\newcommand{\bbZ}{{\Bbb Z}}
\newcommand{\bbR}{{\Bbb R}}
\newcommand{\bbN}{{\Bbb N}}
\newcommand{\bbC}{{\Bbb C}}
\newcommand{\norm}[1]{\left\|#1\right\|}
\newcommand{\Var}{\textnormal{Var}}
\newcommand{\Cov}{\textnormal{Cov}}
\newcommand{\vecoper}{\textnormal{vec}}
\renewcommand{\cite}{\citeyear}
\begin{document}

\title{Wavelet estimation for operator fractional Brownian motion
\thanks{The first author was partially supported by the French ANR AMATIS 2011 grant $\# $ANR-11-BS01-0011.
The second author was partially supported by the Louisiana Board of Regents award LEQSF(2008-11)-RD-A-23 and by the prime award no.\ W911NF-14-1-0475 from the Biomathematics subdivision of the Army Research Office. Support from ENS de Lyon for the author's long-term visit to the school is gratefully acknowledged. The second author is also grateful to the Statistics and Probability Department at MSU and the Laboratoire de Physique at ENS de Lyon for their great hospitality and rich research environments. The authors would like to thank Mark M. Meerschaert for his comments on this work.}
\thanks{{\em AMS Subject classification}. Primary: 60G18, 60G15, 42C40.}
\thanks{{\em Keywords and phrases}: operator fractional Brownian motion, operator self-similarity, wavelets.}}

\author{ Patrice Abry \\ Physics Lab \\ CNRS and \'{E}cole Normale Sup\'{e}rieure de Lyon  \and   Gustavo Didier \\ Mathematics Department\\ Tulane University}

\bibliographystyle{agsm}

\maketitle

\begin{abstract}
Operator fractional Brownian motion (OFBM) is the natural vector-valued extension of the univariate fractional Brownian motion. Instead of a scalar parameter, the law of an OFBM scales according to a Hurst matrix that affects every component of the process. In this paper, we develop the wavelet analysis of OFBM, as well as a new estimator for the Hurst matrix of bivariate OFBM. For OFBM, the univariate-inspired approach of analyzing the entry-wise behavior of the wavelet spectrum as a function of the (wavelet) scales is fraught with difficulties stemming from mixtures of power laws. The proposed approach consists of considering the evolution along scales of the eigenstructure of the wavelet spectrum. This is shown to yield consistent and asymptotically normal estimators of the Hurst eigenvalues, and also of the coordinate system itself under assumptions. A simulation study is included to demonstrate the good performance of the estimators under finite sample sizes.
\end{abstract}


\section{Introduction}



An $\bbR^n$-valued stochastic process $\{X(t)\}_{t \in \bbR}$ is said to be operator self-similar (o.s.s.) when its law scales according to a matrix (Hurst) exponent $H$, i.e.,
\begin{equation}\label{e:o.s.s.}
\{ X(ct) \}_{t \in \bbR} \stackrel{\mathcal{L}}=  \{ c^{H}X(t) \}_{t \in \bbR}, \quad c > 0,
\end{equation}
where $c^H = \sum^{\infty}_{k=0}\log^k(c) H^k/k!$ and $\stackrel{\mathcal{L}}=$ denotes the equality of finite-dimensional distributions. No specific assumption on the eigenstructure of $H$ is imposed, e.g., canonical vectors are not necessarily eigenvectors. The notion of operator self-similarity underpins the natural multivariate generalization of the univariate fractional Brownian motion (FBM): an operator fractional Brownian motion (OFBM) $B_{H} = \{B_H(t)\}_{t \in \bbR}$ is a proper Gaussian, o.s.s., stationary increment stochastic process. In this paper, we propose using the wavelet eigenstructure of OFBM to estimate $H$. The main motivation behind this methodology is to avoid the difficulties stemming from the extrapolation of univariate techniques to a multivariate context, where operator scaling laws such as \eqref{e:o.s.s.} may arise.\\  


Inferential theory for univariate self-similar processes now comprises a voluminous and well-established literature. A non-exhaustive list includes Fox and Taqqu \cite{fox:taqqu:1986} and Robinson \cite{robinson:1995-gaussian,robinson:1995-logperiodogram_regression} on Fourier domain methods, and Wornell and Oppenheim \cite{wornell:oppenheim:1992}, Flandrin \cite{flandrin:1992}, and Veitch and Abry \cite{veitch:abry:1999} on wavelet domain methods, among many others.
The multivariate framework evokes several applications where matrix-based scaling laws are expected to appear, such as in long range dependent time series (Marinucci and Robinson \cite{marinucci:robinson:2000}, Davidson and de Jong \cite{davidson:dejong:2000}, Chung \cite{chung:2002}, Dolado and Marmol \cite{dolado:marmol:2004}, Davidson and Hashimzade \cite{davidson:hashimzade:2007}, Kechagias and Pipiras \cite{kechagias:pipiras:2015}) and queueing systems (Konstantopoulos and Lin \cite{konstantopoulos:lin:1996}, Majewski \cite{majewski:2003,majewski:2005}, Delgado \cite{delgado:2007}). Like FBM in the univariate setting, OFBM is a natural starting point in the construction of estimators for operator self-similar processes due to its tight connection to stationary fractional processes and its being Gaussian (on the general theory of o.s.s.\ processes, see Laha and Rohatgi \cite{laha:rohatgi:1981}, Hudson and Mason \cite{hudson:mason:1982}, Maejima and Mason \cite{maejima:mason:1994}, Cohen et al \cite{cohen:meerschaert:rosinski:2010}).

A characterization of the covariance structure of OFBM can be derived from stochastic integral representations. Under a mild condition on the eigenvalues of the exponent $H$ (see \eqref{e:eigen-assumption}), Didier and Pipiras \cite{didier:pipiras:2011} showed that any OFBM $B_H$ admits a harmonizable representation
\begin{equation} \label{e:OFBM_harmonizable}
\{B_{H}(t)\}_{t \in \bbR} \stackrel{{\mathcal L}}=
\Big\{\int_{\bbR} \frac{e^{itx} - 1}{ix}
(x^{-D}_{+}A +
x^{-D}_{-}\overline{A}) \widetilde{B}(dx)\Big\}_{t
\in \bbR}
\end{equation}
for some complex-valued matrix $A$. In \eqref{e:OFBM_harmonizable}, $x_{\pm} = \max\{\pm x,0\}$,
\begin{equation}\label{e:D}
D = H - \frac{1}{2}\hspace{1mm}I,
\end{equation}
and $\widetilde{B}(dx)$ is a complex-valued random measure such that $\widetilde{B}(-dx)= \overline{\widetilde{B}(dx)}$, $E\widetilde{B}(dx)\widetilde{B}(dx)^{*} = dx$, where $*$ represents Hermitian transposition. Expression \eqref{e:OFBM_harmonizable} shows that the law of an OFBM can be fully described based on the scaling matrix $H$ and the spectral parameter $A$ (see Remark \ref{r:parametrization} on the parametrization).

Let $H = P J_H P^{-1}$, $P \in GL(n,\bbC)$, be the Jordan form of the Hurst parameter in \eqref{e:OFBM_harmonizable} (see Section \ref{s:preliminaries} for matrix notation). If $H$ is diagonal, then we can assume that $P$ takes the form of a scalar matrix $P = pI$, where $p \in \bbR$ and $I$ is the $n \times n$ identity matrix, and that $J_H = \textnormal{diag}(h_1,\hdots,h_n)$. In this case, \eqref{e:o.s.s.} breaks down into simultaneous entry-wise expressions
\begin{equation}\label{e:o.s.s._entrywise}
\{ X(ct) \}_{t \in \bbR} \stackrel{\mathcal{L}}=  \{ (c^{h_1}X_1(t), \hdots,c^{h_n}X_n(t))^* \}_{t \in \bbR}, \quad c > 0.
\end{equation}
Relation \eqref{e:o.s.s._entrywise} is henceforth called \textit{entry-wise scaling}. In particular, under \eqref{e:o.s.s._entrywise} an OFBM is a vector of correlated FBM entries (Amblard et al.\ \cite{amblard:coeurjolly:lavancier:philippe:2012}, Coeurjolly et al.\ \cite{coeurjolly:amblard:achard:2013}). Several estimators have been developed by building upon the univariate, entry-wise scaling laws, e.g., the Fourier-based multivariate local Whittle (e.g., Shimotsu \cite{shimotsu:2007}, Nielsen \cite{nielsen:2011}) and the multivariate wavelet regression (Wendt et al.\ \cite{WENDT:2009:C}, Amblard and Coeurjolly \cite{amblard:coeurjolly:2011}, Achard and Gannaz \cite{achard:gannaz:2014}).
However, if $H$ is non-diagonal, i.e., if $P$ is not a scalar matrix, then the relation \eqref{e:o.s.s.} mixes together the several entries of $X$. The estimation problem under a non-scalar $P$ turns out to be rather intricate and calls for the construction of methods that are multivariate from their inception.

Although the emergence of o.s.s.\ processes in applications is rightly expected -- e.g., as functional weak limits of multivariate time series --, there is no specific reason to believe a priori that scaling laws occur predominantly entry-wise and \textit{exactly} along the canonical axes. Indeed, this is palpably not true in several applications such as fractional blind-source separation (see Didier et al.\ \cite{Helgason:Didier:Abry:2015}) and fractional cointegration (see Robinson \cite{robinson:2008} for a bivariate local Whittle estimator). The framework of multivariate mixed fractional time series subsumes both cases. Let $W = \{W_t\}_{t \in \bbZ}$ be an unobserved signal. To fix ideas suppose that the signal is an operator fractional Gaussian noise (OFGN), namely, it has the form $W_t = B_{H_W}(t) - B_{H_W}(t-1)$ with spectral parameter $A_W$ (see \eqref{e:OFBM_harmonizable}). Further assume that $H_W = \textnormal{diag}(h_1,\hdots,h_n)$. The observed signal has the form $Y = \{Y_t\}_{t \in \bbZ} = \{P W_t\}_{t \in \bbZ}$, for a non-scalar, mixing matrix $P \in GL(n,\bbR)$. Then, the mixed process $Y$ is another OFGN with parameters $H_Y = P \textnormal{diag}(h_1,\hdots,h_n)P^{-1}$ and $A_Y = PA_{W}$. In blind source separation, the entries of $W$ are uncorrelated, whereas in cointegration they are typically correlated.

From a mathematical standpoint, the mixing of scaling laws can be illustrated by means of the expression for the spectral density $f_X$ of an OFBM with Hurst parameter $H = P \textnormal{diag}(h_1,h_2)P^{-1}$, $0 < h_1 < h_2 < 1$, $P \in GL(2,\bbR)$ (see condition \eqref{e:bivariate_OFBM_h1_h2_real}). For $M := P^{-1}AA^* (P^*)^{-1} \in M(n)$ (c.f.\ condition \eqref{e:time_revers}) and $x > 0$, the spectral density takes the form $\Big( f_{X}(x)_{ij} \Big) = x^{-D}AA^* x^{-D^*} $, where
$$
f_{X}(x)_{11} = p^2_{11} m_{11} x^{-2 d_1} + 2p_{11}p_{12} m_{12} x^{-(d_1+d_2)} +
p^2_{12} m_{22} x^{- 2 d_2},
$$
$$
f_{X}(x)_{21} = p_{11}p_{21}m_{11}x^{-2 d_1}+(p_{11}p_{22}+ p_{12}p_{21})m_{12}x^{-(d_1 + d_2)}+
p_{12}p_{22}m_{22}x^{- 2 d_2},
$$
$$
f_{X}(x)_{22} = p^2_{21} m_{11}x^{- 2 d_1} + 2 p_{21}p_{22} m_{12} x^{- (d_1 + d_2)} +
p^2_{22} m_{22}x^{- 2 d_2},
$$
and $d_1 = h_1 - 1/2$, $d_2 = h_2 - 1/2$ (see \eqref{e:S=a(nu)HEW(j)a(nu)H*=(a_b_b_c)} for the analogous expression in the wavelet domain). The univariate-inspired approach of setting up a Fourier-domain log-regression -- e.g., Whittle-type estimators -- has to cope with the double-sided challenge of mixed power laws. On one hand, under mild assumptions on the amplitude coefficients, the dominant power law $x^{-2 d_2}$ always prevails around the origin of the spectrum. On the other hand, and paradoxically, even if the estimation of $d_2$ is the target, the magnitude of the amplitude coefficients themselves can arbitrarily bias the estimate over finite samples by masking the dominant power law.

In this work, we build upon the wavelet analysis of ($n$-dimensional) OFBM to propose a novel wavelet-based estimation method for bivariate, and potentially multivariate, OFBM. The method yields the Hurst eigenvalues of $H$ and, under mild assumptions, also its eigenvectors when $P \in O(2)$. Its essential ingredient, and the main theme of this paper, is a change of perspective: instead of considering the entry-wise behavior of the wavelet spectrum as a function of wavelet scales, it draws upon the evolution along scales of the eigenstructure of the wavelet spectrum. This way, it avoids much of the difficulty associated with inference in the presence of mixed power laws, as we now explain.

For a wavelet function $\psi \in L^2(\bbR)$ with a number $N_{\psi}$ of vanishing moments (see \eqref{e:N_psi}), the (normalized) vector wavelet transforms of OFBM is naturally defined as
\begin{equation}\label{wavelet_transform}
\bbR^n \ni D(2^j,k) = 2^{-j/2} \int_{\bbR} 2^{-j/2} \psi(2^{-j}t-k) B_{H}(t) dt, \quad j \in \bbN \cup \{0\}, \quad k \in \bbZ,
\end{equation}
provided the integral in \eqref{wavelet_transform} exists in an appropriate sense. The wavelet-domain process $\{D(2^j,k)\}_{k \in \bbZ}$ is stationary in $k$ and o.s.s.\ in $j$ (Proposition \ref{p:wavelet_coefs_properties}). Moreover, whereas the original stochastic process $B_H(t)$ displayed fractional memory, the covariance between (multivariate) wavelet coefficients decays as a function of $| 2^j k - 2^{j'}k'|$ according to an inverse fractional power controlled by $N_{\psi}$ (Proposition \ref{p:decay_Cov_wavelet_coefs}). The wavelet spectrum (variance) at scale $j$ is the positive definite matrix
$$
ED(2^j,k)D(2^j,k)^* = ED(2^j,0)D(2^j,0)^* =: E W(2^j),
$$
and its natural estimator, the sample wavelet variance, is the matrix statistic
\begin{equation}\label{e:W(j)}
W(2^j) = \frac{1}{K_j} \sum^{K_j}_{k=1}D(2^j,k)D(2^j,k)^*, \quad K_j = \frac{\nu}{2^j}, \quad j = j_1,\hdots,j_m,
\end{equation}
for a dyadic total of $\nu$ (wavelet) data points. Within the bivariate framework, the univariate-like entry-wise scaling approach would consist of exploiting the behavior of each component $W(2^j)_{i_1, i_2} $, $i_1, i_2 = 1,2$, of the sample wavelet transform $ W(2^j) $ as a function of the scales $2^j$. Apart from an amplitude effect, the entries are then controlled by the dominant Hurst eigenvalue $h_2$ (see expression \eqref{e:Shat=a(nu)H_EW(j)_a(nu)H*=(a_b_b_c)}). Figure~\ref{fig:figa}, top panels, illustrates the fact that this precludes the estimation of $h_1$.

The proposed estimators of the Hurst eigenvalues $h_1$ and $h_2$ are
\begin{equation}\label{e:def_estimator}
\widehat{h}_1(2^j) = \frac{\log \lambda_{1}(2^j) }{2 \log (2^j)} , \quad  \widehat{h}_2(2^j) = \frac{\log \lambda_{2}(2^j) }{2 \log (2^j)},
\end{equation}
where $\lambda_{1}(2^j) \leq \lambda_{2}(2^j)$ are the eigenvalues of the positive definite symmetric matrix $W(2^j)$ (see Definition \ref{def:estimator} for the precise assumptions). However, as usual with operator self-similarity, the finite sample expressions for $\lambda_1(2^j)$ and $\lambda_2(2^j)$ themselves involve a mixture of distinct power laws $2^{j 2h_1}$, $2^{j (h_1+h_2)}$, $2^{j 2h_2}$ ($h_1 < h_2$). For this reason, one must take the limit at coarse scales, namely, the scale itself must go to infinity. It is a remarkable fact that the power law $2^{j2 h_1}$ ends up prevailing in the expression for $\lambda_1(2^j)$ (see Figure \ref{fig:figa}, bottom panels, and the striking contrast with the top panels; see Remark \ref{r:motivation} for a mathematically motivated, intuitive discussion). The convergence of \eqref{e:def_estimator} in turn allows for the convergence of associated sequences of eigenvectors when $P$ is orthogonal. Moreover, simulation studies show that the estimation procedure is accurate and computationally fast. The asymptotics are mathematically developed in two stages. In the first, the wavelet scales (octaves) are held fixed and the asymptotic distribution of the sample wavelet transform is obtained (Proposition \ref{p:4th_moments_wavecoef} and Theorem \ref{t:asymptotic_normality_wavecoef_fixed_scales}). In the second, one takes the limit with respect to the scales themselves. However, the latter must go to infinity slower than the sample size, a feature that our estimators share with Fourier or wavelet-based semiparametric estimators in general (e.g., Robinson \cite{robinson:1995-gaussian}, Moulines et al.\ \cite{moulines:roueff:taqqu:2007:JTSA,moulines:roueff:taqqu:2007:Fractals,moulines:roueff:taqqu:2008}).

Our results are related to the literature on the estimation of operator stable laws via eigenvalues and eigenvectors of sample quadratic forms (see Meerschaert and Scheffler \cite{meerschaert:scheffler:1999,meerschaert:scheffler:2003}). In this context, one encounters the same problem with the prevalence of some dominant power law (i.e., the tail exponent) in most directions. In Becker-Kern and Pap \cite{becker-kern:pap:2008}, a similar philosophy is applied in the time domain to produce one of the very few available estimators for authentic, mixed scaling o.s.s.\ processes of dimension up to 4. However, the asymptotics provided are restricted to consistency. In our work, the wavelet transform is the main tool for ensuring the consistency and asymptotic normality of the proposed estimators. 

The paper is organized as follows. Section \ref{s:preliminaries} contains the notation, assumptions and basic concepts. Section \ref{s:wavelet_analysis} is dedicated to the wavelet analysis of $n$-dimensional OFBM, as well as the asymptotics of the wavelet transform for fixed scales (most of the proofs can be found in Section \ref{s:asympt_normality_fixed_scales}). In Section \ref{s:estimation}, the estimation method for the Hurst exponent of bivariate OFBM is laid out in full detail and its asymptotics are established at coarse scales. Section \ref{s:simulation_studies} displays finite sample computational studies, including one of the performance of the estimators under blind source separation and cointegrated instances, with the purpose of illustrating the robustness of the estimators' performance with respect to different parametric scenarios. The research contained in this paper leads to a number of interesting open questions, which are mentioned in Section \ref{s:open}. Among these is the extension of the consistency and asymptotic normality to any dimension $n \geq 3$, which will generally require dispensing with explicit formulas for eigenvalues and eigenvectors. The appendix contains several auxiliary mathematical results. In addition, in Section \ref{s:discretized_wavelet}, the performance of the estimators is established under the assumption that only discrete observations are available, instead of a full sample path as in \eqref{wavelet_transform}.

 \begin{figure}[h]
\centerline{
\includegraphics[height=40truemm,keepaspectratio]{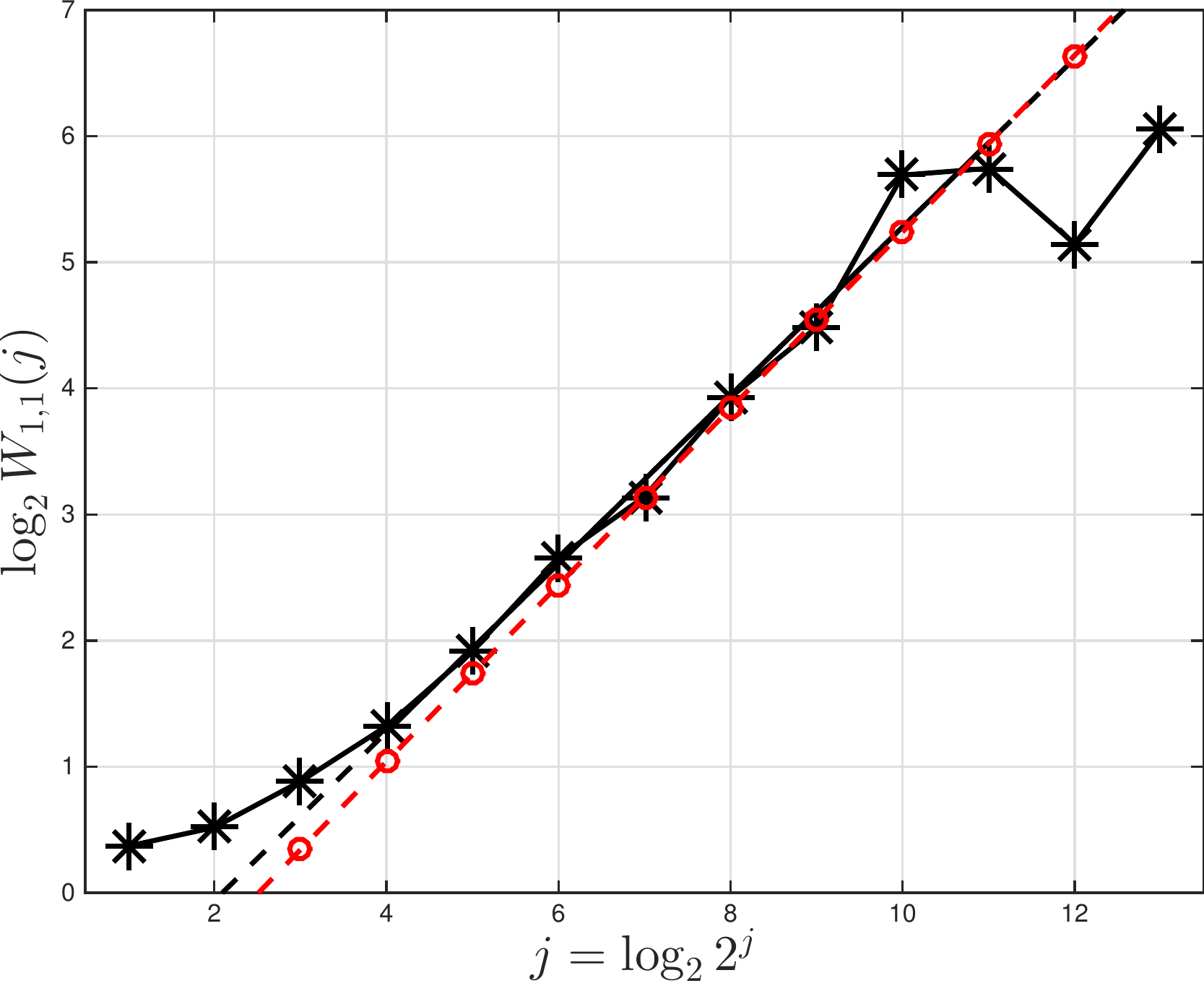}
\includegraphics[height=40truemm,keepaspectratio]{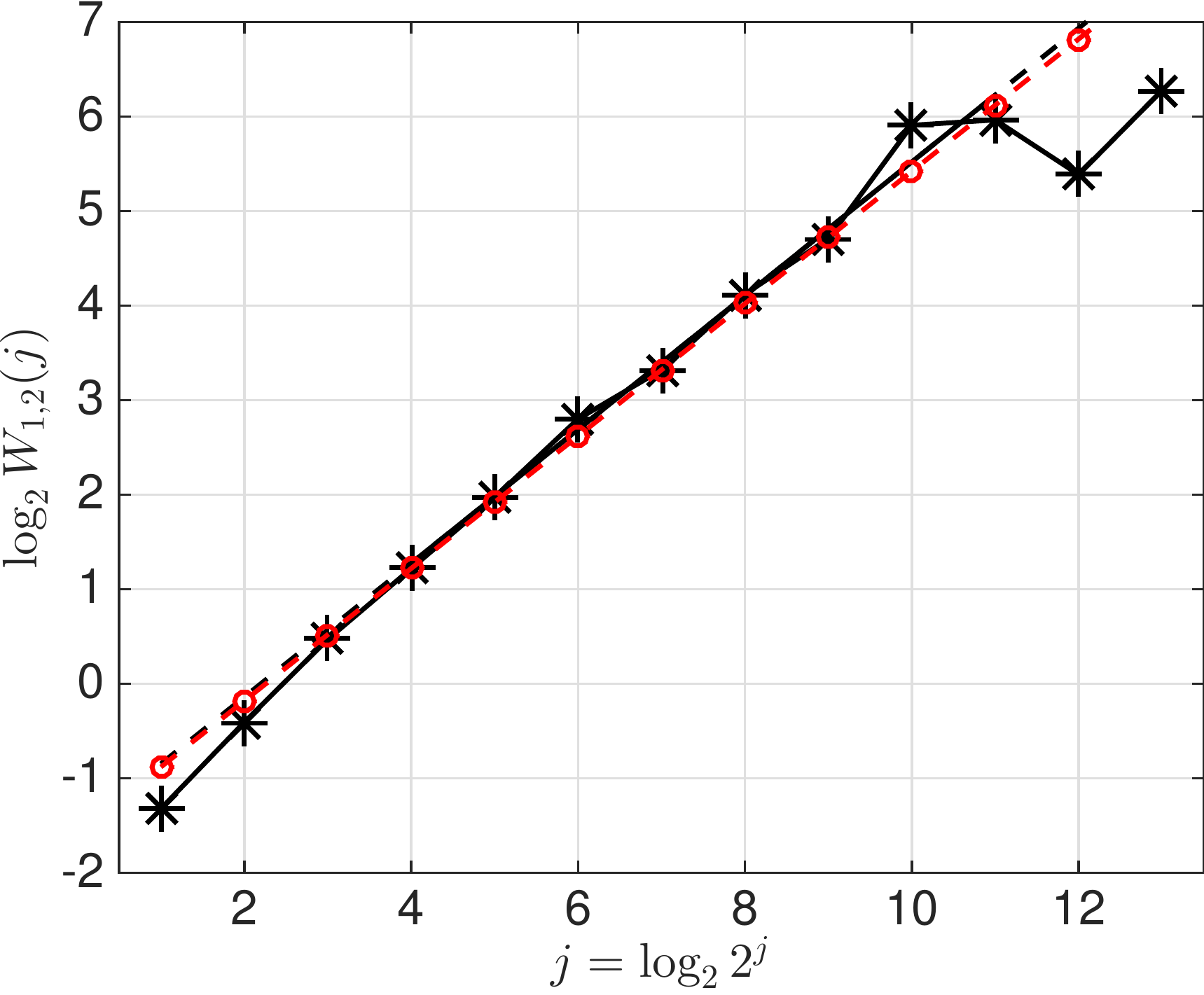}
\includegraphics[height=40truemm,keepaspectratio]{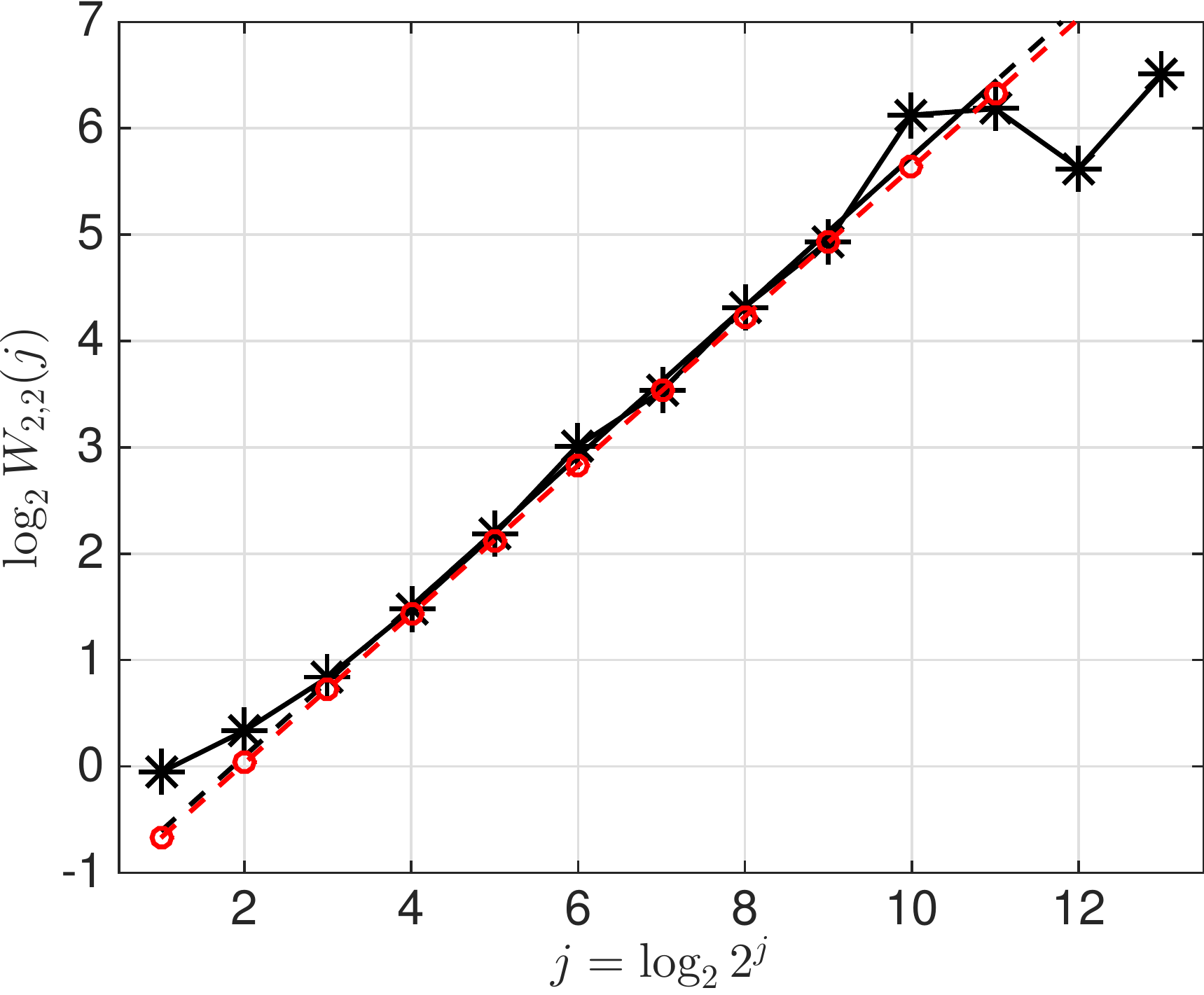}
}
\centerline{
\includegraphics[height=40truemm,keepaspectratio]{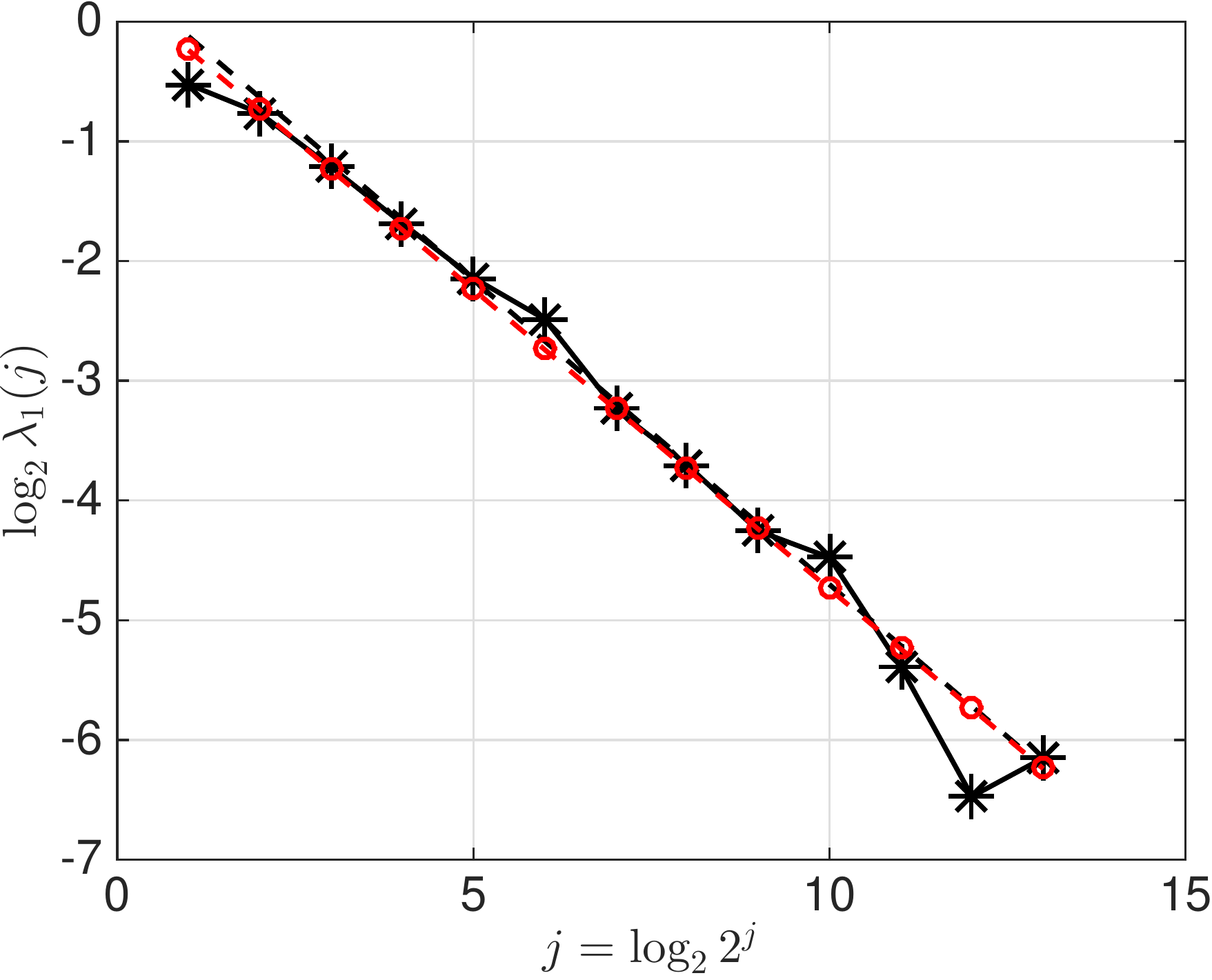}
\includegraphics[height=40truemm,keepaspectratio]{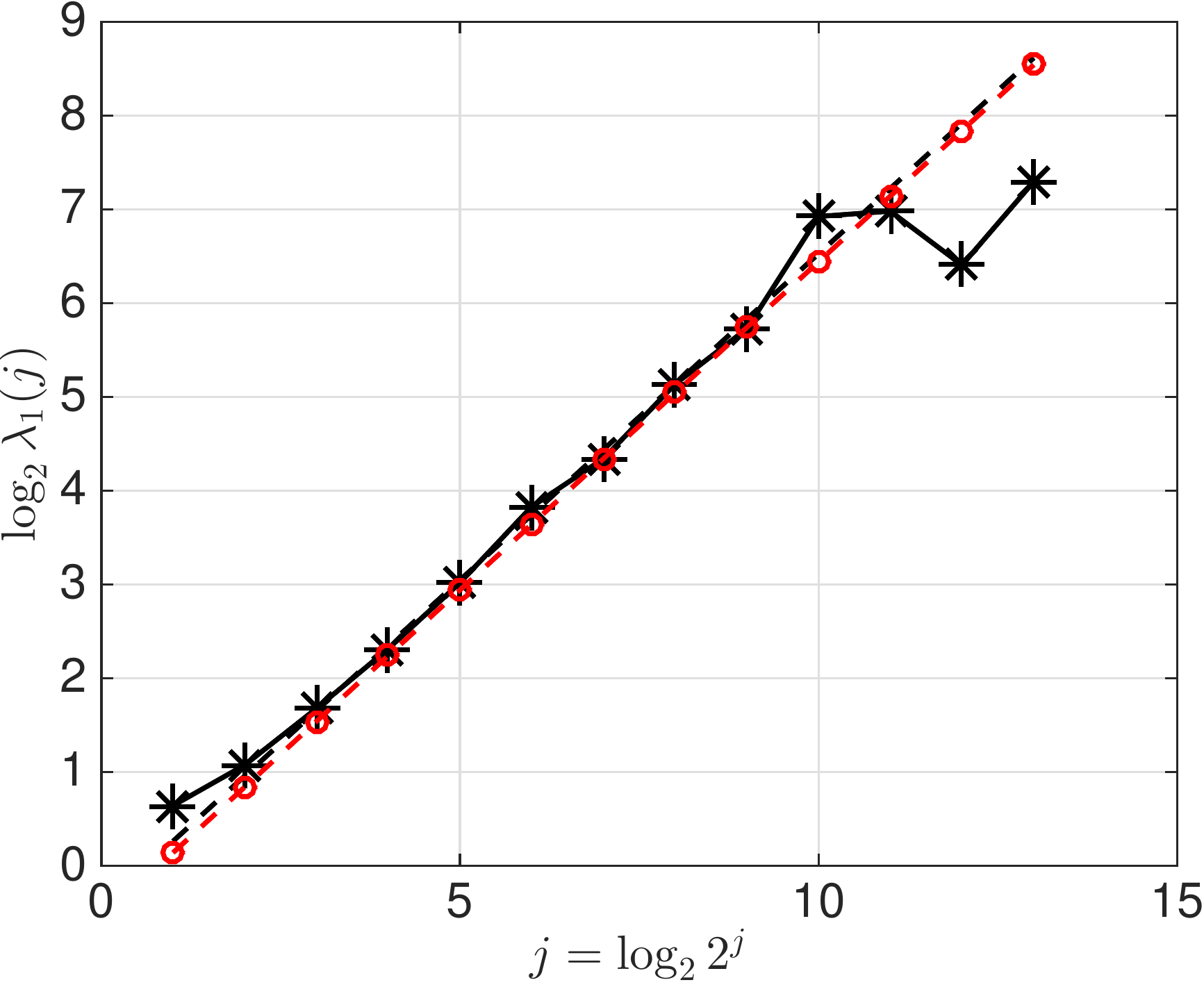}
\includegraphics[height=40truemm,keepaspectratio]{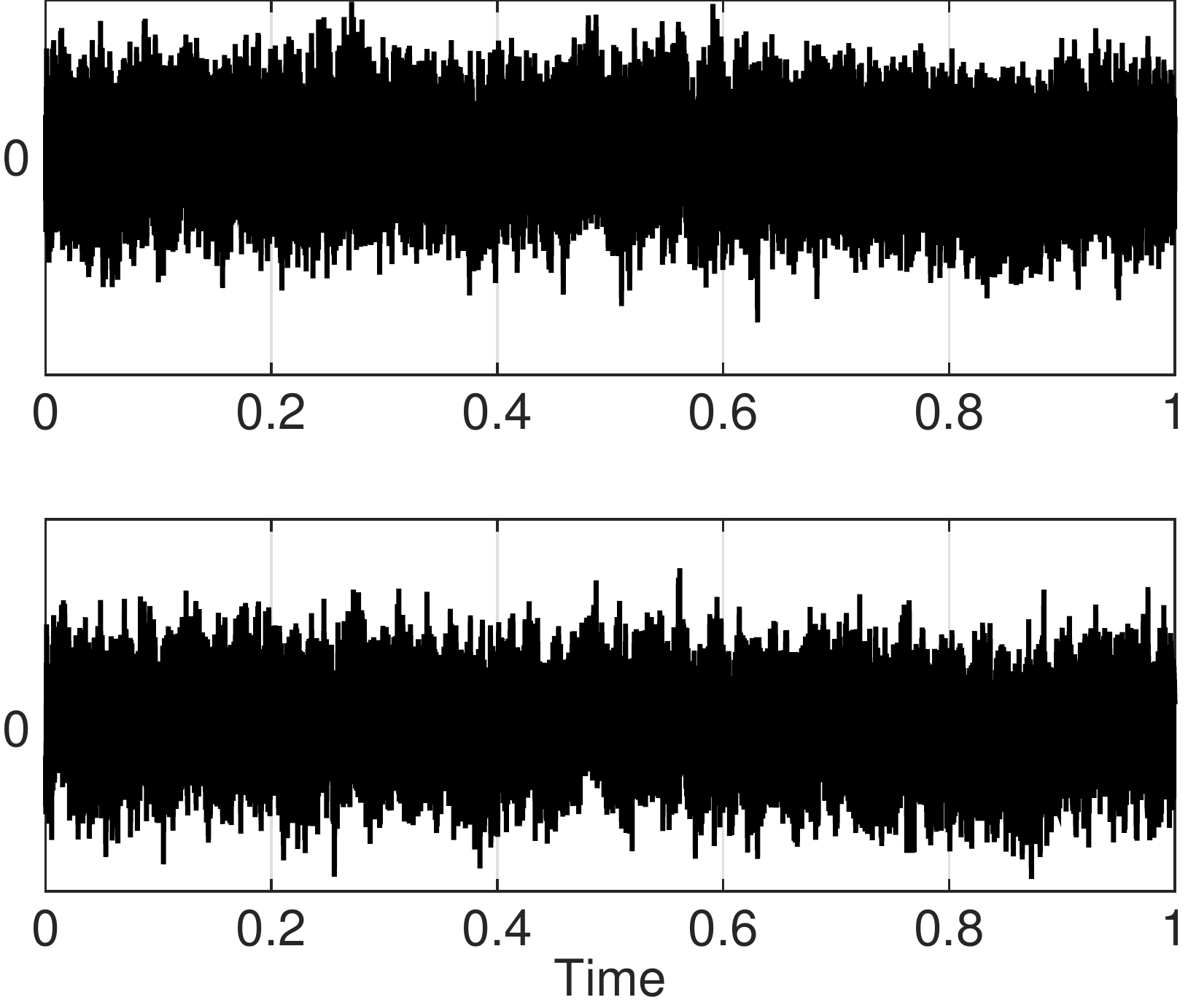}
}
\caption{\label{fig:figa} {\bf Entry-wise vs eigenvalue-based estimation.} From one synthetic realization of OFBM, the top row displays (black solid lines with $\ast$), from left to right, $\log_2 W(2^j)_{1,1} $ vs $ j $, $\log_2 W(2^j)_{1,2} $ vs $ j$ and $\log_2 W(2^j)_{2,2} $ vs. $ j $, on which the asymptotic behavior $j 2 h_2$ is superimposed (red dashed line with `o'). All auto- and cross-components are then driven by the dominant Hurst eigenvalue $h_2$, which precludes the estimation of the eigenvalue $h_1$. The first two bottom row plots display (black solid lines with $\ast$), from left to right, $\log_2 \lambda_1(2^j) $ vs $ j $ and $\log_2 \lambda_2(2^j) $ vs $ j $, with their respective asymptotic trends $j 2 h_1$ and $j 2 h_2$ superimposed (red dashed line with `o'). This demonstrates that both Hurst eigenvalues $h_1$ and $h_2$ can be estimated (see Section \ref{s:entry-wise_vs_eigenvalue} for the simulation details). The bottom right plot displays the increments of the generated OFBM.}
\end{figure}

\section{Notation and assumptions}\label{s:preliminaries}

All through the paper, the dimension of OFBM is denoted by $n \geq 2$.

We shall use throughout the paper the following notation for finite-dimensional operators (matrices). All with respect to the
field $\bbR$, $M(n)$ or $M(n,\bbR)$ is the vector space of all $n \times n$ matrices (endomorphisms), $GL(n)$ or $GL(n,\bbR)$ is the
general linear group (invertible matrices, or automorphisms), $O(n)$ is the (orthogonal) group of matrices $O$ such that $OO^{*} =
I = O^{*}O$ (i.e., the adjoint operator is the inverse), and ${\mathcal S}(n,\bbR)$ is the space of symmetric matrices. We also write $I_n$ to indicate the dimension of the identity matrix $I$. A block-diagonal matrix with main diagonal blocks ${\mathcal P}_1,\hdots,{\mathcal P}_n$ or $m$ times repeated diagonal block ${\mathcal P}$ is represented by
\begin{equation}\label{e:block_diagonal_matrices}
\textnormal{diag}({\mathcal P}_1,\hdots,{\mathcal P}_n), \quad  \textnormal{diag}_m({\mathcal P}),
\end{equation}
respectively. The symbol $\|\cdot\|$ represents a generic matrix or vector norm. For $q \in \bbN$, the $l^q$ entry-wise norm of an $m \times n$ real-valued matrix $M$ is denoted by
\begin{equation}\label{e:|A|p}
\|M\|_{l^q} = \Big(\sum^{m}_{i_1=1}\sum^{n}_{i_2=1} |M_{i_1,i_2}|^q \Big)^{1/q}
\end{equation}
and $\|M\|_{l^\infty} = \sup_{i_1,i_2}|M_{i_1,i_2}|$. A generic matrix $M \in M(n,\bbC)$ has real and imaginary parts $\Re(M)$ and $\Im(M)$, respectively. The functions
\begin{equation}\label{e:projections_def}
\pi_{i}(v), \pi_{i_1,i_2}(M), \quad {\mathbf v} \in \bbR^n, \quad M \in M(n,\bbR),
\end{equation}
denote, respectively, the $i$-th projection (entry) of the vector ${\mathbf v}$ and the $(i_1,i_2)$-th projection (entry)
of the matrix $M$, $i_1,i_2 =1,\hdots,n$. For $S = ( s_{i_1,i_2} )_{i_1,i_2=1,\hdots,n} \in {\mathcal S}(n,\bbR)$,
we define the operator
\begin{equation}\label{e:vec_S}
\textnormal{vec}_{{\mathcal S}}(S) = (s_{11}, s_{12},\hdots, s_{1n}; s_{22},\hdots, s_{2n}; \hdots;s_{n-1,n-1}, s_{n-1,n};s_{n,n})^*.
\end{equation}
In other words, $\textnormal{vec}_{{\mathcal S}}(\cdot)$ vectorizes the upper triangular entries of the symmetric matrix $S$.

When establishing bounds, $C$ stands for a positive constant whose value can change from one line to the next. For a sequence of random vectors $\{X_l, Y_l\}_{l \in \bbN}$, $P(Y_l = 0) = 0$, we write
\begin{equation}\label{e:Xj_approx_Yj}
X_l \stackrel{P}\sim Y_l
\end{equation}
to mean that $X_l / Y_l \stackrel{P}\rightarrow 1$, $l \rightarrow \infty$. Note that this does not imply that $\{X_l, Y_l\}_{l \in \bbN}$ converges in probability. Relations of the type \eqref{e:Xj_approx_Yj} will often appear in the proofs of the results in Section \ref{s:estimation}. We write ${\mathbf X} \stackrel{d}= {\mathbf Y}$ when the random vectors ${\mathbf X}$ and ${\mathbf Y}$ have the same distribution.

All through the paper, we will make the following assumptions on the underlying wavelet basis. For this reason, such assumptions will be omitted in the statements.

\medskip

\noindent {\sc Assumption $(W1)$}: $\psi \in L^1(\bbR)$ is a wavelet function, namely,
\begin{equation}\label{e:N_psi}
\int_{\bbR} \psi^2(t)dt = 1 , \quad \int_{\bbR} t^{q}\psi(t)dt = 0, \quad q = 0,1,\hdots, N_{\psi}-1, \quad N_{\psi} \geq 2.
\end{equation}
\noindent {\sc Assumption $(W2)$}:
\begin{equation}\label{e:supp_psi=compact}
\textnormal{$\textnormal{supp}(\psi)$ is a compact interval}.
\end{equation}
\noindent {\sc Assumption $(W3)$}: there is $\alpha > 1$ such that
\begin{equation}\label{e:psihat_is_slower_than_a_power_function}
\sup_{x \in \bbR} |\widehat{\psi}(x)| (1 + |x|)^{\alpha} < \infty.
\end{equation}

\medskip

Under \eqref{e:N_psi}, \eqref{e:supp_psi=compact} and \eqref{e:psihat_is_slower_than_a_power_function}, $\psi$ is continuous, $\widehat{\psi}(x)$ is everywhere differentiable and its first $N_{\psi}-1$ derivatives are zero at $x = 0$ (see Mallat \cite{mallat:1999}, Theorem 6.1 and the proof of Theorem 7.4).

\begin{example}
If $\psi$ is a Daubechies wavelet with $N_{\psi}$ vanishing moments, $\textnormal{supp}(\psi) = [0,2N_{\psi} -1]$ (see Mallat \cite{mallat:1999}, Proposition 7.4).
\end{example}

Starting from the harmonizable representation \eqref{e:OFBM_harmonizable}, throughout the paper we will make the following assumptions on the OFBM $B_H = \{B_H(t)\}_{t \in \bbR}$.

\medskip

\noindent {\sc Assumption (OFBM1)}: the eigenvalues (characteristic roots) $h_k$ of the matrix exponent $H$ satisfy

\begin{equation}\label{e:eigen-assumption}
    0< \Re(h_k)<1,\quad k=1,\ldots,n.
\end{equation}

\medskip

\noindent {\sc Assumption (OFBM2)}:
\begin{equation}\label{e:full-rank}
\textnormal{$\Re(AA^*)$ has full rank}.
\end{equation}

\medskip

\noindent The condition \eqref{e:eigen-assumption} generalizes the familiar constraint $0 < H < 1$ on the Hurst parameter of a FBM. As shown in Didier and Pipiras \cite{didier:pipiras:2011}, it ensures the existence of the harmonizable representation \eqref{e:OFBM_harmonizable}. Also, \eqref{e:eigen-assumption} implies that the OFBM under consideration has mean zero, which follows by the same reasoning as in Taqqu \cite{taqqu:2003}, p.7, property ($ii$). In turn, recall that a stochastic process is called proper when the distribution of $X(t)$ is full dimensional for $t \neq 0$. The condition \eqref{e:full-rank} is sufficient (though not necessary) for the integral on the right-hand side of \eqref{e:OFBM_harmonizable} to be a proper stochastic process and hence to define an OFBM.


The next two assumptions will appear in some of the results. 

\medskip

\noindent {\sc Assumption (OFBM3)}:
\begin{equation}\label{e:time_revers}
\Im(AA^*) = 0.
\end{equation}

\medskip

\noindent {\sc Assumption (OFBM4)}: $B_H = \{B_H(t)\}_{t \in \bbR}$ is a \textit{bivariate} OFBM with scaling matrix
\begin{equation}\label{e:bivariate_OFBM_h1_h2_real}
H = PJ_H P^{-1} = P \textnormal{diag}(h_1,h_2)P^{-1}, \quad 0< h_1 < h_2 < 1, \quad P \in GL(2,\bbR),
\end{equation}
where the columns of $P$ are unit vectors.

\medskip

\noindent The condition \eqref{e:time_revers} is equivalent to time reversibility, namely, $\{B_{H}(-t)\} \stackrel{{\mathcal L}}= \{B_{H}(t)\}$. In turn, the latter is equivalent to the existence of a closed form expression for the covariance function, i.e.,
\begin{equation}\label{e:time_reversibility}
EB_{H}(s)B_{H}(t)^* = \frac{1}{2} \{|t|^{H} \Sigma |t|^{H^*} + |s|^{H} \Sigma |s|^{H^*} - |t-s|^{H} \Sigma |t-s|^{H^*}\},
\end{equation}
where $\Sigma := EB_{H}(1)B_{H}(1)^*$ (Didier and Pipiras \cite{didier:pipiras:2011}, Proposition 5.2). Time reversibility is used in some of the results in Section \ref{s:wavelet_analysis} and in all Section \ref{s:estimation}. The bivariate framework \eqref{e:bivariate_OFBM_h1_h2_real} is only used in limits at coarse scales (Section \ref{s:estimation}), due to the availability of convenient formulas for eigenvalues and eigenvectors.

\begin{remark}\label{r:parametrization}
In regard to the parametrization, the connection between $\Sigma$ and $(H,AA^*)$ can be obtained implicitly based on the second moment of the harmonizable representation \eqref{e:OFBM_harmonizable} at $t = 1$, and it can be worked out explicitly under \eqref{e:time_reversibility} and stronger additional conditions, e.g., assuming $H$ is diagonalizable with real eigenvalues and eigenvectors.

The condition \eqref{e:bivariate_OFBM_h1_h2_real} renders OFBM identifiable, namely, the mapping from the parametrization $H$ into the space of OFBM laws is injective for a fixed spectral parameter $AA^*$. This is a consequence of only considering Hurst matrices $H$ with real eigenvalues. For a general discussion of the (non)identifiability of OFBM, see Didier and Pipiras \cite{didier:pipiras:2012}.
\end{remark}


\section{Wavelet analysis}\label{s:wavelet_analysis}

In this section, we carry out the wavelet analysis of $n$-dimensional OFBM. The proof of Proposition \ref{p:wavelet_coefs_properties} can be found in Section \ref{s:additional}, whereas Section \ref{s:discretized_wavelet} contains the proofs of Proposition \ref{p:decay_Cov_wavelet_coefs}, Proposition \ref{p:4th_moments_wavecoef} and Theorem \ref{t:asymptotic_normality_wavecoef_fixed_scales}.

\subsection{Basic properties}

The normalized wavelet transform \eqref{wavelet_transform} is itself a vector-valued random field in the scale and shift parameters $j$ and $k$, respectively. It will be convenient to make the change of variables $z = 2^{-j}t - k$, and reexpress
\begin{equation}\label{e:wavelet_transform_after_change_var}
D(2^j,k) =  \int_{\bbR} \psi(z)B_H(2^j z + 2^j k) dz.
\end{equation}
As in the univariate case, the wavelet coefficients of OFBM exhibit a number of nice properties. The next proposition describes such properties as well as the general form of the wavelet spectrum (variance).
\begin{proposition}\label{p:wavelet_coefs_properties}
Under the assumptions (OFBM1)--(2), let $\{D(2^j,k)\}_{j \in \bbN, k \in \bbZ}$ be as in \eqref{e:wavelet_transform_after_change_var}. Then,
\begin{itemize}
\item [(P1)] the wavelet transform \eqref{wavelet_transform} is well-defined in the mean square sense, and $ED(2^j,k)=0$;
\item [(P2)] (stationarity for a fixed scale) $\{D(2^j,k+h)\}_{k \in \bbZ} \stackrel{{\mathcal L}}= \{D(2^j,k)\}_{k \in \bbZ}$, $h \in \bbZ$;
\item [(P3)] (operator self-similarity over different scales) $\{D(2^j,k)\}_{k \in \bbZ} \stackrel{{\mathcal L}}= \{2^{jH}D(1,k)\}_{k \in \bbZ}$;
\item [(P4)] for some $C> 0$, the wavelet spectrum $EW(2^j,k) \equiv EW(2^j)$ is given by
\begin{equation}\label{e:EW(j)_Fourier}
EW(2^j) = C \int_{\bbR} (x^{-D}_{+}AA^* x^{-D^*}_{+} + x^{-D}_{-}\overline{AA^*} x^{-D^*}_{-}) \frac{|\widehat{\psi}(2^j x)|^2}{x^2}dx;
\end{equation}
\item [(P5)] the wavelet spectrum satisfies the operator scaling relation
$$
EW(2^j) = 2^{jH} EW(1) 2^{jH^*}=
2^{jH} \Big\{\int_{\bbR}\int_{\bbR}\psi(t)\psi(t') \hspace{1mm}EB_H(t)B_H(t')^* dtdt' \Big\} 2^{jH^*},
$$
$j \in \bbN$. In particular, under \eqref{e:time_revers},
\begin{equation}\label{e:EWj_timereversible}
EW(2^j) = - \frac{1}{2}\hspace{1mm}2^{jH} \Big\{ \int_{\bbR}\int_{\bbR} \psi(t) \psi(t')\hspace{1mm}|t-t'|^{H}\Sigma |t-t'|^{H^*}dtdt' \Big\}2^{jH^*};
\end{equation}
\item [(P6)] in analogy to ($P5$), $W(2^j) \stackrel{d}= 2^{jH}( \frac{1}{K_j}\sum^{K_j}_{k=1} D(1,k)D(1,k)^* ) 2^{jH^*}$, where $K_j$ is given in \eqref{e:W(j)};
\item [(P7)] the wavelet spectrum has full rank, namely, $\det EW(2^j)  \neq 0$, $j \in \bbN$.
\end{itemize}
\end{proposition}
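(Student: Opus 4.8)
The plan is to establish the seven properties essentially in the order listed, since several of them build on their predecessors. For (P1), I would start from the expression \eqref{e:wavelet_transform_after_change_var} and verify that the stochastic integral defining $D(2^j,k)$ is well-defined in $L^2$; the cleanest route is to pass to the harmonizable representation \eqref{e:OFBM_harmonizable}, interchange the (Lebesgue) integration against $\psi$ with the stochastic integration against $\widetilde B(dx)$ — justified by a Fubini-type argument for Wiener integrals once one checks an $L^1$-$L^2$ integrability bound — and arrive at a spectral-domain representation
\begin{equation}\label{e:Dspectral}
D(2^j,k) \stackrel{{\mathcal L}}= 2^{j/2}\int_{\bbR} e^{i 2^j k x}\,\widehat{\psi}(2^j x)\,\Big(x^{-D}_{+}A + x^{-D}_{-}\overline{A}\Big)\,\widetilde B(dx),
\end{equation}
where finiteness of the second moment follows from (W1)/(W3): near $x=0$, the $N_\psi\geq 2$ vanishing moments make $\widehat\psi(2^jx)$ vanish to order $N_\psi$, which controls the $x^{-D}_\pm$ singularity under (OFBM1), and near infinity the decay \eqref{e:psihat_is_slower_than_a_power_function} with $\alpha>1$ dominates the polynomial growth of $x^{-D}_\pm$. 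Once \eqref{e:Dspectral} is in hand, $ED(2^j,k)=0$ is immediate, and (P4) follows by computing $ED(2^j,k)D(2^j,k)^*$ from the isometry property $E\widetilde B(dx)\widetilde B(dx)^* = dx$, which directly yields \eqref{e:EW(j)_Fourier} with $C$ collecting the $2^j$ prefactor (after the change of variable absorbing $2^j$, the $j$-dependence enters only through $\widehat\psi(2^jx)$, matching the stated formula).

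Properties (P2) and (P3) are structural consequences of the stationary-increments and operator-self-similarity of $B_H$ together with the change of variables \eqref{e:wavelet_transform_after_change_var}. For (P2), shifting $k\mapsto k+h$ inside \eqref{e:wavelet_transform_after_change_var} and using that $\{B_H(t+2^jh)-B_H(2^jh)\}\stackrel{{\mathcal L}}=\{B_H(t)\}$ (stationary increments), combined with the fact that the constant $B_H(2^jh)$ integrates to zero against $\psi$ because $\psi$ has at least one vanishing moment, gives the equality of finite-dimensional distributions in $k$. For (P3), I would apply \eqref{e:o.s.s.} with $c=2^j$ inside the integral: $\{B_H(2^jz+2^jk)\}\stackrel{{\mathcal L}}=\{2^{jH}B_H(z+k)\}$ as processes in $(z,k)$, and then pull the deterministic matrix $2^{jH}$ outside the integral in $z$, yielding $\{D(2^j,k)\}\stackrel{{\mathcal L}}=\{2^{jH}D(1,k)\}$; one should note the matrix acts on the left, so the scaling is genuinely operator-valued. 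Property (P5) is then read off by taking second moments in (P3): $EW(2^j)=2^{jH}EW(1)2^{jH^*}$, and $EW(1)=\int\int\psi(t)\psi(t')\,EB_H(t)B_H(t')^*\,dt\,dt'$ by Fubini (again the vanishing-moment integrability makes this rigorous). Under (OFBM3), substituting the closed-form covariance \eqref{e:time_reversibility} and using that $|t|^H\Sigma|t|^{H^*}$ and $|t'|^H\Sigma|t'|^{H^*}$ integrate to zero against $\psi(t)\psi(t')$ (each is a polynomial-times-power term killed by a vanishing moment on the corresponding variable — here one uses $N_\psi\geq 2$ more carefully since $|t|^H$ need not be polynomial, but the relevant terms factor as $g(t)\cdot(\text{const in }t')$ and vanish upon integrating the factor lacking a companion) leaves only the $-\tfrac12|t-t'|^H\Sigma|t-t'|^{H^*}$ term, giving \eqref{e:EWj_timereversible}. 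Property (P6) is the sample-statistic analogue of (P3)–(P5): apply (P3) jointly in $k$ to the $K_j$ summands in \eqref{e:W(j)}, factor $2^{jH}$ and $2^{jH^*}$ out of the sum, and note the equality is in distribution (not almost sure) since it rests on the distributional identity (P3).

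The substantive point is (P7), full rank of $EW(2^j)$. By (P5) it suffices to show $\det EW(1)\neq 0$, i.e., $EW(1)$ is positive definite; equivalently, using \eqref{e:EW(j)_Fourier}, that there is no nonzero $v\in\bbC^n$ (or $\bbR^n$) with $v^* EW(1) v = 0$. The approach is: such a $v$ would force $v^*\big(x^{-D}_+ A + x^{-D}_-\overline A\big)=0$ for (Lebesgue-)a.e.\ $x$ in the support of $\widehat\psi$, hence (by analyticity of $x\mapsto x^{-D}$ on $(0,\infty)$ and on $(-\infty,0)$, together with $\widehat\psi\not\equiv 0$ on a set of positive measure in each half-line — which holds because $\psi\in L^1\cap L^2$ is nonzero) that $v^* x^{-D}A=0$ for all $x>0$ and $v^* x^{-D}\overline A=0$ for all $x<0$. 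Differentiating $x^{-D}$ in $x$ at a point and iterating, or expanding $x^{-D}=\sum_k \log^k(x)(-D)^k/k!$ and separating by powers of $\log x$, shows $v^*(-D)^k A = 0$ for all $k\geq 0$, hence $v^*$ annihilates the $D$-invariant subspace generated by the columns of $A$; combined with the conjugate relation this forces $v^*\Re(AA^*)$-related quantities to vanish and ultimately $v^*\Re(AA^*)v = 0$, contradicting (OFBM2). I expect (P7) to be the main obstacle: making the "annihilation on a positive-measure set $\Rightarrow$ annihilation everywhere $\Rightarrow$ $v^*(-D)^kA=0$ for all $k$" chain fully rigorous requires care about where $\widehat\psi$ vanishes and about handling the $x_+$ versus $x_-$ branches and their complex conjugates simultaneously; a clean alternative I would try is to invoke positive-definiteness of $\Sigma = EB_H(1)B_H(1)^*$ (which is a genuine full-dimensional Gaussian covariance under properness, guaranteed by (OFBM2)) directly in the representation $EW(1)=\int\int\psi\psi'\,EB_H(t)B_H(t')^*$, showing this double integral is the covariance matrix of the $\bbR^n$-valued Gaussian vector $\int\psi(t)B_H(t)\,dt$, which is nondegenerate because $\psi\not\equiv 0$ and $B_H(t)$ is nondegenerate for $t\neq 0$ — degeneracy of the integral would again reduce, via the harmonizable representation, to the same algebraic condition ruled out by (OFBM2). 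Either way the crux is converting (OFBM2) into nonvanishing of a Fourier/wavelet integral.
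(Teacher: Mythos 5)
Your proposal is correct and establishes all seven properties; the overall architecture matches the paper's, but two steps are handled by genuinely different means, so a comparison is worthwhile. For (P1) the paper does not pass to the spectral domain: it bounds $\|EB_H(s)B_H(t)^*\|_{l^1}\leq C|s|^{\max\Re\,\textnormal{eig}(H)}|t|^{\max\Re\,\textnormal{eig}(H)}$, checks $\int\int\|EB_H(s)B_H(t)^*\|_{l^1}|\psi(s)||\psi(t)|\,ds\,dt<\infty$ using the compact support of $\psi$, and invokes a Cram\'er--Leadbetter criterion for mean-square existence, with Fubini giving $ED(2^j,k)=0$; your harmonizable-representation route is an equally valid alternative that also delivers (P4) essentially for free, exactly as you use it (modulo the missing $1/(ix)$ factor in your displayed spectral formula, which is needed to produce the $x^{-2}$ in \eqref{e:EW(j)_Fourier}). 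For (P2), (P3), (P5), (P6) the paper simply cites the univariate arguments (Delbeke--Abry, Bardet); your sketches are the intended arguments, and your observation that the $|t|^{H}\Sigma|t|^{H^*}$ and $|t'|^{H}\Sigma|t'|^{H^*}$ terms factor and die against the zeroth vanishing moment is the correct mechanism behind \eqref{e:EWj_timereversible}. For (P7), which you rightly identify as the substantive point, the paper's argument short-circuits your annihilation/analyticity chain: since $\psi$ is real, $\widehat{\psi}(-2^jx)=\overline{\widehat{\psi}(2^jx)}$, so the substitution $y=-x$ on the negative half-line folds \eqref{e:EW(j)_Fourier} into $C\,2^j\int_0^\infty x^{-D}\,2\Re(AA^*)\,x^{-D^*}|\widehat{\psi}(2^jx)|^2x^{-2}\,dx$, and positive definiteness is then immediate from (OFBM2) together with the invertibility of $x^{-D}$ and the fact that $\textnormal{supp}\,\widehat{\psi}$ has positive measure --- no iterated differentiation or $v^*(-D)^kA=0$ is needed. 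Your own chain does close, but only because of this same symmetry: $|\widehat{\psi}(-x)|=|\widehat{\psi}(x)|$ guarantees the two branches can be evaluated at mirror points, so a single $x_0>0$ in the support yields both $A^*x_0^{-D^*}v=0$ and $(\overline{A})^*x_0^{-D^*}v=0$, whence $w^*\Re(AA^*)w=\tfrac12(\|A^*w\|^2+\|(\overline{A})^*w\|^2)=0$ for $w=x_0^{-D^*}v\neq0$, contradicting (OFBM2); without that pairing, knowing only $A^*w_1=0$ and $(\overline{A})^*w_2=0$ at different points would not suffice, so you should make the mirror-point step explicit if you keep your version.
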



Fix some $0 < \delta < 1/2$, and consider the range of wavelet parameters $j$, $j'$, $k$, $k'$ such that
\begin{equation}\label{e:wave_param_range}
\Big| \frac{\max\{2^j,2^{j'}\}}{2^j k-2^{j'}k'}\Big| \leq \frac{\delta}{\max\{1,\textnormal{length}(\textnormal{supp} (\psi))\}} .
\end{equation}
If the parameters ($j, k$) and ($j', k'$) of two wavelet coefficients satisfy \eqref{e:wave_param_range}, then we can interpret that the latter are ``far apart" in the parameter space. The next result provides a notion of decay of the covariance between wavelet coefficients under \eqref{e:wave_param_range}. The proof is similar to that for the univariate case, but we provide it in Section \ref{s:asympt_normality_fixed_scales} for the reader's convenience.
\begin{proposition}\label{p:decay_Cov_wavelet_coefs}
Under the assumptions (OFBM1)--(3) and \eqref{e:wave_param_range}, the covariance between wavelet coefficients \eqref{e:wavelet_transform_after_change_var} satisfies the relation
$$
ED(2^j,k)D(2^{j'},k')^*
$$
\begin{equation}\label{e:decay_Cov_wavelet_coefs}
= - \frac{1}{2}\frac{2^{(j+j')N_{\psi}}}{|2^j k- 2^{j'}k'|^{2N_{\psi}}}\hspace{1mm}\Big\{|2^jk-2^{j'}k'|^{H} \Big( O^{N_{\psi}}_{i_1,i_2}(1)\Big)_{i_1,i_2=1,\hdots,n} |2^jk-2^{j'}k'|^{H^*}\Big\},
\end{equation}
where $\Big( O^{N_{\psi}}_{i_1,i_2}(1)\Big)_{i_1,i_2=1,\hdots,n}$ is an entry-wise bounded symmetric-matrix-valued function that depends only on $N_{\psi}$. As a consequence,
\begin{equation}\label{e:decay_Cov_wavelet_coefs_in_norm}
\|ED(2^j,k) D(2^{j'},k')^*\| \leq C(N_{\psi},j,j') \frac{|\log^{\kappa} |2^j k- 2^{j'}k'||}{|2^j k- 2^{j'}k'|^{2N_{\psi}-2 \max_{h \in \textnormal{eig}(H)}\Re(h)}},
\end{equation}
where $\kappa$ is the dimension of the largest Jordan block in the spectrum of $H$, and $C(N_{\psi},j,j') =  C(N_{\psi}) \hspace{1mm} 2^{(j+j')N_{\psi}}$ for some constant $C(N_{\psi}) >0$.
\end{proposition}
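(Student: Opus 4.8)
The plan is to exploit the closed-form covariance expression \eqref{e:time_reversibility}, valid under time reversibility (OFBM3), and perform a Taylor expansion of the function $u \mapsto |u|^H \Sigma |u|^{H^*}$ around the point $u_0 = 2^j k - 2^{j'}k'$, using the vanishing moments of $\psi$ to kill the low-order terms. Concretely, after the change of variables \eqref{e:wavelet_transform_after_change_var} and Fubini, one writes
\begin{equation*}
ED(2^j,k)D(2^{j'},k')^* = -\frac{1}{2}\int_{\bbR}\int_{\bbR}\psi(z)\psi(z') \, g\bigl(2^j z - 2^{j'}z' + u_0\bigr)\, dz\, dz',
\end{equation*}
where $g(u) = |u|^H \Sigma |u|^{H^*}$ (the $|t|^H$ and $|s|^H$ terms in \eqref{e:time_reversibility} integrate to zero because $\psi$ has at least one vanishing moment, after checking integrability using (W1)--(W3) as in Proposition \ref{p:wavelet_coefs_properties}). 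Under the separation hypothesis \eqref{e:wave_param_range}, the argument $2^j z - 2^{j'}z' + u_0$ never changes sign as $(z,z')$ range over $\textnormal{supp}(\psi)\times\textnormal{supp}(\psi)$, so $g$ is real-analytic there and we may Taylor-expand it to order $N_\psi$ in the increment $h := 2^j z - 2^{j'}z'$.

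First I would record the scaling identity $g(u) = |u|^{N_\psi}\, u^{-N_\psi}\, g(u) = \dots$; more usefully, I would write the Taylor expansion of $g$ about $u_0$ with integral remainder, note that the terms of order $0,1,\dots,N_\psi-1$ in $h$ vanish upon integrating against $\psi(z)\psi(z')$ (each monomial $z^a (z')^b$ with $a+b \le N_\psi-1$ has at least one of $a,b \le N_\psi-1$, hence integrates to zero against $\psi$), and the leading surviving contribution is the order-$N_\psi$ term. Extracting the scalar factor $2^{(j+j')N_\psi/2}$ from each of the $N_\psi$ copies of $h$ and using homogeneity of the derivatives of $g$ — namely $g^{(N_\psi)}(u_0)$ scales like $|u_0|^{H}|u_0|^{H^*}|u_0|^{-N_\psi}$ up to a matrix-valued constant depending only on $N_\psi$, $H$, $\Sigma$ — yields exactly the displayed form \eqref{e:decay_Cov_wavelet_coefs}, with $(O^{N_\psi}_{i_1,i_2}(1))$ collecting the bounded coefficients (bounded because, after factoring out $|u_0|^H \cdot |u_0|^{H^*}$, the remaining $(z,z')$-integral is over a compact set and the integrand is uniformly bounded thanks to \eqref{e:wave_param_range}). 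Symmetry of the matrix function follows from $\Sigma = \Sigma^*$ and the symmetry of the construction.

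For the norm bound \eqref{e:decay_Cov_wavelet_coefs_in_norm} I would pass from \eqref{e:decay_Cov_wavelet_coefs} to a submultiplicative matrix norm, using the standard estimate $\| |u|^{H} \| \le C |u|^{\max_{h \in \textnormal{eig}(H)}\Re(h)}\,|\log|u||^{\kappa-1}$ (and similarly for $|u|^{H^*}$), where $\kappa$ is the size of the largest Jordan block — this is the elementary bound on powers of a matrix via its Jordan form, and the log factor is exactly where $\kappa$ enters. Combining the two powers of $|u_0|$ with the prefactor $|u_0|^{-2N_\psi}$ and absorbing the $2^{(j+j')N_\psi}$ into $C(N_\psi,j,j')$ gives the claim. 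The main obstacle I anticipate is the bookkeeping in the Taylor expansion of the \emph{matrix-valued, non-commuting} function $g(u) = |u|^H \Sigma |u|^{H^*}$: one must be careful that differentiating $|u|^H$ brings down factors of $\log|u|$ and of $H$ on the left while $|u|^{H^*}$ contributes on the right, so the $N_\psi$-th derivative is a sum over binomial splittings $\binom{N_\psi}{\ell}$ of terms $H^{(\ell)}$-type blocks times $\Sigma$ times $(H^*)$-type blocks, each still homogeneous of the right degree; organizing this cleanly (perhaps via the representation $|u|^H = \exp(H \log|u|)$ and differentiating the exponential) and verifying the uniform bound on the remainder over the compact $(z,z')$-domain under \eqref{e:wave_param_range} is the technical crux. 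The rest — integrability to justify Fubini, vanishing of the low-order terms, and the Jordan-form norm estimate — is routine and parallels the univariate argument.
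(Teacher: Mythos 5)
Your overall strategy is the paper's: use the time-reversibility covariance formula \eqref{e:time_reversibility}, factor out $|2^jk-2^{j'}k'|^{H}$ on the left and $|2^jk-2^{j'}k'|^{H^*}$ on the right, expand $x\mapsto x^{H}\Sigma x^{H^*}$ around $x=1$ under the separation condition \eqref{e:wave_param_range}, kill low-order terms with the vanishing moments, and finish the norm bound with the Jordan-form estimate. However, there is a genuine error at the central step: you assert that the leading surviving contribution is the order-$N_\psi$ term of the expansion. It is not. Your own vanishing-moment argument, applied correctly, shows that a monomial $z^a(z')^b$ survives the double integration against $\psi(z)\psi(z')$ only if \emph{both} $a\geq N_\psi$ and $b\geq N_\psi$; hence every term of total order $r\leq 2N_\psi-1$ vanishes (for $a+b\leq 2N_\psi-1$ at least one of $a,b$ is $\leq N_\psi-1$), and the first surviving term has order $r=2N_\psi$. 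This is precisely what produces the decay $|2^jk-2^{j'}k'|^{-2N_\psi}$ and the prefactor $2^{(j+j')N_\psi}$ in \eqref{e:decay_Cov_wavelet_coefs}: in the $r=2N_\psi$ term the only surviving split is $a=b=N_\psi$, contributing exactly $(2^{j})^{N_\psi}(2^{j'})^{N_\psi}$. As written, your expansion to order $N_\psi$ would yield a (vanishing) ``leading'' term and a remainder of the wrong size, and the claimed factor $2^{(j+j')N_\psi/2}$ per copy of $h$ does not reproduce the stated constants. The fix is simply to carry the expansion to order $2N_\psi$ (or, as the paper does, to use the full power series).

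A secondary point: you flag the control of the matrix-valued derivatives of $g(u)=|u|^{H}\Sigma|u|^{H^*}$ as ``the technical crux'' but do not supply it. The paper resolves this with an explicit Leibniz-type formula for $f^{(r)}(1)$ where $f(x)=x^H\Sigma x^{H^*}$ (a sum over binomial splittings of products of factors $H-j_1I$ on the left and $H^*-j_2I$ on the right) and the resulting bound $\|f^{(r)}(1)\|/(r-1)!=O(1)$ (Lemma \ref{l:fracquadform_deriv_bound}); combined with Stirling's estimate for the largest binomial coefficient and the bound on the double wavelet integrals, this makes the full series summable under \eqref{e:wave_param_range}. If you prefer a finite Taylor expansion with integral remainder, you still need a uniform bound of this type on the $2N_\psi$-th derivative along the segment, so this ingredient cannot be waved away. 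The concluding norm bound via $\|c^{H}\|\leq C\,c^{\max\Re\,\mathrm{eig}(H)}|\log^{\kappa}(c)|$ is fine and matches the paper.
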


\subsection{Asymptotics for sample wavelet transforms: fixed scales}

As typical in the asymptotic study of averages, we begin by investigating the asymptotic covariance of the sample wavelet transforms $W(2^j)$.

For FBM, the asymptotic covariance between wavelet transforms $W(2^j), W(2^{j'}) \in \bbR$ is not available in closed form since it depends on the wavelet function, which is itself not available in closed form (c.f.\ Bardet \cite{bardet:2002}, Proposition II.3). Operator self-similarity adds a layer of intricacy, since in general exact entry-wise scaling relations are not present.

For notational simplicity, let
\begin{equation}\label{e:X,Y_stand_for_wavecoefs}
{\mathbf X}^* = (X_1,\hdots,X_n) = (d_{1}(2^j,k), \hdots,d_{n}(2^j,k)), \quad
{\mathbf Y}^* = (Y_1,\hdots,Y_n) = (d_{1}(2^{j'},k'), \hdots,d_{n}(2^{j'},k')),
\end{equation}
where $d_i(2^j,k)$, $i = 1,\hdots,n$, is the $i$-th entry of the wavelet transform vector $D(2^j,k)$. The bivariate case serves to illustrate the computation of covariances.
\begin{example}
For a zero mean, Gaussian random vector ${\mathbf Z} \in \bbR^m$, the Isserlis theorem (e.g., Vignat \cite{vignat:2012}) yields
\begin{equation}\label{e:Isserlis_theorem}
E(Z_1 \hdots Z_{2k}) = \sum \prod E(Z_i Z_j), \quad E(Z_1 \hdots Z_{2k+1}) = 0, \quad k = 1,\hdots, \lfloor m/2 \rfloor.
\end{equation}
The notation $\sum \prod$ stands for adding over all possible $k$-fold products of pairs $E(Z_i Z_j)$, where the indices partition the set $1,\hdots,2k$. So, let ${\mathbf X}$ and ${\mathbf Y}$ be as in \eqref{e:X,Y_stand_for_wavecoefs} with $n=2$. Then,
$$
\textnormal{Cov}
\left(  \left(\begin{array}{c}
 X^{2}_1\\
X^{2}_2\\
X_1 X_2
\end{array}\right),
\left(\begin{array}{c}
 Y^{2}_1\\
Y^{2}_2\\
Y_1 Y_2
\end{array}\right)\right)
$$
\begin{equation}\label{e:fourth_moments}
=\left(\begin{array}{ccc}
 2(E(X_1Y_1))^2 &  2(E(X_1 Y_2))^2 & 2E(X_1 Y_1)E(X_1Y_2)\\
2(E(X_2 Y_1))^2 &  2(E(X_2Y_2))^2 & 2E(X_2Y_1)E(X_2Y_2)\\
2E(X_1Y_1)E(X_2Y_1)  &  2 E(X_1Y_2)E(X_2Y_2) &  c_{33}\\
\end{array}\right),
\end{equation}
where $c_{33} = E(X_1Y_1)E(X_2Y_2) + E(X_1Y_2)E(X_2Y_1)$.
\end{example}
Expression \eqref{e:fourth_moments} shows that the asymptotic behavior of the second moments of $W(2^j)$ involves several cross products. A notationally economical way of tackling this difficulty is by resorting to Kronecker products. For instance, in the bivariate case,
$$
M(4,\bbR) \ni EXY^* \otimes EXY^* =
\left(\begin{array}{cc}
E(X_1Y_1) & E(X_1 Y_2) \\
E(X_2Y_1) & E(X_2 Y_2) \\
\end{array}\right)
\otimes
\left(\begin{array}{cc}
E(X_1Y_1) & E(X_1 Y_2) \\
E(X_2Y_1) & E(X_2 Y_2) \\
\end{array}\right)
$$
contains all the 9 terms (two-fold products of cross moments), as well as a few repeated ones, needed to express (\ref{e:fourth_moments}). In view of \eqref{e:Isserlis_theorem}, this fact extends to general dimension $n$ by means of the relations
\begin{equation}\label{e:Isserlis}
\textnormal{Cov}(X_{i_1}X_{i_2},Y_{j_1}Y_{j_2}) = E(X_{i_1} Y_{j_1})E(X_{i_2} Y_{j_2})+E(X_{i_1} Y_{j_2})E(X_{i_2} Y_{j_1}),
\end{equation}
for $i_1,i_2,j_1,j_2 = 1,\hdots, n$. The next proposition provides an expression that encompasses the asymptotic fourth moments of the wavelet coefficients.
\begin{proposition}\label{p:4th_moments_wavecoef}
Let $B_H = \{B_{H}(t)\}_{t \in \bbR}$ be an OFBM under the assumptions (OFBM1) -- (3) and let $K_j$ be as in \eqref{e:W(j)}. As $\nu \rightarrow \infty$, for every pair of octaves $j$, $j'$,
\begin{itemize}
\item [$(i)$]
$$
\sqrt{K_j}\sqrt{K_{j'}}\frac{1}{K_j}\frac{1}{K_{j'}} \sum^{K_j}_{k=1}\sum^{K_{j'}}_{k'=1}
ED(2^j,k)D(2^{j'},k')^* \otimes ED(2^j ,k)D(2^{j'},k')^*
$$
\begin{equation}\label{e:limiting_kron}
\rightarrow 2^{-(j+j')/2} \gcd(2^{j},2^{j'}) \sum^{\infty}_{z= - \infty} \Omega_{j,j'}(z)
\otimes \Omega_{j,j'}(z),
\end{equation}
where
$$
\Omega_{j,j'}(z) :=  -\frac{1}{2} \int_{\bbR} \int_{\bbR} \psi( t )\psi( t')
|(2^j t - 2^{j'} t')+ z \hspace{1mm} \textnormal{gcd}(2^j,2^{j'})|^H\Sigma
|(2^j t - 2^{j'} t')+ z \hspace{1mm}\textnormal{gcd}(2^j,2^{j'})|^{H^*} dt' dt;
$$
\item [$(ii)$] there is a matrix $G_{jj'} \in M(n(n+1)/2,\bbR)$, not necessarily symmetric, such that
$$
\sqrt{K_j}\sqrt{K_{j'}}\hspace{1mm}\textnormal{Cov}(\vecoper_{{\mathcal S}}W(2^j),\vecoper_{{\mathcal S}}W(2^{j'})) \rightarrow G_{jj'},
$$
where the entries of $G_{jj'}$ can be retrieved from \eqref{e:limiting_kron} by means of \eqref{e:Isserlis} (see \eqref{e:vec_S} on the notation $\vecoper_{{\mathcal S}}$).
\end{itemize}
\end{proposition}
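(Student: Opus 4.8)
The plan is to reduce the statement to the large-$\nu$ behavior of averaged cross-covariance matrices $ED(2^j,k)D(2^{j'},k')^{*}$, which Proposition \ref{p:decay_Cov_wavelet_coefs} brings under control. The first step is to establish, under (OFBM1)--(3), the closed form $ED(2^j,k)D(2^{j'},k')^{*} = \Theta_{j,j'}(2^j k - 2^{j'}k')$ for all $k,k'\in\bbZ$, where $\Theta_{j,j'}(m)$ is defined exactly like $\Omega_{j,j'}(z)$ in the Proposition but with the offset $z\gcd(2^j,2^{j'})$ replaced by an arbitrary real $m$; in particular $\Omega_{j,j'}(z) = \Theta_{j,j'}(z\gcd(2^j,2^{j'}))$. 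To obtain this, I would bring the expectation inside the (mean-square convergent, by (P1)) integrals in \eqref{e:wavelet_transform_after_change_var}, substitute the time-reversible covariance \eqref{e:time_reversibility}, and observe that the two ``unmixed'' terms $|s|^{H}\Sigma|s|^{H^{*}}$ and $|t|^{H}\Sigma|t|^{H^{*}}$ each factor out one of the two $\psi$-integrals and so vanish by the zeroth vanishing moment $\int_{\bbR}\psi = 0$, leaving only the ``mixed'' term. Fubini is legitimate here since $\psi$ is bounded with compact support and $x\mapsto|x|^{H}$ is locally bounded under (OFBM1). This identity is global in $(k,k')$, whereas the bound $\|\Theta_{j,j'}(m)\|\le C(N_\psi,j,j')\,|\log^{\kappa}|m||\,|m|^{-(2N_\psi-2\max_{h}\Re(h))}$ of \eqref{e:decay_Cov_wavelet_coefs_in_norm} holds only for $|m|$ large, while $\sup_{m}\|\Theta_{j,j'}(m)\|<\infty$ always by (P1) and Cauchy--Schwarz.

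Next, with $g:=\gcd(2^j,2^{j'})=2^{\min(j,j')}$, the left-hand side of \eqref{e:limiting_kron} equals $(K_j K_{j'})^{-1/2}\sum_{z\in\bbZ}N_{\nu}(z)\,\Omega_{j,j'}(z)\otimes\Omega_{j,j'}(z)$, where $N_{\nu}(z):=\#\{(k,k')\in[1,K_j]\times[1,K_{j'}] : 2^j k-2^{j'}k'=zg\}$. Assuming $j\le j'$ (the case $j>j'$ being symmetric), $2^j k-2^{j'}k'=2^j z$ forces $k=z+2^{j'-j}k'$, so each admissible $k'$ contributes at most one $k$; hence $N_{\nu}(z)\le\min(K_j,K_{j'})$ for all $z,\nu$, while for each fixed $z$, $N_{\nu}(z)/\min(K_j,K_{j'})\to 1$ as $\nu\to\infty$ (the boundary correction is $O(1)$ for fixed $z$). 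Since $\min(K_j,K_{j'})/\sqrt{K_j K_{j'}}=2^{-(j+j')/2}\gcd(2^j,2^{j'})$, which is constant in $\nu$, the prefactor $N_{\nu}(z)/\sqrt{K_j K_{j'}}$ converges pointwise in $z$ to that constant and is dominated by it uniformly in $\nu$. Part $(i)$ then follows by dominated convergence on $\bbZ$ with counting measure: Step 1 gives $\|\Omega_{j,j'}(z)\otimes\Omega_{j,j'}(z)\|\le\|\Theta_{j,j'}(zg)\|^2$, which for large $|z|$ decays like $|\log^{2\kappa}|zg||\,|zg|^{-2(2N_\psi-2\max_{h}\Re(h))}$, with exponent $>4$ (using $N_\psi\ge 2$ and $\max_{h}\Re(h)<1$), so the series $\sum_z\Omega_{j,j'}(z)\otimes\Omega_{j,j'}(z)$ converges absolutely.

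For part $(ii)$, since the wavelet coefficients are jointly Gaussian with mean zero, Isserlis' identity \eqref{e:Isserlis} writes $\textnormal{Cov}(W(2^j)_{i_1 i_2},W(2^{j'})_{j_1 j_2})$ as a fixed linear combination of entries of $(K_j K_{j'})^{-1}\sum_{k,k'}A_{k,k'}\otimes A_{k,k'}$, with $A_{k,k'}:=ED(2^j,k)D(2^{j'},k')^{*}$ --- concretely, the sum of the $((i_1,i_2),(j_1,j_2))$ and the $((i_1,i_2),(j_2,j_1))$ entries of the Kronecker square. Multiplying by $\sqrt{K_j K_{j'}}$ and defining $G_{jj'}$ to be the corresponding linear combination of entries of the right-hand side of \eqref{e:limiting_kron}, restricted to the indices $i_1\le i_2$ and $j_1\le j_2$ as prescribed by $\vecoper_{{\mathcal S}}$, yields $(ii)$; $G_{jj'}$ need not be symmetric because for $j\ne j'$ the quantities $\textnormal{Cov}(W(2^j)_{i_1 i_2},W(2^{j'})_{j_1 j_2})$ and $\textnormal{Cov}(W(2^j)_{j_1 j_2},W(2^{j'})_{i_1 i_2})$ differ in general.

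The main obstacle is the conjunction of Step 1 with the normalization bookkeeping in Step 2: one must verify both that the cross-covariance genuinely depends on $(k,k')$ only through $2^j k-2^{j'}k'$ (so that the unmixed terms really drop), and that the doubly-indexed average, once multiplied by $\sqrt{K_j K_{j'}}$, collapses to a single $\bbZ$-indexed series carrying exactly the factor $2^{-(j+j')/2}\gcd(2^j,2^{j'})$. The edge effects in $N_{\nu}(z)$ and the asymmetry between the two scales $j$ and $j'$ are precisely where the counting is easiest to mishandle; everything after that is a routine invocation of Proposition \ref{p:decay_Cov_wavelet_coefs} and dominated convergence.
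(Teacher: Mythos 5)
Your proposal is correct and follows essentially the same route as the paper: reduce the cross-covariance to a function $\Phi_{2^jk-2^{j'}k'}$ of the single index $2^jk-2^{j'}k'$ via time reversibility and the vanishing moments, count the multiplicity of each value $z\gcd(2^j,2^{j'})$ in the double sum, and pass to the limit by dominated convergence using the summability supplied by Proposition \ref{p:decay_Cov_wavelet_coefs}, with ($ii$) following from ($i$) by Isserlis. The only (harmless) difference is that you count the pairs directly by exploiting $2^{\min(j,j')}\mid 2^{\max(j,j')}$, where the paper invokes its general gcd-based counting lemmas (Lemmas \ref{l:increase_in_number_of_pairs} and \ref{l:convergence_average_ajk-aj'k'}) along the subsequence $\nu=2^{j+j'}\nu_*$.
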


\begin{remark}
The definition of wavelet only requires $N_\psi \geq 1$, but $N_\psi \geq 2$ (see \eqref{e:N_psi}) is needed for the convergence in Proposition \ref{p:4th_moments_wavecoef} (see \eqref{e:cov_terms_close_together_converge}).
\end{remark}

The next theorem establishes the asymptotics for the vectorized sample wavelet transforms $( \textnormal{vec}_{{\mathcal S}} W(2^j) )_{j=j_1,\hdots,j_m}$ at a fixed set of octaves.

\begin{theorem}\label{t:asymptotic_normality_wavecoef_fixed_scales}
Let $B_H = \{B_{H}\}_{t \in \bbR}$ be an OFBM under the assumptions (OFBM1)--(3). Let $j_1 < \hdots < j_m$ be a fixed set of octaves. Then,
\begin{equation}\label{e:asymptotic_normality_wavecoef_fixed_scales}
\Big(\sqrt{K_j}(\textnormal{vec}_{{\mathcal S}} W(2^j)- \textnormal{vec}_{{\mathcal S}} EW(2^j) ) \Big)_{j = j_1 , \hdots, j_m} \stackrel{d}\rightarrow {\mathcal N}_{\frac{n(n+1)}{2} \times m}(0,F),
\end{equation}
as $\nu \rightarrow \infty$ (see \eqref{e:vec_S} on the notation $\vecoper_{{\mathcal S}}$). In \eqref{e:asymptotic_normality_wavecoef_fixed_scales}, the matrix $F \in {\mathcal S}(\frac{n(n+1)}{2}m,\bbR)$ has the form $F = (G_{jj'})_{j,j'=1,\hdots,m}$, where each block $G_{jj'} \in M(n(n+1)/2,\bbR)$ is described in Proposition \ref{p:4th_moments_wavecoef}.
\end{theorem}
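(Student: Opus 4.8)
The plan is to establish \eqref{e:asymptotic_normality_wavecoef_fixed_scales} by a Cram\'er--Wold reduction to a univariate CLT for a stationary, weakly dependent sequence, combined with the limiting covariance structure already identified in Proposition \ref{p:4th_moments_wavecoef}. First I would fix a set of octaves $j_1 < \dots < j_m$ and observe that, by \eqref{e:W(j)}, the quantity $\sqrt{K_j}(\textnormal{vec}_{{\mathcal S}} W(2^j) - \textnormal{vec}_{{\mathcal S}} EW(2^j))$ is a normalized partial sum of the centered stationary sequence $\{\textnormal{vec}_{{\mathcal S}}(D(2^j,k)D(2^j,k)^*) - \textnormal{vec}_{{\mathcal S}} EW(2^j)\}_{k}$; here stationarity in $k$ is (P2) of Proposition \ref{p:wavelet_coefs_properties}. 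Because the $K_j = \nu 2^{-j}$ differ across octaves, I would re-index all scales on the common grid $\gcd(2^{j_1},\dots,2^{j_m})$ so that the stacked vector becomes a genuine partial sum of a single $\bbR^{m\,n(n+1)/2}$-valued stationary sequence; the deterministic rescaling factors $2^{-(j+j')/2}\gcd(2^j,2^{j'})$ appearing in \eqref{e:limiting_kron} are exactly the bookkeeping constants produced by this re-indexing.

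Next, by the Cram\'er--Wold device it suffices to show that for every fixed vector $\theta \in \bbR^{m\,n(n+1)/2}$ the scalar sequence $S_\nu(\theta) := \theta^* \big(\sqrt{K_j}(\textnormal{vec}_{{\mathcal S}} W(2^j) - \textnormal{vec}_{{\mathcal S}} EW(2^j))\big)_{j=j_1,\dots,j_m}$ converges in distribution to a centered Gaussian with variance $\theta^* F \theta$. Each summand is a quadratic form in the Gaussian vector $D(2^j,k)$, hence lies in the second Wiener chaos of the underlying Gaussian measure $\widetilde{B}(dx)$ from \eqref{e:OFBM_harmonizable}. I would therefore invoke a Breuer--Major-type CLT for functionals in a fixed Wiener chaos (equivalently, the fourth-moment/Nualart--Peccati theorem, or a classical strong-mixing CLT for Gaussian subordinated sequences): the key hypothesis to verify is summability of the covariances of the sequence, i.e., $\sum_{z} \|\textnormal{Cov}(\textnormal{vec}_{{\mathcal S}}(D(2^j,k)D(2^j,k)^*), \textnormal{vec}_{{\mathcal S}}(D(2^{j'},k+z)D(2^{j'},k+z)^*))\| < \infty$ for all pairs $j,j'$. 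By the Isserlis identity \eqref{e:Isserlis}, these covariances are bilinear in the entries of $ED(2^j,k)D(2^{j'},k+z)^*$, and Proposition \ref{p:decay_Cov_wavelet_coefs}, specifically the bound \eqref{e:decay_Cov_wavelet_coefs_in_norm}, gives decay of order $|z|^{-(2N_\psi - 2\max_{h}\Re(h))}$ up to a logarithmic factor; since $N_\psi \geq 2$ and $\Re(h) < 1$ by (OFBM1), the exponent exceeds $2$, so the \emph{squares} of these cross-moments are summable and the covariance series converges absolutely. This both verifies the CLT hypothesis and, together with Proposition \ref{p:4th_moments_wavecoef}$(ii)$, identifies the limiting variance as $\theta^* F \theta$ with $F = (G_{jj'})_{j,j'=1,\dots,m}$; symmetry and positive semidefiniteness of $F$ follow since it is a limit of covariance matrices.

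The remaining technical points I would dispatch are: (a) checking that $F$ is well-defined, i.e., that the block entries $G_{jj'}$ furnished by Proposition \ref{p:4th_moments_wavecoef} indeed assemble into an element of ${\mathcal S}(\frac{n(n+1)}{2}m,\bbR)$ — this is immediate from the $\nu\to\infty$ limit of the genuine covariance matrices $\sqrt{K_j}\sqrt{K_{j'}}\textnormal{Cov}(\textnormal{vec}_{{\mathcal S}} W(2^j),\textnormal{vec}_{{\mathcal S}} W(2^{j'}))$; and (b) confirming that the bias term $\textnormal{vec}_{{\mathcal S}} EW(2^j)$ is exactly the mean being subtracted, so no additional bias correction enters (this uses (P4)). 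The main obstacle, in my view, is packaging the multi-scale structure cleanly: the summands at different octaves are defined over shift grids of different mesh, they are mutually dependent, and one must argue the \emph{joint} CLT rather than marginal CLTs at each octave. Handling this rigorously requires either a careful common-refinement re-indexing so that a single stationary-sequence CLT applies to the stacked process, or a direct chaos argument (fourth-moment theorem) applied to the scalar $S_\nu(\theta)$, where one must check that the fourth cumulant of $S_\nu(\theta)$ vanishes in the limit — again reducible, via diagram formulas for Wiener-chaos functionals, to the same covariance-decay estimate \eqref{e:decay_Cov_wavelet_coefs_in_norm}. I would favor the chaos route, since it isolates all the analysis into the single summability estimate already available from Proposition \ref{p:decay_Cov_wavelet_coefs}.
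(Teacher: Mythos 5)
Your proposal is correct in substance and shares the paper's starting point (Cram\'er--Wold plus the reduction of everything to the covariance-decay estimate \eqref{e:decay_Cov_wavelet_coefs_in_norm}), but the CLT machinery you invoke is different from the paper's. The paper writes the scalar linear combination $T_\nu = \sum_j {\boldsymbol \alpha}_j^* \vecoper_{\mathcal S} W(2^j)/\sqrt{2^j}$ as a quadratic form $Y^* D Y$ in the single stacked Gaussian vector $Y$ of all wavelet coefficients across octaves (which neatly sidesteps the grid-mesh mismatch you flag as the main obstacle), diagonalizes $\Gamma^{1/2} D \Gamma^{1/2}$ to get a weighted sum of independent $\chi^2$ variables, and applies a Lindeberg-type CLT (Lemma \ref{l:Vn/sigma_n->N(0,1)}) under the condition $\max_i|\lambda_i(\nu)| = o(\nu^{-1/2})$; that eigenvalue bound is obtained from a row-sum bound on $\Gamma$, which is where Proposition \ref{p:decay_Cov_wavelet_coefs} enters. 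You instead invoke a Breuer--Major / fourth-moment (Nualart--Peccati) theorem for second Wiener chaos, verifying square-summability of the wavelet covariances. The two routes are essentially equivalent at their core --- for a second-chaos quadratic form the vanishing of the fourth cumulant is controlled by exactly the normalized maximal eigenvalue the paper bounds --- so your approach buys a cleaner citation of off-the-shelf chaos results at the cost of importing heavier machinery, while the paper's is self-contained and elementary (following Bardet and Istas--Lang). Two small points you gloss over that the paper treats explicitly: the decay bound \eqref{e:decay_Cov_wavelet_coefs_in_norm} only holds in the range \eqref{e:wave_param_range}, so the finitely many ``near-diagonal'' pairs $(k,k')$ per $k$ must be handled separately by a uniform variance bound (see \eqref{e:cov_terms=distant_apart+close_together}); and the Cram\'er--Wold argument must restrict to directions ${\boldsymbol \alpha}$ with nondegenerate limiting variance \eqref{e:only_alphas_for_which_the_limit_is_nontrivial}. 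Neither is a gap, just bookkeeping you would need to include.
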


\section{A wavelet-based estimator for bivariate OFBM}\label{s:estimation}

In this section, we switch to the bivariate framework \eqref{e:bivariate_OFBM_h1_h2_real}, i.e., $n = 2$. We draw upon explicit expressions for eigenvalues to establish the consistency and asymptotic normality of the estimators \eqref{e:def_estimator} as the wavelet scale grows according to a factor $a(\nu)\rightarrow \infty$, as $\nu \rightarrow \infty$. We also show the consistency and asymptotic normality, in a sense to be defined, of a sequence of eigenvectors associated with the smallest eigenvalue of the sample wavelet variance matrix.

The proposed estimators make use of the behavior at coarse scales of the sample wavelet variance
\begin{equation}\label{e:W(a(v)2^j)}
W(a(\nu)2^j) = \frac{1}{K_{a,j}}\sum^{K_{a,j}}_{k=1}D(a(\nu)2^{j},k)D(a(\nu)2^{j},k)^*, \quad K_{a,j} = \frac{\nu}{a(\nu)2^j}.
\end{equation}
Recall that $h_1 < h_2$ under \eqref{e:bivariate_OFBM_h1_h2_real}. In \eqref{e:W(a(v)2^j)}, $\{a(\nu)\}_{\nu \in \bbN}$ is assumed to be a dyadic sequence such that
\begin{equation}\label{e:a(nu)/J->infty}
a(\nu) \leq \frac{\nu}{2^j}, \quad \frac{a(\nu)}{\nu} + \frac{\nu}{a(\nu)^{1 + 2 h_1}}\rightarrow 0, \quad \nu \rightarrow \infty
\end{equation}
(see Remark \ref{r:choice_of_a(nu)} on the choice of $a(\nu)$ in practice).

We will make use of some basic relations for bivariate symmetric positive semidefinite matrices. Recall that for a matrix
\begin{equation}\label{e:S=(a_b_b_c)}
S = \left(\begin{array}{cc}
a & b\\
b & c
\end{array}\right)
\end{equation}
the eigenvalues can be expressed in closed form as
\begin{equation}\label{e:lambda1_lambda2_explicit}
\lambda_1 = \frac{(a+c) - \sqrt{\Delta}}{2} \leq  \lambda_2 = \frac{(a+c) + \sqrt{\Delta}}{2}, \quad \Delta = (a+c)^2 -4(ac- b^2).
\end{equation}
By Sylvester's criterion, positive semidefiniteness implies that $a + c \geq 0$. As a consequence, if $\det(S) > 0$,
\begin{equation}\label{e:lambda1_explicit}
\lambda_1 = \frac{1}{2}(a+c) \Big( 1 - \sqrt{1 - \frac{4(ac-b^2)}{(a+c)^2}}\Big) = \frac{1}{2}\frac{4(ac-b^2)}{a+c}\frac{\Big( 1 - \sqrt{1 - \frac{4(ac-b^2)}{(a+c)^2}}\Big)}{4(ac-b^2)/(a+c)^2},
\end{equation}
and
\begin{equation}\label{e:lambda2_explicit}
\lambda_2 = \frac{1}{2}(a+c) \Big( 1 + \sqrt{1 - \frac{4(ac-b^2)}{(a+c)^2}}\Big).
\end{equation}
Let $\textbf{v} = (v_1,v_2)^* \in \bbR^2$ be an eigenvector associated with an eigenvalue $\lambda$. By further assuming $b \neq 0$, the relation $S \textbf{v} = \lambda \textbf{v}$ yields
\begin{equation}\label{e:relation_v1_v2}
v_2 = \frac{\lambda - a}{b} \hspace{1mm}v_1.
\end{equation}

\subsection{The weak limit of eigenvalues}

The next definition describes the proposed estimators for the Hurst eigenvalues $h_1 < h_2$.

\begin{definition}\label{def:estimator}
Let $B_H = \{B_H(t)\}_{t \in \bbR}$ be an OFBM under the assumptions (OFBM1)--(4). For a dyadic number $a(\nu)$, let $W(a(\nu)2^j)$ be the associated (symmetric) sample wavelet spectrum at scale $a(\nu)2^j$, and let
\begin{equation}\label{e:lambda1_lambda2}
\lambda_{1}(a(\nu)2^j) \leq \lambda_{2}(a(\nu)2^j)
\end{equation}
be its eigenvalues. The wavelet estimators at scale $a(\nu)2^j$ of the eigenvalues $h_1 < h_2$ are defined, respectively, as in expression \eqref{e:def_estimator} with $a(\nu)2^j$ in place of $2^j$.
\end{definition}

By analogy to \eqref{e:lambda1_lambda2} and \eqref{e:def_estimator}, we denote the eigenvalues and normalized log-eigenvalues of $EW(a(\nu)2^j)$, respectively, by
\begin{equation}\label{e:lambda^E_1_lambda^E_2}
\lambda^E_{1}(a(\nu)2^j) \leq \lambda^E_{2}(a(\nu)2^j), \quad h^E_{1}(a(\nu)2^j) = \frac{\log \lambda^E_{1}(a(\nu)2^j)}{2\log(a(\nu)2^j)} \leq h^E_{2}(a(\nu)2^j) = \frac{\log \lambda^E_{2}(a(\nu)2^j)}{2\log(a(\nu)2^j)}.
\end{equation}
When developing asymptotics for the estimators \eqref{e:lambda1_lambda2}, in view of the operator self-similarity property $(P6)$ we will consider the matrix statistics
\begin{equation}\label{e:B^(j)}
\widehat{B}_a(2^j) := P^{-1}W_a(2^j)(P^{*})^{-1}, \quad j = j_1 < \hdots <j_m,
\end{equation}
where $ W_a(2^j) = \frac{1}{K_{a,j}} \sum^{K_{a,j}}_{k=1} D(2^j,k)D(2^j,k)^*$ (c.f.\ \eqref{e:W(j)} and \eqref{e:W(a(v)2^j)}). Each $\widehat{B}_a(2^j)$ is only a pseudo-estimator of
\begin{equation}\label{e:B(j)}
B(2^j) := P^{-1}EW(2^j) (P^{*})^{-1}, \quad P \in GL(2,\bbR),
\end{equation}
because its expression involves the unknown matrix parameter $P$. It will be convenient to describe the matrices \eqref{e:B^(j)}, \eqref{e:B(j)} entry-wise as
\begin{equation}\label{e:B^a(j)_B(j)}
\widehat{B}_a(2^j) = \Big(\widehat{b}_{i_1,i_2}(2^j) \Big)_{i_1,i_2=1,2}, \quad B(2^j) = \Big( b_{i_1,i_2}(2^j) \Big)_{i_1,i_2=1,2}.
\end{equation}
The asymptotic distribution of the matrix statistics \eqref{e:B^(j)} is given in the following lemma.
\begin{lemma}\label{l:vecB^a(2^j)_asympt}
For $m \in \bbN$, let $j_1 < \hdots < j_m$ be a set of fixed octaves $j$. Let $\Pi = \Big( \pi_{i_1,i_2}\Big)_{i_1,i_2=1,2} = P^{-1}$, and let $\widehat{B}_a(2^j)$, $B(2^j)$ be as in \eqref{e:B^(j)}, \eqref{e:B(j)}, respectively. Under the assumptions (OFBM1) -- (4) and \eqref{e:a(nu)/J->infty},
\begin{equation}\label{e:asymptotic_normality_pseudoestimator_Bhat}
\left( \begin{array}{c}
\sqrt{K_{a,j}}(\vecoper_{{\mathcal S}}\widehat{B}_a(2^j) - \vecoper_{{\mathcal S}}B(2^j))
\end{array}\right)_{j=j_1,\hdots,j_m}\stackrel{d}\rightarrow N(0,\Sigma_{B}(j_1,\hdots,j_m)), \quad \nu \rightarrow \infty.
\end{equation}
In \eqref{e:asymptotic_normality_pseudoestimator_Bhat},
\begin{equation}\label{e:SigmaB}
\Sigma_{B}(j_1,\hdots,j_m) = \textnormal{diag}_{m}({\mathcal P})F \textnormal{diag}_{m}({\mathcal P})^*,
\end{equation}
where $F$ is the asymptotic covariance matrix in \eqref{e:asymptotic_normality_wavecoef_fixed_scales}, $\textnormal{diag}_m({\mathcal P})$ is as defined in \eqref{e:block_diagonal_matrices}, and
$$
{\mathcal P} = \left(\begin{array}{ccc}
\pi^2_{11} & 2 \pi_{11}\pi_{12} & \pi^{2}_{12}\\
\pi_{11}\pi_{21} & \pi_{11}\pi_{22} + \pi_{12}\pi_{21} & \pi_{12} \pi_{22}\\
\pi^2_{21} & 2 \pi_{21}\pi_{22} & \pi^{2}_{22}\\
\end{array}\right).
$$
\end{lemma}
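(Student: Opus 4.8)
The plan is to deduce Lemma~\ref{l:vecB^a(2^j)_asympt} from Theorem~\ref{t:asymptotic_normality_wavecoef_fixed_scales} by a linear-algebraic change of variables together with the delta method (in its exact, linear form). The key observation is that the map $S \mapsto P^{-1}S(P^*)^{-1}$ on ${\mathcal S}(2,\bbR)$ is linear, so after vectorization via $\vecoper_{{\mathcal S}}$ it is represented by a fixed $3\times 3$ matrix; I claim that matrix is precisely $\mathcal P$. Concretely, writing $\Pi = P^{-1} = (\pi_{i_1,i_2})$, one has $\widehat B_a(2^j) = \Pi\, W_a(2^j)\, \Pi^*$, and expanding the $(1,1)$, $(1,2)$ and $(2,2)$ entries of $\Pi S \Pi^*$ in terms of $s_{11}, s_{12}, s_{22}$ gives exactly the rows of $\mathcal P$ acting on $\vecoper_{{\mathcal S}}(S) = (s_{11}, s_{12}, s_{22})^*$. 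So the first step is simply to verify this identity:
\begin{equation}\label{e:vecS_conjugation}
\vecoper_{{\mathcal S}}(\Pi S \Pi^*) = {\mathcal P}\, \vecoper_{{\mathcal S}}(S), \quad S \in {\mathcal S}(2,\bbR).
\end{equation}

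Next I would note the distributional identity needed to connect $\widehat B_a(2^j)$ with the quantities appearing in Theorem~\ref{t:asymptotic_normality_wavecoef_fixed_scales}. By property $(P6)$ of Proposition~\ref{p:wavelet_coefs_properties}, $W(a(\nu)2^j) \stackrel{d}= (a(\nu)2^j)^H W_a(2^j) (a(\nu)2^j)^{H^*}$ with $W_a(2^j)$ built from the scale-$2^j$ wavelet coefficients averaged over $K_{a,j}$ shifts; crucially, $W_a(2^j)$ has exactly the same law as the statistic $W(2^j)$ in \eqref{e:W(j)} but with $K_{a,j}$ in place of $K_j$, and $K_{a,j} = \nu/(a(\nu)2^j) \to \infty$ by the first constraint in \eqref{e:a(nu)/J->infty} together with $a(\nu) \le \nu/2^j$. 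Since $K_{a,j}$ plays the role of the ``sample size'' $K_j$ in Theorem~\ref{t:asymptotic_normality_wavecoef_fixed_scales}, that theorem applies verbatim (with $\nu$ replaced by the effective count and the octaves $j_1,\dots,j_m$ held fixed) to yield
$$
\Big(\sqrt{K_{a,j}}\big(\vecoper_{{\mathcal S}} W_a(2^j) - \vecoper_{{\mathcal S}} EW(2^j)\big)\Big)_{j=j_1,\dots,j_m} \stackrel{d}\rightarrow {\mathcal N}(0,F),
$$
where $F$ is the same block matrix $(G_{jj'})$ as in \eqref{e:asymptotic_normality_wavecoef_fixed_scales}. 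Then I would apply the linear map $\vecoper_{{\mathcal S}} S \mapsto {\mathcal P}\,\vecoper_{{\mathcal S}} S$ block-diagonally across the $m$ octaves — i.e.\ multiply the limiting Gaussian vector by $\textnormal{diag}_m({\mathcal P})$. Because \eqref{e:vecS_conjugation} gives $\vecoper_{{\mathcal S}}\widehat B_a(2^j) - \vecoper_{{\mathcal S}} B(2^j) = {\mathcal P}(\vecoper_{{\mathcal S}} W_a(2^j) - \vecoper_{{\mathcal S}} EW(2^j))$ exactly (no remainder term, as the transformation is linear), the continuous mapping theorem immediately delivers \eqref{e:asymptotic_normality_pseudoestimator_Bhat} with covariance $\textnormal{diag}_m({\mathcal P})\, F\, \textnormal{diag}_m({\mathcal P})^* = \Sigma_B(j_1,\dots,j_m)$ as in \eqref{e:SigmaB}.

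The main point requiring care, rather than a deep obstacle, is the bookkeeping in establishing \eqref{e:vecS_conjugation}: one must check that the particular ordering and the factor of $2$ on the off-diagonal entry chosen in the definition \eqref{e:vec_S} of $\vecoper_{{\mathcal S}}$ are consistent with the matrix $\mathcal P$ as written (the $2$'s in the first and third rows of $\mathcal P$, and their absence in the middle row, are dictated by this convention). A secondary subtlety is to confirm that replacing the averaging count $K_j$ by $K_{a,j}$ — which depends on $\nu$ through $a(\nu)$ — does not invalidate Theorem~\ref{t:asymptotic_normality_wavecoef_fixed_scales}: since the octaves $j_1,\dots,j_m$ are fixed and $K_{a,j}\to\infty$, the proof of that theorem (which only uses that the number of averaged terms tends to infinity and the covariance-decay bound \eqref{e:decay_Cov_wavelet_coefs_in_norm}) goes through unchanged; the role of $a(\nu)$ here is purely to index the subsampling. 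I would state this explicitly. No genuinely new estimate is needed beyond what Theorem~\ref{t:asymptotic_normality_wavecoef_fixed_scales} and Proposition~\ref{p:wavelet_coefs_properties} already provide.
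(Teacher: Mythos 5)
Your proposal is correct and follows essentially the same route as the paper: verify $\vecoper_{{\mathcal S}}(\Pi S \Pi^*) = {\mathcal P}\,\vecoper_{{\mathcal S}}(S)$, recast the left-hand side as $\textnormal{diag}_m({\mathcal P})$ applied to the centered, normalized $W_a(2^j)$ vector, and invoke Theorem \ref{t:asymptotic_normality_wavecoef_fixed_scales} together with \eqref{e:a(nu)/J->infty} (which guarantees $K_{a,j}\to\infty$). Your explicit remark that the fixed-scale CLT applies with $K_{a,j}$ in place of $K_j$ is exactly the point the paper leaves implicit.
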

\begin{proof}
For any $j$, a brief calculation shows that $\vecoper_{{\mathcal S}}(\Pi W_{a}(2^j)\Pi^*) = {\mathcal P} \vecoper_{{\mathcal S}}W_a(2^j)$.
Likewise, $\vecoper_{{\mathcal S}}(\Pi EW(2^j)\Pi^*) = {\mathcal P} \vecoper_{{\mathcal S}}EW(2^j)$. Therefore, we can recast the left-hand side of \eqref{e:asymptotic_normality_pseudoestimator_Bhat} as
$$
\textnormal{diag}_{m}({\mathcal P}) \left( \begin{array}{c}
\sqrt{K_{a,j}}(\vecoper_{{\mathcal S}}W_a(2^j) - \vecoper_{{\mathcal S}}EW(2^j))
\end{array}\right)_{j=j_1,\hdots,j_m}.
$$
The weak limit in \eqref{e:asymptotic_normality_pseudoestimator_Bhat} is now a consequence of \eqref{e:a(nu)/J->infty} and Theorem \ref{t:asymptotic_normality_wavecoef_fixed_scales}. $\Box$\\
\end{proof}

The next lemma contains expressions for the wavelet spectrum and its sample counterpart. The notation $(\hspace{0.5mm}\cdot \hspace{0.5mm})_{j=j_1,\hdots,j_m}$ designates a vector of $2 \times 2$ matrices.
\begin{lemma}\label{l:EW(a(n)2j)_W(a(n)2j)_scaling}
For $m \in \bbN$, let $j_1 < \hdots < j_m$ be a set of fixed scales $j$. Let $\widehat{B}_a(2^j)$, $B(2^j)$ be as in \eqref{e:B^(j)}, \eqref{e:B(j)}, respectively, and let $\widehat{b}_{i_1,i_2}(2^j)$, $b_{i_1,i_2}(2^j)$ be as in \eqref{e:B^a(j)_B(j)}. For a dyadic number $a(\nu)$, under the assumptions (OFBM1) -- (4), we can express
\begin{equation}\label{e:W(a(nu)2^j)}
\Big(W(a(\nu)2^{j}) \Big)_{j=j_1,\hdots,j_m} \stackrel{d}= \left(\begin{array}{cc}
\widehat{a}_{j,a(\nu)} & \widehat{b}_{j,a(\nu)} \\
\widehat{b}_{j,a(\nu)} & \widehat{c}_{j,a(\nu)}
\end{array}\right)_{j=j_1,\hdots,j_m},
\end{equation}
where
$$
\widehat{a}_{j,a(\nu)} = p^2_{11} \widehat{b}_{11}(2^j)a(\nu)^{2h_1} + 2p_{11}p_{12}\widehat{b}_{12}(2^j)a(\nu)^{h_1 + h_2} + p^{2}_{12}\widehat{b}_{22}(2^j)a(\nu)^{2h_2},
$$
$$
\widehat{b}_{j,a(\nu)} = p_{11}p_{21}\widehat{b}_{11}(2^j)a(\nu)^{2h_1} + (p_{11}p_{22}+ p_{12}p_{21})\widehat{b}_{12}(2^j)a(\nu)^{h_1+h_2} + p_{12}p_{22}\widehat{b}_{22}(2^j)a(\nu)^{2h_2},
$$
\begin{equation}\label{e:Shat=a(nu)H_EW(j)_a(nu)H*=(a_b_b_c)}
\widehat{c}_{j,a(\nu)} = p^{2}_{21}\widehat{b}_{11}(2^j)a(\nu)^{2h_1} + 2p_{21}p_{22}\widehat{b}_{12}(2^j)a(\nu)^{h_1 + h_2} + p^{2}_{22}\widehat{b}_{22}(2^j)a(\nu)^{2h_2}.
\end{equation}
Likewise,
\begin{equation}\label{e:EW(a(nu)2^j)}
E W(a(\nu)2^{j}) = \left(\begin{array}{cc}
a_{j,a(\nu)} & b_{j,a(\nu)} \\
b_{j,a(\nu)} & c_{j,a(\nu)}
\end{array}\right),
\end{equation}
where
$$
a_{j,a(\nu)} = p^2_{11} b_{11}(2^j)a(\nu)^{2h_1} + 2p_{11}p_{12}b_{12}(2^j)a(\nu)^{h_1 + h_2} + p^{2}_{12}b_{22}(2^j)a(\nu)^{2h_2},
$$
$$
b_{j,a(\nu)} = p_{11}p_{21}b_{11}(2^j)a(\nu)^{2h_1} + (p_{11}p_{22}+ p_{12}p_{21})b_{12}(2^j)a(\nu)^{h_1+h_2} + p_{12}p_{22}b_{22}(2^j)a(\nu)^{2h_2},
$$
\begin{equation}\label{e:S=a(nu)HEW(j)a(nu)H*=(a_b_b_c)}
c_{j,a(\nu)} = p^{2}_{21}b_{11}(2^j)a(\nu)^{2h_1} + 2p_{21}p_{22}b_{12}(2^j)a(\nu)^{h_1 + h_2} + p^{2}_{22}b_{22}(2^j)a(\nu)^{2h_2}.
\end{equation}
\end{lemma}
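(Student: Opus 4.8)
The plan is to reduce both identities to the operator scaling relations of Proposition \ref{p:wavelet_coefs_properties}, combined with the eigendecomposition
$a(\nu)^{H} = P\,\textnormal{diag}\big(a(\nu)^{h_1},a(\nu)^{h_2}\big)P^{-1}$,
which follows from \eqref{e:bivariate_OFBM_h1_h2_real} and the series definition of $c^{H}$ in \eqref{e:o.s.s.} (since $H = PJ_HP^{-1}$, one has $c^{H} = P c^{J_H}P^{-1}$). For the deterministic identity \eqref{e:EW(a(nu)2^j)}--\eqref{e:S=a(nu)HEW(j)a(nu)H*=(a_b_b_c)}: because $a(\nu)2^j$ is dyadic, property $(P5)$ applied at scales $a(\nu)2^j$ and $2^j$ gives $EW(a(\nu)2^j) = (a(\nu)2^j)^{H}EW(1)(a(\nu)2^j)^{H^*} = a(\nu)^{H}\,EW(2^j)\,a(\nu)^{H^*}$. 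Substituting the eigendecomposition of $a(\nu)^{H}$ and recalling from \eqref{e:B(j)} that $P^{-1}EW(2^j)(P^*)^{-1} = B(2^j) = (b_{i_1,i_2}(2^j))$, this becomes
$EW(a(\nu)2^j) = P\,\textnormal{diag}\big(a(\nu)^{h_1},a(\nu)^{h_2}\big)\,B(2^j)\,\textnormal{diag}\big(a(\nu)^{h_1},a(\nu)^{h_2}\big)\,P^*$. Expanding this product of $2\times 2$ matrices entry by entry, using the symmetry $b_{12}(2^j)=b_{21}(2^j)$, yields exactly the three stated formulas for $a_{j,a(\nu)}$, $b_{j,a(\nu)}$, $c_{j,a(\nu)}$.

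For the distributional identity \eqref{e:W(a(nu)2^j)}--\eqref{e:Shat=a(nu)H_EW(j)_a(nu)H*=(a_b_b_c)}, I first establish the joint (over $j=j_1,\dots,j_m$) scaling relation $\{D(a(\nu)2^j,k)\}_{j,k} \stackrel{{\mathcal L}}{=} \{a(\nu)^{H}D(2^j,k)\}_{j,k}$. This comes directly from the representation \eqref{e:wavelet_transform_after_change_var}: writing $D(a(\nu)2^j,k) = \int_{\bbR}\psi(z)B_H\big(a(\nu)(2^j z + 2^j k)\big)dz$ and invoking the operator self-similarity \eqref{e:o.s.s.} of $B_H$ with the single factor $c=a(\nu)$ — applied simultaneously to all arguments $2^j z + 2^j k$ — replaces $B_H(a(\nu)\,\cdot\,)$ by $a(\nu)^{H}B_H(\,\cdot\,)$ in law, jointly in $(j,k)$; properties $(P3)$ and $(P6)$ are the per-scale versions of this same argument. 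Consequently, with $W_a(2^j)$ the (subsampled) sample wavelet variance appearing in \eqref{e:B^(j)}, $W(a(\nu)2^j) = K_{a,j}^{-1}\sum_{k=1}^{K_{a,j}}D(a(\nu)2^j,k)D(a(\nu)2^j,k)^* \stackrel{d}{=} a(\nu)^{H}W_a(2^j)a(\nu)^{H^*}$ jointly over $j$. Substituting $a(\nu)^{H} = P\,\textnormal{diag}\big(a(\nu)^{h_1},a(\nu)^{h_2}\big)P^{-1}$ and $\widehat{B}_a(2^j) = P^{-1}W_a(2^j)(P^*)^{-1}$ from \eqref{e:B^(j)}, the very same entry-wise expansion as above — now with $\widehat{b}_{i_1,i_2}(2^j)$ in place of $b_{i_1,i_2}(2^j)$ — produces the formulas for $\widehat{a}_{j,a(\nu)}$, $\widehat{b}_{j,a(\nu)}$, $\widehat{c}_{j,a(\nu)}$.

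There is no genuine obstacle: the content is a $2\times2$ matrix computation fed by scaling relations already proved. The only points deserving care are (i) making the scaling relation for $\{D(a(\nu)2^j,k)\}$ \emph{joint} in the octaves, so that \eqref{e:W(a(nu)2^j)} is a joint distributional statement rather than a marginal one — this is why I return to \eqref{e:wavelet_transform_after_change_var} and \eqref{e:o.s.s.} rather than quoting $(P3)$/$(P6)$ verbatim — and (ii) keeping track of the symmetry of $\widehat{B}_a(2^j)$ and $B(2^j)$, which is what makes the off-diagonal entries coincide and the three listed coefficients suffice.
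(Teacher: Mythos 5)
Your proof is correct and follows essentially the same route as the paper's: invoke the operator scaling of the wavelet spectrum and its sample counterpart, write $a(\nu)^{H}=P\,\mathrm{diag}(a(\nu)^{h_1},a(\nu)^{h_2})P^{-1}$, and expand $P\,\mathrm{diag}(\cdot)\,\widehat{B}_a(2^j)\,\mathrm{diag}(\cdot)\,P^*$ entry by entry. The only difference is that the paper simply cites $(P5)$--$(P6)$, whereas you re-derive the scaling jointly over the octaves from \eqref{e:wavelet_transform_after_change_var} and \eqref{e:o.s.s.} with the single factor $c=a(\nu)$ --- a point the paper's one-line proof leaves implicit, and which is indeed needed for \eqref{e:W(a(nu)2^j)} to hold as a joint distributional identity.
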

\begin{proof}
The operator scaling properties $(P5)$ and $(P6)$ yield
\begin{equation}\label{e:W(a(nu)2^j)=PdiagB^diagP*}
\Big(W(a(\nu)2^{j}) \Big)_{j=j_1,\hdots,j_m} \stackrel{d}=\Big(P \textnormal{diag}(a(\nu)^{h_1},a(\nu)^{h_2})\hspace{1mm}\widehat{B}_a(2^j) \hspace{1mm}\textnormal{diag}(a(\nu)^{h_1},a(\nu)^{h_2})P^*\Big)_{j=j_1,\hdots,j_m},
\end{equation}
\begin{equation}\label{e:EW(a(nu)2^j)=PdiagBdiagP*}
E W(a(\nu)2^{j}) = P \textnormal{diag}(a(\nu)^{h_1},a(\nu)^{h_2})\hspace{1mm}B(2^j) \hspace{1mm}\textnormal{diag}(a(\nu)^{h_1},a(\nu)^{h_2})P^*,
\end{equation}
from which \eqref{e:Shat=a(nu)H_EW(j)_a(nu)H*=(a_b_b_c)} and \eqref{e:S=a(nu)HEW(j)a(nu)H*=(a_b_b_c)} follow. $\Box$\\
\end{proof}

The next theorem establishes the consistency and asymptotic normality of the estimators described in Definition \ref{def:estimator}. Intuitively, the theorem states that
$$
\widehat{h}_1(a(\nu)2^j) \approx h_1 , \quad \widehat{h}_2(a(\nu)2^j) \approx h_2 \textnormal{ with convergence rate $2 \log (a(\nu)2^j) \sqrt{K_{a,j}}$}.
$$

\begin{theorem}\label{t:asympt_normality_lambda1}
For $m \in \bbN$, let $j_1 < \hdots < j_m$ be a set of fixed octaves $j$. Let $\widehat{h}_{1}$, $\widehat{h}_{2}$, $h^E_{1}$, $h^E_{2}$ be as in \eqref{e:def_estimator} and \eqref{e:lambda^E_1_lambda^E_2}. Under the assumptions (OFBM1) -- (4) and \eqref{e:a(nu)/J->infty}, as $\nu \rightarrow \infty$,
\begin{itemize}
\item [($i$)]
\begin{equation}
(\widehat{h}_1(a(\nu)2^{j}), \widehat{h}_2(a(\nu)2^{j})) \stackrel{P}\rightarrow (h_1,h_2), \quad (h^E_1(a(\nu)2^{j}), h^E_2(a(\nu)2^{j})) \rightarrow (h_1,h_2),
\end{equation}
for $j = j_1,\hdots,j_m$;
\item [($ii$)]
$$
\left( \begin{array}{c}
2\log(a(\nu)2^{j})\sqrt{K_{a,j}}[\widehat{h}_1(a(\nu)2^{j}) - h^E_1(a(\nu)2^{j})]  \\
2\log(a(\nu)2^{j})\sqrt{K_{a,j}}[\widehat{h}_2(a(\nu)2^{j}) - h^E_2(a(\nu)2^{j})] \\
\end{array}\right)_{j=j_1,\hdots,j_m}
$$
\begin{equation}\label{e:claim_theo_asympt_normality_lambda1}
\stackrel{d}\rightarrow N(0,\Sigma_{h_1,h_2}(j_1,\hdots,j_m)).
\end{equation}
In \eqref{e:claim_theo_asympt_normality_lambda1},
\begin{equation}\label{e:Sigmah1h2}
\Sigma_{h_1,h_2}(j_1,\hdots,j_m) = {\mathcal Q} \Sigma_B(j_1,\hdots,j_m) {\mathcal Q}^*,
\end{equation}
where $\Sigma_B(j_1,\hdots,j_m)$ is the covariance matrix in \eqref{e:SigmaB}, and ${\mathcal Q} = \Big( {\mathcal Q}_{jj'}\Big)_{j,j'= j_1,\hdots,j_m}$ is a block matrix whose blocks have dimension $2 \times 3$ and satisfy
$$
{\mathcal Q}_{jj} = \left(\begin{array}{ccc}
\frac{b_{22}(2^j)}{\det B(2^j)} & - \frac{2 b_{12}(2^j)}{\det B(2^j)} & \Big( \frac{b_{11}(2^j)b_{22}(2^j)}{\det B(2^j)} - 1\Big)\frac{1}{b_{22}(2^j)}\\
0 & 0 & \frac{1}{b_{22}(2^j)}
\end{array}\right) \quad \textnormal{and }{\mathcal Q}_{jj'} = {\boldsymbol 0}, \textnormal{ if }j \neq j'.
$$
\end{itemize}
\end{theorem}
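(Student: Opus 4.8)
The plan is to derive the asymptotics of $(\widehat h_1,\widehat h_2)$ from Lemma \ref{l:vecB^a(2^j)_asympt} by a chain of reductions, pushing the randomness through the explicit eigenvalue formulas \eqref{e:lambda1_explicit}--\eqref{e:lambda2_explicit} while carefully tracking which power law $a(\nu)^{2h_1}$, $a(\nu)^{h_1+h_2}$, $a(\nu)^{2h_2}$ dominates in each algebraic combination. The starting observation is $(P6)$, which via \eqref{e:W(a(nu)2^j)=PdiagB^diagP*} lets us work with $\widehat a_{j,a(\nu)}, \widehat b_{j,a(\nu)}, \widehat c_{j,a(\nu)}$ from Lemma \ref{l:EW(a(n)2j)_W(a(n)2j)_scaling}. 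For the \emph{largest} eigenvalue: in $\widehat a_{j,a(\nu)} + \widehat c_{j,a(\nu)}$ the term $a(\nu)^{2h_2}$ dominates (using $\widehat b_{22}(2^j)\to b_{22}(2^j)>0$, which holds since $B(2^j)$ is positive definite by $(P7)$), so $\lambda_2(a(\nu)2^j) = (a(\nu)^{2h_2})(\widehat b_{22}(2^j) + o_P(1))(1+o_P(1))$ after noting via \eqref{e:lambda2_explicit} that $\tfrac{4(\widehat a\widehat c - \widehat b^2)}{(\widehat a+\widehat c)^2}\to 0$. Taking $\log$ and dividing by $2\log(a(\nu)2^j)$ then yields $\widehat h_2(a(\nu)2^j)\stackrel{P}\to h_2$; the deterministic statement for $h^E_2$ is the same computation with $\widehat b_{22}$ replaced by $b_{22}$.

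For the \emph{smallest} eigenvalue, the key algebraic fact is the first identity in \eqref{e:lambda1_explicit}: $\lambda_1 = \tfrac{4(\widehat a\widehat c-\widehat b^2)}{\widehat a+\widehat c}\cdot g(\varepsilon)$ where $g(\varepsilon)=\tfrac{1-\sqrt{1-\varepsilon}}{\varepsilon}\to 1/2$ as $\varepsilon\to 0$. One computes $\det W(a(\nu)2^j) = \widehat a\widehat c - \widehat b^2 = (\det P)^2\, a(\nu)^{2(h_1+h_2)}\det \widehat B_a(2^j)$ — this is the crucial cancellation: the bilinear form in the $\widehat b_{i_1,i_2}$ collapses to a determinant, killing the $a(\nu)^{2h_2}$ and $a(\nu)^{4h_2}$ contributions. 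Combined with $\widehat a + \widehat c \stackrel{P}\sim \|\text{col}_2(P)\|^2\, a(\nu)^{2h_2}\widehat b_{22}(2^j)$ (using $\|\text{col}_2(P)\|=1$, in fact, by \eqref{e:bivariate_OFBM_h1_h2_real}), we get
\begin{equation}\label{e:plan_lambda1}
\lambda_1(a(\nu)2^j) \stackrel{P}\sim a(\nu)^{2h_1}\,\frac{(\det P)^2 \det \widehat B_a(2^j)}{\widehat b_{22}(2^j)} \stackrel{P}\sim a(\nu)^{2h_1}\,\frac{(\det P)^2 \det B(2^j)}{b_{22}(2^j)},
\end{equation}
so $\log \lambda_1 = 2h_1\log a(\nu) + O_P(1)$ and $\widehat h_1(a(\nu)2^j)\stackrel{P}\to h_1$, proving $(i)$.

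For $(ii)$, I would linearize. Write $\widehat h_i(a(\nu)2^j) - h^E_i(a(\nu)2^j) = \tfrac{1}{2\log(a(\nu)2^j)}\log\big(\lambda_i(a(\nu)2^j)/\lambda^E_i(a(\nu)2^j)\big)$, so the $2\log(a(\nu)2^j)$ prefactor in \eqref{e:claim_theo_asympt_normality_lambda1} exactly cancels and it suffices to show $\sqrt{K_{a,j}}\log(\lambda_i/\lambda^E_i)$ is asymptotically normal. Since both $\lambda_i$ and $\lambda^E_i$ are smooth functions of $(\widehat a,\widehat b,\widehat c)$ resp.\ $(a,b,c)$ that are bounded away from $0$, a delta-method argument reduces this to the joint CLT for $\sqrt{K_{a,j}}(\vecoper_{\mathcal S}\widehat B_a(2^j) - \vecoper_{\mathcal S} B(2^j))$ from Lemma \ref{l:vecB^a(2^j)_asympt} — but I must first factor out the $a(\nu)$-powers. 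From \eqref{e:plan_lambda1}, $\log\lambda_1 - \log\lambda^E_1 = \log\det\widehat B_a(2^j) - \log\det B(2^j) - (\log\widehat b_{22}(2^j) - \log b_{22}(2^j)) + (\text{negligible})$, and the gradient of $S\mapsto \log\det S - \log S_{22}$ at $B(2^j)$, expressed in $\vecoper_{\mathcal S}$ coordinates, is exactly the first row of ${\mathcal Q}_{jj}$; similarly $\log\lambda_2 - \log\lambda^E_2 \stackrel{P}\sim \log\widehat b_{22}(2^j) - \log b_{22}(2^j)$ gives the second row $(0,0,1/b_{22}(2^j))$. Block-diagonality of ${\mathcal Q}$ across $j\neq j'$ is automatic since $\lambda_i(a(\nu)2^j)$ depends only on $\widehat B_a(2^j)$. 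The main obstacle — and where the bulk of the work lies — is \textbf{controlling the error terms uniformly}: each $\stackrel{P}\sim$ hides a multiplicative factor $1+r_\nu$ with $r_\nu = O_P(a(\nu)^{-(h_2-h_1)}) + O_P(\nu^{1/2} a(\nu)^{-(1/2+h_1)}\cdot\text{stuff})$ coming from the subdominant power laws and from $\widehat b_{i_1,i_2}(2^j) - b_{i_1,i_2}(2^j) = O_P(K_{a,j}^{-1/2})$, and one must verify that after multiplication by $\sqrt{K_{a,j}}$ (not by $\log(a(\nu)2^j)\sqrt{K_{a,j}}$) these remainders still vanish — this is precisely what the second condition $\nu/a(\nu)^{1+2h_1}\to 0$ in \eqref{e:a(nu)/J->infty} is calibrated to guarantee, since $\sqrt{K_{a,j}}\,a(\nu)^{-(h_2-h_1)}\lesssim (\nu/a(\nu)^{1+2h_1})^{1/2}\cdot a(\nu)^{h_1-h_2+h_1}\to 0$ and similar bookkeeping for the cross terms. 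I would isolate this in a sequence of $\stackrel{P}\sim$-type lemmas (as flagged after \eqref{e:Xj_approx_Yj}) before invoking Slutsky and the delta method to conclude.
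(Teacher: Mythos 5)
Your overall architecture is the paper's: part ($i$) via the dominant-power-law analysis of $\widehat{a}+\widehat{c}$ together with the determinant identity $\widehat{a}\widehat{c}-\widehat{b}^2=a(\nu)^{2(h_1+h_2)}\det W_a(2^j)$, and part ($ii$) via the decomposition of $\log\lambda_i-\log\lambda^E_i$ through \eqref{e:lambda1_explicit}--\eqref{e:lambda2_explicit}, reduction to $\sqrt{K_{a,j}}(\vecoper_{{\mathcal S}}\widehat{B}_a(2^j)-\vecoper_{{\mathcal S}}B(2^j))$, and Lemma \ref{l:vecB^a(2^j)_asympt}. Your identification of the first row of ${\mathcal Q}_{jj}$ as the gradient of $S\mapsto\log\det S-\log S_{22}$ is correct and is a clean repackaging of \eqref{e:lambda1_asympt_equiv}.

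The gap is in the error control, which you yourself flag as the bulk of the work. You propose to kill the remainders by showing $\sqrt{K_{a,j}}\,a(\nu)^{-(h_2-h_1)}\to0$, deriving this from \eqref{e:a(nu)/J->infty} via the factorization $(\nu/a(\nu)^{1+2h_1})^{1/2}a(\nu)^{2h_1-h_2}$. But the second factor diverges whenever $2h_1>h_2$ (e.g.\ $h_1=0.6$, $h_2=0.7$), and indeed $\sqrt{K_{a,j}}\,a(\nu)^{-(h_2-h_1)}=2^{-j/2}(\nu/a(\nu)^{1+2(h_2-h_1)})^{1/2}$ need not vanish under \eqref{e:a(nu)/J->infty}: the condition $\nu/a(\nu)^{1+2h_1}\to0$ does not imply $\nu/a(\nu)^{1+2(h_2-h_1)}\to0$ when $h_2-h_1<h_1$. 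So any remainder that is genuinely of deterministic size $a(\nu)^{-(h_2-h_1)}$ would not be negligible at the rate $\sqrt{K_{a,j}}$. The way out --- and what the paper's proof actually does in \eqref{e:lambda1-lambdaE1=sum_second_term_limit} and \eqref{e:lambda1-lambdaEsum_first_term} (see \eqref{e:lambda1-lambdaE1=sum_first_term}) --- is to observe that every remainder is a \emph{matched difference} between a hatted quantity and its deterministic counterpart (e.g.\ $\widehat{r}-r$ with $\widehat{r}=4(\widehat{a}\widehat{c}-\widehat{b}^2)/(\widehat{a}+\widehat{c})^2$, or the $O_P(a(\nu)^{h_1-h_2})$ versus $O(a(\nu)^{h_1-h_2})$ terms in the denominators), so that the subdominant deterministic contributions cancel and the difference already carries a factor $O_P(K_{a,j}^{-1/2})$ on top of the scale $a(\nu)^{-(h_2-h_1)}$ or $a(\nu)^{-2(h_2-h_1)}$. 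After multiplication by $\sqrt{K_{a,j}}$ one is then left with $O_P(a(\nu)^{-(h_2-h_1)})\to0$, which needs only $a(\nu)\to\infty$ and $h_1<h_2$, not the rate condition you invoke. You should restructure your remainder lemmas so that each bounds $\sqrt{K_{a,j}}(\widehat{X}-X)$ directly (via Lemma \ref{l:vecB^a(2^j)_asympt} and the mean value theorem) rather than bounding $\sqrt{K_{a,j}}$ times a one-sided $O(a(\nu)^{-(h_2-h_1)})$ envelope.
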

\begin{proof}
Fix an arbitrary $j$. For notational simplicity we write $B = B(2^j) = \Big( b_{i_1,i_2}\Big)_{i_1,i_2=1,2}$. We also drop the subscripts $j,\nu$ in the expressions \eqref{e:Shat=a(nu)H_EW(j)_a(nu)H*=(a_b_b_c)} and \eqref{e:S=a(nu)HEW(j)a(nu)H*=(a_b_b_c)}. Recall that the asymptotic distribution for $\Big(\widehat{B}_a(2^j)\Big)_{j=j_1,\hdots,j_m}$ is given by \eqref{e:asymptotic_normality_pseudoestimator_Bhat}.

The statement ($i$) is a consequence of Lemma \ref{l:elementary_results} and Theorem \ref{t:asymptotic_normality_wavecoef_fixed_scales}. In regard to statement ($ii$), the asymptotics will be written out for just one general term indexed $j$, but the conclusions apply to the whole vector comprising the terms associated with $j = j_1,\hdots,j_m$. Since all meaningful limits will boil down to a function of the variables that appear on the right-hand side of \eqref{e:asymptotic_normality_pseudoestimator_Bhat}, they depend on the omitted, fixed octave $j$.

Note that $2\log(a(\nu)2^{j})[\widehat{h}_i(a(\nu)2^{j}) - h^E_i(a(\nu)2^{j})]  = \log \lambda_{i}(a(\nu)2^{j}) - \log \lambda^E_{i}(a(\nu)2^{j})$, $i=1,2$. Wiht probability going to 1, we can reexpress the latter for $i = 1$ as
$$
\Big\{\log \frac{\widehat{a}\widehat{c}-\widehat{b}^2}{\widehat{a} + \widehat{c}}
- \log \frac{ac-b^2}{a + c} \Big\}
$$
\begin{equation}\label{e:lambda1-lambdaE1=sum}
+ \Big\{\log \frac{ \sqrt{1 + (-4)(\widehat{a}\widehat{c}-\widehat{b}^2)/(\widehat{a}+\widehat{c})^2}-1}{(-4)(\widehat{a}\widehat{c}-\widehat{b}^2)/(\widehat{a}+\widehat{c})^2}
- \log \frac{ \sqrt{1 + (-4)(ac-b^2)/(a+c)^2}-1}{(-4)(ac-b^2)/(a+c)^2}\Big\}.
\end{equation}
The mean value theorem will allow us to show that the second term in the sum \eqref{e:lambda1-lambdaE1=sum} does not contribute to the asymptotic limit. Let
\begin{equation}\label{e:f1_f2}
f_1(x) = (\sqrt{1-x}-1)(-x)^{-1}, \quad f_2(x) = \log f_1(x).
\end{equation}
By a second order Taylor expansion of the square root function around 1,
\begin{equation}\label{e:f1'(x)->1/4}
f_1'(x) = \frac{1}{\sqrt{1-x}} \frac{-\frac{x}{2} - (\sqrt{1-x}-1)}{x^2} = \frac{1}{\sqrt{1-x}}\Big\{\frac{1}{8} + o(1)\Big\} \rightarrow \frac{1}{8}, \quad x \rightarrow 0.
\end{equation}
Therefore, $\lim_{x \rightarrow 0}f_2'(x) = \frac{1}{4}$. Thus, the second term in the sum \eqref{e:lambda1-lambdaE1=sum} can be reexpressed as
$$
f_2\Big( \frac{-4(\widehat{a}\widehat{c}-\widehat{b}^2)}{(\widehat{a}+\widehat{c})^2}\Big) - f_2\Big(\frac{-4(ac-b^2)}{(a+c)^2} \Big)
= f_2'(\xi_{abc}) \Big\{\frac{4(ac-b^2)}{(a+c)^2} - \frac{4(\widehat{a}\widehat{c}-\widehat{b}^2)}{(\widehat{a}+\widehat{c})^2}\Big\},
$$
where the random variable $\xi_{abc}$ lies between $\frac{4(ac-b^2)}{(a+c)^2}$ and $\frac{4(\widehat{a}\widehat{c}-\widehat{b}^2)}{(\widehat{a}+\widehat{c})^2}$. As a consequence of Lemma \ref{l:elementary_results}, it satisfies the limit $\xi_{abc} \stackrel{P}\rightarrow 0$; hence, $f_{2}'(\xi_{abc}) \stackrel{P}\rightarrow \frac{1}{4}$. By expressions \eqref{e:W(a(nu)2^j)=PdiagB^diagP*} and \eqref{e:EW(a(nu)2^j)=PdiagBdiagP*},
\begin{equation}\label{e:ac-b^2=det}
\widehat{a}\widehat{c}-\widehat{b}^2 = a(\nu)^{2(h_1 + h_2)} \det W_a(2^j), \quad ac-b^2 = a(\nu)^{2(h_1 + h_2)}\det EW(2^j).
\end{equation}
Thus,
$$
\frac{(ac-b^2)}{(a+c)^2} - \frac{(\widehat{a}\widehat{c}-\widehat{b}^2)}{(\widehat{a}+\widehat{c})^2} = \frac{(ac-b^2)-(\widehat{a}\widehat{c}-\widehat{b}^2)}{(a+c)^2} + (\widehat{a}\widehat{c} - \widehat{b}^2)\Big\{\frac{1}{(a+c)^2} - \frac{1}{(\widehat{a}+\widehat{c})^2}\Big\}
$$
$$
= \frac{1}{a(\nu)^{2(h_2-h_1)}} \Big\{ \frac{\det W_a(2^j) - \det EW(2^j)}{[O(a(\nu)^{h_1-h_2}) + (p^{2}_{12}+p^{2}_{22})b_{22}]^2}
$$
\begin{equation}\label{e:lambda1-lambdaE1=sum_second_term_reexpressed}
+ \det W_a(2^j) \Big[ \frac{1}{[O(a(\nu)^{h_1-h_2})+ (p^{2}_{12}+p^{2}_{22})b_{22}]^2}
- \frac{1}{[O_P(a(\nu)^{h_1-h_2})+ (p^{2}_{12}+p^{2}_{22})\widehat{b}_{22}]^2}\Big] \Big\}.
\end{equation}
After premultiplication by the rate factor $\sqrt{K_{a,j}}$, by \eqref{e:a(nu)/J->infty} the expression \eqref{e:lambda1-lambdaE1=sum_second_term_reexpressed} becomes
$$
\frac{1}{a(\nu)^{2(h_2-h_1)}} \Big\{ \frac{O_P(1)}{[O(a(\nu)^{h_1-h_2})+ (p^{2}_{12}+p^{2}_{22})b_{22}]^2}
$$
\begin{equation}\label{e:lambda1-lambdaE1=sum_second_term_limit}
+ \det W_a(2^j) \Big[ \frac{ O_P(1) }{[O(a(\nu)^{h_1-h_2})+ (p^{2}_{12}+p^{2}_{22})b_{22}]^2 [O_P(a(\nu)^{h_1-h_2})+ (p^{2}_{12}+p^{2}_{22})\widehat{b}_{22}]^2}\Big] \Big\} \stackrel{P}\rightarrow 0.
\end{equation}
The numerators of the first and second terms appearing in the sum on the left-hand side of \eqref{e:lambda1-lambdaE1=sum_second_term_limit} are bounded in probability as a consequence of Theorem \ref{t:asymptotic_normality_wavecoef_fixed_scales} and of applying the mean value theorem to the function $f_3(x) = x^2$.

We now look at the first term in the sum \eqref{e:lambda1-lambdaE1=sum}. In view of the relations \eqref{e:ac-b^2=det} and by applying the mean value theorem twice, it can be rewritten as
$$
\log \frac{\det W_a(2^j)}{O_P(a(\nu)^{h_1-h_2}) + (p^2_{12}+p^2_{22})\widehat{b}_{22}} - \log \frac{\det EW(2^j)}{O(a(\nu)^{h_1-h_2}) + (p^2_{12}+p^2_{22})b_{22}}
$$
\begin{equation}\label{e:lambda1-lambdaE1=sum_first_term}
= \frac{1}{\xi_{W}} \{\det W_a(2^j) - \det EW(2^j)\}- \frac{1}{\xi_{b_{22}}} \Big\{O_P\Big(\frac{a(\nu)^{h_1-h_2}}{\sqrt{K_{a,j}}} \Big) + (p^{2}_{12}+p^2_{22})(\widehat{b}_{22} - b_{22}) \Big\}.
\end{equation}
The random variable $\xi_{W}$ lies between $\det W_a(2^j)$ and $\det EW(2^j)$, whereas the random variable $\xi_{b_{22}}$ lies between $O_P(a(\nu)^{h_1-h_2}) + (p^2_{12}+p^2_{22})\widehat{b}_{22}$ and $O(a(\nu)^{h_1-h_2}) + (p^2_{12}+p^2_{22})b_{22}$. Moreover, these random variables display asymptotic behavior $\xi_{W} \stackrel{P}\rightarrow \det EW(2^j)$, $\xi_{b_{22}} \stackrel{P}\rightarrow (p^{2}_{12}+p^2_{22})b_{22}$, $\nu \rightarrow \infty$, where the latter limits stem from Theorem \ref{t:asymptotic_normality_wavecoef_fixed_scales} and Lemma \ref{l:vecB^a(2^j)_asympt}, respectively. Therefore, after premultiplication by the rate factor $\sqrt{K_{a,j}}$, the expression \eqref{e:lambda1-lambdaE1=sum_first_term} is asymptotically equivalent in probability to
\begin{equation}\label{e:theo_asympt_normality_lambda1_explicit}
\sqrt{K_{a,j}} \hspace{0.5mm} \frac{\det \widehat{B}_a(2^j) - \det B(2^j)}{\det B(2^j)} - \sqrt{K_{a,j}}\hspace{0.5mm} \frac{\widehat{b}_{22}- b_{22}}{b_{22}}.
\end{equation}
Let $f_4(x,y,z) = xz - y^2$. By the mean value theorem,
\begin{equation}\label{e:detBhat_Taylor}
\det \widehat{B}_a(2^j) - \det B(2^j) = \nabla f_4(\xi_{b_{11}},\xi_{b_{12}},\xi_{b_{22}})(\vecoper_{{\mathcal S}}\widehat{B}_a(2^j) - \vecoper_{{\mathcal S}}B(2^j)),
\end{equation}
where $\nabla f_4(\xi_{b_{11}},\xi_{b_{12}},\xi_{b_{22}})  \stackrel{P}\rightarrow \nabla f_4(b_{11},b_{12},b_{22})$ as a consequence of Lemma \ref{l:vecB^a(2^j)_asympt}. Reintroducing $j$, from \eqref{e:lambda1-lambdaE1=sum_second_term_limit}, \eqref{e:theo_asympt_normality_lambda1_explicit} and \eqref{e:detBhat_Taylor}, we conclude that
$$
\sqrt{K_{a,j}}(\log \lambda_1(a(\nu)2^j)-\log \lambda^E_1(a(\nu)2^j))\stackrel{P}\sim \frac{b_{22}(2^j)}{\det B(2^j)} \sqrt{K_{a,j}} (\widehat{b}_{11}(2^j)- b_{11}(2^j))
$$
\begin{equation}\label{e:lambda1_asympt_equiv}
- 2 \frac{b_{12}(2^j) }{\det B(2^j)} \sqrt{K_{a,j}} (\widehat{b}_{12}(2^j)- b_{12}(2^j)) + \Big(\frac{b_{11}(2^j)b_{22}(2^j)}{\det B(2^j)}-1\Big) \sqrt{K_{a,j}} \Big(\frac{\widehat{b}_{22}(2^j)- b_{22}(2^j)}{b_{22}(2^j)} \Big).
\end{equation}

A similar calculation can be developed for $\lambda_2(a(\nu)2^{j}) - \lambda^E_2(a(\nu)2^{j})$. The latter can be recast as
\begin{equation}\label{e:lambda2-lambda2E=sum}
\Big\{\log(\widehat{a}+\widehat{c}) - \log(a+c)\Big\} + \Big\{\log\Big( 1 + \sqrt{1 - \frac{4(\widehat{a}\widehat{c}-\widehat{b}^2)}{(\widehat{a}+\widehat{c})^2}}\Big) - \log\Big( 1 + \sqrt{1 - \frac{4(ac-b^2)}{(a+c)^2}}\Big)\Big\}.
\end{equation}
By the mean value theorem based on the function $f_5(x) = \log(1 + \sqrt{1 - x})$, after premultiplication by the rate factor $\sqrt{K_{a,j}}$ the second term in the sum \eqref{e:lambda2-lambda2E=sum} can be reexpressed as
$$
f_5'(\xi_{abc}) \sqrt{K_{a,j}}\Big\{ \frac{4(\widehat{a}\widehat{c}-\widehat{b}^2)}{(\widehat{a}+\widehat{c})^2} - \frac{4(ac-b^2)}{(a+c)^2}\Big\} \stackrel{P}\rightarrow 0,
$$
since $f_5 '(\xi_{abc}) \stackrel{P}\rightarrow - \frac{1}{4}$ and by \eqref{e:lambda1-lambdaE1=sum_second_term_limit}. As for the first term in the sum \eqref{e:lambda2-lambda2E=sum}, by the mean value theorem we can rewrite it with probability going to 1 as
$$
\log\{O_P(a(\nu)^{h_{1}-h_{2}})+ (p^{2}_{12}+p^2_{22})\widehat{b}_{22}\} - \log\{O(a(\nu)^{h_{1}-h_{2}})+ (p^{2}_{12}+p^2_{22})b_{22}\}
$$
$$
= \frac{1}{\xi_{b_{22}}} \Big\{O_P\Big(\frac{a(\nu)^{h_{1}-h_{2}}}{\sqrt{K_{a,j}}}\Big)+ (p^{2}_{12}+p^2_{22})(\widehat{b}_{22}- b_{22})\Big\},
$$
where $\xi_{b_{22}} \stackrel{P}\rightarrow (p^{2}_{12}+p^2_{22})b_{22}$. As a consequence, and reintroducing $j$,
\begin{equation}\label{e:lambda2_asympt_equiv}
\sqrt{K_{a,j}}\{ \log(\lambda_2(a(\nu)2^{j})) - \log(\lambda^E_2(a(\nu)2^{j})) \} \stackrel{P}\sim \sqrt{K_{a,j}} \Big(\frac{\widehat{b}_{22}(2^j)- b_{22}(2^j)}{b_{22}(2^j)} \Big).
\end{equation}
The expressions \eqref{e:lambda1_asympt_equiv} and \eqref{e:lambda2_asympt_equiv} for $j = j_1 < \hdots < j_m$ imply the weak limit \eqref{e:claim_theo_asympt_normality_lambda1} with asymptotic covariance matrix \eqref{e:Sigmah1h2}. $\Box$\\
\end{proof}

\begin{remark}\label{r:motivation}
At this point, we can shed more light on the apparent paradox anticipated in the Introduction: the asymptotic behavior of each entry of the matrices $W(a(\nu)2^j)$ and $EW(a(\nu)2^j)$ (see \eqref{e:W(a(nu)2^j)} and \eqref{e:EW(a(nu)2^j)}) is generally governed by the power law $a(\nu)^{2h_2}$, whereas that of the eigenvalues $\lambda_1(a(\nu)2^{j})$, $\lambda^E_1(a(\nu)2^j)$ is dominated by $a(\nu)^{2h_1}$. For mathematical simplicity, consider only the matrix $EW(a(\nu)2^j)$ under the instance $A = P$ in \eqref{e:OFBM_harmonizable}, where the columns of $P$ are unit vectors. The operator scaling property $(P5)$ of the wavelet transform yields
\begin{equation}\label{e:EW(j)=simplified_scaling}
EW(a(\nu)2^j) = P \textnormal{diag}(w_1 a(\nu)^{2h_1},w_2 a(\nu)^{2h_2}) P^*,
\end{equation}
where $EW(2^j) = P \textnormal{diag}(w_1,w_2) P^*$, for $w_1 = w_1(2^j) > 0$, $w_2 = w_2(2^j) > 0$. Then,
\begin{equation}\label{e:Epower_law}
\lambda^{E}_1(a(\nu)2^j) = \inf_{\textbf{u} \in S^1} \textbf{u}^* EW(a(\nu)2^j) \textbf{u} \leq \textbf{u}^*_0 \hspace{1mm}EW(a(\nu)2^j) \hspace{1mm}\textbf{u}_0 = \frac{w_1 a(\nu)^{2h_1} }{\|\textbf{u}_0 \|^2},
\end{equation}
where $\textbf{u}_0 = (P^*)^{-1}e_1/\|(P^*)^{-1}e_1 \|$ and $e_1$ is the first Euclidean vector. In other words, $\lambda^{E}_1(a(\nu)2^j)$ cannot go to infinity faster than $a(\nu)^{2h_1}$.

Notwithstanding \eqref{e:Epower_law}, it should be stressed that $\lambda^{E}_1(a(\nu)2^j)$ does not generally follow an exact power law. This can be seen based on the explicit expression for $\lambda^{E}_1(a(\nu)2^j)$ (see \eqref{e:lambda1_lambda2_explicit}), i.e.,
\begin{equation}\label{e:lambda1(j)_explicit}
\lambda^{E}_1(a(\nu)2^j) = \frac{1}{2} \{EW(a(\nu)2^j)_{11} + EW(a(\nu)2^j)_{22} - \sqrt{\Delta(a(\nu)2^j)}\}
\end{equation}
where $\Delta(a(\nu)2^j) = (EW(a(\nu)2^j)_{11}- EW(a(\nu)2^j)_{22})^2 + 4 (EW(a(\nu)2^j)_{12})^2$. Under \eqref{e:EW(j)=simplified_scaling}, the matrix $EW(a(\nu)2^j) = \Big(EW(a(\nu)2^j)_{i_1,i_2} \Big)_{i_1,i_2=1,2}$ can be expressed entry-wise as in \eqref{e:S=a(nu)HEW(j)a(nu)H*=(a_b_b_c)} with $b_{11}(2^j) = w_1$, $b_{12}(2^j) = 0$ and $b_{22}(2^j) = w_2$. It is of interest to note that the quality of the approximation $\lambda^{E}_1(a(\nu)2^j) \sim C a(\nu)^{2h_1}$ increases with the difference $h_2 - h_1$. Indeed, by \eqref{e:lambda1_explicit} and a second order Taylor expansion of the function $f(x) = \sqrt{x}$ around 1,
$$
\lambda^{E}_1(a(\nu)2^j) = \frac{2 \det EW(2^j)}{ w_{1}a(\nu)^{2(h_1 - h_2)} + w_{2}}a(\nu)^{2h_1}
$$
\begin{equation}\label{e:expansion_lambdaE1_secondorder_Taylor}
\Big\{ \frac{1}{2} + \frac{1}{2} \frac{\det EW(2^j)}{[w_{1} a(\nu)^{2(h_1 - h_2)}+ w_{2}]^2} \frac{1}{a(\nu)^{2(h_2 - h_1)}} + o\Big(\frac{1}{a(\nu)^{2(h_2 - h_1)}}\Big)\Big\}.
\end{equation}
The discussion and conclusions presented in this remark can be easily generalized beyond the instance \eqref{e:EW(j)=simplified_scaling}.
\end{remark}

\begin{remark}\label{r:choice_of_a(nu)}
In practice, the choice of $a(\nu)$ involves a statistical compromise. A large value of $a(\nu)$ with respect to $\nu$ implies that the estimator variance is relatively large and the estimator distribution is not very close to Gaussian, but it also yields a relatively small bias. Computational experiments suggest that the ratio $\nu/a(\nu)2^j$ should be no less than $2^3$.
\end{remark}


\subsection{The weak limit of unit eigenvectors}

For given $j, \log_2 a(\nu) \in \bbN$, consider the relation \eqref{e:relation_v1_v2} for the eigenvector entries
\begin{equation}\label{e:eigenvec_v(nu)}
\widehat{\textbf{v}}(a(\nu)2^j) = (\widehat{v}_1(a(\nu)2^j),\widehat{v}_2(a(\nu)2^j))^* \in \bbR^2
\end{equation}
associated with the smallest eigenvalue $\lambda_1 := \lambda_1(a(\nu)2^j)$ of the (symmetric) sample wavelet variance matrix $\widehat{S}:= W(a(\nu)2^j)$. Further consider their wavelet variance counterparts $(v_1(a(\nu)2^j),v_2(a(\nu)2^j))^*$, $\lambda^E_1 := \lambda^E_1(a(\nu)2^j)$ and $S:=EW(a(\nu)2^j)$. The ratios $(\widehat{v}_2/\widehat{v}_1)(a(\nu)2^j)$, $(v_2/v_1)(a(\nu)2^j)$ represent the tangents of the angles that determine the associated eigenspaces. In the next definition, we propose to use $(\widehat{v}_2/\widehat{v}_1)(a(\nu)2^j)$ as an estimator of the tangent of the angle determined by the entries of $P$ associated with $\lambda_1$ when $P \in O(2)$, i.e., $\tan(\theta) = -p_{12}/p_{22}$. For notational simplicity, we will just write
\begin{equation}\label{e:theta=-p12/p22}
\theta = -\frac{p_{12}}{p_{22}}.
\end{equation}
\begin{definition}\label{def:theta_estimator}
Let $j, \log_2 a(\nu) \in \bbN$. Under the assumptions of Definition \ref{def:estimator}, we define the estimator of $\theta$ (see \eqref{e:theta=-p12/p22}) at scale $a(\nu)2^{j}$ and its wavelet spectrum counterpart as
$$
\widehat{\theta}(a(\nu)2^{j}) = \Big(\frac{\widehat{v}_2}{\widehat{v}_1}\Big)(a(\nu)2^{j}) = \frac{\lambda_{1}(a(\nu)2^j) - \widehat{a}_{j,a(\nu)}}{\widehat{b}_{j,a(\nu)}},
$$
\begin{equation}\label{e:angle_estimator}
\theta(a(\nu)2^{j}) = \Big(\frac{v_2}{v_1}\Big)(a(\nu)2^{j}) = \frac{\lambda^E_{1}(a(\nu)2^j) - a_{j,a(\nu)}}{b_{j,a(\nu)}}
\end{equation}
where the definition of each individual term on the right-hand of the expressions in \eqref{e:angle_estimator} is given by \eqref{e:lambda1_lambda2}, \eqref{e:lambda^E_1_lambda^E_2}, \eqref{e:W(a(nu)2^j)} and \eqref{e:EW(a(nu)2^j)}.
\end{definition}

The consistency and asymptotic normality of the estimator $\widehat{\theta}(a(\nu)2^{j})$ in \eqref{e:angle_estimator} are precisely stated and shown in Theorem \ref{t:weak_limit_eigenvector} below. The limits themselves do not depend on the orthogonality of $P$. Note that the parametric assumption \eqref{e:p12_neq_0_or_p11=0} below rules out diagonal wavelet spectra, the latter being associated with entry-wise scaling instances of OFBM (as in \eqref{e:o.s.s._entrywise}). For further comments on the assumptions of Theorem \ref{t:weak_limit_eigenvector}, see Remark \ref{r:on_p12=0}.

\begin{theorem}\label{t:weak_limit_eigenvector}
Let $j \in \bbN$, and let $\widehat{\theta}(a(\nu)2^{j})$, $\theta(a(\nu)2^{j})$ be as in \eqref{e:angle_estimator}. Suppose the assumptions (OFBM1)--(4) and \eqref{e:a(nu)/J->infty} hold, as well as the parametric assumption $p_{22} \neq 0$.
\begin{itemize}
\item [($i$)] If
\begin{equation}\label{e:p12_neq_0_or_p11=0}
\textnormal{either $p_{12} \neq 0$ \hspace{2mm} or \hspace{2mm} $p_{12} = 0 \textnormal{ and }b_{12} \neq 0 $},
\end{equation}
then
\begin{equation}\label{e:thetahat_consistency}
\widehat{\theta}(a(\nu)2^j) \stackrel{P}\rightarrow \theta = - \frac{p_{12}}{p_{22}}, \quad \theta(a(\nu)2^j) \rightarrow \theta = - \frac{p_{12}}{p_{22}},\quad \nu \rightarrow \infty;
\end{equation}
\item [($ii$)] if
\begin{equation}\label{e:p12_neq_0_and_b12_neq_0}
p_{12} \neq 0 \textnormal{ and } b_{12} \neq 0,
\end{equation}
then,
\begin{equation}\label{e:thetahat_weak_limit}
a(\nu)^{h_2-h_1}\sqrt{K_{a,j}} \{ \widehat{\theta}(a(\nu)2^j) - \theta(a(\nu)2^j) \} \stackrel{d}\rightarrow \hspace{0.5mm} N(0,\sigma^2_{\theta}), \quad \nu \rightarrow \infty.
\end{equation}
In \eqref{e:thetahat_weak_limit}, $\sigma^2_{\theta} = {\mathcal R}^*\Sigma_{B}(2^j){\mathcal R}$, where $\Sigma_{B}(2^j)$ is the $3 \times 3$ block, associated with $j$, on the main diagonal of $\Sigma_{B}(j_1,\hdots,j_m)$ (see \eqref{e:asymptotic_normality_pseudoestimator_Bhat}), and ${\mathcal R}^* = \frac{\det P}{b_{22}p^{2}_{22}}\Big(0, -1 , \frac{b_{12}}{b_{22}} \Big)$.
\end{itemize}
\end{theorem}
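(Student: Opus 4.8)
The plan is to reduce the statement to the already-established asymptotics of the pseudo-estimator $\widehat B_a(2^j)$ in Lemma~\ref{l:vecB^a(2^j)_asympt}, exactly as in the proof of Theorem~\ref{t:asympt_normality_lambda1}. First I would substitute the entry-wise expressions \eqref{e:Shat=a(nu)H_EW(j)_a(nu)H*=(a_b_b_c)} and \eqref{e:S=a(nu)HEW(j)a(nu)H*=(a_b_b_c)} for $\widehat a_{j,a(\nu)}, \widehat b_{j,a(\nu)}$ and $a_{j,a(\nu)}, b_{j,a(\nu)}$ into the definition \eqref{e:angle_estimator} of $\widehat\theta$ and $\theta$. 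Using \eqref{e:lambda1_explicit} together with the identity \eqref{e:ac-b^2=det}, i.e.\ $\widehat a\widehat c - \widehat b^2 = a(\nu)^{2(h_1+h_2)}\det W_a(2^j)$ and likewise for the population version, one sees that $\lambda_1(a(\nu)2^j) - \widehat a_{j,a(\nu)}$ is, to leading order, a quantity of size $a(\nu)^{2h_1}$ times a factor converging to $-(p_{11}^2+p_{21}^2)b_{11} + $ (cross terms), while $\widehat b_{j,a(\nu)}$ is dominated by its $a(\nu)^{2h_2}$ term with coefficient $p_{12}p_{22}\widehat b_{22}(2^j)$. Hence the ratio is $O(a(\nu)^{2h_1-2h_2}) \to 0$ when $p_{12}=0$, and converges to $-p_{12}/p_{22}$ when $p_{12}\neq 0$; carrying out this limit carefully (treating the sub-dominant power-law remainders as $o(1)$ and invoking the consistency of $\widehat B_a(2^j)$ from Lemma~\ref{l:vecB^a(2^j)_asympt} and of $\lambda_1$ from Theorem~\ref{t:asympt_normality_lambda1}(i)) gives part~($i$). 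The role of the hypothesis \eqref{e:p12_neq_0_or_p11=0} is precisely to guarantee that the denominator $b_{j,a(\nu)}$ does not vanish in the limit in the degenerate case $p_{12}=0$ (then $b_{j,a(\nu)} = p_{11}p_{21}b_{11}(2^j)a(\nu)^{2h_1} + (p_{11}p_{22}+p_{12}p_{21})b_{12}(2^j)a(\nu)^{h_1+h_2}+0$, and one needs the $b_{12}\neq 0$ clause, together with $p_{22}\neq 0$, so that the leading surviving coefficient is nonzero).

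For part~($ii$), under \eqref{e:p12_neq_0_and_b12_neq_0} both $\widehat\theta$ and $\theta$ are smooth functions of the entries $(\widehat b_{11}(2^j), \widehat b_{12}(2^j), \widehat b_{22}(2^j))$ and of $\lambda_1(a(\nu)2^j)$, respectively of their population counterparts. I would write
$$
\widehat\theta(a(\nu)2^j) - \theta(a(\nu)2^j) = \frac{\lambda_1(a(\nu)2^j) - \widehat a_{j,a(\nu)}}{\widehat b_{j,a(\nu)}} - \frac{\lambda^E_1(a(\nu)2^j) - a_{j,a(\nu)}}{b_{j,a(\nu)}},
$$
factor out the common power $a(\nu)^{2h_1}$ from the numerators and $a(\nu)^{2h_2}$ from the denominators (so the ratio picks up the rate factor $a(\nu)^{h_1-h_2}$, explaining the normalization $a(\nu)^{h_2-h_1}\sqrt{K_{a,j}}$ in \eqref{e:thetahat_weak_limit}), and then apply the mean value theorem / delta method to the resulting difference. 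Here the analysis runs parallel to the one leading to \eqref{e:lambda1_asympt_equiv}: using the asymptotic equivalence $\sqrt{K_{a,j}}(\log\lambda_1 - \log\lambda^E_1) \stackrel{P}\sim$ a linear combination of $\sqrt{K_{a,j}}(\widehat b_{i_1i_2}(2^j) - b_{i_1i_2}(2^j))$ with the coefficients given by ${\mathcal Q}_{jj}$ in Theorem~\ref{t:asympt_normality_lambda1}, one expresses the whole difference as $\sqrt{K_{a,j}}$ times a single linear functional $\mathcal R^*(\vecoper_{\mathcal S}\widehat B_a(2^j) - \vecoper_{\mathcal S} B(2^j))$, up to terms that vanish in probability (the sub-dominant power-law contributions, which by \eqref{e:a(nu)/J->infty} are $o_P(a(\nu)^{h_1-h_2})$ after premultiplication by $\sqrt{K_{a,j}}$, exactly as in \eqref{e:lambda1-lambdaE1=sum_second_term_limit}). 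Identifying the gradient vector as $\mathcal R^* = \frac{\det P}{b_{22}p_{22}^2}\bigl(0,-1,\frac{b_{12}}{b_{22}}\bigr)$ is a direct computation of $\partial\theta/\partial(b_{11},b_{12},b_{22})$ at the limiting point, using $\theta = -p_{12}/p_{22}$ and the leading-order forms of numerator and denominator. The conclusion \eqref{e:thetahat_weak_limit} then follows from Lemma~\ref{l:vecB^a(2^j)_asympt} and the continuous mapping / Slutsky theorem, with $\sigma^2_\theta = \mathcal R^*\Sigma_B(2^j)\mathcal R$.

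The main obstacle I anticipate is bookkeeping the error terms in the two-scale expansion: one must show that when the leading $a(\nu)^{2h_1}$ (resp.\ $a(\nu)^{2h_2}$) behavior is peeled off, every remainder — the $O(a(\nu)^{h_1-h_2})$ pieces inside both numerator and denominator, the difference $\det W_a(2^j) - \det EW(2^j)$ expanded via the mean value theorem on $f_4(x,y,z) = xz-y^2$, and the Taylor remainder from expanding $\lambda_1$ via $f_1, f_2$ as in \eqref{e:f1_f2}–\eqref{e:f1'(x)->1/4} — is still $o_P(1)$ after multiplication by the rate $a(\nu)^{h_2-h_1}\sqrt{K_{a,j}}$. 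This is where the precise growth condition $\nu/a(\nu)^{1+2h_1}\to 0$ in \eqref{e:a(nu)/J->infty} is consumed, and where one must be careful that the $o_P$ statements are uniform enough to combine. Once that is under control, the identification of $\mathcal R^*$ and the invocation of Lemma~\ref{l:vecB^a(2^j)_asympt} are routine.
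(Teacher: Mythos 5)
Your overall architecture matches the paper's: part ($i$) via the explicit formulas \eqref{e:lambda1_explicit}, \eqref{e:relation_v1_v2} and the asymptotics of Lemma \ref{l:elementary_results}, and part ($ii$) by splitting $\widehat\theta-\theta$ into an eigenvalue piece and a ratio piece, reducing everything to Lemma \ref{l:vecB^a(2^j)_asympt}, and reading off the gradient ${\mathcal R}$. However, there is a concrete error in your leading-order analysis that undermines the key step of part ($ii$). Under \eqref{e:p12_neq_0_and_b12_neq_0} the numerator $\lambda_1(a(\nu)2^j)-\widehat a_{j,a(\nu)}$ is \emph{not} of size $a(\nu)^{2h_1}$: it is dominated by $-\widehat a_{j,a(\nu)} \sim -p_{12}^2\widehat b_{22}(2^j)a(\nu)^{2h_2}$, since $\lambda_1 = O_P(a(\nu)^{2h_1})$ is negligible against it. Your claim is internally inconsistent with your own (correct) conclusion in part ($i$) that the ratio tends to the nonzero constant $-p_{12}/p_{22}$; a numerator of order $a(\nu)^{2h_1}$ over a denominator of order $a(\nu)^{2h_2}$ would force the ratio to zero. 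Consequently, "factoring out $a(\nu)^{2h_1}$ from the numerators and $a(\nu)^{2h_2}$ from the denominators" cannot be the source of the normalization $a(\nu)^{h_2-h_1}$ in \eqref{e:thetahat_weak_limit}.

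The actual mechanism, which your sketch does not identify, is an exact algebraic cancellation in $\frac{a}{b}-\frac{\widehat a}{\widehat b} = \frac{a(\widehat b-b)-b(\widehat a-a)}{b\,\widehat b}$: the leading $a(\nu)^{2h_2}$ coefficients of $\widehat a$ and $\widehat b$ are $p_{12}^2\widehat b_{22}$ and $p_{12}p_{22}\widehat b_{22}$, whose ratio $p_{12}/p_{22}$ is deterministic, so the $O_P(K_{a,j}^{-1/2})$ fluctuation of $\widehat b_{22}$ drops out at leading order — this is the identity $\alpha_{22}(\widehat\beta_{22}-\beta_{22})-\beta_{22}(\widehat\alpha_{22}-\alpha_{22})=0$ in the paper's expansion \eqref{e:a(bhat-b)+b(a-ahat)}. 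The first surviving contributions come from the $a(\nu)^{h_1+h_2}$ terms (carrying $\widehat b_{12}-b_{12}$) and their interaction with $\widehat b_{22}-b_{22}$, each suppressed by a factor $a(\nu)^{h_1-h_2}$; this is what produces the rate $a(\nu)^{h_2-h_1}\sqrt{K_{a,j}}$ and forces the first component of ${\mathcal R}$ to vanish. Without this cancellation a naive delta method would give rate $\sqrt{K_{a,j}}$ and a contribution from $\widehat b_{22}$ at leading order, i.e.\ the wrong limit. A smaller but related issue: for the term $a(\nu)^{h_2-h_1}\sqrt{K_{a,j}}\{\lambda_1/\widehat b-\lambda^E_1/b\}$ you invoke the ${\mathcal Q}_{jj}$ log-eigenvalue asymptotics, but what is actually needed (and what the paper proves in \eqref{e:1a+1b}--\eqref{e:1a+1b_limit}) is that $\lambda_1/\widehat b=O_P(a(\nu)^{-2(h_2-h_1)})$, so this whole piece is doubly suppressed and vanishes even after multiplication by the blowing-up factor $a(\nu)^{h_2-h_1}$. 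You would need to supply both of these points to turn the sketch into a proof.
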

\begin{proof}
As in the proof of Theorem \ref{t:asympt_normality_lambda1}, we will drop the subscripts $j$ and $\nu$ whenever convenient. To show ($i$) for $\theta(a(\nu)2^j)$, first assume that $p_{12} \neq 0$. The property $(P7)$ ensures that $b_{22} \neq 0$, whence $b = b_{j,a(\nu)} \neq 0$ for large $\nu$ in expression \eqref{e:S=a(nu)HEW(j)a(nu)H*=(a_b_b_c)}. The same applies to $a = a_{j,a(\nu)}$. Therefore, $v_1 \neq 0$ in \eqref{e:relation_v1_v2}, and
\begin{equation}\label{e:v2/v1_limit}
\frac{v_2}{v_1} = \frac{a}{b} \Big( \frac{\lambda^E_1}{a} - 1\Big) \sim - \frac{p_{12}}{p_{22}}, \quad \nu \rightarrow \infty,
\end{equation}
by Lemma \ref{l:elementary_results}.

Now assume that $p_{12} = 0$. Then, $p_{11} \neq 0$, and $a = a_{j,a(\nu)} \neq 0 $ for large $\nu$. Since $b_{12} \neq 0$ by assumption, then $b = b_{j,a(\nu)} \neq 0$. Again, the limit \eqref{e:v2/v1_limit} follows.

In regard to $\widehat{\theta}(a(\nu)2^j)$, in either case in \eqref{e:p12_neq_0_or_p11=0} the relations above can be simply rewritten for $\widehat{S} = W(a(\nu)2^{j})$ with eigenvector $\widehat{{\mathbf v}}$. Because of \eqref{e:asymptotic_normality_pseudoestimator_Bhat}, $\widehat{B}_a(2^j) \stackrel{P}\rightarrow B(2^j)$ and thus the expression $\widehat{v}_2/\widehat{v}_1$ is well-defined with probability increasing to 1, as $\nu \rightarrow \infty$. Again by Lemma \ref{l:elementary_results}, $\widehat{v}_2/\widehat{v}_1 \stackrel{P}\rightarrow - \frac{p_{12}}{p_{22}}$.\\

For the ensuing developments, recall that $b^{-1}_{22}$ is well-defined. So, to show ($ii$), reexpress
$$
a(\nu)^{h_2-h_1}\sqrt{K_{a,j}} \Big[ \Big( \frac{\lambda_{1}- \widehat{a}}{\widehat{b}}\Big) - \Big( \frac{\lambda^E_{1}- a}{b}\Big)\Big]
$$
\begin{equation}\label{e:1+2}
= a(\nu)^{h_2-h_1}\sqrt{K_{a,j}} \Big\{\frac{\lambda_1}{\widehat{b}} - \frac{\lambda^E_1}{b}\Big\} + a(\nu)^{h_2-h_1}\sqrt{K_{a,j}} \Big\{ \frac{a}{b}- \frac{\widehat{a}}{\widehat{b}}\Big\}.
\end{equation}
The first term on the right-hand side of \eqref{e:1+2} can be further developed into
\begin{equation}\label{e:1a+1b}
a(\nu)^{h_2-h_1}\frac{1}{\widehat{b}} \sqrt{K_{a,j}} \{ \lambda_1 - \lambda^E_1\} + a(\nu)^{h_2-h_1}(-1) \frac{\lambda^E_1}{b} \sqrt{K_{a,j}}\Big\{ \frac{\widehat{b} - b}{\widehat{b}}\Big\}.
\end{equation}
In view of Lemmas \ref{l:elementary_results} and \ref{l:vecB^a(2^j)_asympt},
$$
\frac{\lambda^E_1}{b} \sim \frac{1}{a(\nu)^{2(h_2 - h_1)}} \frac{\det EW(2^j)}{p_{12}p_{22}b^2_{22}(p^{2}_{12} + p^{2}_{22})},
\quad \sqrt{K_{a,j}} \Big\{ \frac{\widehat{b}- b}{\widehat{b}}\Big\} \stackrel{P}\sim \sqrt{K_{a,j}}\frac{\{\widehat{b}_{22} - b_{22}\}}{\widehat{b}_{22}} \stackrel{d}\rightarrow \frac{N(0,\sigma^2(b_{22}(2^j)))}{b_{22}(2^j)},
$$
where $\sigma^2(b_{22}(2^j))$ comes from the matrix $\Sigma_{B}(j_1,\hdots,j_m)$ in \eqref{e:asymptotic_normality_pseudoestimator_Bhat}.
Consequently, in regard to the second term in the sum \eqref{e:1a+1b},
\begin{equation}\label{e:1a_second_term}
a(\nu)^{h_2-h_1}(-1) \frac{\lambda^E_1}{b} \sqrt{K_{a,j}}\Big\{ \frac{\widehat{b} - b}{\widehat{b}}\Big\} \stackrel{P}\rightarrow 0.
\end{equation}
We now look at the first term in the sum \eqref{e:1a+1b}. Let $\widehat{r} = 4\frac{\widehat{a}\widehat{c}-\widehat{b}^2}{(\widehat{a}+\widehat{c})^2}$ and $r = 4\frac{ac-b^2}{(a+c)^2}$. Then,
$$
\sqrt{K_{a,j}}\{\lambda_{1}-\lambda^E_1\} = \sqrt{K_{a,j}}\Big\{2 \Big( \frac{\widehat{a}\widehat{c}-\widehat{b}^2}{\widehat{a}+\widehat{c}}\Big)-2 \Big( \frac{ac-b^2}{a+c}\Big)\Big\}\Big(\frac{\sqrt{1 - \widehat{r}} - 1}{-\widehat{r}}\Big)
$$
\begin{equation}\label{e:sqrtKaj(lambda1-lambdaE1)}
+ 2\Big(\frac{ac-b^2}{a+c}\Big)\sqrt{K_{a,j}}\Big\{\Big(\frac{\sqrt{1 - \widehat{r}} - 1}{-\widehat{r}}\Big) - \Big(\frac{\sqrt{1 - r} - 1}{-r}\Big)\Big\}.
\end{equation}
By the same reasoning leading to \eqref{e:lambda1-lambdaE1=sum_second_term_limit},
$$
\sqrt{K_{a,j}}\Big\{2\Big( \frac{\widehat{a}\widehat{c}-\widehat{b}^2}{\widehat{a}+\widehat{c}}\Big)- 2\Big( \frac{ac-b^2}{a+c}\Big)\Big\}
= a(\nu)^{2h_1}\Big\{\frac{O_{P}(1)}{O_{P}(a(\nu)^{h_1 - h_2})+ (p^{2}_{12}+p^{2}_{22})\widehat{b}_{22}}
$$
\begin{equation}\label{e:sqrt(K)_(ac-b2/a+c-allhat)-ac-b2/a+c}
+ \det EW(2^j) \frac{O_P(1)}{[O_P(a(\nu)^{h_1-h_2}) + (p^{2}_{12}+p^{2}_{22})\widehat{b}_{22}][O(a(\nu)^{h_1-h_2}) + (p^{2}_{12}+p^{2}_{22})b_{22}]}\Big\}.
\end{equation}
Moreover, for $f_1(x)$ as in \eqref{e:f1_f2}, with probability going to 1 the mean value theorem gives $\sqrt{K_{a,j}}\{f_1(\widehat{r}) - f_1(r)\} = f_1'(\xi_r) \sqrt{K_{a,j}}(\widehat{r} - r)$, where the random variable $\xi_r$ lies between $r$ and $\widehat{r}$. Thus, by \eqref{e:4(ac-b2)/a+c_4(ac-b2)/(a+c^2_p12_p22neq0}, \eqref{e:4(ac-b2)/a+c_4(ac-b2)/(a+c^2_p12_p22neq0_in_prob}, \eqref{e:f1'(x)->1/4} and \eqref{e:lambda1-lambdaE1=sum_second_term_limit},
\begin{equation}\label{e:sqrt(K)(sqrt(1-rhat)-1/-rhat)-(sqrt(1-r)-1/-r)}
f_1'(\xi_r) \stackrel{P}\rightarrow \frac{1}{8}, \quad \sqrt{K_{a,j}}(\widehat{r} - r) \stackrel{P}\sim \frac{O_P(1)}{a(\nu)^{2(h_2-h_1)}} .
\end{equation}
Therefore, by \eqref{e:sqrt(K)_(ac-b2/a+c-allhat)-ac-b2/a+c}, \eqref{e:sqrt(K)(sqrt(1-rhat)-1/-rhat)-(sqrt(1-r)-1/-r)}, \eqref{e:4(ac-b2)/a+c_4(ac-b2)/(a+c^2_p12_p22neq0} and \eqref{e:4(ac-b2)/a+c_4(ac-b2)/(a+c^2_p12_p22neq0_in_prob}, the first term in \eqref{e:1a+1b} is asymptotic equivalent in probability to
\begin{equation}\label{e:1a_first_term}
a(\nu)^{h_2-h_1}\Big(\frac{1}{[O_{P}(a(\nu)^{h_1-h_2})+ p_{12}p_{22}\widehat{b}_{22}]} \frac{1}{a(\nu)^{2h_2}} \Big)\Big\{a(\nu)^{2h_1}O_P(1)+ \frac{O_P(1)}{a(\nu)^{2(h_2 - h_1)}}\Big\}  \stackrel{P}\rightarrow 0.
\end{equation}
As a consequence, in regard to the first term on the right-hand side of \eqref{e:1+2},
\begin{equation}\label{e:1a+1b_limit}
a(\nu)^{h_2 - h_1}\sqrt{K_{a,j}} \Big\{\frac{\lambda_1}{\widehat{b}} - \frac{\lambda^E_1}{b}\Big\} \stackrel{P}\rightarrow 0.
\end{equation}

We now turn to the second term on the right-hand side of \eqref{e:1+2}. Note that
\begin{equation}\label{e:a/b-ahat/bhat}
\frac{a}{b} -\frac{\widehat{a}}{\widehat{b}}= \frac{a(\widehat{b} - b) - b(\widehat{a}-a)}{b \hspace{0.5mm}\widehat{b}} \stackrel{P}\sim a(\nu)^{-2h_{2}}\frac{a(\widehat{b} - b) - b(\widehat{a}-a)}{p^2_{12}p^2_{22}b^2_{22}(2^j)}.
\end{equation}
Note that the ratio on the right-hand side of \eqref{e:a/b-ahat/bhat} is well-defined, since $p_{12}, p_{22} \neq 0$, by assumption. Recall the expressions \eqref{e:Shat=a(nu)H_EW(j)_a(nu)H*=(a_b_b_c)} and \eqref{e:S=a(nu)HEW(j)a(nu)H*=(a_b_b_c)} and denote $a(\nu)^{-2 h_2}a$, $a(\nu)^{-2 h_2}b$, $a(\nu)^{-2 h_2}\widehat{a}$ and $a(\nu)^{-2 h_2} \widehat{b}$, respectively, by
$$
O(a(\nu)^{2(h_1 - h_2)}) + \alpha_{12}a(\nu)^{h_1 - h_2} + \alpha_{22}, \quad O(a(\nu)^{2(h_1 - h_2)}) + \beta_{12}a(\nu)^{h_1-h_2} + \beta_{22},
$$
\begin{equation}\label{e:a,ahat,b,bhat}
O_P(a(\nu)^{2(h_1 - h_2)}) + \widehat{\alpha}_{12}a(\nu)^{h_1 - h_2} + \widehat{\alpha}_{22} \quad \textnormal{and}\quad O_P(a(\nu)^{2(h_1 - h_2)}) + \widehat{\beta}_{12}a(\nu)^{h_1-h_2} + \widehat{\beta}_{22}.
\end{equation}
For the sake of mathematical clarity, we will henceforth omit the terms of order $a(\nu)^{2(h_1 - h_2)}$ in the expressions \eqref{e:a,ahat,b,bhat}, since the ensuing argument can be easily adapted to account for them. So, based on \eqref{e:a,ahat,b,bhat}, we can rewrite the numerator in \eqref{e:a/b-ahat/bhat} as
$$
(\alpha_{12}a(\nu)^{h_1-h_2} + \alpha_{22}) \{(\widehat{\beta}_{12} - \beta_{12})a(\nu)^{h_1 - h_2} + (\widehat{\beta}_{22} - \beta_{22})\}
$$
$$
- (\beta_{12}a(\nu)^{h_1-h_2} + \beta_{22}) \{(\widehat{\alpha}_{12} - \alpha_{12})a(\nu)^{h_1 - h_2} + (\widehat{\alpha}_{22} - \alpha_{22})\}
$$
$$
= a(\nu)^{h_1-h_2}\{(\alpha_{12}a(\nu)^{h_1 - h_2} + \alpha_{22})(\widehat{\beta}_{12} - \beta_{12}) - (\beta_{12}a(\nu)^{h_1 - h_2} + \beta_{22})(\widehat{\alpha}_{12} - \alpha_{12})\}
$$
\begin{equation}\label{e:a(bhat-b)+b(a-ahat)}
+ a(\nu)^{h_1-h_2}\{\alpha_{12}(\widehat{\beta}_{22} - \beta_{22}) - \beta_{12}(\widehat{\alpha}_{22} - \alpha_{22})\},
\end{equation}
where the equality is a consequence of the fact that $\alpha_{22}(\widehat{\beta}_{22} - \beta_{22}) - \beta_{22}(\widehat{\alpha}_{22}- \alpha_{22}) = 0$.
When premultiplied by $a(\nu)^{h_2 - h_1}\sqrt{K_{a,j}}$, the first term in the sum \eqref{e:a(bhat-b)+b(a-ahat)} is asymptotically equivalent in probability to
\begin{equation}\label{e:N(0,sigma2(b12))_limit}
\sqrt{K_{a,j}}\{\alpha_{22}(\widehat{\beta}_{12} - \beta_{12}) - \beta_{22}(\widehat{\alpha}_{12} - \alpha_{12})\} \stackrel{d}\rightarrow - p^2_{12}b_{22}(2^j)\det(P) \hspace{1mm}N(0,\sigma^2(b_{12}(2^j))),
\end{equation}
where $\sigma^2(b_{12}(2^j))$ is taken from the matrix $\Sigma_B(j_1,\hdots,j_m)$ in \eqref{e:asymptotic_normality_pseudoestimator_Bhat}.
In turn, when premultiplied by $a(\nu)^{h_2 - h_1}\sqrt{K_{a,j}}$, the second term in the sum \eqref{e:a(bhat-b)+b(a-ahat)} is asymptotically equivalent in probability to
\begin{equation}\label{e:N(0,sigma2(b22))_limit}
\sqrt{K_{a,j}}\{\alpha_{12}(\widehat{\beta}_{22} - \beta_{22}) - \beta_{12}(\widehat{\alpha}_{22} - \alpha_{22})\}
\stackrel{d}\rightarrow  p^2_{12}b_{12}(2^j)\det(P) \hspace{1mm}N(0,\sigma^2(b_{22}(2^j))).
\end{equation}
In view of \eqref{e:1a+1b_limit}, and assuming $p_{22} > 0$, the relation \eqref{e:thetahat_weak_limit} is obtained by dividing the sum of the weak limits \eqref{e:N(0,sigma2(b12))_limit} and \eqref{e:N(0,sigma2(b22))_limit} by the denominator on the right-hand side of \eqref{e:a/b-ahat/bhat}. $\Box$\\
\end{proof}

\begin{remark}\label{r:ceigenv}
From a different but mathematically equivalent perspective, the statement \eqref{e:thetahat_consistency} implies that $W(a(\nu)2^j)$ and $E W(a(\nu)2^j)$ have sequences of unit eigenvectors associated with $\lambda_1(a(\nu)2^j)$ and $\lambda^E_1(a(\nu)2^j)$, respectively, which converge (in probability and deterministically, respectively) to the limiting unit vector
\begin{equation}\label{e:thetahat_consistency_in_terms_of_p}
\Big( \frac{|p_{22}|}{\sqrt{p^{2}_{12} + p^{2}_{22}}}, \frac{- p_{12} \hspace{1mm}\textnormal{sign}(p_{22})}{\sqrt{p^{2}_{12} + p^{2}_{22}}}\Big).
\end{equation}
If $P \in O(2)$, then eigenvectors of $H$ in two orthogonal directions can be consistently estimated by the eigenvectors of $W(a(\nu)2^j)$. In view of \eqref{e:thetahat_consistency_in_terms_of_p}, there is also a unit eigenvector associated with $\lambda_{2}(a(\nu)2^j)$ that converges to $(p_{21},-p_{11})^*$, since the latter is orthogonal to $(p_{22},-p_{12})^*$. It is easy to see that the latter vectors generate the same (eigen)spaces as $(p_{12},p_{22})^*$, $(p_{11},p_{21})^*$, respectively.

\end{remark}

\begin{remark}\label{r:on_p12=0}
In regard to the assumptions of Theorem \ref{t:weak_limit_eigenvector}, when $p_{22} = 0$, it is clear that \eqref{e:angle_estimator} cannot be consistent. By a similar proof to that of Theorem \ref{t:weak_limit_eigenvector}, asymptotic normality (with different variances) for $\{ \widehat{\theta}(a(\nu)2^j) - \theta(a(\nu)2^j)\}$ can also be established when $p_{12} = 0$ or $b_{12} = 0$. In the former case, for instance, the convergence rate is $a(\nu)^{h_2 - h_1}\sqrt{K_{a,j}}$ or $\sqrt{K_{a,j}}$, respectively, depending on whether $b_{12} \neq 0$ or both $b_{12} = 0$ and $p_{21} \neq 0$.
\end{remark}

\begin{remark}
In Theorem \ref{t:weak_limit_eigenvector}, the convergence rate is the non-standard $a(\nu)^{h_2-h_1}\sqrt{K_{a,j}} \sim a(\nu)^{h_2-h_1 -1/2}\sqrt{\nu /2^j} $, which depends on the parameters to be estimated $h_1$, $h_2$. In practice, since $a(\nu)$ is much slower than $\nu$ (see condition \eqref{e:a(nu)/J->infty}), its effect may not be noticeable (see Section \ref{s:simulation_studies} below).
\end{remark}

\begin{remark}
Substituting $\lambda_{2}(a(\nu)2^j)$ for $\lambda_{1}(a(\nu)2^j)$ in \eqref{e:angle_estimator} does not provide information on $p_{11}$ or $p_{21}$. Under mild assumptions on the parameters, Lemma \ref{l:elementary_results} yields
$$
\frac{a}{b}\Big(\frac{\lambda_{2}(a(\nu)2^{j})}{a} - 1 \Big) \stackrel{P}\rightarrow \frac{p_{22}}{p_{12}}.
$$
\end{remark}

\section{Simulation studies}
\label{s:simulation_studies}

In this section, we lay out and discuss a broad computational study of the performance of the proposed estimators. OFBM was simulated by means of the Hermir Toolbox, devised in Helgason et al.\ \cite{Helgason:Pipiras:Abry:2011a,Helgason:Pipiras:Abry:2011b} and available at {\tt www.hermir.org}. When describing the results, we drop the asymptotic scaling factor $a(\nu)$ used in Theorems \ref{t:asympt_normality_lambda1} and \ref{t:weak_limit_eigenvector} and only speak of shifting scales $2^j \in \bbN$.

The chosen sample path size was $N = 2^{16}$. The purpose of picking a large $N$ was to provide a compelling illustration of the estimators' ability to capture both Hurst eigenvalues. In biomedical applications, typical recordings can be much shorter (of the order $ N=2^{10}$; see, for instance, Ivanov \cite{ivanov:1999}). Nevertheless, sample paths of size $N = 2^{16}$ are, indeed, encountered in Internet traffic analysis (Abry et al.\ \cite{abfrv:2002}) and hydrodynamic turbulence (Frisch \cite{frisch:1995}).

In the wavelet analysis, we used least asymmetric orthogonal Daubechies wavelets with $N_\psi = 2$ (Daubechies \cite{daubechies:1992}, chapters 6--8). It was verified that similar conclusions can be obtained when using $N_\psi > 2$ or some other wavelet with a large enough number of vanishing moments.

\begin{remark}\label{r:discretized_wavelet_transform}
In practice, a continuous time OFBM sample path is not available and thus the theoretical wavelet coefficient $D(2^j,k)$ cannot be computed. Instead, one approximates the latter by means of the classical recursive (or pyramidal) discrete filter bank algorithm (Mallat \cite{mallat:1999}, chapter 7). The algorithm's main input is a discrete time sequence, which, following Veitch et al.\ \cite{VEITCH:2003:A}, can be a discrete OFBM sample $\{B_H(k)\}_{k \in T}$, $T \subseteq \bbZ$. In Section \ref{s:discretized_wavelet}, we lay out in detail the mathematical framework for estimation based on discretized wavelet coefficients. In the main result of the section, Theorem \ref{t:asympt_log_a(nu)_discrete}, we establish that, under mild conditions, the estimated Hurst eigenvalues and eigenvector stemming from the pyramidal algorithm also satisfy the weak limits \eqref{e:claim_theo_asympt_normality_lambda1} and \eqref{e:thetahat_weak_limit}, respectively.
\end{remark}

\subsection{Entry-wise vs eigenvalue-based estimation}\label{s:entry-wise_vs_eigenvalue}

A matrix $W(2^j)$ was computed from a single sample path (of size $N = 2^{16}$) of a synthetic OFBM with matrix parameters
$
P =  \left(\begin{array}{cc}
0.98 & 0.57 \\
0.20 & 0.82
\end{array}\right)$, $J_H =  \textnormal{diag}(0.25,0.85)$. In the spirit of the entry-wise approach, the wavelet-based estimation of the Hurst eigenvalues relies on performing a linear regression on a $\log_2 W(2^j) $ vs $ j = \log_2 2^j$ diagram, motivated by the log-transformed scalar version of $EW(2^j)$ as in $(P5)$, i.e., $ E W(2^j) = C(H) 2^{j 2H}$ for some $C(H) > 0$. This is shown for each auto- and cross-wavelet components of the bivariate OFBM in Figure \ref{fig:figa}. The top row displays, in order, plots for $\log_2 W(2^j)_{1,1} $ vs $ j = \log_2 2^j$, $\log_2 W(2^j)_{1,2} $ vs $ j = \log_2 2^j$ and $\log_2 W(2^j)_{2,2} $ vs $ j = \log_2 2^j$. The asymptotic behavior $j 2 h_2$ is superimposed on each of these plots. In view of the expression \eqref{e:Shat=a(nu)H_EW(j)_a(nu)H*=(a_b_b_c)}, it is unsurprising that at coarse scales all auto- and cross-components end up driven by the dominant Hurst eigenvalue $h_2$. In other words, the conspicuous prevalence of the latter precludes the estimation of the Hurst eigenvalue $h_1$. By contrast, Figure \ref{fig:figa}, bottom row, displays, in order, plots for $\log_2 \lambda_1(2^j) $ vs $ j = \log_2 2^j$ and $\log_2 \lambda_2(2^j) $ vs $ j = \log_2 2^j$, as well as the superimposed asymptotic behaviors $j 2 h_1$ and $j 2 h_2$, respectively. The eigenvalue-based procedure leads to two scaling laws individually driven by each of the Hurst eigenvalues $h_1$ and $h_2$.

\subsection{Estimation performance and asymptotic normality}

We also numerically synthesized $10,000$ bivariate OFBM sample paths to study the finite-sample effectiveness of the normal approximation described in Theorem \ref{t:asympt_normality_lambda1}. For each path, the estimates $W(2^j)$, $\lambda_1(2^j)$, $ \lambda_2(2^j)$ and $ \hat v_2(2^j)/\hat v_1(2^j)$ were computed. Averaging over realizations yields Monte Carlo estimates $ \hat E \lambda_1(2^j)  $, $ \hat E \lambda_2(2^j)  $ and $\hat E p_{12}(2^j)/p_{22}(2^j) $ of the ensemble averages $ E\lambda_1(2^j)$, $ E \lambda_2(2^j)$ and $- E \widehat{\theta}(2^j) $, together with estimates of the variances $\widehat{\textnormal{Var}} \lambda_1(2^j)$, $\widehat{\textnormal{Var}} \lambda_2(2^j) $, $\widehat{\textnormal{Var}} \hspace{0.5mm}\theta(2^j)$.

In Figures \ref{fig:figcc}, \ref{fig:figb} and \ref{fig:figc}, the simulated OFBM has parameter $ J_H =  \textnormal{diag}(0.25,0.85)$ and path size $N = 2^{16}$ for the three cases, but different mixing matrices of the general form
$P =  \left(\begin{array}{cc}
1/\sqrt{1+\gamma^2} & \beta/\sqrt{1+\beta^2} \\
\gamma/\sqrt{1+\gamma^2} & 1/\sqrt{1+\beta^2}
\end{array}\right) $.
In this parametrization, $\beta = -\theta$ (see \eqref{e:theta=-p12/p22}). The simulated instances are representative of different parametric settings. The choice $\beta=0.7$, $\gamma =0.2$ (Figure \ref{fig:figcc}) illustrates the situation of a general mixing matrix $P$. The choice $\beta= -\gamma$ and $\beta/\sqrt{1+\beta^2} = \sin \pi/6$  (Figure \ref{fig:figb}) represents the case $P \in O(2)$, whereas $\gamma = 0 $ and $\beta= 0.2$ (Figure \ref{fig:figc}) portrays the case often referred to as fractional cointegration. A comparison of the estimates (black solid lines with `o') with the theoretical values (red dashed lines) in Figures \ref{fig:figcc}, \ref{fig:figb}, and \ref{fig:figc} reveals the robustness of the proposed estimators (top row). The qq-plots (bottom row) also show that no deviation from a ${\cal N}(0,1)$ distribution can be observed within $\pm 2$ standard deviations for $\log_2 \lambda_1(2^j) $ and $\log_2 \lambda_1(2^j) $, and within $\pm 2$ standard deviations for $p_{12}/p_{22}$.

The simulations indicate that, beyond asymptotics, Theorem \ref{t:asympt_normality_lambda1} for $\widehat{h}_1(2^j) $ and $\widehat{h}_2(2^j) $, and Theorem \ref{t:weak_limit_eigenvector} for $\widehat{\theta}(2^j) = - \widehat{\beta}(2^j)$ provide effective normal approximations to the finite-sample estimator distributions. Whether or not $P \in O(2)$ does not impact the performance of $\log_2 \lambda_1 (2^j)/ 2 j$, $\log_2 \lambda_2(2^j) / 2 j $ and $\widehat{\beta}(2^j)$. However, assuming $P \in O(2)$ additionally allows for the full estimation of $P$, and thus of $H$, as discussed in Remark~\ref{r:ceigenv} and illustrated in Figure \ref{fig:figd}.

\begin{figure}[h]
\centerline{
\includegraphics[height=40truemm,keepaspectratio]{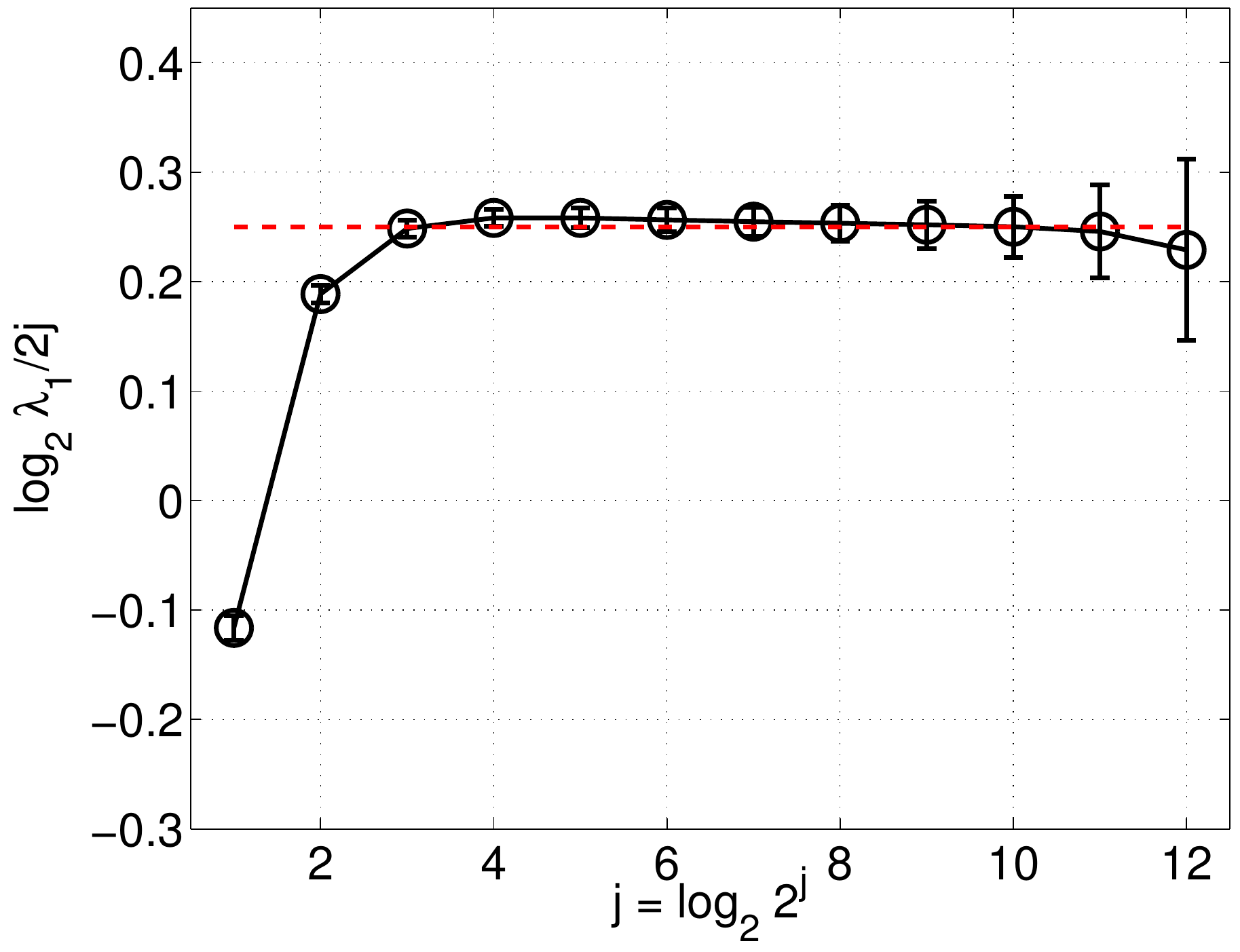}
\includegraphics[height=40truemm,keepaspectratio]{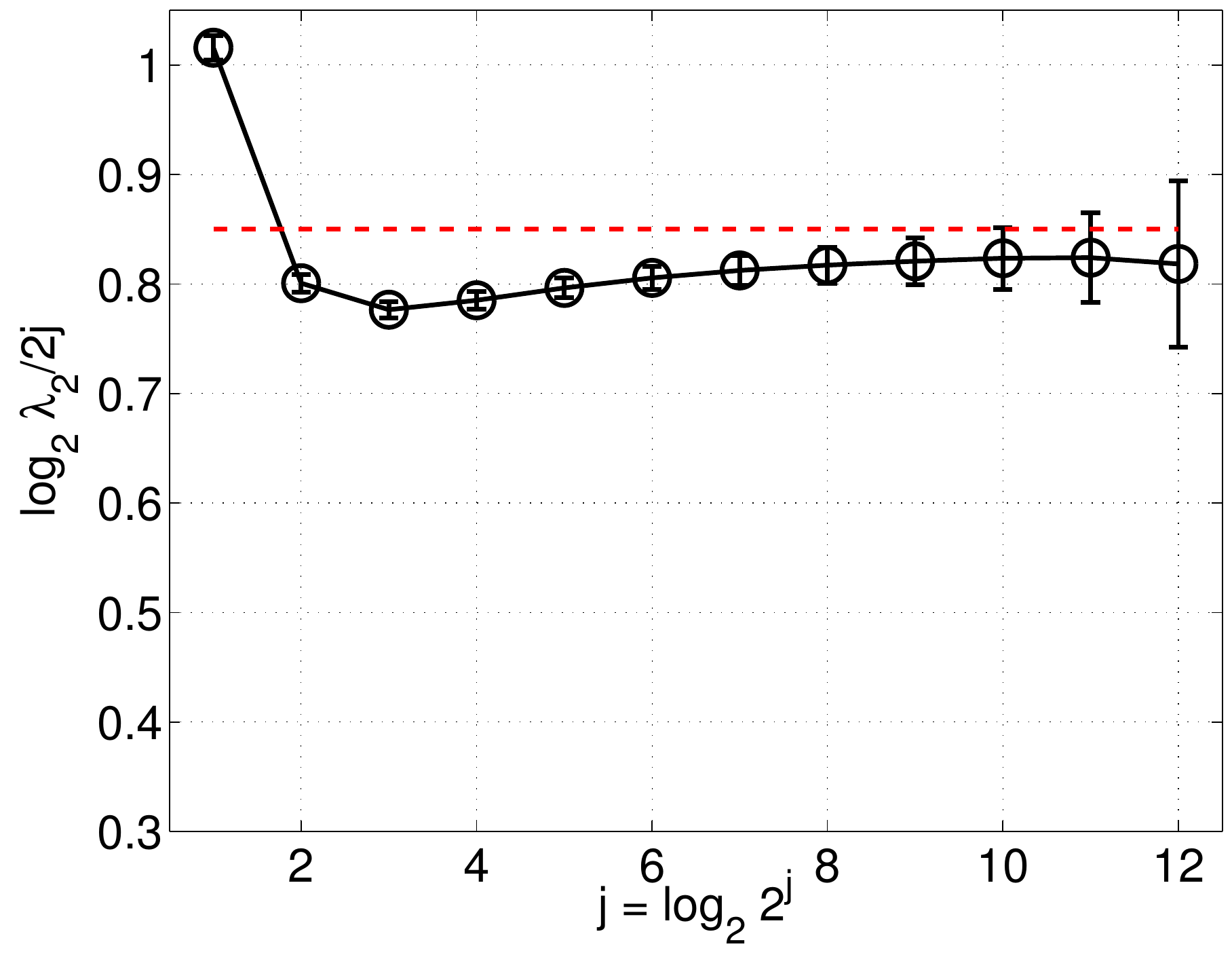}
\includegraphics[height=40truemm,keepaspectratio]{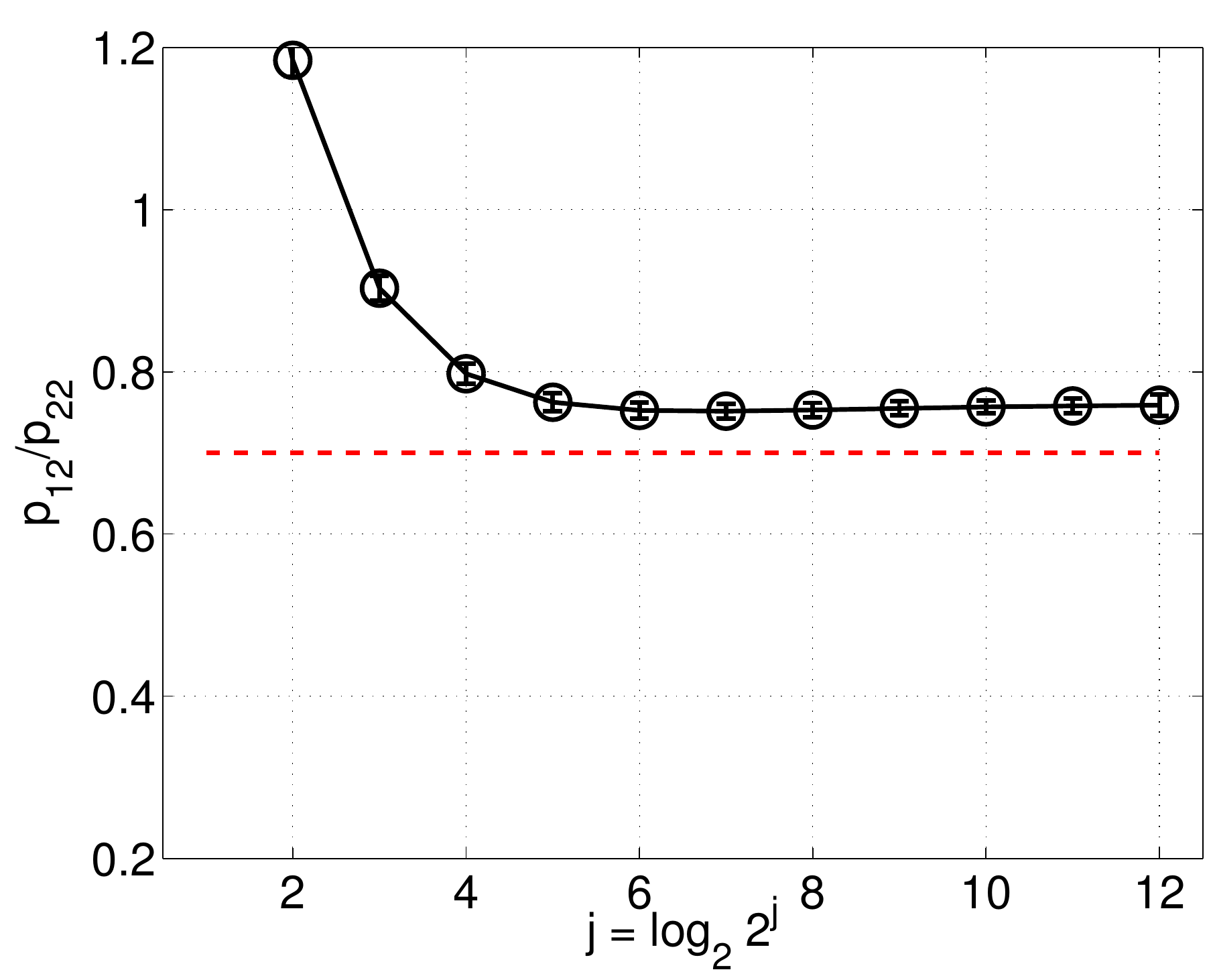}
}
\centerline{
\includegraphics[height=40truemm,keepaspectratio]{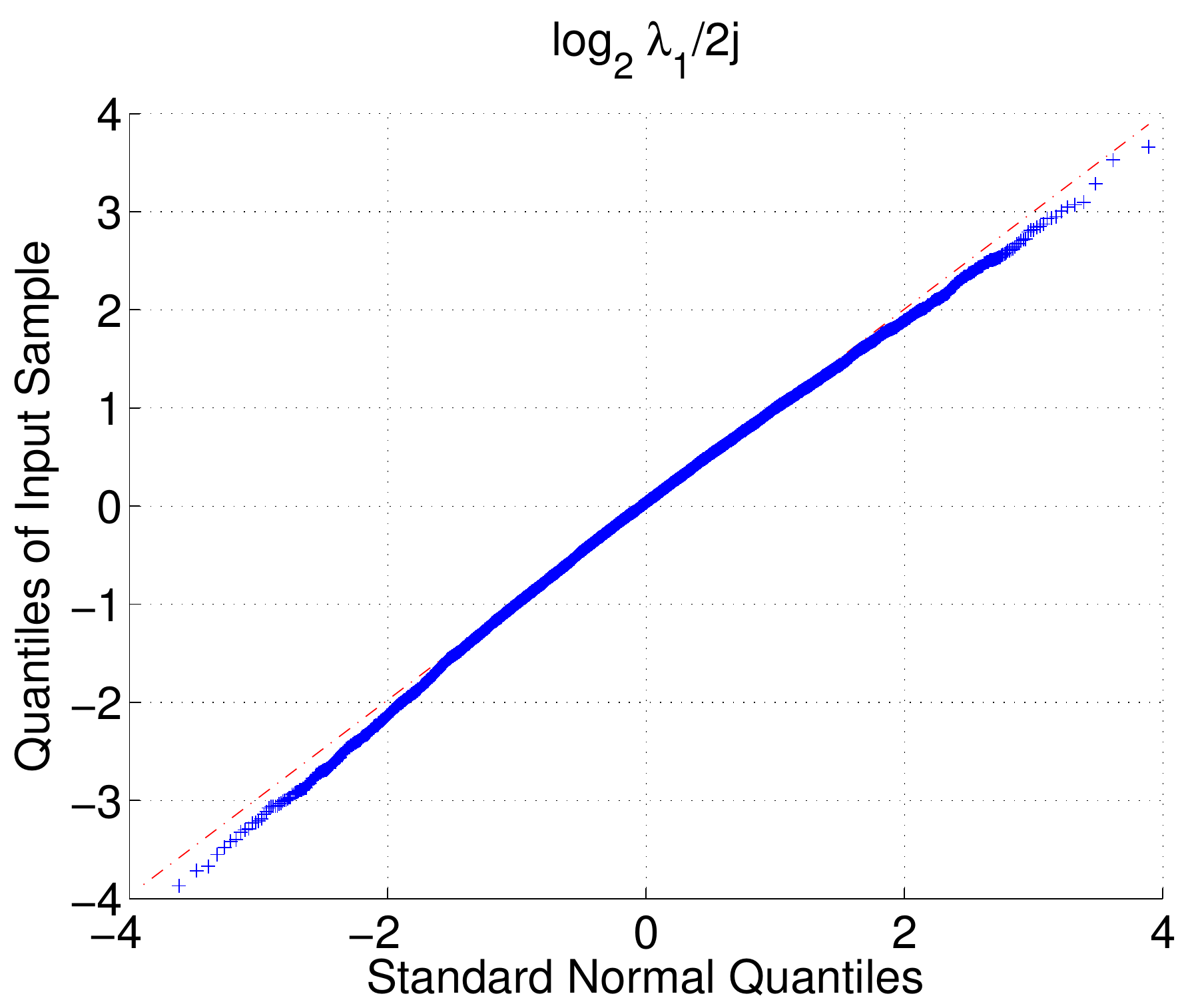} \hspace{3mm}
\includegraphics[height=40truemm,keepaspectratio]{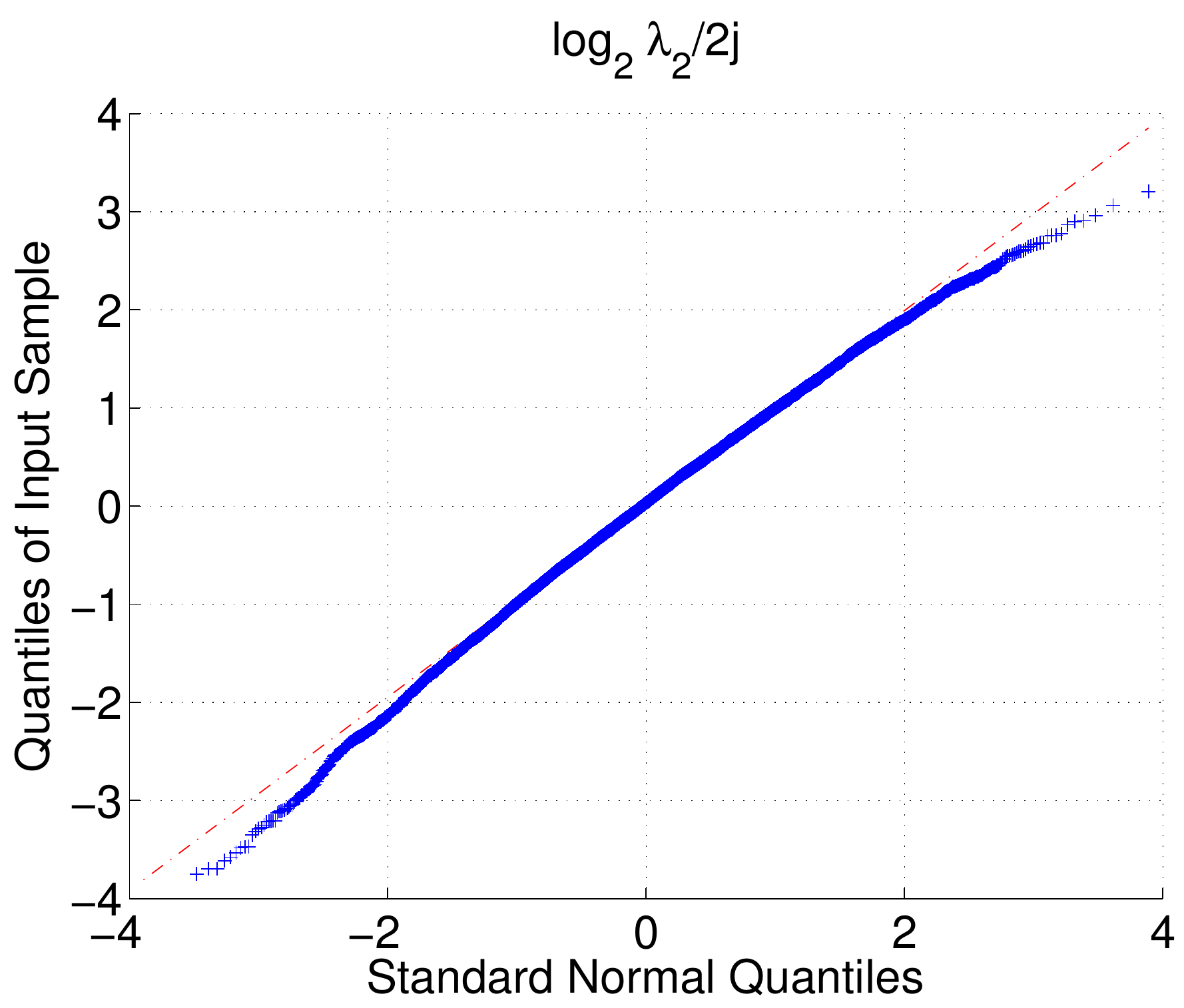} \hspace{3mm}
\includegraphics[height=40truemm,keepaspectratio]{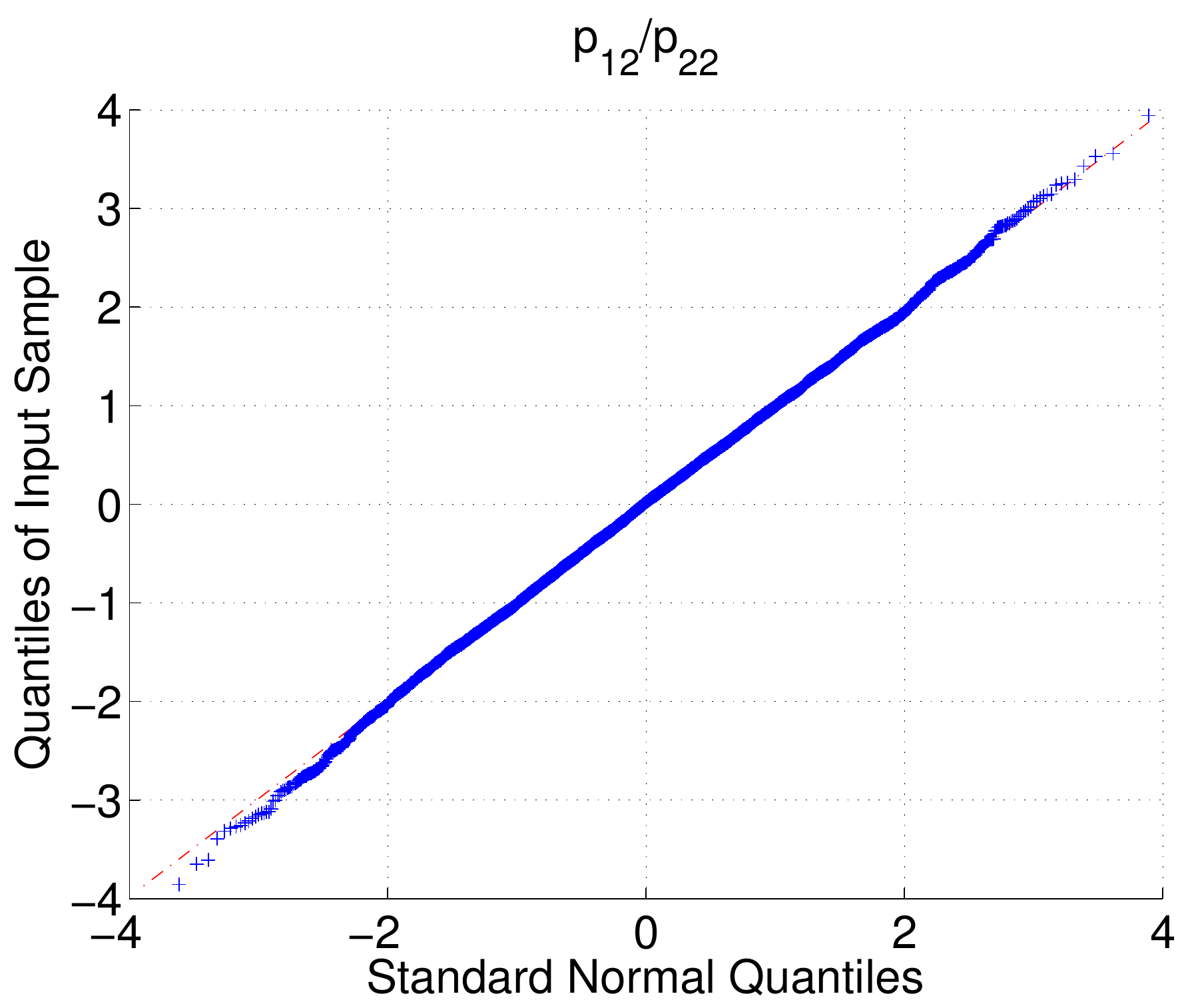}
}
\caption{\label{fig:figcc} {\bf  Estimation performance and asymptotic normality:} OFBM with $\gamma = 0.2 $ and $\beta= 0.7$. Top row: black solid lines with `o' show $ \hat E \widehat{\vartheta} \pm \sqrt{ \widehat{ \textnormal{ Var }}  \widehat{\vartheta} /n}$ for $\widehat{\vartheta} = \log_2 \lambda_1 (2^j)/ 2 j$,  $\log_2 \lambda_2(2^j) / 2 j$ and $p_{12}(2^j)/p_{22}(2^j)$ (target parameters: $h_1$ (left plots), $h_2$ (center plots), $p_{12}/p_{22}$ (right plots), respectively), red dashed lines corresponding to theoretical values. Bottom row, the corresponding qq-plots (against ${\cal N}(0,1)$ distributions) for $j=10$.
}
 \end{figure}

\begin{figure}[h]
\centerline{
\includegraphics[height=40truemm,keepaspectratio]{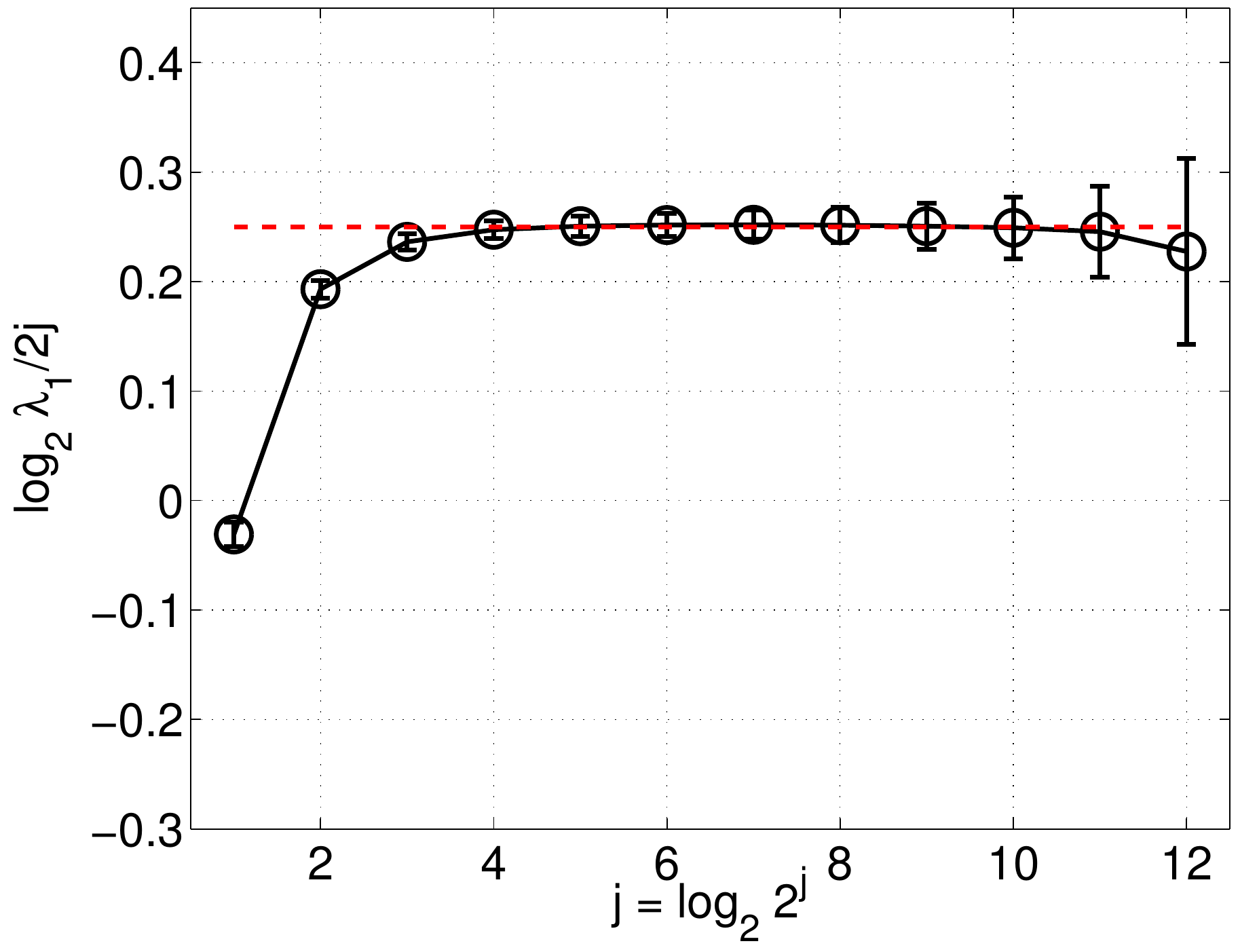}
\includegraphics[height=40truemm,keepaspectratio]{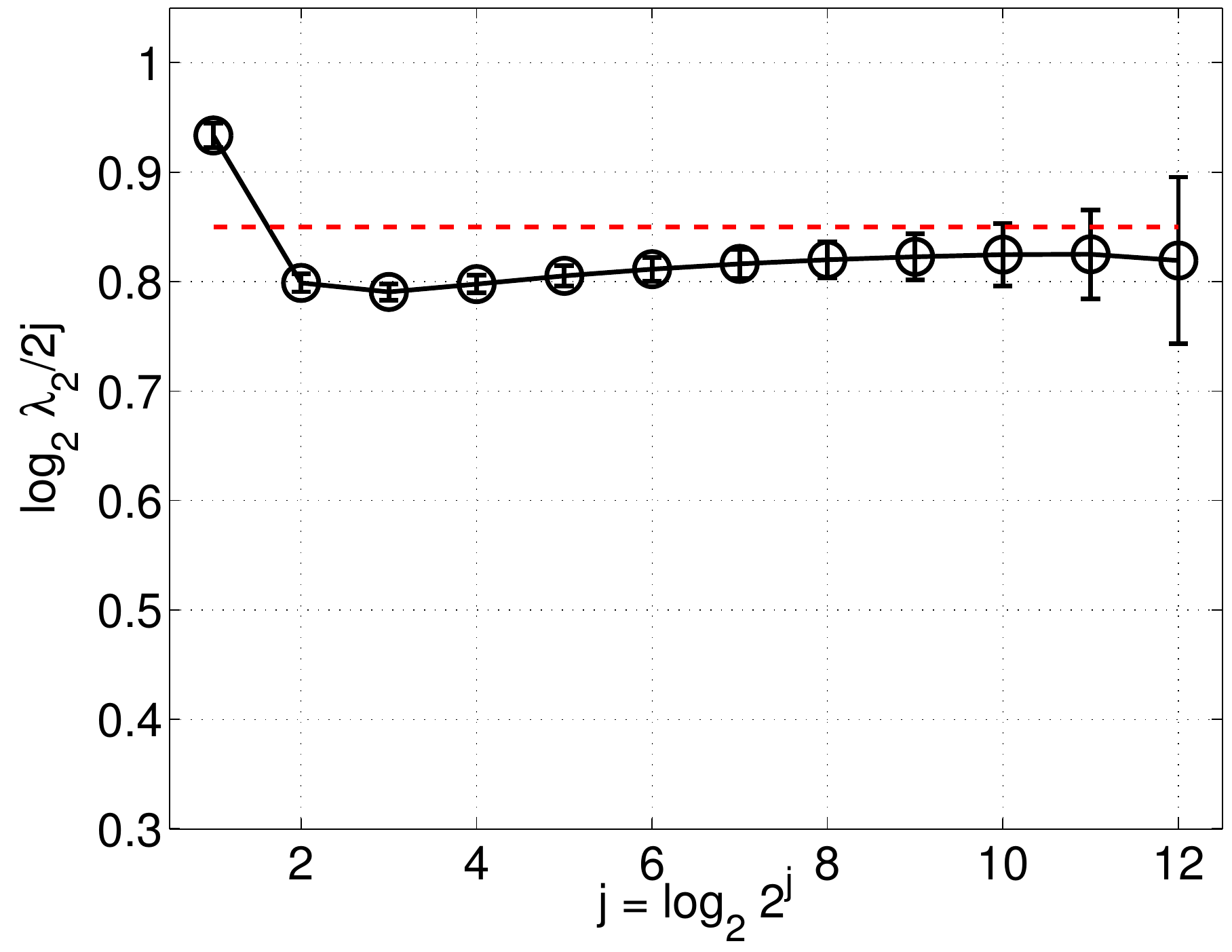}
\includegraphics[height=40truemm,keepaspectratio]{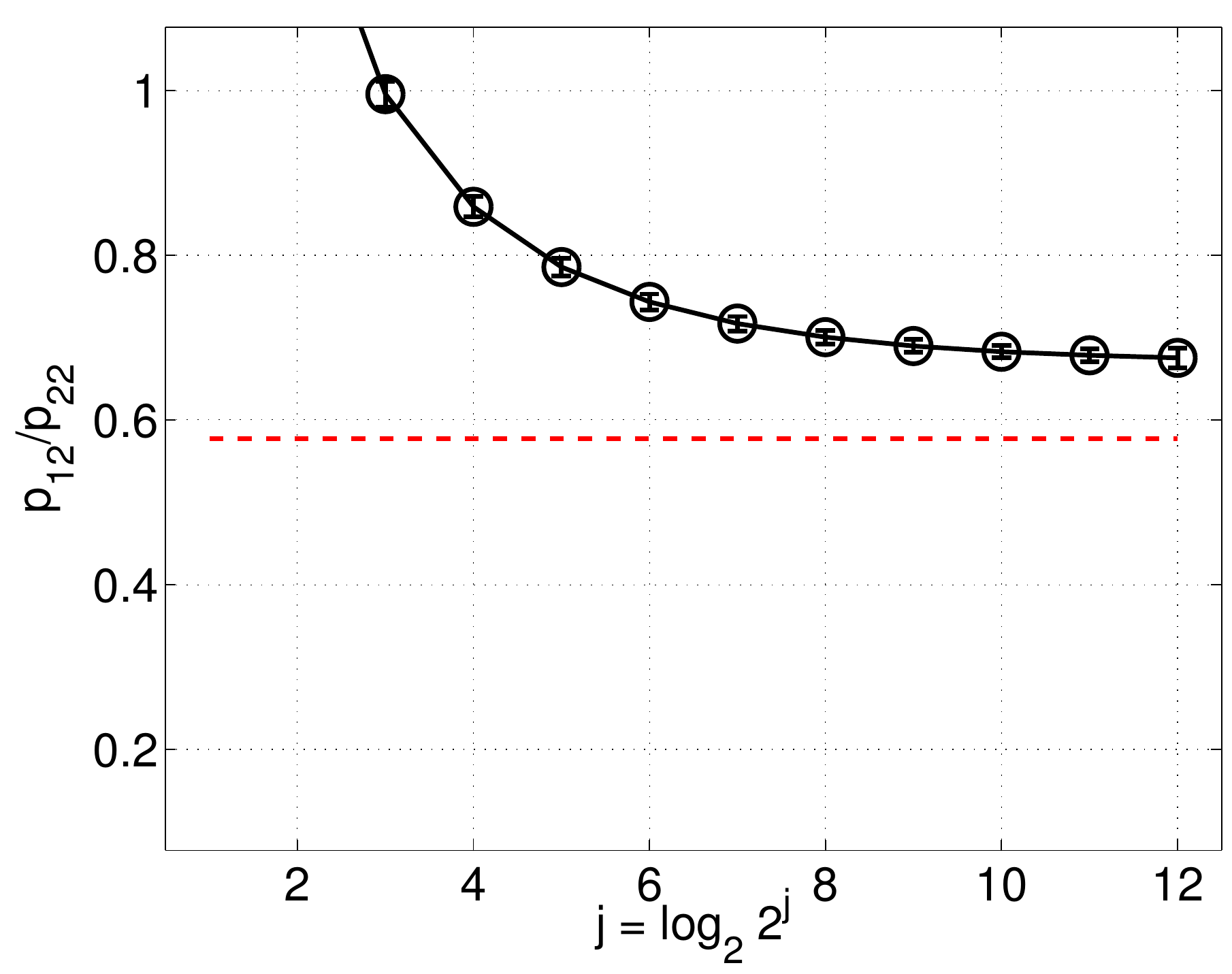}
}
\centerline{
\includegraphics[height=40truemm,keepaspectratio]{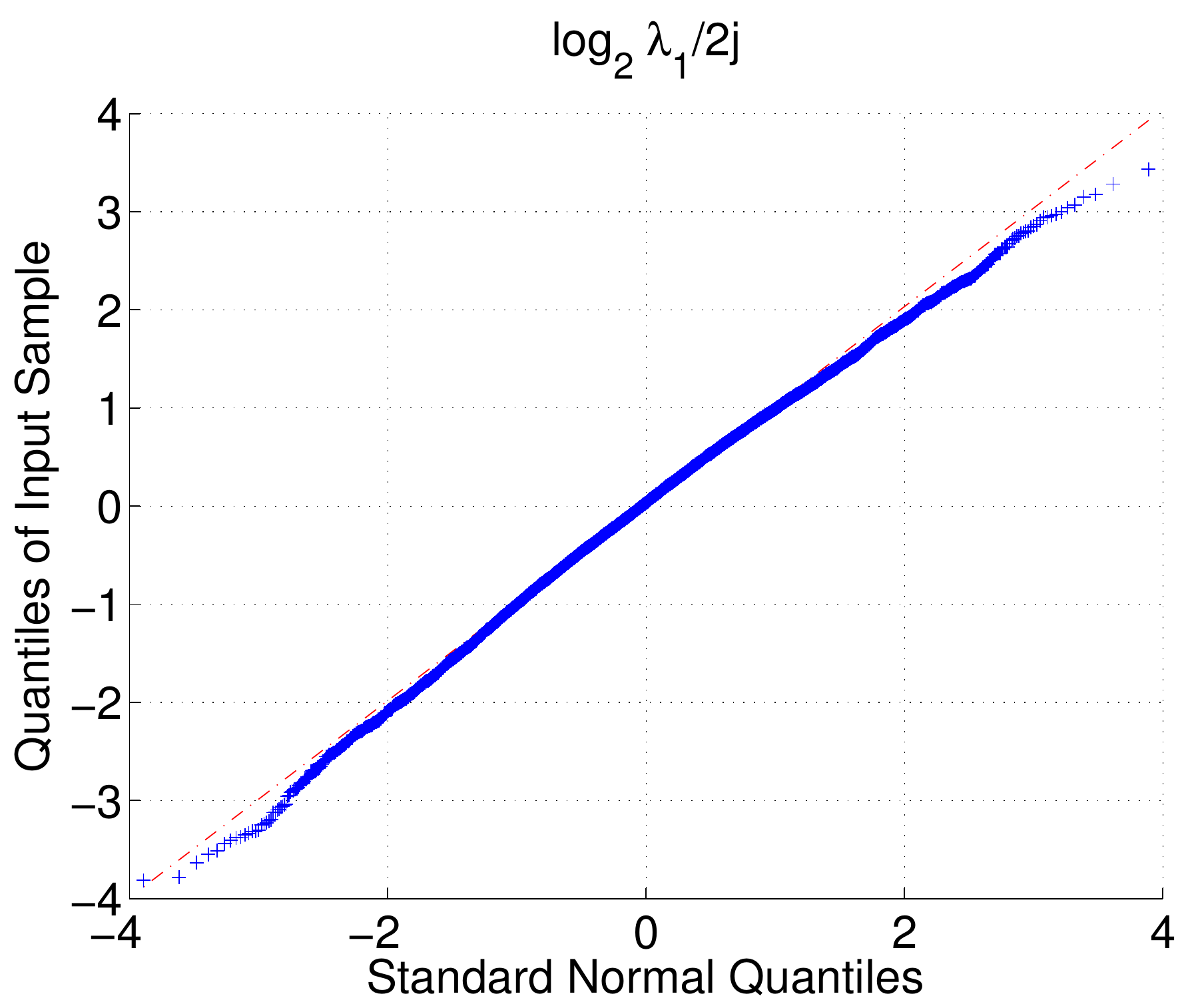} \hspace{3mm}
\includegraphics[height=40truemm,keepaspectratio]{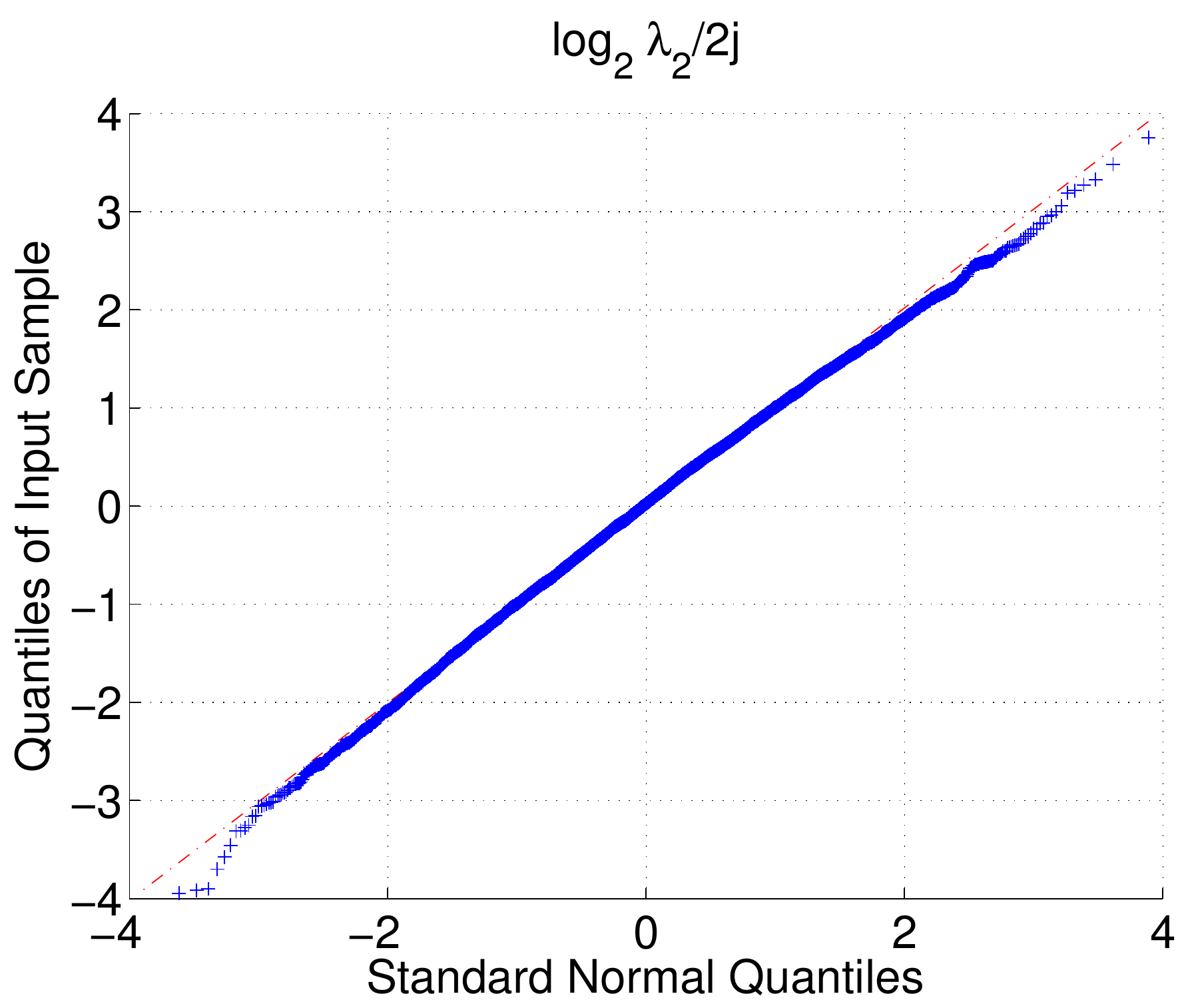} \hspace{3mm}
\includegraphics[height=40truemm,keepaspectratio]{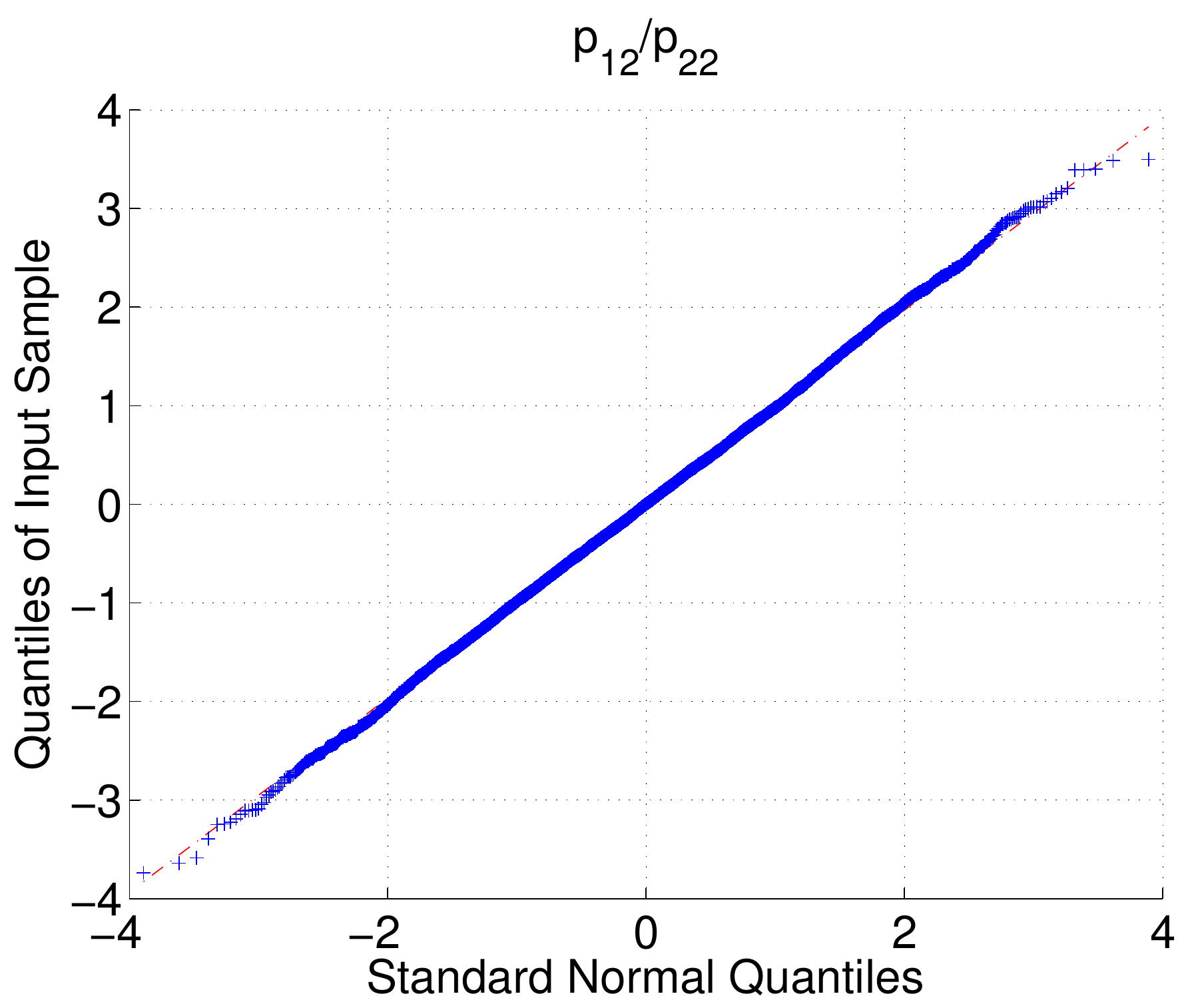}
}
\caption{\label{fig:figb} {\bf Estimation performance and asymptotic normality:}  OFBM with $\gamma = -\beta $ and $\beta/\sqrt{1+\beta^2} = \sin \pi/6$.
Top row: black solid lines with `o' show $ \hat E \widehat{\vartheta} \pm \sqrt{ \widehat{ \textnormal{ Var }}  \widehat{\vartheta} /n}$
for $\widehat{\vartheta} = \log_2 \lambda_1 (2^j)/ 2 j$,  $\log_2 \lambda_2(2^j) / 2 j $ and $p_{12}(2^j)/p_{22}(2^j)$ (target parameters: $h_1$ (left plots), $h_2$ (center plots), $p_{12}/p_{22}$ (right plots), respectively), red dashed lines correspond to theoretical values.
Bottom row, corresponding qq-plots (against ${\cal N}(0,1)$ distributions) for $ j=10 $.
}
 \end{figure}

 \begin{figure}[h]
\centerline{
\includegraphics[height=40truemm,keepaspectratio]{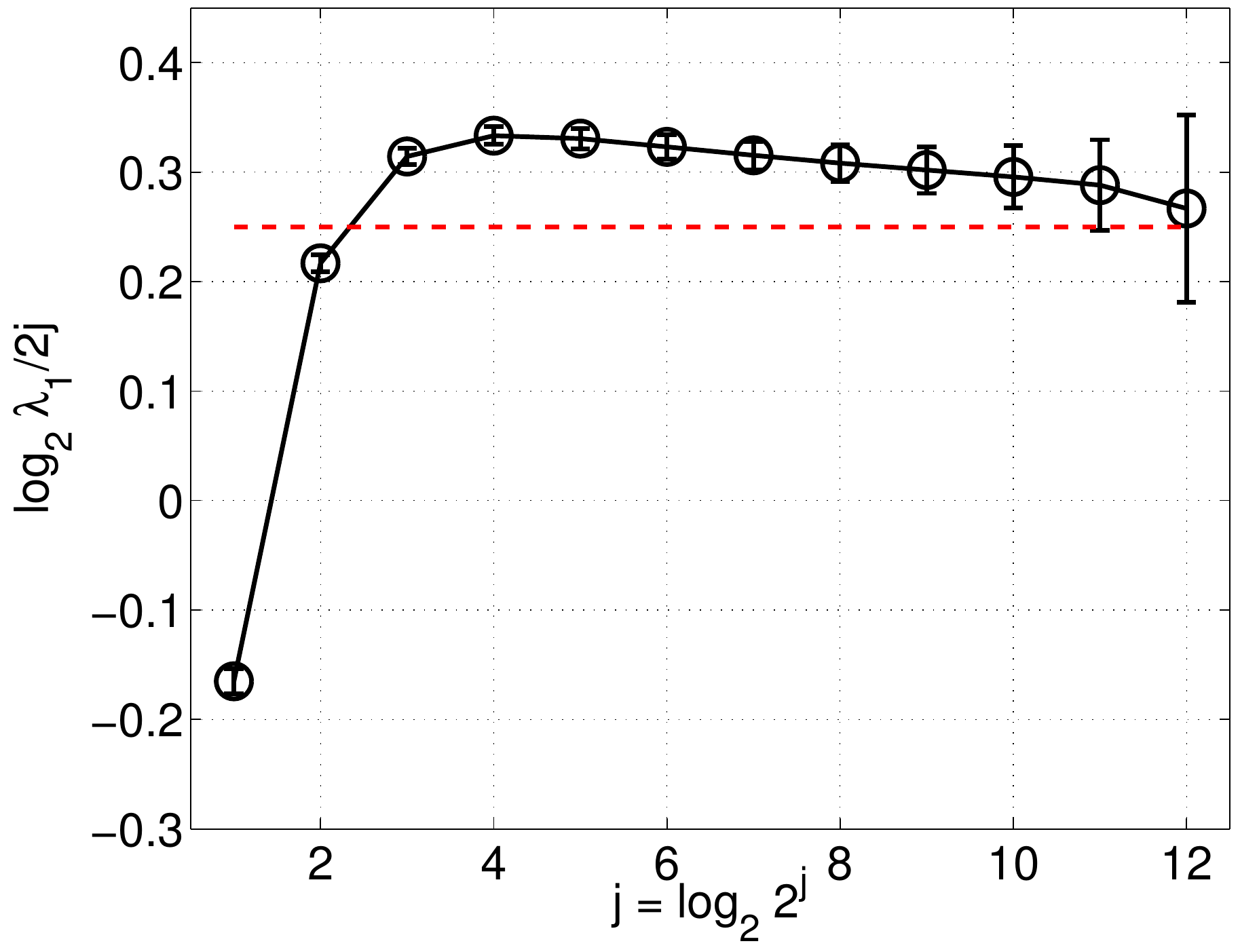}
\includegraphics[height=40truemm,keepaspectratio]{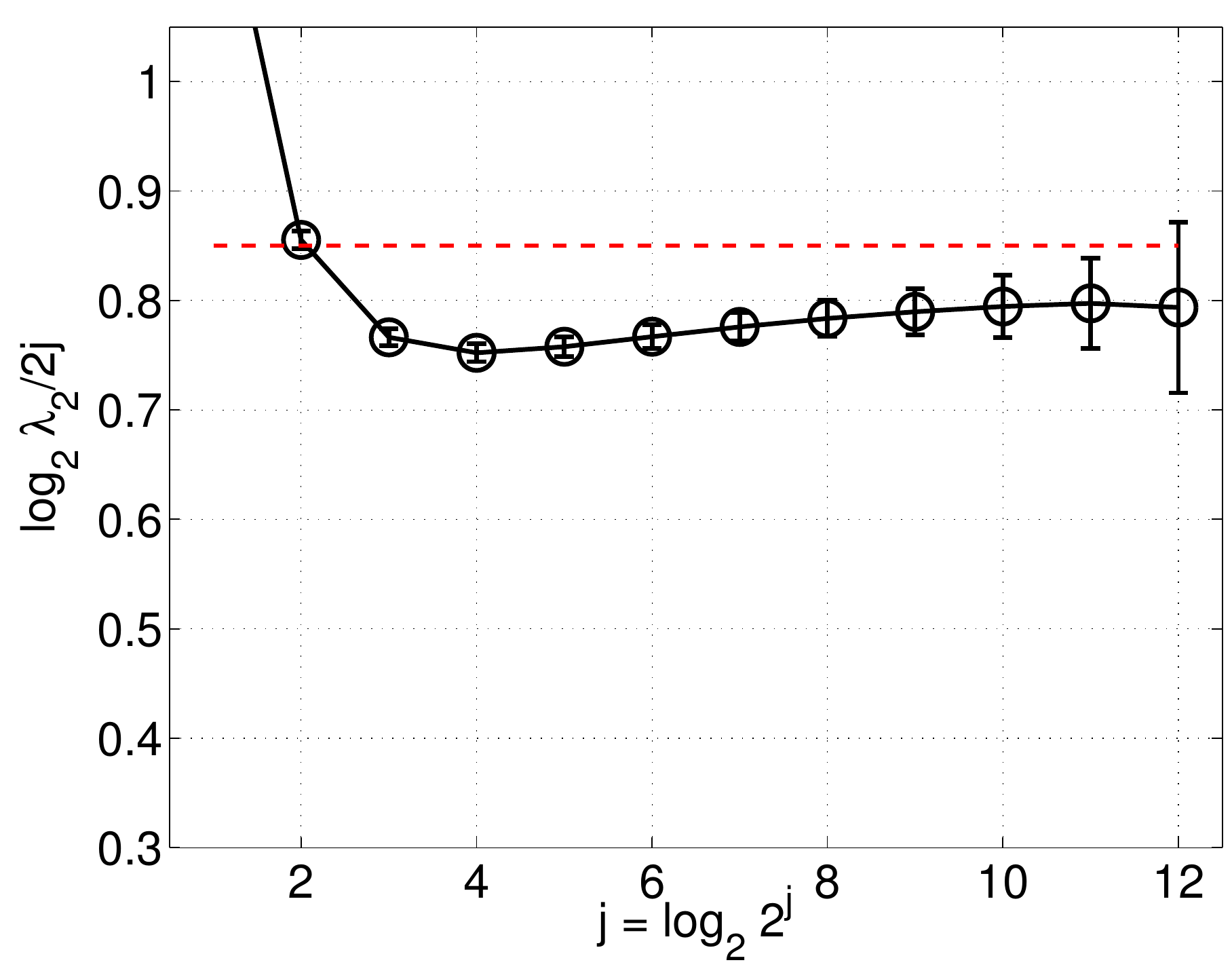}
\includegraphics[height=40truemm,keepaspectratio]{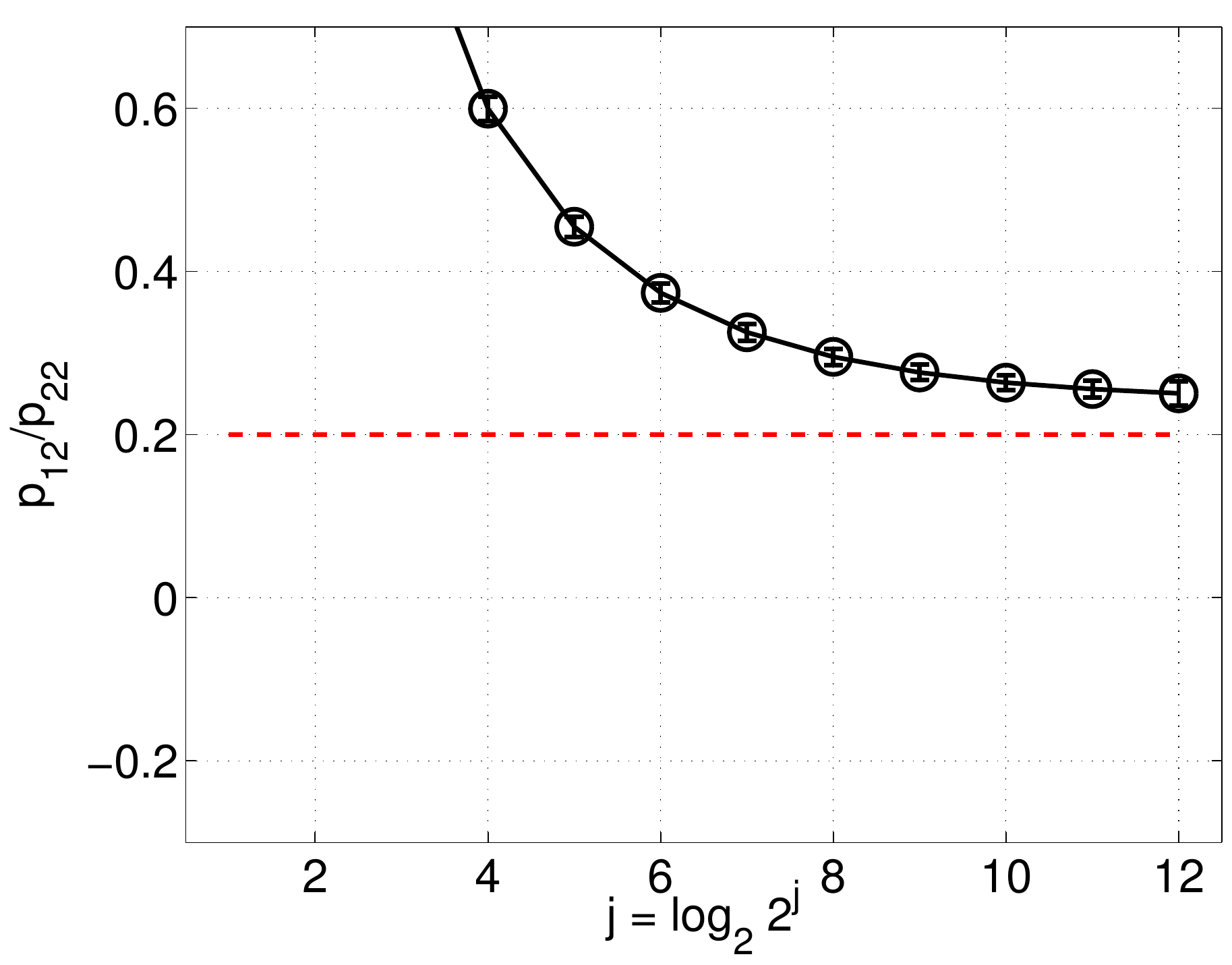}
}
\centerline{
\includegraphics[height=40truemm,keepaspectratio]{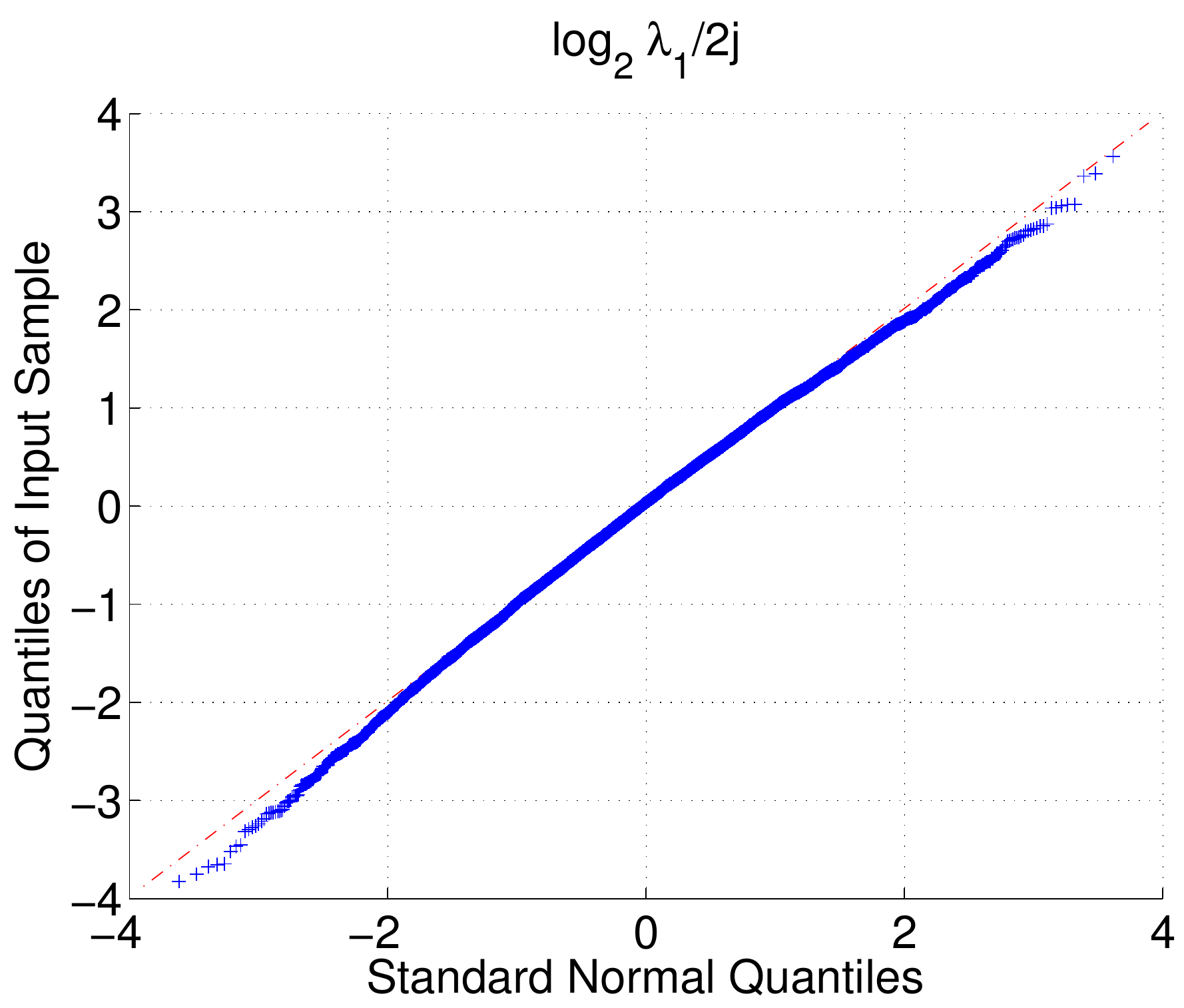} \hspace{3mm}
\includegraphics[height=40truemm,keepaspectratio]{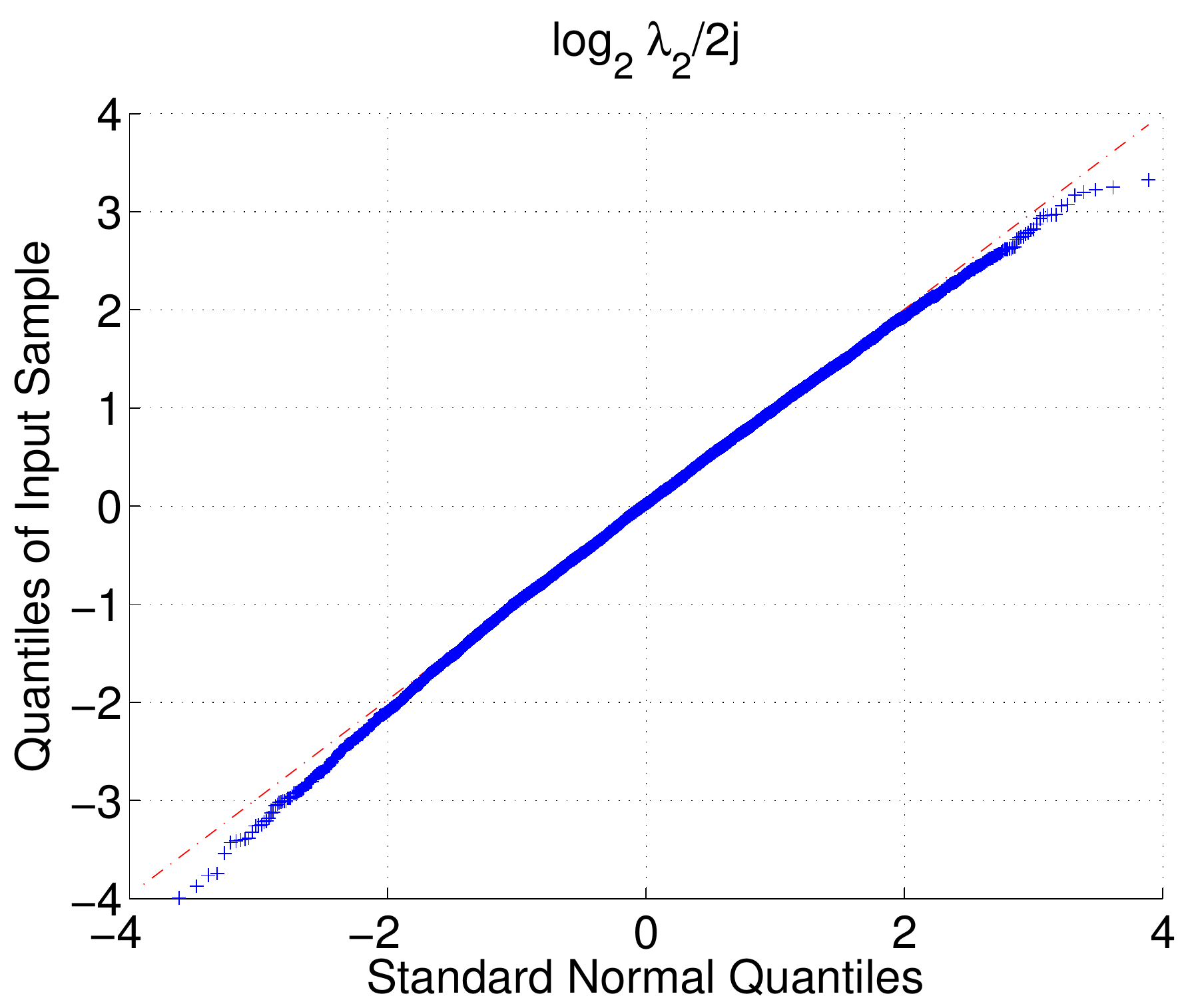} \hspace{3mm}
\includegraphics[height=40truemm,keepaspectratio]{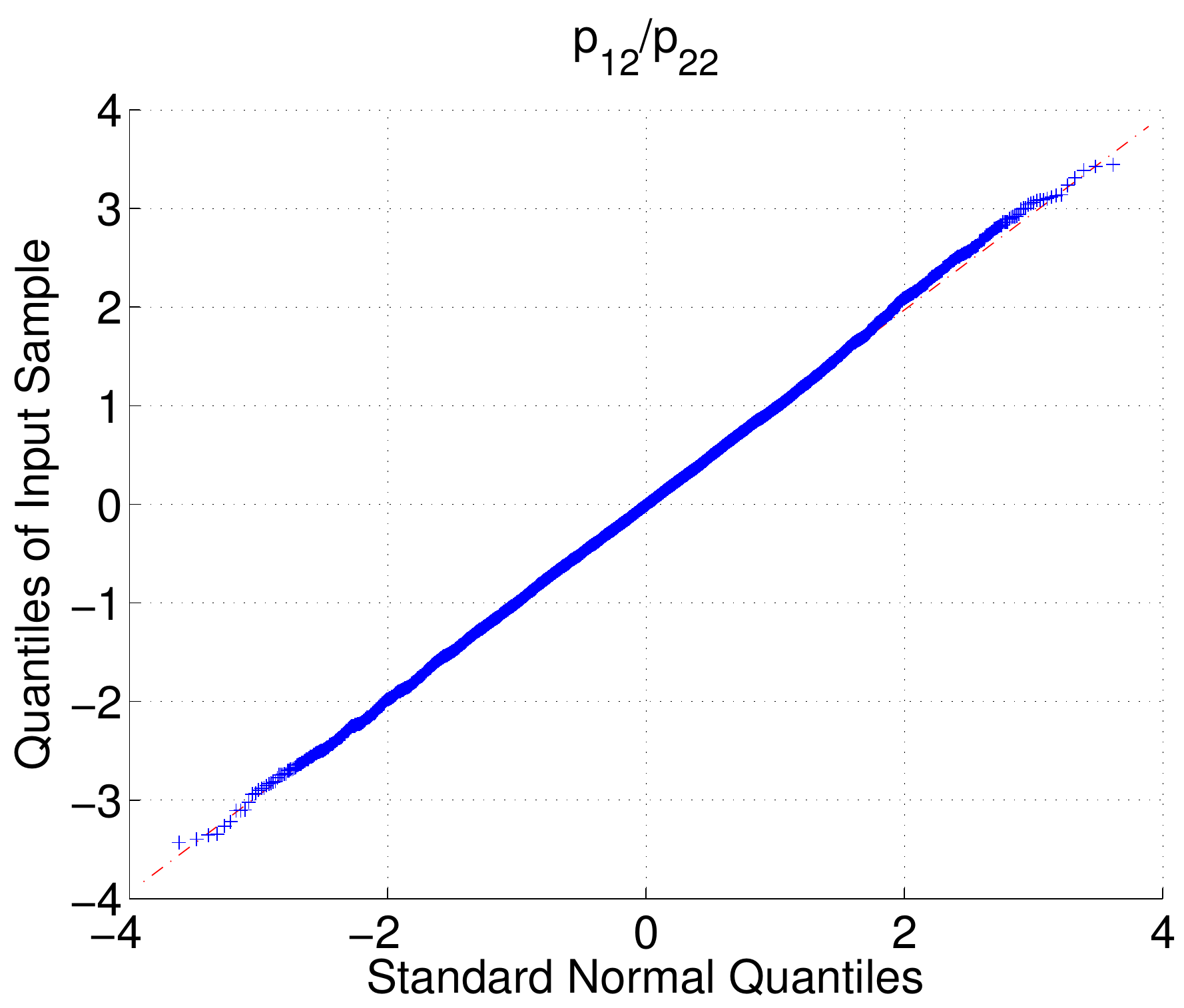}
}
\caption{\label{fig:figc} {\bf  Estimation performance and asymptotic normality:}  OFBM with $\gamma = 0 $ and $\beta= 0.2$.
Top row: black `o' in solid lines show $ \hat E \widehat{\vartheta} \pm \sqrt{ \widehat{ \textnormal{ Var }}  \widehat{\vartheta} /n}$
for $\widehat{\vartheta} = \log_2 \lambda_1 (2^j)/ 2 j$,  $\log_2 \lambda_2(2^j) / 2 j $ and $p_{12}(2^j)/p_{22}(2^j)$ (target parameters: $h_1$ (left plots), $h_2$ (center plots), $p_{12}/p_{22}$ (right plots), respectively), red dashed lines correspond to theoretical values.
Bottom row, corresponding qq-plots (against ${\cal N}(0,1)$ distributions) for $ j=10 $.
}
 \end{figure}

 \begin{figure}[h]
\centerline{
\includegraphics[height=40truemm,keepaspectratio]{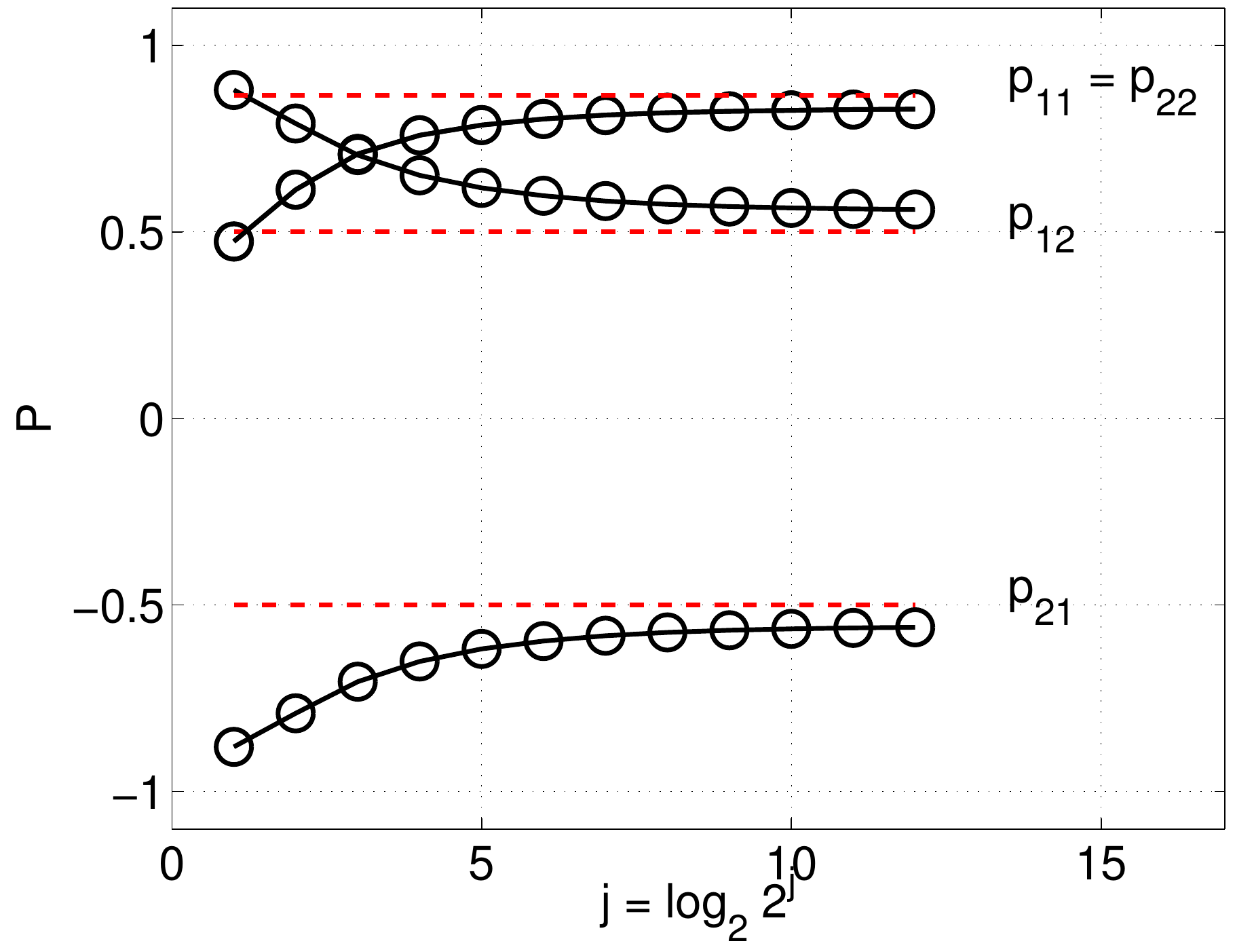}
}
\caption{\label{fig:figd} {\bf  Estimation performance when $P \in O(2)$:} OFBM with $\gamma = -\beta $ and $\beta/\sqrt{1+\beta^2} = \sin \pi/6$.
Estimates $\hat p_{k,l}(2^j) $, $k,l \in \{1, 2\}^2$ (black solid lines with `o')  of the entries of $P$, $p_{k,l} $ (red dashed line), from the eigenvectors of the $W(2^j)$. When $P \in O(2)$, both $P$ and $J_H$ can be estimated, and the matrix exponent $H$ is fully estimated.
}
\end{figure}

\subsection{Beyond the bivariate setting}

It is natural to ask whether in higher dimension the eigenvalues of the sample wavelet variance are good estimators of the Hurst eigenvalues. Figure \ref{fig:fige} shows eigenvalue-based estimation at work for $n=4$, with $\log_2 \lambda_p(2^j) /2j$, for $p=1, 2, 3, 4 $, averaged over 2,000 realizations of an OFBM with parameters
$$
P =  \left(\begin{array}{cccc}
0.90 &   -0.22 & -0.30 &   -0.22 \\
    0.43 &   0.45 &    0.63 &    0.46 \\
         0  & -0.85 &    0.40&    0.30 \\
         0   &      0  &  -0.59  & 0.81 \\
\end{array}\right)$$
and $ J_H =  \textnormal{diag}(0.20,0.40,0.70,0.90)$, $ N = 2^{16}$. The computational results indicate that the smallest and largest eigenvalues of $W(2^j)$ are still good estimators of the smallest and largest entries of $J_H$. Furthermore, there is evidence that the method produces reasonable estimates of all the intermediate-valued entries of $J_H$ based on the corresponding intermediate eigenvalues of $W(2^j)$. In regard to eigenvector estimation under the assumption $P \in O(4)$, unreported numerical studies suggest that the sample wavelet variance eigenvectors are also good estimators for $P$. 

 \begin{figure}[h]
\centerline{
\includegraphics[height=40truemm,keepaspectratio]{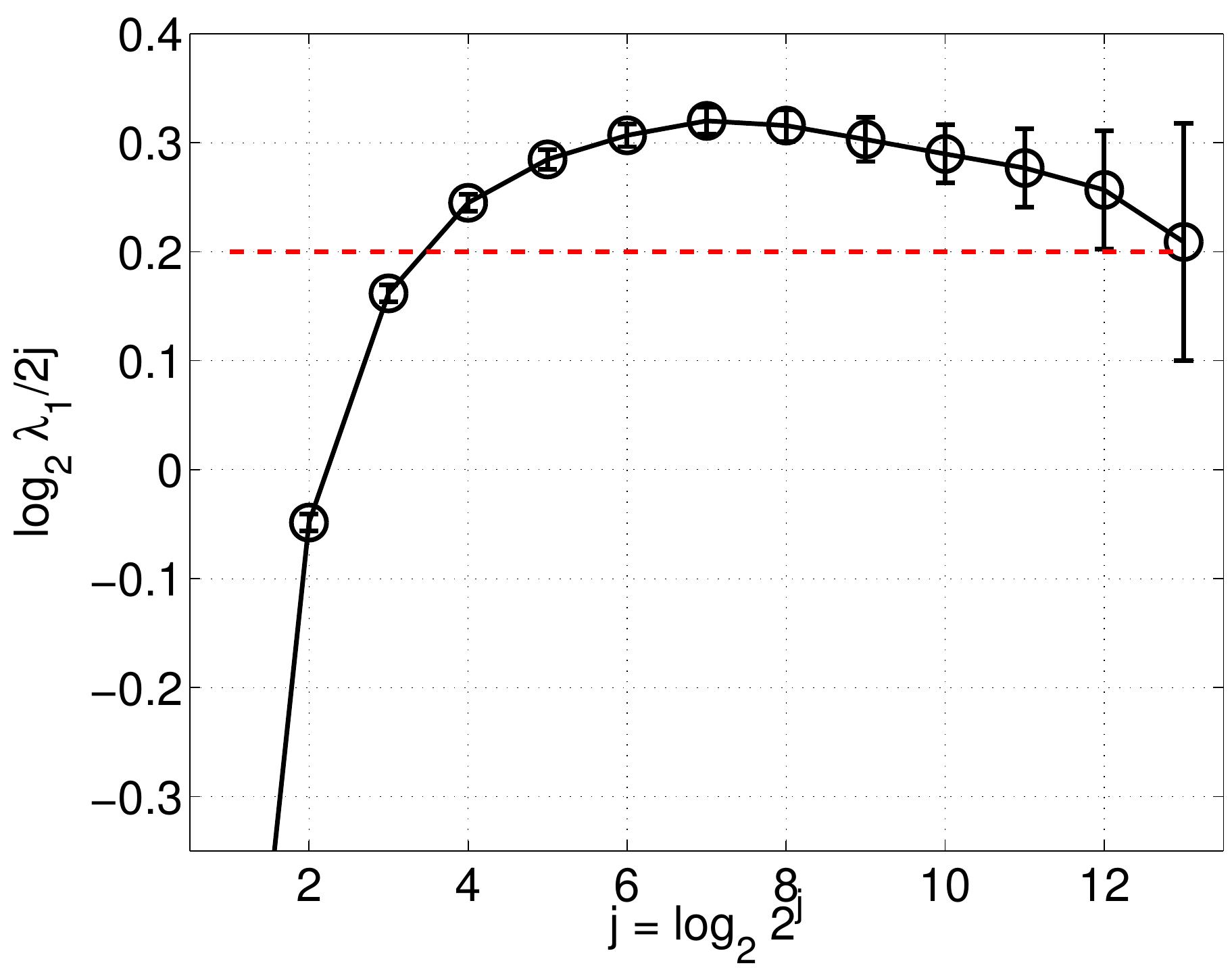}
\includegraphics[height=40truemm,keepaspectratio]{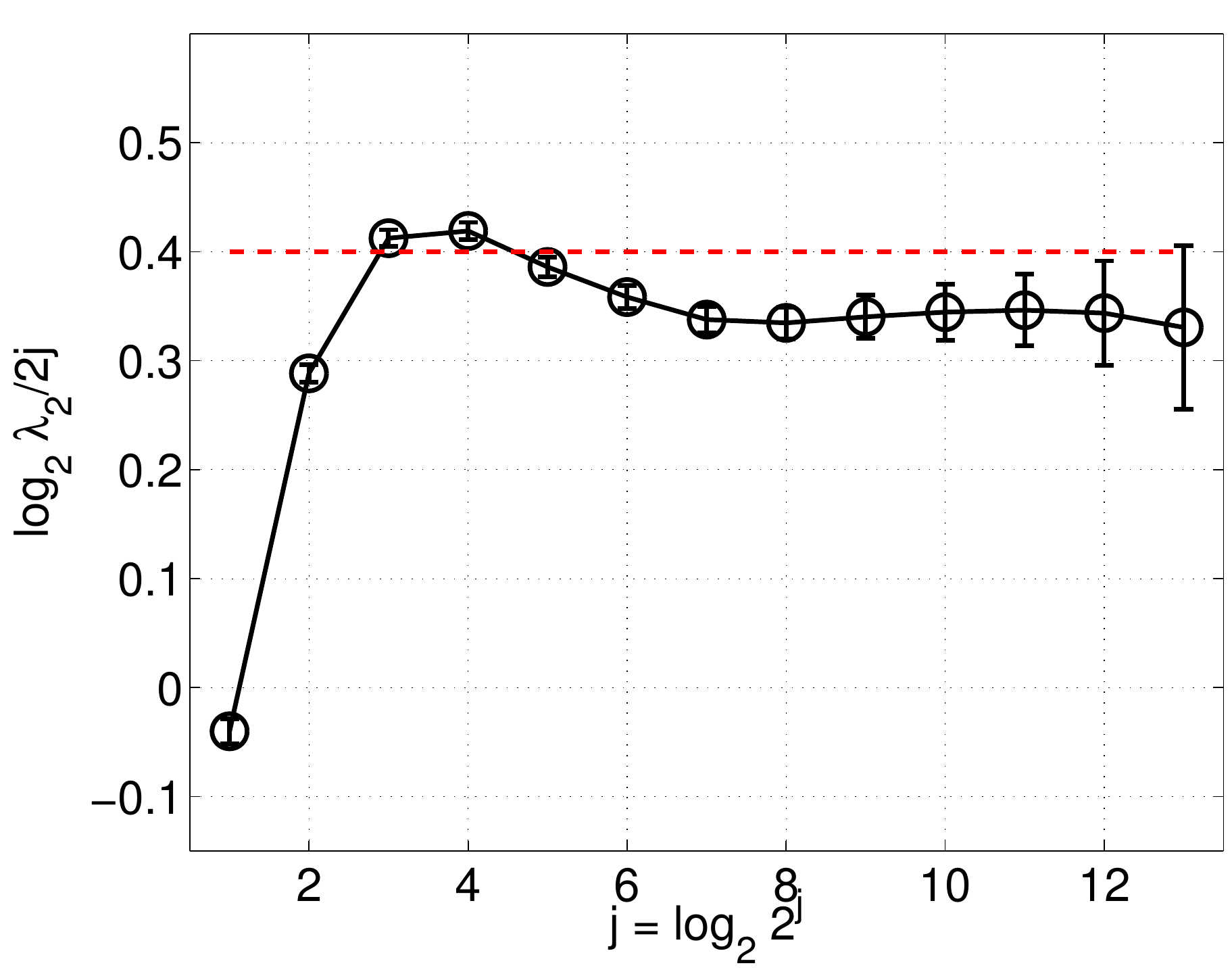}
\includegraphics[height=40truemm,keepaspectratio]{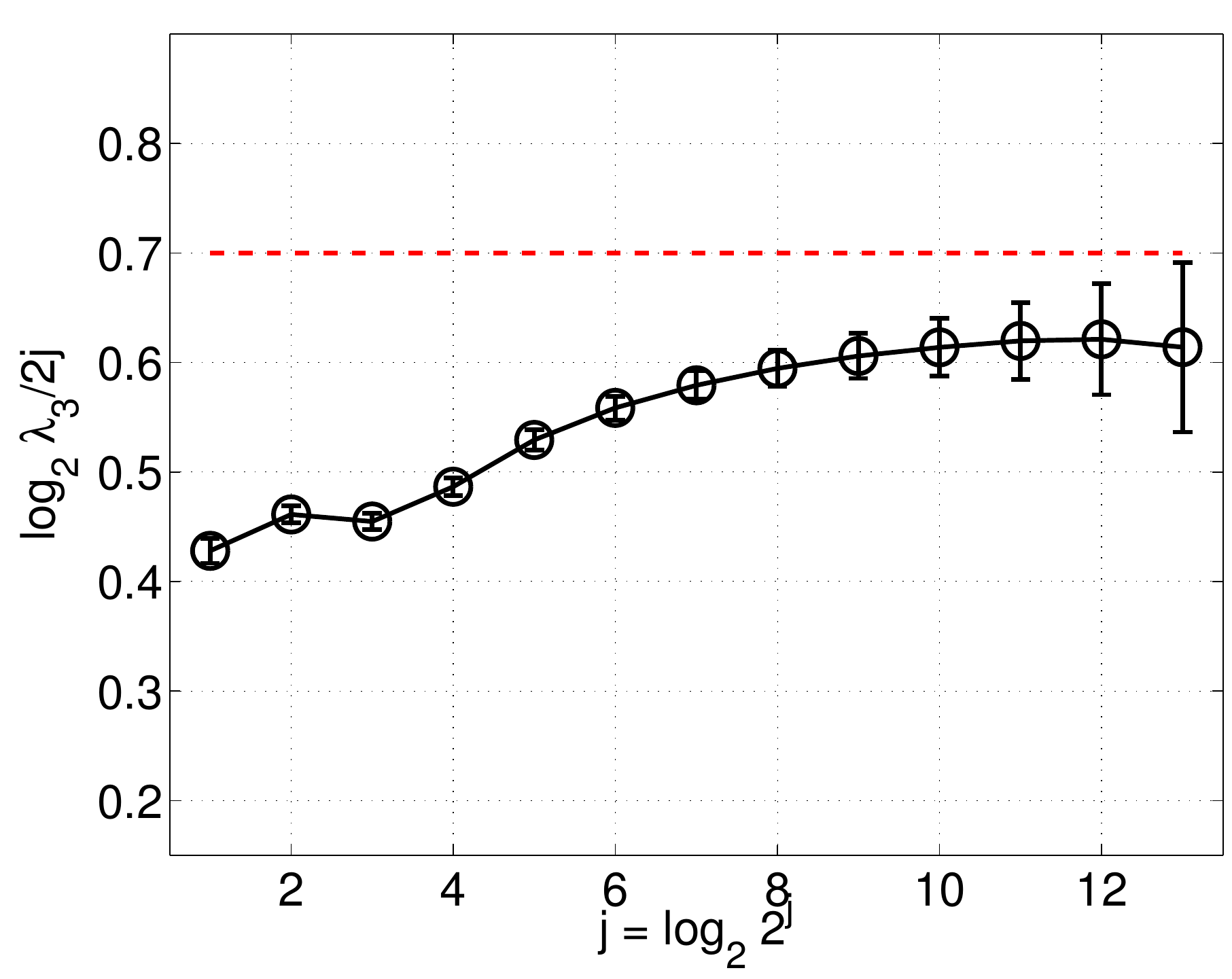}
\includegraphics[height=40truemm,keepaspectratio]{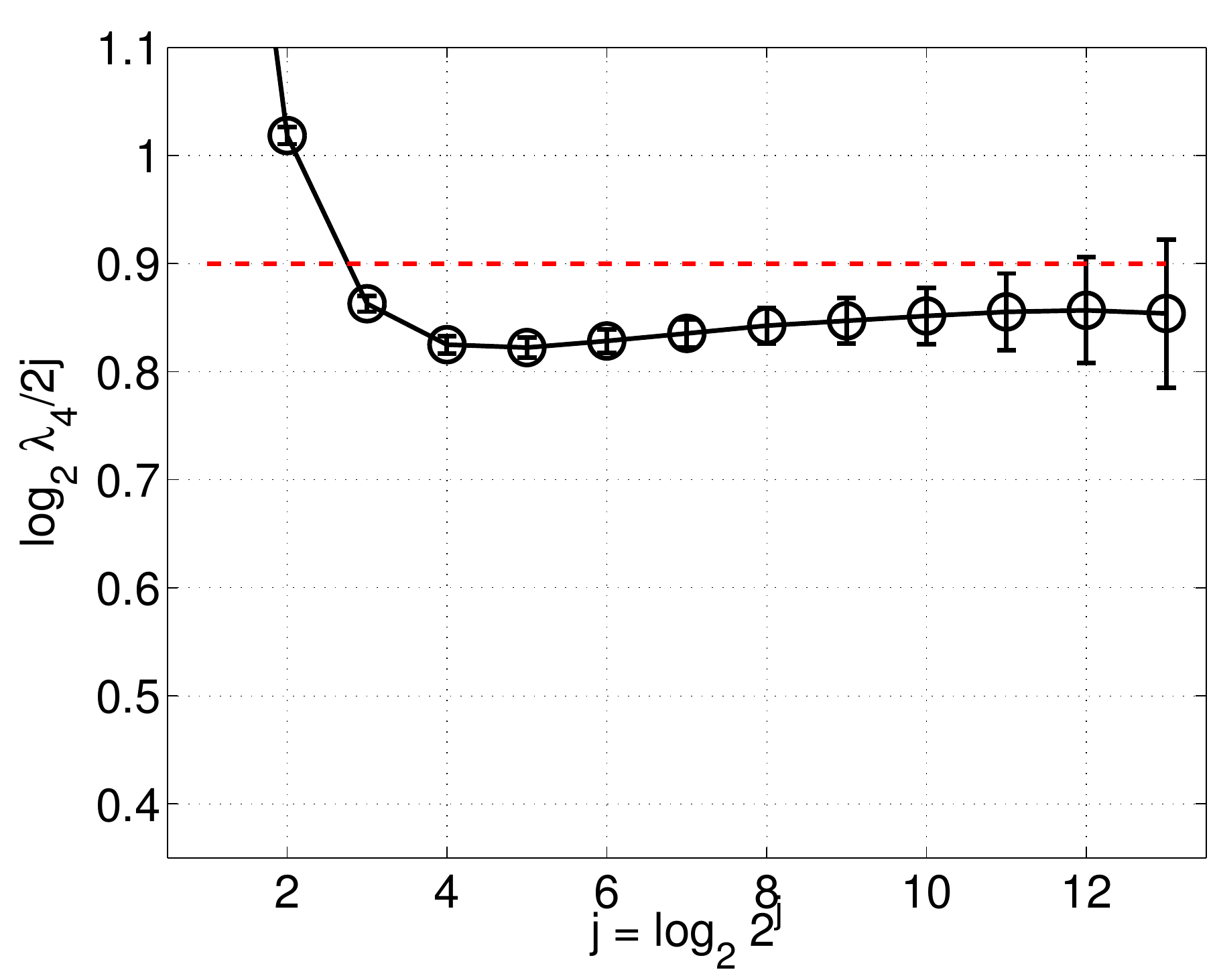}
}
\centerline{
\includegraphics[height=40truemm,keepaspectratio]{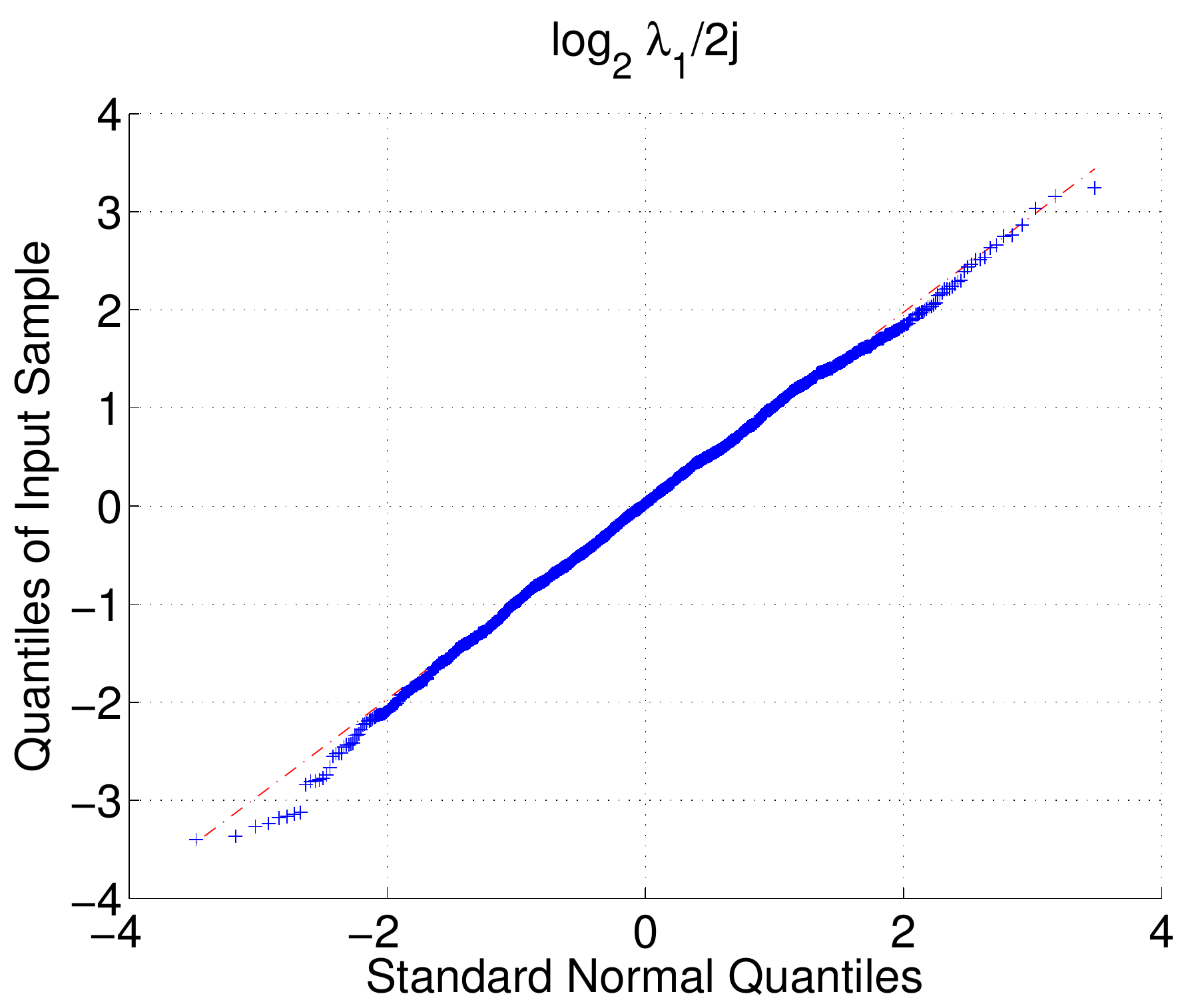} \hspace{3mm}
\includegraphics[height=40truemm,keepaspectratio]{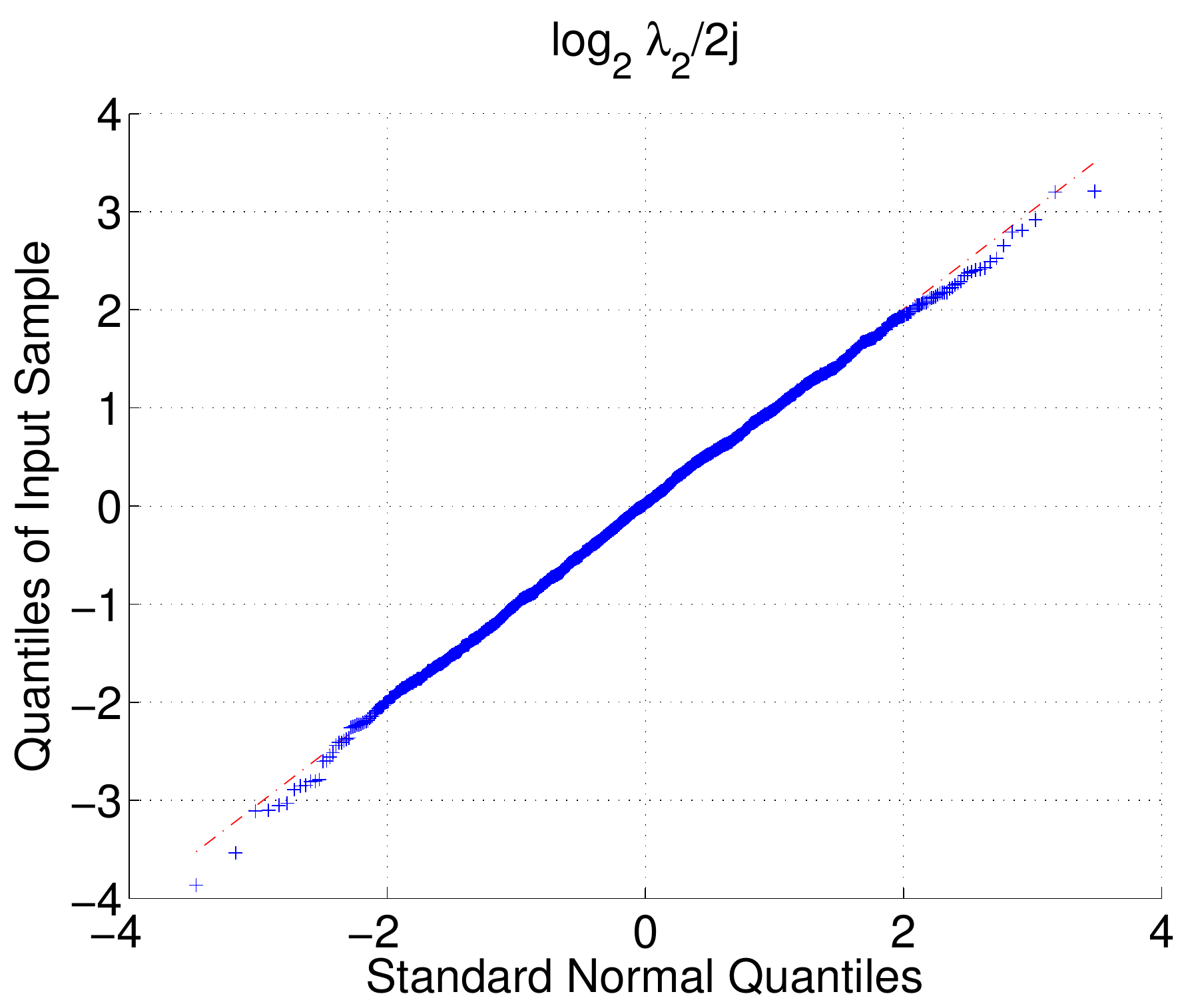} \hspace{3mm}
\includegraphics[height=40truemm,keepaspectratio]{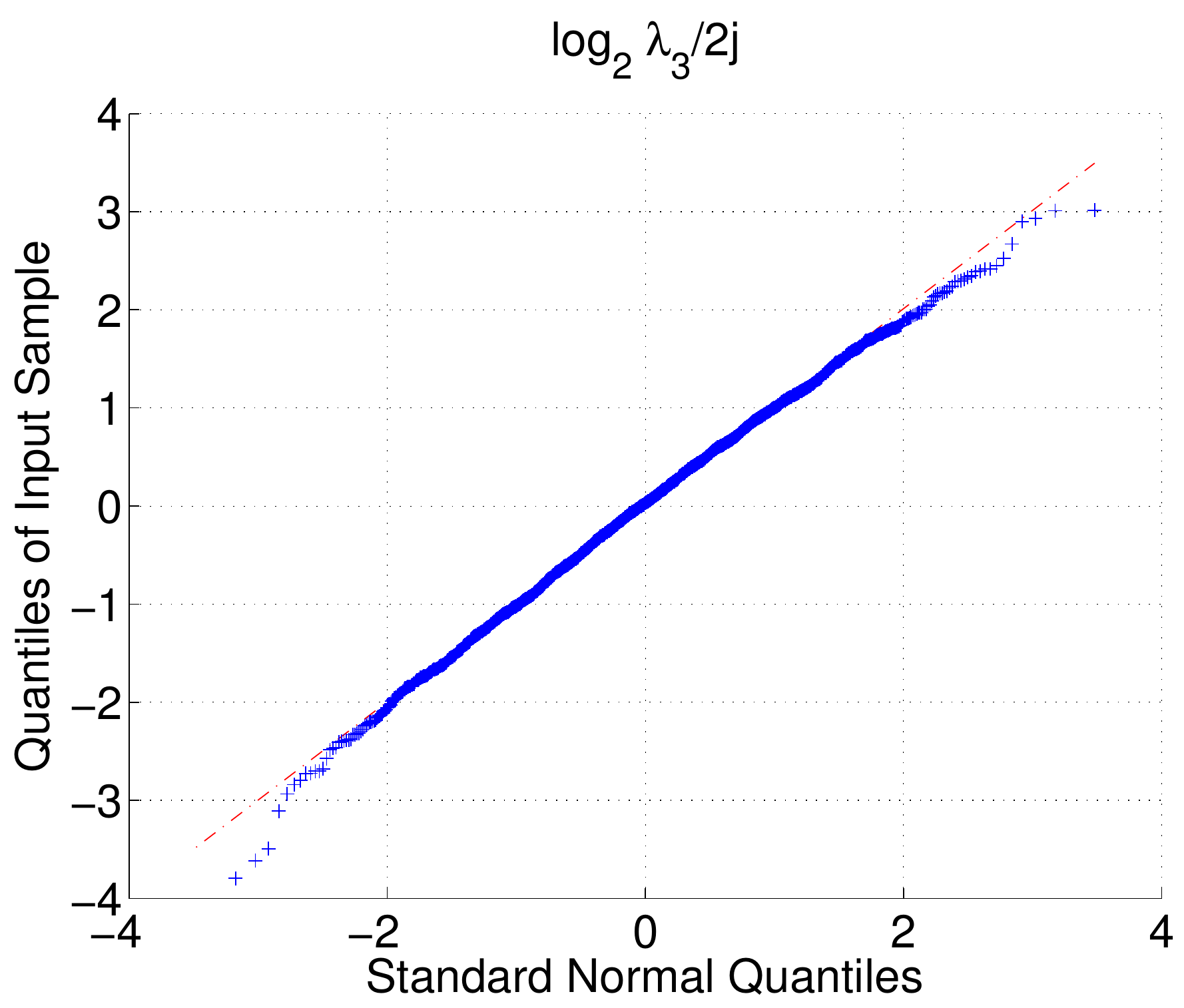} \hspace{3mm}
\includegraphics[height=40truemm,keepaspectratio]{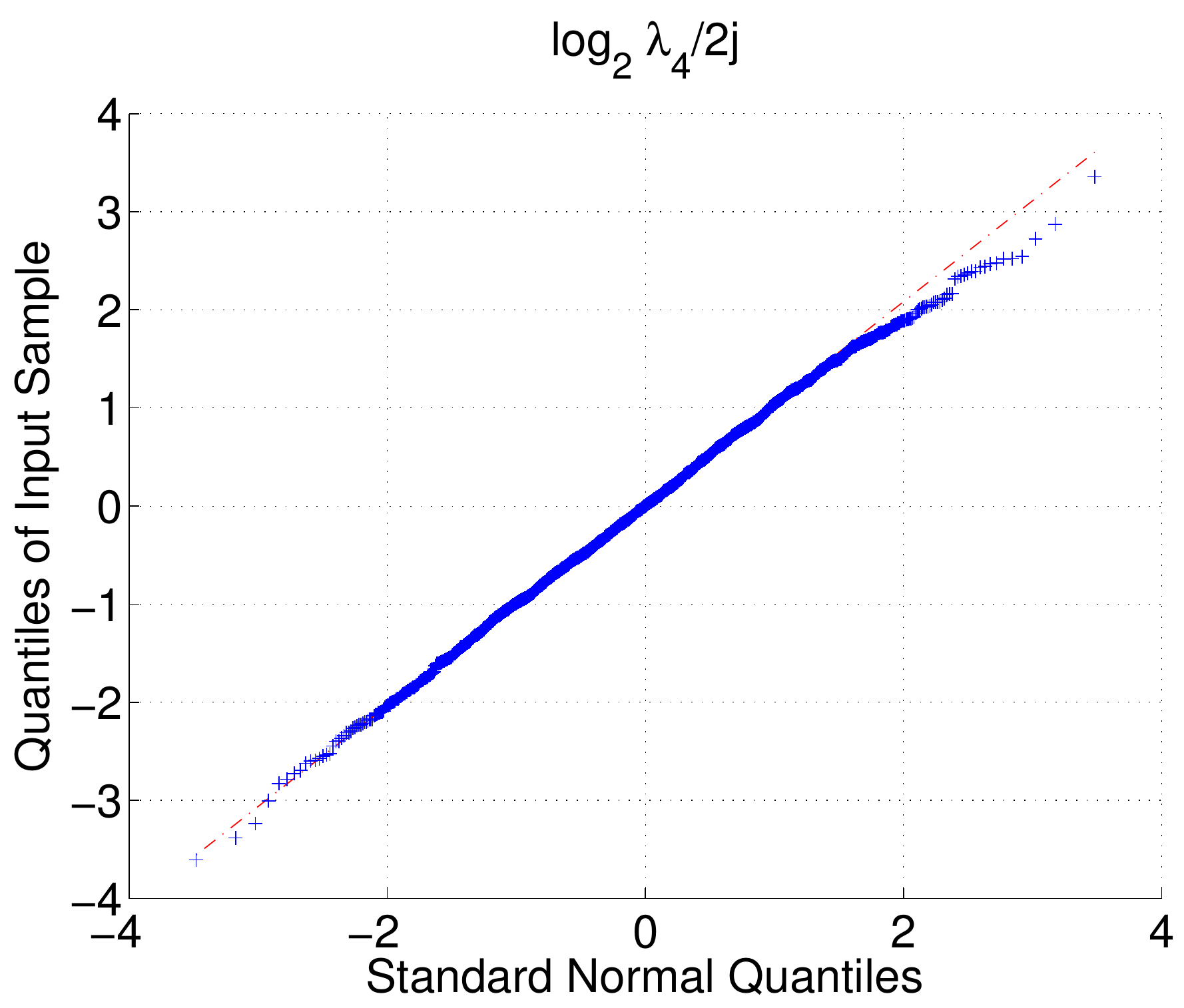}
}
\caption{\label{fig:fige} {\bf  Estimation performance and asymptotic normality:}  OFBM in dimension 4.
Top row: black solid lines with `o' show $ \hat E \widehat{\vartheta} \pm \sqrt{ \widehat{ \textnormal{ Var }}  \widehat{\vartheta} /n}$
for $\widehat{\vartheta} = \log_2 \lambda_1 (2^j)/ 2 j$,  $\log_2 \lambda_2(2^j) / 2 j $, $\log_2 \lambda_3(2^j) / 2 j $ and $\log_2 \lambda_4(2^j) / 2 j $ (target parameters: $h_1$, $h_2$, $h_3$, $h_4$, respectively, with the plots in the same order), red dashed lines correspond to theoretical values.
Bottom row, corresponding qq-plots (against ${\cal N}(0,1)$ distributions) for $j=10$.
}
 \end{figure}

 \section{Perspectives and open issues}\label{s:open}

OFBM constitutes the natural multivariate extension of the univariate FBM, allowing a different Hurst eigenvalue in each coordinate, in an arbitrary coordinate system. When the mixing matrix $P$ is non-diagonal, the problem of estimating $H$ becomes distinctively multivariate and is not easily amenable to approaches inspired in the univariate context. In this work, we propose a change of perspective from univariate-like, entry-wise scaling relations to the eigenstructure of the sample wavelet variance $W(2^j)$ across scales. In the bivariate setting, this methodology is mathematically shown to yield consistent and asymptotically normal estimates of the Hurst eigenvalues as well as of the eigenspace angle parameter $-p_{12}/p_{22}$. Consequently, the matrix Hurst parameter $H$ can be fully estimated when $P \in O(2)$. Large sample size simulation was used to illustrate and shed further light on the weak limits obtained. The research contained in this paper has lead to five open issues, currently under investigation: ($i$) the quantitative assessment of the performance of the estimators as a function of sample size; how much data is demanded by the difficult problem of estimating operator-scaling systems, especially in high dimension?; ($ii$) is there an advantage to using multiple scales in a regression, as in univariate wavelet-based estimation?; ($iii$) mathematical extensions to OFBM in higher dimension; ($iv$) the estimation of non-orthogonal coordinate systems; ($v$) applications in real data. In the near future, a {\sc Matlab} toolbox for the estimators proposed in this paper will be made publicly available.

\appendix

\section{Additional results}\label{s:additional}

{\sc Proof of Proposition \ref{p:wavelet_coefs_properties}}
We first show ($P1$). Since the covariance function $EB_H(s)B_H(t)^*$ is continuous, by Cram\'{e}r and Leadbetter \cite{cramer:leadbetter:1967}, p.86, it suffices to show that
\begin{equation}\label{e:integ_EBH(s)BH(t)_psi(s)_psi(t)dsdt}
\int_{\bbR} \int_{\bbR} \| EB_H(s)B_H(t)^* \|_{l^1} \hspace{1mm}|\psi(s)| |\psi(t)| \hspace{1mm}ds dt < \infty.
\end{equation}
In fact,
\begin{equation}\label{e:EBH(s)BH(t)_bound}
\| EB_H(s)B_H(t)^* \|_{l^1} = \sum^{n}_{i_1=1}\sum^{n}_{i_2=1}|E B_H(s)_{i_1}B_H(t)_{i_2}|
\leq  \Big( \sum^{n}_{i_1=1}\sqrt{E B_H(s)^2_{i_1}} \Big)\Big(\sum^{n}_{i_2=1}\sqrt{ E B_H(t)^2_{i_2}}\Big).
\end{equation}
However, for $t \in \bbR$, $\|EB_H(t)B_H(t)^*\|_{l^{\infty}} = \|t^{H}\Sigma t^{H^*}\|_{l^{\infty}} \leq C |t|^{2 \max \Re\textnormal{eig}(H) }$.
Therefore, \eqref{e:EBH(s)BH(t)_bound} is bounded by $C |t|^{\max \Re \hspace{0.5mm}\textnormal{eig}(H) } |s|^{\max \Re \hspace{0.5mm}\textnormal{eig}(H) }$.
By conditions \eqref{e:supp_psi=compact} and \eqref{e:psihat_is_slower_than_a_power_function}, \eqref{e:integ_EBH(s)BH(t)_psi(s)_psi(t)dsdt} holds. The fact that $E\|B_H(t)\|_{l^1} < \infty $ and the assumption \eqref{e:supp_psi=compact} yield
$$
E \int_{\bbR}|\psi(t)|\|B_H(2^j t + 2^j k)\|_{l^1} dt \leq C \int_{\bbR}|\psi(t)| \hspace{1mm}E\|B_H(t+k)\|_{l^1}dt
$$
$$
\leq  C E\|B_H(1)\| \int_{\bbR} |\psi(s-k)| \|s^H\|_{l^1}ds < \infty,
$$
where we used the change-of-variables $s = t+k$. By Fubini, this yields $(P1)$, as claimed.

In view of ($P1$), the properties ($P2$), ($P3$) and ($P5$) and can be established by arguments similar to those for the univariate case (see, for instance, the argument in Delbeke and Abry \cite{delbeke:abry:2000}, Theorem 3 for the former two, and Bardet \cite{bardet:2000}, Proposition II.1, for the latter). Note that $\int_{\bbR}e^{i2^jxt}\psi(t)dt = C \widehat{\psi}(2^{j}x)$ for some constant $C > 0$. Therefore, the property ($P6$) is a consequence of ($P3$). The property ($P4$) is a consequence of the harmonizable representation \eqref{e:OFBM_harmonizable}, and also of the conditions \eqref{e:N_psi}, \eqref{e:supp_psi=compact} and \eqref{e:psihat_is_slower_than_a_power_function}; the latter ensure that the integrand in \eqref{e:EW(j)_Fourier} is well-defined in $\bbR$. We now show ($P7$). Since $\psi \in \bbR$, then $\widehat{\psi}(-2^jx) = \overline{\widehat{\psi}(2^jx)}$. Thus, by a change-of-variables $y = -x$ over the integration domain $x < 0$ we can rewrite \eqref{e:EW(j)_Fourier} as
$$
EW(2^j) = C 2^j  \int^{\infty}_{0} x^{-D}  2 \Re(AA^*) x^{-D^*} \frac{|\widehat{\psi}(2^j x)|^2}{x^2} dx.
$$
Since $\textnormal{supp} \hspace{1mm}\widehat{\psi}(x) $ has positive Lebesgue measure, then \eqref{e:full-rank} yields $v^* EW(2^j) v > 0$, $v \in \bbC^n \backslash\{0\}$. $\Box$\\


\begin{lemma}\label{l:elementary_results}
Fix $j \in \bbN$. Under (OFBM1)--(4) and \eqref{e:a(nu)/J->infty}, let $EW(a(\nu)2^j)$ and $W(a(\nu)2^j)$ be as in Lemma \ref{l:EW(a(n)2j)_W(a(n)2j)_scaling}. Then, the following limits hold, as $\nu \rightarrow \infty$:
\begin{equation}\label{e:4(ac-b2)/a+c_4(ac-b2)/(a+c^2_p12_p22neq0}
\frac{4(ac-b^2)}{a+c} \sim \frac{4 \det EW(2^j)}{(p^{2}_{12}+p^{2}_{22})b_{22}(2^j)}a(\nu)^{2h_1}, \quad \frac{4(ac-b^2)}{(a+c)^2} \sim \frac{4 \det EW(2^j)}{((p^{2}_{12}+p^{2}_{22})b_{22}(2^j))^2}\frac{1}{a(\nu)^{2(h_2 - h_1)}}
\end{equation}
\begin{equation}\label{e:4(ac-b2)/a+c_4(ac-b2)/(a+c^2_p12_p22neq0_in_prob}
\frac{4(\widehat{a}\widehat{c}-\widehat{b}^2)}{\widehat{a}+\widehat{c}} \stackrel{P}\sim \frac{4 \det W(2^j)}{(p^2_{12}+p^{2}_{22})\widehat{b}_{22}(2^j)}a(\nu)^{2h_1}, \quad \frac{4(\widehat{a}\widehat{c}-\widehat{b}^2)}{(\widehat{a}+\widehat{c})^2} \stackrel{P}\sim \frac{4 \det W(2^j)}{((p^{2}_{12}+p^{2}_{22})\widehat{b}_{22}(2^j))^2}\frac{1}{a(\nu)^{2(h_2 - h_1)}},
\end{equation}
\begin{equation}\label{e:limit_derivative_squareroot}
\frac{\Big( 1 - \sqrt{1 - \frac{4(ac-b^2)}{(a+c)^2}}\Big)}{4(ac-b^2)/(a+c)^2} \rightarrow \frac{1}{2}, \quad
\frac{\Big( 1 - \sqrt{1 - \frac{4(\widehat{a}\widehat{c}-\widehat{b}^2)}{(\widehat{a}+\widehat{c})^2}}\Big)}{4(\widehat{a}\widehat{c}-\widehat{b}^2)/(\widehat{a}+\widehat{c})^2} \stackrel{P}\rightarrow \frac{1}{2},
\end{equation}
\begin{equation}\label{e:lambdaE1_asympt}
\lambda^{E}_1 \sim \frac{\det EW(2^j)}{(p^2_{12}+p^2_{22})b_{22}(2^j)}a(\nu)^{2h_1}, \quad \lambda_1 \stackrel{P}\sim \frac{\det W(2^j)}{(p^2_{12}+p^2_{22})\widehat{b}_{22}(2^j)}a(\nu)^{2h_1},
\end{equation}
\begin{equation}\label{e:lambdaE2_lambda2_asympt}
\lambda^{E}_2 \sim (p^{2}_{12}+p^2_{22})b_{22}(2^j)a(\nu)^{2h_2}, \quad \lambda_2 \stackrel{P}\sim (p^{2}_{12}+p^2_{22})\widehat{b}_{22}(2^j)a(\nu)^{2h_2}.
\end{equation}
\end{lemma}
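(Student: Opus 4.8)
The plan is to reduce all six displayed limits to the leading-order behaviour, as $a(\nu)\to\infty$, of the two scalar ``building blocks'' $a+c$ and $ac-b^2$ together with their sample analogues $\widehat a+\widehat c$, $\widehat a\widehat c-\widehat b^2$, and then to propagate these through the closed-form eigenvalue expressions \eqref{e:lambda1_explicit}--\eqref{e:lambda2_explicit}. I would first record two preliminary facts. Under (OFBM4) the columns of $P$ are unit vectors, so $p^2_{11}+p^2_{21}=p^2_{12}+p^2_{22}=1$; and $EW(2^j)$ is positive definite by $(P7)$, hence so is $B(2^j)=P^{-1}EW(2^j)(P^*)^{-1}$, whence $b_{11}(2^j),b_{22}(2^j)>0$ and $\det EW(2^j)>0$. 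For the sample counterparts, Lemma \ref{l:vecB^a(2^j)_asympt} (via Theorem \ref{t:asymptotic_normality_wavecoef_fixed_scales}) yields $\widehat{b}_{i_1,i_2}(2^j)\stackrel{P}\to b_{i_1,i_2}(2^j)$ and $\det W_a(2^j)\stackrel{P}\to\det EW(2^j)>0$, so that $\widehat{b}_{22}(2^j)>0$ and $\det W_a(2^j)>0$ with probability tending to $1$; moreover $W(a(\nu)2^j)$ is a.s.\ positive semidefinite, so $\widehat a,\widehat c\ge 0$ and, by the arithmetic--geometric mean inequality, $0\le\widehat a\widehat c-\widehat b^2\le(\widehat a+\widehat c)^2/4$ a.s., which makes $\widehat r:=4(\widehat a\widehat c-\widehat b^2)/(\widehat a+\widehat c)^2\in[0,1]$ a.s.\ and legitimizes $\sqrt{1-\widehat r}$.

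\emph{Building blocks.} Summing the diagonal entries $a_{j,a(\nu)},c_{j,a(\nu)}$ of Lemma \ref{l:EW(a(n)2j)_W(a(n)2j)_scaling} and using the unit-norm relations will give $a+c=b_{11}(2^j)a(\nu)^{2h_1}+2(p_{11}p_{12}+p_{21}p_{22})b_{12}(2^j)a(\nu)^{h_1+h_2}+(p^2_{12}+p^2_{22})b_{22}(2^j)a(\nu)^{2h_2}$, and since $h_1<h_2$ and $b_{22}(2^j)>0$ the last term dominates, i.e.\ $a+c\sim(p^2_{12}+p^2_{22})b_{22}(2^j)a(\nu)^{2h_2}$. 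The identical computation for $\widehat a+\widehat c$, now using $\widehat{b}_{11}(2^j),\widehat{b}_{12}(2^j)=O_P(1)$, $\widehat{b}_{22}(2^j)\stackrel{P}\to b_{22}(2^j)>0$ and $a(\nu)^{h_1-h_2}\to 0$, gives $\widehat a+\widehat c\stackrel{P}\sim(p^2_{12}+p^2_{22})\widehat{b}_{22}(2^j)a(\nu)^{2h_2}$. For the determinants I would invoke the operator-scaling identities \eqref{e:EW(a(nu)2^j)=PdiagBdiagP*}, \eqref{e:W(a(nu)2^j)=PdiagB^diagP*} (equivalently \eqref{e:ac-b^2=det}), which give the exact relations $ac-b^2=a(\nu)^{2(h_1+h_2)}\det EW(2^j)$ and $\widehat a\widehat c-\widehat b^2=a(\nu)^{2(h_1+h_2)}\det W_a(2^j)$.

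\emph{Propagation.} Dividing the determinant identities by $a+c$, respectively $(a+c)^2$, and inserting the leading-order expansion of $a+c$ yields \eqref{e:4(ac-b2)/a+c_4(ac-b2)/(a+c^2_p12_p22neq0}; the sample statement \eqref{e:4(ac-b2)/a+c_4(ac-b2)/(a+c^2_p12_p22neq0_in_prob} follows the same way, the passage from $\sim$ to $\stackrel{P}\sim$ being justified by $\det W_a(2^j)\stackrel{P}\to\det EW(2^j)$, $\widehat{b}_{22}(2^j)\stackrel{P}\to b_{22}(2^j)$ and the fact that products, ratios and reciprocals of $\stackrel{P}\sim$-relations are again $\stackrel{P}\sim$ (Slutsky and the continuous mapping theorem). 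In particular $r:=4(ac-b^2)/(a+c)^2\to 0$ and $\widehat r\stackrel{P}\to 0$, and since $g(x):=(1-\sqrt{1-x})/x$ extends continuously to $g(0)=1/2$, the continuous mapping theorem gives \eqref{e:limit_derivative_squareroot}. Finally, writing \eqref{e:lambda1_explicit} as $\lambda_1=\frac{1}{2}\,\frac{4(ac-b^2)}{a+c}\,g(r)$ and combining with these limits gives \eqref{e:lambdaE1_asympt}, while inserting the expansion of $a+c$ and $1+\sqrt{1-r}\to 2$ (resp.\ $1+\sqrt{1-\widehat r}\stackrel{P}\to 2$) into \eqref{e:lambda2_explicit} gives \eqref{e:lambdaE2_lambda2_asympt}; throughout, the sample versions replace $\sim$, $\to$ by $\stackrel{P}\sim$, $\stackrel{P}\to$.

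The computations are elementary and I do not expect a genuine obstacle. The only points demanding care are the bookkeeping with the relation $\stackrel{P}\sim$ of \eqref{e:Xj_approx_Yj} — checking that it is preserved under the multiplications, divisions and compositions with functions continuous at the relevant limit used in the chain above, so that no error terms need be tracked explicitly — and the a.s.\ positivity and positive-semidefiniteness facts that make $\widehat{b}_{22}(2^j)^{-1}$, $\det W_a(2^j)$, $\sqrt{1-\widehat r}$ and the various quotients well defined on events of probability tending to $1$.
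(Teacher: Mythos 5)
Your proposal is correct and follows essentially the same route as the paper's proof: the exact determinant identity $ac-b^2=a(\nu)^{2(h_1+h_2)}\det EW(2^j)$ (and its sample analogue) combined with the expansion of $a+c$ from Lemma \ref{l:EW(a(n)2j)_W(a(n)2j)_scaling}, dominance of the $a(\nu)^{2h_2}$ term, and propagation through the closed-form eigenvalue expressions \eqref{e:lambda1_explicit}--\eqref{e:lambda2_explicit}. You simply supply more of the bookkeeping (positive semidefiniteness, $\widehat r\in[0,1]$, continuity of $(1-\sqrt{1-x})/x$ at $0$) that the paper leaves implicit.
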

\begin{proof}
In regard to $\lambda_1$ and $\lambda^{E}_1$, consider the relation \eqref{e:lambda1_explicit}. By Lemma \ref{l:EW(a(n)2j)_W(a(n)2j)_scaling},
$$
\frac{4(ac-b^2)}{a+c}
$$
\begin{equation}\label{e:4(ac-b^2)/a+c}
= \frac{4 a(\nu)^{2(h_1 + h_2)}\det EW(2^j)}{(p^{2}_{11}+p^{2}_{21})b_{11}a(\nu)^{2h_1} + 2 (p_{11}p_{12} + p_{21}p_{22})b_{12}a(\nu)^{h_1 + h_2}
+ (p^{2}_{12}+p^{2}_{22})b_{22}a(\nu)^{2h_2}},
\end{equation}
where the determinant is non-trivial due to the property $(P7)$. Again from Lemma \ref{l:EW(a(n)2j)_W(a(n)2j)_scaling} and by applying Theorem \ref{t:asymptotic_normality_wavecoef_fixed_scales}, an analogous expression holds for $4(\widehat{a}\widehat{c}-\widehat{b}^2)/(\widehat{a}+\widehat{c})$. Then, the expressions \eqref{e:4(ac-b2)/a+c_4(ac-b2)/(a+c^2_p12_p22neq0}, \eqref{e:4(ac-b2)/a+c_4(ac-b2)/(a+c^2_p12_p22neq0_in_prob}, \eqref{e:limit_derivative_squareroot}, \eqref{e:lambdaE1_asympt} follow.

An analogous reasoning applied to $\lambda_2$ and $\lambda^{E}_2$ leads to \eqref{e:lambdaE2_lambda2_asympt}, since the relation \eqref{e:lambda2_explicit}, Lemma \ref{l:EW(a(n)2j)_W(a(n)2j)_scaling} and Theorem \ref{t:asymptotic_normality_wavecoef_fixed_scales} show that
$\lambda^{E}_2 \sim a + c$, $\lambda_2 \stackrel{P}\sim \widehat{a} + \widehat{c}$. $\Box$\\
\end{proof}

\section{Asymptotic normality at fixed scales: proofs}\label{s:asympt_normality_fixed_scales}

This section contains the remaining proofs of the statements in Section \ref{s:wavelet_analysis}.\\

\noindent {\sc Proof of Proposition \ref{p:decay_Cov_wavelet_coefs}}: In view of \eqref{e:supp_psi=compact}, we can assume without loss of generality that
\begin{equation}\label{e:supp}
\textnormal{supp}(\psi) = [0,K], \quad K \geq 1.
\end{equation}
By \eqref{e:N_psi}, \eqref{e:time_revers}, \eqref{e:time_reversibility}, \eqref{e:wavelet_transform_after_change_var} and \eqref{e:supp}, the wavelet covariance can be reexpressed as
$$
E D(2^j,k)D(2^{j '},k')^* =  \int_{\bbR}\int_{\bbR}\psi(t)\psi(t') EB_H(2^j t+2^j k)B_H(2^{j'}t'+2^{j'}k')^*dtdt'
$$
\begin{equation}\label{e:wave_cov_reexpressed-1}
= -\frac{1}{2}\int_{\bbR}\int_{\bbR} \psi(t)\psi(t')
|2^j t- 2^{j'} t' + 2^j k - 2^{j'} k'|^{H}\Sigma |2^j t- 2^{j'} t' + 2^j k - 2^{j'} k'|^{H^*} dt dt'
\end{equation}
\begin{equation}\label{e:wave_cov_reexpressed}
= - \frac{1}{2} \hspace{1mm}|2^j k- 2^{j'}k'|^{H} \Big\{\int^K_{0}\int^K_{0}\psi(t)\psi(t') \hspace{1mm}\Big|1 + \frac{2^j t- 2^{j'}t'}{2^jk- 2^{j '}k'}\Big|^{H}\Sigma
\Big|1 + \frac{2^j t- 2^{j '}t'}{2^j k- 2^{j '}k'}\Big|^{H^*}dtdt'\Big\}
|2^j k- 2^{j'}k'|^{H^*}.
\end{equation}
Let
\begin{equation}\label{e:f(x)=fracquadform}
f(x) = x^{H}\Sigma x^{H^*} = \Big( f_{i_1,i_2}(x)\Big)_{i_1,i_2 = 1,\hdots,n}.
\end{equation}
Within the radius defined by \eqref{e:wave_param_range}, the integrand in \eqref{e:wave_cov_reexpressed} can be reexpressed analytically as
$$
\Big(1 + \frac{2^jt- 2^{j'}t'}{2^j k- 2^{j'}k'}\Big)^{H}\Sigma
\Big(1 + \frac{2^j t- 2^{j'}t'}{2^j k- 2^{j'}k'}\Big)^{H^*} = \sum^{\infty}_{r=0} \Big( \frac{f^{(r)}_{i_1,i_2}(1)}{r!} \Big( \frac{2^j t- 2^{j'}t'}{2^j k - 2^{j'}k'}\Big)^r \Big)_{i_1,i_2=1,\hdots,n}.
$$
Thus, again \eqref{e:N_psi} yields
$$
\int^K_{0}\int^K_{0}\psi(t)\psi(t') \hspace{1mm}\Big(1 + \frac{2^j t- 2^{j'}t'}{2^j k- 2^j k'}\Big)^{H}\Sigma\Big(1 + \frac{2^j t- 2^{j '}t'}{2^j k - 2^{j'}k'}\Big)^{H^*}dtdt'
$$
\begin{equation}\label{e:main_wavelet_transform_term_after_applying_Q_moments}
= \sum^{\infty}_{r=2N_{\psi}}\Big( \frac{f^{(r)}_{i_1, i_2}(1)}{r!}  \int^K_{0}\int^K_{0}\psi(t)\psi(t') \hspace{1mm}\Big( \frac{2^j t- 2^{j '}t'}{2^j k - 2^{j'}k'}\Big)^rdtdt'\Big)_{i_1,i_2 = 1,\hdots,n}.
\end{equation}
We now look at each term in the summation \eqref{e:main_wavelet_transform_term_after_applying_Q_moments}. By induction, up to the matrix constant $\Big(f^{(2N_{\psi}+l)}_{i_1,i_2}(1)/(2N_{\psi}+l)!\Big)_{i_1,i_2=1,\hdots,n}$ the term associated with the index $r = 2N_{\psi} + l$, $l \in \bbN$, can be written as
$$
\frac{1}{(2^{j}k - 2^{j'}k')^{2N_{\psi}}} \Big\{ \sum^{N_{\psi}+l}_{\nu=N_{\psi}} {2N_{\psi}+l \choose \nu} \frac{(2^{j})^{\nu}(-2^{j'})^{(2N_{\psi}+l)-\nu}}{(2^{j}k-2^{j'}k')^l}
\int^K_0 \int^K_0 \psi(t) \psi(t') t^{\nu}t'^{(2N_{\psi}+l)-\nu}dtdt'\Big\}
$$
$$
\stackrel{\tau = \nu - N_{\psi}}= \frac{(2^{j+j'})^{N_{\psi}}}{(2^{j}k - 2^{j'}k')^{2N_{\psi}}}
$$
\begin{equation}\label{e:sum_doubleinteg_psi_resid_after_induction}
\Big\{ \sum^{l}_{\tau=0} {2N_{\psi}+l \choose \tau + N_{\psi}} \frac{(2^{j})^{\tau}(-1)^{N_{\psi}+l-\tau}(2^{j'})^{l-\tau}}{(2^{j}k-2^{j'}k')^l}
\int^K_0 \int^K_0 \psi(t) \psi(t') t^{\tau + N_{\psi}}t'^{N_{\psi}+l-\tau}dtdt'\Big\}.
\end{equation}
In the expression \eqref{e:sum_doubleinteg_psi_resid_after_induction}, for any $l \in \bbN$ the largest binomial coefficient is
\begin{equation}\label{e:binom_coef_after_Stirling}
{2N_{\psi}+l \choose \lfloor l/2 \rfloor + N_{\psi}} \sim C \frac{2^l}{\sqrt{l}},
\end{equation}
where the asymptotic equivalence (as $l \rightarrow \infty$) follows from Stirling's formula. Moreover,
\begin{equation}\label{e:double_wavelet_integ_bound}
\Big|\int^K_0 \int^K_0 \psi(t) \psi(t') t^{\tau + N_{\psi}}t'^{N_{\psi}+l-\tau}dtdt' \Big| \leq C \frac{K^{2N_{\psi}+l+2}}{l}.
\end{equation}
Recall that we are taking the wavelet parameters in the range \eqref{e:wave_param_range}. Then, by \eqref{e:binom_coef_after_Stirling}, \eqref{e:double_wavelet_integ_bound} and Lemma \ref{l:fracquadform_deriv_bound} below, the expression \eqref{e:main_wavelet_transform_term_after_applying_Q_moments} is bounded from above by
$$
\frac{\|f^{(2N_{\psi})}(1)\|}{(2N_{\psi})!}
{2N_{\psi} \choose N_{\psi}}\int^{K}_{0}\int^{K}_{0}|\psi(t)\psi(t') | t^{N_{\psi}}(t')^{N_{\psi}}dt dt'
$$
$$
+
\sum^{\infty}_{l=1} \frac{\|f^{(2N_{\psi}+l)}(1)\|}{(2N_{\psi}+l)!} \Big| \frac{\max\{2^j,2^{j'}\}}{2^j k - 2^{j'}k'}\Big|^l \sum^{l}_{\tau=0}
{2N_{\psi} +l \choose \tau + N_{\psi}}\int^{K}_{0}\int^{K}_{0}|\psi(t)\psi(t') | t^{\tau + N_{\psi}}(t')^{N_{\psi}+l - \tau}dt dt'
$$
$$
\leq C + C' \sum^{\infty}_{l=1} \frac{1}{l}\Big( \frac{\delta}{K}\Big)^{l} \frac{2^l}{\sqrt{l}} \Big\{(l+1) \frac{K^{2N_{\psi}+l+2}}{l}\Big\}.
$$
Therefore, in view of \eqref{e:wave_param_range}, the series \eqref{e:main_wavelet_transform_term_after_applying_Q_moments} is, indeed, summable, and by \eqref{e:wave_cov_reexpressed} and \eqref{e:sum_doubleinteg_psi_resid_after_induction} the expression \eqref{e:decay_Cov_wavelet_coefs} holds.

In regard to \eqref{e:decay_Cov_wavelet_coefs_in_norm}, note that for $c \geq \delta^{-1}K\max\{2^j,2^{j'}\}$ we can construct the bound for the matrix exponential
$$
\|c^{H}\| \leq C \hspace{1mm} \|P\| \|P^{-1}\| \|c^{J_H}\|_{l^1} \leq C' \hspace{1mm}c^{\max_{h \in \textnormal{eig}(H)}\Re(h)}  \hspace{1mm}  |\log^{\kappa}(c)|.
$$
Now set $c = |2^j k - 2^{j'}k'|$ under the restriction \eqref{e:wave_param_range}. $\Box$\\

\begin{lemma}\label{l:fracquadform_deriv_bound}
Let $f$ be as in \eqref{e:f(x)=fracquadform}. Then,
\begin{equation}\label{e:fracquadform_deriv_bound}
\|f^{(r)}(1)\|/(r-1)! = O(1), \quad r \in \bbN.
\end{equation}
\end{lemma}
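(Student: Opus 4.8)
\emph{Plan.} The plan is to reduce the matrix estimate to a scalar one via the Jordan form of $H$, and then to bound the $r$-th derivative of each scalar building block by a Cauchy estimate combined with an elementary product inequality that exploits $\Re(h_{k})>0$. First I would write $H=PJP^{-1}$ with $J$ in Jordan form, so that $x^{H}=Px^{J}P^{-1}$ and each entry of $x^{H}$ is a finite linear combination of functions $x^{\lambda}(\log x)^{p}$, $\lambda\in\textnormal{eig}(H)$, $0\le p\le\kappa-1$, where $\kappa$ is the maximal size of a Jordan block of $H$. Since $\log x\in\bbR$ for $x>0$, one has $x^{H^{*}}=(x^{H})^{*}$, so each entry of $x^{H^{*}}$ is a finite combination of functions $x^{\overline{\lambda}}(\log x)^{p'}$. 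Multiplying out $f(x)=x^{H}\Sigma x^{H^{*}}$, each entry $f_{i_{1},i_{2}}(x)$ becomes a finite linear combination of terms $x^{\alpha}(\log x)^{q}$ with $\alpha=\lambda_{a}+\overline{\lambda_{b}}$ for eigenvalues $\lambda_{a},\lambda_{b}$ of $H$ and $0\le q\le 2(\kappa-1)$; by (OFBM1), $0<\Re(\alpha)=\Re(\lambda_{a})+\Re(\lambda_{b})<2$. Hence it suffices to prove that, for each fixed $\alpha$ with $\Re(\alpha)>0$ and each fixed integer $q\ge0$, there is a constant $C_{\alpha,q}$, independent of $r$, such that the $r$-th derivative of $x\mapsto x^{\alpha}(\log x)^{q}$ at $x=1$ has absolute value at most $C_{\alpha,q}(r-1)!$, for all $r\in\bbN$.

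For the scalar bound, I would view $\Phi(x,\alpha)=x^{\alpha}$ as jointly analytic on $\{x>0\}\times\bbC$, so that $\partial_{\alpha}^{q}\Phi=x^{\alpha}(\log x)^{q}$ and $\partial_{x}^{r}\Phi=(\alpha)_{r}^{\downarrow}\,x^{\alpha-r}$ with $(\alpha)_{r}^{\downarrow}:=\prod_{j=0}^{r-1}(\alpha-j)$. Commuting the mixed partials and evaluating at $x=1$, where every $\log$ factor produced by $\partial_{\alpha}^{q}x^{\alpha-r}$ vanishes, gives that the $r$-th derivative of $x^{\alpha}(\log x)^{q}$ at $x=1$ equals $\partial_{\alpha}^{q}(\alpha)_{r}^{\downarrow}$. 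Fixing $\rho=\rho(\alpha)\in(0,\tfrac12\Re(\alpha))$ small enough that the circle $\{|z-\alpha|=\rho\}$ avoids $\{0,1,2,\dots\}$, Cauchy's formula gives $|\partial_{\alpha}^{q}(\alpha)_{r}^{\downarrow}|\le q!\,\rho^{-q}\max_{|z-\alpha|=\rho}\prod_{j=0}^{r-1}|z-j|$, so matters reduce to showing $\prod_{j=0}^{r-1}|z-j|\le C(\alpha)(r-1)!$ uniformly over that circle. For $z$ on the circle one has $\Re(z)\ge\sigma:=\tfrac12\Re(\alpha)>0$ and $|z|\le R:=|\alpha|+1$; writing $\prod_{j=1}^{r-1}|z-j|=(r-1)!\prod_{j=1}^{r-1}|1-z/j|$ and using $|1-z/j|^{2}=1-2\Re(z)/j+|z|^{2}/j^{2}\le1-\sigma/j\le e^{-\sigma/j}$ for all $j\ge\lceil R^{2}/\sigma\rceil$, while bounding the finitely many smaller indices by a constant, one gets $\prod_{j=1}^{r-1}|1-z/j|\le C(\alpha)\,r^{-\sigma/2}\le C(\alpha)$. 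Summing the scalar bound over the finitely many pairs $(\alpha,q)$ from the reduction then gives $\|f^{(r)}(1)\|\le C(r-1)!$.

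The delicate step is the last one, namely extracting the correct power of $r$. Crude estimates — $|z-j|\le|z|+j$, or Fa\`a di Bruno applied to $u\mapsto e^{uH}\Sigma e^{uH^{*}}$ after the substitution $u=\log x$ — lose a factor $r^{|\alpha|}$, resp.\ a factor $r^{c\|H\|}$, and only yield $O(r^{c}(r-1)!)$. The entire gain comes from the sign of the term $-2\Re(z)/j$ above, i.e.\ from the hypothesis $\Re(h_{k})>0$ in (OFBM1), which forces $\prod_{j}|1-z/j|$ to stay bounded (in fact to tend to $0$); this is exactly what pins $f^{(r)}(1)$ at order $(r-1)!$. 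Passing through the contour integral is convenient chiefly because it handles uniformly the special case in which $\alpha$ is a nonnegative integer (necessarily $\alpha=1$, since $0<\Re(\alpha)<2$), where $(\alpha)_{r}^{\downarrow}$ vanishes and one would otherwise have to factor out that zero before differentiating in $\alpha$.
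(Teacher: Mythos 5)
Your argument is correct, but it takes a genuinely different route from the paper's. The paper first proves, by induction, an explicit Leibniz-type formula for $f^{(r)}(x)$ as a binomial sum of matrix terms $\{\prod_{j_1}(H-j_1I)\}\,x^{H-(r-k)I}\Sigma x^{H^*-kI}\,\{\prod_{j_2}(H^*-j_2I)\}$; it then bounds each matrix product in norm by the scalar product $\prod_j|j-h_{\min}|$ via the Jordan form, and identifies the resulting scalar sum with $|f^{(r)}_{h_{\min}}(1)|$ for the one-dimensional model $f_{h_{\min}}(x)=x^{2h_{\min}}$, whose $r$-th derivative at $1$ is the explicit falling factorial $2h_{\min}\prod_{j=1}^{r-1}(j-2h_{\min})\le 2h_{\min}(r-1)!$. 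You instead pass to the Jordan form at the level of $f$ itself, reducing each entry to finitely many scalar blocks $x^{\alpha}(\log x)^{q}$ with $\Re(\alpha)\in(0,2)$, and control their $r$-th derivatives at $x=1$ through the identity $\partial_x^{r}\big[x^{\alpha}(\log x)^{q}\big]\big|_{x=1}=\partial_{\alpha}^{q}\prod_{j=0}^{r-1}(\alpha-j)$ followed by a Cauchy estimate on a small circle in the $\alpha$-plane. Both proofs ultimately rest on the same mechanism, namely that the positivity of $\Re(h_k)$ keeps $\prod_{j=0}^{r-1}|z-j|$ at order $(r-1)!$ rather than $r^{c}(r-1)!$ (in the paper this is the bound $\prod_{j\ge1}(j-2h_{\min})\le (r-1)!$; in your proof it is the factor $-2\Re(z)/j$ in $|1-z/j|^{2}$). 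What your version buys is an explicit and uniform treatment of the logarithmic contributions coming from nontrivial Jordan blocks, plus a clean handling via the contour of the degenerate case where the exponent is a nonnegative integer; what the paper's version buys is the avoidance of complex analysis altogether, working directly with real matrix calculus at the cost of the inductive derivative formula and a norm-submultiplicativity step. Either route delivers the estimate in the form needed for the proof of Proposition \ref{p:decay_Cov_wavelet_coefs}.
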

\begin{proof}
To prove \eqref{e:fracquadform_deriv_bound}, we will first establish the formula
\begin{equation}\label{e:f^(q)(x)}
f^{(r)}(x) = \sum^{r}_{k=0} {r \choose k} \Big\{\prod^{(r-1)-k}_{j_1=0}(H-j_1 I)\Big\}x^{H-(r-k)I}\Sigma x^{H^*-kI}\Big\{\prod^{k-1}_{j_2=0}(H^* -j_2 I)\Big\},
\end{equation}
where, for notational convenience, we set $\prod^{-1}_{j=0}(H-jI)\equiv I$. For $r = 1$, the formula holds. So, by induction, assume that it also holds for $r \in \bbN$. Then,
$$
f^{(r+1)}(x) = \sum^{r}_{k=0} {r \choose k} \Big\{\prod^{(r-1)-k}_{j_1=0}(H-j_1 I)\Big\}\{(H-(r-k)I)x^{H-(r-k)I-I}\Sigma x^{H^*-kI}
$$
$$
+x^{H-(r-k)I}\Sigma x^{H^*-kI-I}(H^*-kI)\}
\Big\{\prod^{k-1}_{j_2=0}(H^*-j_2 I)\Big\}
$$
$$
= {r+1 \choose 0} \Big\{\prod^{r}_{j=0}(H-j I)\Big\} x^{H-(r+1)I}\Sigma x^{H^*}
$$
$$
+ \sum^{r}_{k=1} \Big[ {r \choose k} + {r \choose k-1} \Big] \Big\{\prod^{r-k}_{j_1=0}(H-j_1 I)\Big\} x^{H-((r+1) -k)I}\Sigma x^{H^*-kI}\Big\{\prod^{k-1}_{j_2=0}(H^*-j_2 I)\Big\}
$$
$$
+ {r+1 \choose r+1} x^{H}\Sigma x^{H^*-(r+1)I}\Big\{\prod^{r}_{j=0}(H^*-j I)\Big\}
$$
$$
= \sum^{r+1}_{k=0} {r+1 \choose k} \Big\{\prod^{r-k}_{j_1=0}(H-j_1 I)\Big\}x^{H-((r+1)-k)I}\Sigma x^{H^*-kI}\Big\{\prod^{k-1}_{j_2=0}(H^*-j_2 I)\Big\}.
$$
This establishes \eqref{e:f^(q)(x)}.

Recall the notation \eqref{e:eigen-assumption} for the eigenvalues of $H$, and let $h_{\min} := \min \Re \hspace{0.5mm} \textnormal{eig}(H) \in (0,1)$. For any $J \in \bbN \cup \{0\}$, the equivalence of matrix norms yields the bound
\begin{equation}\label{e:prod_(H-jI)_norm_bound}
\Big\|\prod^{J}_{j=0}(H-jI) \Big\| \leq C \prod^{J}_{j=0}\|J_H-jI\|_{l^1} = C \prod^{J}_{j=0}\Big\{\sum^{n}_{k=1}|j - h_k| + c \Big\} \leq C'\prod^{J}_{j=0}|j - h_{\min}|,
\end{equation}
where $c \geq 0$ accounts for the number of off-diagonal 1s in the Jordan form $J_H$. Now set $x = 1$ in the expression \eqref{e:f^(q)(x)}. By \eqref{e:prod_(H-jI)_norm_bound},
$$
\|f^{(r)}(1)\| = |(-1)^{r}| \Big\| \sum^{r}_{k=0} {r \choose k} \Big\{\prod^{(r-1)-k}_{j_1=0}(j_1 I - H)\Big\}\Sigma \Big\{\prod^{k-1}_{j_2=0}(j_2 I- H^*)\Big\} \Big\|
$$
$$
\leq C \sum^{r}_{k=0} {r \choose k} \Big\{\prod^{(r-1)-k}_{j_1=0}|j_1 - h_{\min}|\Big\}\Big\{\prod^{k-1}_{j_2=0}|j_2 - h_{\min}|\Big\} = C |f^{(r)}_{h_{\min}}(1)|,
$$
where $f_{h_{\min}}(x)$ denotes the function $f$ when $n = 1$ (scalar), the Hurst parameter is $h_{\min}$ and $\Sigma = 1$. Since $|f^{(r)}_{h_{\min}}(1)| = 2 h_{\min} \prod^{r-1}_{j=1} (j - 2 h_{\min})$, then \eqref{e:fracquadform_deriv_bound} holds. $\Box$\\
\end{proof}

For the next results, we assume that $a_{j}, a_{j'} \in \bbN$.

Let $\{\phi_{k}\}_{k \in \bbZ}$ be a sequence of real numbers. We are interested in calculating the limit
\begin{equation}\label{e:average_double_sum_Phi}
\lim_{\nu \rightarrow \infty}\frac{1}{\nu}\sum^{K_j}_{k=1}\sum^{K_{j'}}_{k'=1} \phi_{a_{j}k - a_{j'}k'}.
\end{equation}
Let
\begin{equation}\label{e:R=indices}
{\mathcal R} := \{a_j \bbN - a_{j'}\bbN\}
\end{equation}
be the set of indices of $\phi_{\cdot}$. The next lemma provides the growth rate of the number of terms in the summation \eqref{e:average_double_sum_Phi}. Before we state and show it, note that to every $r \in {\mathcal R}$ we can associate the affine level curve
\begin{equation}\label{e:k'}
k'(k) = \frac{a_j}{a_{j'}}k - \frac{r}{a_{j'}} , \quad k \in \bbR,
\end{equation}
which contains all the pairs $(k,k')\in \bbN^2$ satisfying $a_j k - a_{j'}k' = r$ in the chosen range.
\begin{lemma}\label{l:increase_in_number_of_pairs}
Let $a_j, a_{j'},\nu \in \bbN$ and $r \in {\mathcal R}$ (see \eqref{e:R=indices}). Let $k'(\cdot)$ be the function defined in \eqref{e:k'}. Then, the number $\xi_r(\nu)$ of pairs $(k,k') \in \bbZ^2$, $1 \leq k \leq a_{j'}\nu$, $1 \leq k' \leq a_{j}\nu$, in the affine level curve \eqref{e:k'} associated with $r$ satisfies
\begin{equation}\label{e:increase_in_number_of_pairs}
\lim_{\nu \rightarrow \infty} \frac{\xi_r(\nu)}{\nu} = \gcd(a_j,a_{j'}).
\end{equation}
\end{lemma}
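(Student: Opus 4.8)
The plan is to turn the lattice–point count into a count of integers in an interval. Write $g=\gcd(a_j,a_{j'})$ and $a_j=g\alpha$, $a_{j'}=g\beta$ with $\gcd(\alpha,\beta)=1$. Since $r\in{\mathcal R}$, we have $g\mid r$, so the Diophantine equation $a_j k-a_{j'}k'=r$ admits an integer solution; fix one, say $(k_0,k_0')$. Because $\gcd(\alpha,\beta)=1$, the full solution set is $\{(k_0+\beta t,\;k_0'+\alpha t):t\in\bbZ\}$, consecutive solutions differing by the minimal step $(\beta,\alpha)$. In particular, every pair $(k,k')\in\bbZ^2$ on the level curve \eqref{e:k'} associated with $r$ is indexed by exactly one $t\in\bbZ$, and $\xi_r(\nu)$ equals the number of admissible $t$.

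Next I would translate the box constraints into constraints on $t$. Using $\beta>0$ and $a_{j'}=g\beta$, the requirement $1\le k_0+\beta t\le a_{j'}\nu$ is equivalent to $t\in I_\nu^{(1)}:=\bigl[\tfrac{1-k_0}{\beta},\,g\nu-\tfrac{k_0}{\beta}\bigr]$; likewise, using $\alpha>0$ and $a_j=g\alpha$, the requirement $1\le k_0'+\alpha t\le a_j\nu$ is equivalent to $t\in I_\nu^{(2)}:=\bigl[\tfrac{1-k_0'}{\alpha},\,g\nu-\tfrac{k_0'}{\alpha}\bigr]$. This is precisely where the asymmetric box dimensions $a_{j'}\nu$ and $a_j\nu$ enter: both intervals share the leading term $g\nu$. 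Hence $\xi_r(\nu)=\#\bigl(I_\nu\cap\bbZ\bigr)$ with $I_\nu:=I_\nu^{(1)}\cap I_\nu^{(2)}$.

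Then I would note that $(k_0,k_0')$ is a fixed solution independent of $\nu$, so $I_\nu$ is an interval whose left endpoint $\max\{\tfrac{1-k_0}{\beta},\tfrac{1-k_0'}{\alpha}\}$ is a constant and whose right endpoint equals $g\nu-\max\{\tfrac{k_0}{\beta},\tfrac{k_0'}{\alpha}\}$, i.e.\ $g\nu$ minus a constant; thus for large $\nu$ the interval is nonempty and has length $g\nu+O(1)$. Since the number of integers in any bounded interval differs from its length by at most $1$, we get $\xi_r(\nu)=g\nu+O(1)$. Dividing by $\nu$ and letting $\nu\to\infty$ yields $\xi_r(\nu)/\nu\to g=\gcd(a_j,a_{j'})$, which is \eqref{e:increase_in_number_of_pairs}.

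The arguments here are elementary; the only point requiring care — the "obstacle," such as it is — is to keep track of the $O(1)$ endpoint corrections and to recognize that the stated box dimensions $a_{j'}\nu$ and $a_j\nu$ are exactly what makes the two $t$-intervals $I_\nu^{(1)}$, $I_\nu^{(2)}$ have a common leading length $g\nu$, so that their intersection is again of size $\sim g\nu$ rather than $o(\nu)$; a box of sides $\nu\times\nu$, say, would produce a different constant.
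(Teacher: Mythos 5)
Your proof is correct and follows essentially the same route as the paper: both identify the solution set of $a_j k - a_{j'}k' = r$ as an arithmetic progression (step $a_{j'}/\gcd(a_j,a_{j'})$ in the $k$-coordinate, equivalently your parametrization $(k_0+\beta t, k_0'+\alpha t)$) and count the terms falling in the prescribed range, obtaining $\gcd(a_j,a_{j'})\nu + O(1)$. If anything, your version is slightly more careful, since you explicitly verify that the second box constraint $1\le k'\le a_j\nu$ yields a $t$-interval with the same leading length $g\nu$ and hence only shifts the count by $O(1)$, a point the paper's proof leaves implicit.
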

\begin{proof}
We first show that $a_j k - a_{j'}k' = r$ has a solution $(k_*,k'(k_*)) \in \bbZ^2$, and that the set ${\mathcal A}$ of such solutions has the form
\begin{equation}\label{e:k**=k*+aj'N_set_of_solutions}
{\mathcal A} = \Big\{(k_{**},k'(k_{**})) \in \bbZ^2: k_{**} = k_* + \frac{a_{j'}}{\gcd(a_j,a_{j'})} \bbZ \Big\}.
\end{equation}
By Theorem 1.8 in Jones and Jones \cite{jones:jones:1998}, p.10, we can conveniently reexpress the set ${\mathcal R}$ in \eqref{e:R=indices} as
\begin{equation}\label{e:R=gcd(aj,aj')Z}
{\mathcal R} = \gcd(a_j,a_{j'}) \bbZ.
\end{equation}
In other words, for any fixed $w \in \bbZ$, there is a solution pair $(k_*,k'(k_*)) \in \bbN^2$ for the equation
\begin{equation}\label{e:ajk-aj'k'=wgcd(aj,aj')}
a_j k - a_{j'}k' = \gcd(a_j,a_{j'}) w.
\end{equation}
For notational simplicity, rewrite the solution pair as $(k_*,k'_*)$, and consider another solution $(k_{**},k'_{**}) \in \bbZ^2$. By plugging the two solution pairs into \eqref{e:ajk-aj'k'=wgcd(aj,aj')} we obtain
\begin{equation}\label{e:k'**-k'*=aj/aj'(k**-k*)}
k'_{**} - k'_* = \frac{a_j /\gcd(a_j,a_{j'})}{a_{j'}/\gcd(a_j,a_{j'})} (k_{**} - k_*).
\end{equation}
By premultiplying \eqref{e:k'**-k'*=aj/aj'(k**-k*)} by $-1$ if necessary, we can assume that $k_{**} - k_*, k'_{**} - k'_*\in \bbN$. Since $a_j/\gcd(a_j,a_{j'}), a_{j'}/\gcd(a_j,a_{j'})$ are coprime, then $k_{**} - k_*= \frac{a_{j'}}{\gcd(a_j,a_{j'})} z$ for some $z \in \bbN$. Therefore, $(k_{**},k'_{**})$ has the form described on the right-hand side of \eqref{e:k**=k*+aj'N_set_of_solutions}. Conversely, for a given solution $(k_*,k'(k_*))$, every pair $(k_{**},k'(k_{**}))$ of the form expressed on the right-hand side of \eqref{e:k**=k*+aj'N_set_of_solutions} is also a solution (in $\bbZ^2$) for \eqref{e:ajk-aj'k'=wgcd(aj,aj')}. This establishes \eqref{e:k**=k*+aj'N_set_of_solutions}.

To show \eqref{e:increase_in_number_of_pairs}, for any sufficiently large $\nu$, let $k_0 \in \{1,\hdots,a_{j'}\nu\}$ be the smallest number such that $(k_0,k'(k_0)) \in \bbN^2$ solves \eqref{e:ajk-aj'k'=wgcd(aj,aj')}. Let $\bbR \ni x = \gcd(a_j,a_{j'}) (\nu - k_0/a_{j'})$. Then, by \eqref{e:k**=k*+aj'N_set_of_solutions}, $(\lfloor x \rfloor,k'(\lfloor x \rfloor) )$ is the rightmost solution for \eqref{e:ajk-aj'k'=wgcd(aj,aj')} within the first-entry range $k = 1,\hdots,a_{j'}\nu$. Moreover,
$\lfloor x \rfloor \nu^{-1} \rightarrow \gcd(a_j,a_{j'})$, $\nu \rightarrow \infty$. This establishes \eqref{e:increase_in_number_of_pairs}. $\Box$\\
\end{proof}

\begin{lemma}\label{l:convergence_average_ajk-aj'k'}
Let $\{\phi_{\cdot}\} \in \bbR$ be a sequence such that $\sum^{\infty}_{z = -\infty} |\phi_{z \gcd(a_j,a_{j'})}| < \infty$.
Then,
\begin{equation}\label{e:convergence_average_ajk-aj'k'}
\frac{1}{\nu}\sum^{a_{j'}\nu}_{k=1}\sum^{a_{j}\nu}_{k'=1}\phi_{a_{j}k - a_{j'}k'}\rightarrow \gcd(a_{j},a_{j'}) \sum^{\infty}_{z = - \infty} \phi_{z \gcd(a_j,a_{j'})}, \quad \nu \rightarrow \infty.
\end{equation}
\end{lemma}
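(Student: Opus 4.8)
The plan is to reorganize the double sum according to the level sets of the linear form $r = a_j k - a_{j'}k'$ and then pass to the limit term by term via dominated convergence for series, using Lemma \ref{l:increase_in_number_of_pairs} for the pointwise limits.

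First, set $g := \gcd(a_j,a_{j'})$. By \eqref{e:R=gcd(aj,aj')Z}, the form $a_j k - a_{j'}k'$ takes values only in $\mathcal{R} = g\bbZ$ as $(k,k')$ ranges over $\bbZ^2$. Collecting the terms of the double sum according to their common value $r = a_j k - a_{j'}k' = zg$, $z \in \bbZ$, gives, for each fixed $\nu$, the identity
$$
\frac{1}{\nu}\sum^{a_{j'}\nu}_{k=1}\sum^{a_{j}\nu}_{k'=1}\phi_{a_{j}k - a_{j'}k'} = \sum_{z \in \bbZ} \frac{\xi_{zg}(\nu)}{\nu}\,\phi_{zg},
$$
where $\xi_r(\nu)$ is the counting function of Lemma \ref{l:increase_in_number_of_pairs}; this is actually a finite sum, since $\xi_{zg}(\nu) = 0$ once $|z|$ is large relative to $\nu$.

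Second, I would establish a uniform-in-$(z,\nu)$ bound $\xi_{zg}(\nu)/\nu \le g + 1$. Indeed, by \eqref{e:k**=k*+aj'N_set_of_solutions} the integer solutions of $a_j k - a_{j'}k' = zg$ have first coordinates forming an arithmetic progression with common difference $a_{j'}/g$; hence the number of them with $1 \le k \le a_{j'}\nu$ is at most $a_{j'}\nu/(a_{j'}/g) + 1 = g\nu + 1$, and imposing the further constraint $1 \le k' \le a_j\nu$ can only decrease this count. Consequently
$$
\Big| \frac{\xi_{zg}(\nu)}{\nu}\,\phi_{zg}\Big| \le (g+1)\,|\phi_{zg}|, \qquad z \in \bbZ,\ \nu \in \bbN,
$$
and the right-hand side is summable over $z \in \bbZ$ by the hypothesis $\sum_{z=-\infty}^{\infty} |\phi_{zg}| < \infty$.

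Third, combining the pointwise limit $\xi_{zg}(\nu)/\nu \to g$ from Lemma \ref{l:increase_in_number_of_pairs} with the dominating bound above, dominated convergence for series (with respect to counting measure on $\bbZ$) permits interchanging the limit and the summation, yielding
$$
\lim_{\nu\to\infty} \sum_{z\in\bbZ}\frac{\xi_{zg}(\nu)}{\nu}\,\phi_{zg} = \sum_{z\in\bbZ} g\,\phi_{zg} = \gcd(a_j,a_{j'})\sum_{z=-\infty}^{\infty}\phi_{z\gcd(a_j,a_{j'})},
$$
which is \eqref{e:convergence_average_ajk-aj'k'}. The only genuinely non-routine step is the uniform bound in the second paragraph; everything else is bookkeeping plus an appeal to the preceding lemma. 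That bound is itself elementary: consecutive lattice solutions on a fixed level line are equally spaced with gap $a_{j'}/g$ in the $k$-direction, so their number in a $k$-window of length $a_{j'}\nu$ is $O(g\nu)$, uniformly in the level $r$.
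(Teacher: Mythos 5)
Your proof is correct and follows essentially the same route as the paper's: reorganize the double sum along the level sets of $a_j k - a_{j'}k'$, invoke the pointwise limit $\xi_{zg}(\nu)/\nu \rightarrow \gcd(a_j,a_{j'})$ from Lemma \ref{l:increase_in_number_of_pairs}, and conclude by dominated convergence. The only thing you add is the explicit uniform bound $\xi_{zg}(\nu)/\nu \leq \gcd(a_j,a_{j'})+1$ justifying the domination, which the paper leaves implicit; your derivation of that bound from the arithmetic-progression structure \eqref{e:k**=k*+aj'N_set_of_solutions} of the solution set is correct.
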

\begin{proof}
By expression \eqref{e:R=gcd(aj,aj')Z} in the proof of Lemma \ref{l:increase_in_number_of_pairs}, $\nu^{-1}\sum^{a_{j'}\nu}_{k=1}\sum^{a_{j}\nu}_{k'=1}\phi_{a_{j}k - a_{j'}k'}= \hspace{1mm}\nu^{-1}\sum_{r \in {\mathcal R} \cap B_{j,j'}(\nu)} \xi_{r}(\nu) \phi_{r}$, where $B_{j,j'}(\nu)$ is the range for $r$ such that $1 \leq k \leq a_{j'}\nu$ and $1 \leq k' \leq a_{j}\nu$. Moreover, by \eqref{e:R=gcd(aj,aj')Z}, $\cup_{\nu \in \bbN}B_{j,j'}(\nu) = \gcd(a_j,a_{j'})\bbZ$. Then, expression \eqref{e:convergence_average_ajk-aj'k'} is a consequence of dominated convergence and \eqref{e:increase_in_number_of_pairs}. $\Box$\\
\end{proof}

\noindent {\sc Proof of Proposition \ref{p:4th_moments_wavecoef}}: The statement ($ii$) is a direct consequence of ($i$), so we only prove the latter. It suffices to consider the subsequence $\nu = 2^j 2^{j'} \nu_*$, $\nu_* \rightarrow \infty$. Then, $K_{j} = 2^{j'}\nu^*$, $K_{j'} = 2^{j}\nu_*$, and $\sqrt{K_{j}}\sqrt{K_{j'}} K^{-1}_{j} K^{-1}_{j'} = 2^{-(j+j')/2}\hspace{0.5mm}\nu^{-1}_*$. From the expression \eqref{e:wave_cov_reexpressed-1},
$$
\textnormal{Cov}(D(2^j,k),D(2^{j'},k')) =: \Phi_{2^{j}k - 2^{j'}k'}.
$$
Let $\Xi_{2^{j}k - 2^{j'}k'} = \Phi_{2^{j}k - 2^{j'}k'} \otimes \Phi_{2^{j}k - 2^{j'}k'}$. By \eqref{e:R=gcd(aj,aj')Z}, the range of indices spanned by $2^j k - 2^{j'}k'$ is $\bbZ \hspace{0.5mm}\gcd(2^j,2^{j'})$. Thus, we would like to show that
\begin{equation}\label{e:sum_norms_Kron_wavecoef_is_abssummable}
\sum^{\infty}_{z = - \infty} \| \Xi_{z \gcd(2^j,2^{j'})  }\| < \infty.
\end{equation}
Since the $l^1$ matrix norm (see \eqref{e:|A|p}) is sub-multiplicative,
$
\|\Xi_{z \gcd(2^j,2^{j'})  } \|_{l^1} =
\| \textnormal{vec}(\Phi_{z \gcd(2^j,2^{j'})  }) \textnormal{vec}(\Phi_{z \gcd(2^j,2^{j'})  })^* \|_{l^1}
\leq \|\textnormal{vec}(\Phi_{z \gcd(2^j,2^{j'})  })\|^2_{l^1}.
$
Moreover, Proposition \ref{p:decay_Cov_wavelet_coefs} implies that the sequence $\{\norm{ \textnormal{vec}(\Phi_{z \gcd(2^j,2^{j'})  })}_{l^1}\}_{z \in \bbZ}$ is summable (with $N_{\psi} \geq 2$; see \eqref{e:N_psi}); hence, \eqref{e:sum_norms_Kron_wavecoef_is_abssummable} holds. The expression (\ref{e:limiting_kron}) is now a consequence of Lemma \ref{l:convergence_average_ajk-aj'k'}. $\Box$\\

\noindent {\sc Proof of Theorem \ref{t:asymptotic_normality_wavecoef_fixed_scales}}
The argument is reminiscent of those in Bardet \cite{bardet:2000}, pp.510-513, Bardet \cite{bardet:2002}, p.997, and Istas and Lang \cite{istas:lang:1997}, Lemma 2. For notational simplicity, we will restrict ourselves to the bivariate context ($n = 2$). The argument for general $n$ can be worked out by a simple adaptation.

The proof is by means of the Cram\'{e}r-Wold device. Form the vector of wavelet coefficients
$$
Y = (d_1(2^{j_1},1),d_2(2^{j_1},1), \hdots, d_1(2^{j_1},K_{j_1}),d_2(2^{j_1},K_{j_1});\hdots;
$$
$$
d_1(2^{j_m},1),d_2(2^{j_m},1), \hdots, d_1(2^{j_m},K_{j_m}),d_2(2^{j_m},K_{j_m})) \in \bbR^{\Upsilon(\nu)},
$$
where $\Upsilon(\nu) := 2 \hspace{1mm}\sum^{j_m}_{j=j_1}K_j $. Notice that $m$, $j_1, \hdots, j_m$ are fixed, but each $K_j$ goes to infinity with $\nu$. Let
\begin{equation}\label{e:alphavec}
{\boldsymbol \alpha} = ({\boldsymbol \alpha}_{j_1},\hdots,{\boldsymbol \alpha}_{j_m}) \in \bbR^{3m},
\end{equation}
where
$$
{\boldsymbol \alpha}_{j} = (\alpha_{j,1},\alpha_{j,12},\alpha_{j,2})^* \in \bbR^3, \quad j = j_1,\hdots,j_m.
$$
Now form the block-diagonal matrix
\begin{equation}\label{e:block-diagonal_matrix_D}
D = \textnormal{diag}\Big( \underbrace{\frac{1}{K_{j_1}}\sqrt{\frac{1}{2^{j_1}}}\Omega_{j_1}, \hdots, \frac{1}{K_{j_1}}\sqrt{\frac{1}{2^{j_1}}}\Omega_{j_1}}_\text{$K_{j_1}$}; \hdots;
\underbrace{\frac{1}{K_{j_m}}\sqrt{\frac{1}{2^{j_m}}}\Omega_{j_m}, \hdots, \frac{1}{K_{j_m}}\sqrt{\frac{1}{2^{j_m}}}\Omega_{j_m}}_\text{$K_{j_m}$}\Big),
\end{equation}
where
$$
\Omega_{j}
= \left(\begin{array}{cc}
\alpha_{j,1} & \alpha_{j,12}/2\\
\alpha_{j,12}/2 & \alpha_{j,2}
\end{array}\right), \quad j = j_1,\hdots, j_m.
$$
We would like to establish the limiting distribution of the statistic
$$
T_{\nu} = \sum^{j_m}_{j=j_1} \frac{{\boldsymbol \alpha}^*_{j}}{\sqrt{2^j}} \vecoper_{{\mathcal S}}W(2^j) = Y^* D Y,
$$
where it suffices to consider ${\boldsymbol \alpha}$ in \eqref{e:alphavec} such that
\begin{equation}\label{e:only_alphas_for_which_the_limit_is_nontrivial}
{\boldsymbol \alpha}^* \Sigma(H,AA^*) {\boldsymbol \alpha} > 0
\end{equation}
(see Brockwell and Davis \cite{brockwell:davis:1991}, pp.\ 211 and 214). The matrix $\Sigma(H,AA^*)$ in \eqref{e:only_alphas_for_which_the_limit_is_nontrivial} is obtained from Proposition \ref{p:4th_moments_wavecoef}, and can be written in block form as
\begin{equation}\label{e:Sigma(H,AA*)_in_block_form}
\Sigma(H,AA^*) = (G_{jj'})_{j,j'=j_1,\hdots,j_m},
\end{equation}
corresponding to block entries of the vector ${\boldsymbol \alpha} = (\alpha_{j_1},\hdots,\alpha_{j_m})^*$. Let $\Gamma = \Cov(Y,Y)$ and consider the spectral decomposition $\Gamma^{1/2}D\Gamma^{1/2} = O \Lambda O^*$, where $\Lambda$ is diagonal with real, and not necessarily positive, eigenvalues $\lambda$ and $O$ is an orthogonal matrix. Now let $Z \sim N(0,I_{\Upsilon(\nu)})$. Then,
$$
T_{\nu} \stackrel{d}= Z^* \Gamma^{1/2} D \Gamma^{1/2} Z = Z^* O \Lambda  O^* Z \stackrel{d}=Z^* \Lambda Z =: \sum^{\Upsilon(\nu)}_{i=1}\lambda_i(\nu) Z^{2}_i.
$$
Assume for the moment that
\begin{equation}\label{e:max_eig=o(1/(2^J/2))}
\max_{i=1,\hdots,\Upsilon(\nu)}|\lambda_i(\nu)| = o\Big(\frac{1}{\nu^{1/2}} \Big).
\end{equation}
By \eqref{e:only_alphas_for_which_the_limit_is_nontrivial} and Proposition \ref{p:4th_moments_wavecoef},
$$
\nu \Var(T_{\nu}) = \sum^{j_m}_{j=j_1}\sum^{j_m}_{j'=j_1}{\boldsymbol \alpha}^*_{j} \Big\{ \sqrt{\frac{\nu}{2^j}} \sqrt{\frac{\nu}{2^{j'}}}\Cov(\vecoper_{{\mathcal S}}W(2^j),
\vecoper_{{\mathcal S}}W(2^{j'})) \Big\}  {\boldsymbol \alpha}_{j'}
\rightarrow \sum^{j_m}_{j=j_1}\sum^{j_m}_{j'=j_1}{\boldsymbol \alpha}^*_{j} \hspace{1mm}G_{jj'} \hspace{1mm}  {\boldsymbol \alpha}_{j'} > 0.
$$
Therefore, there exists a constant $C > 0$ such that, for large enough $\nu$, $\nu \Var(T_{\nu}) \geq C > 0$. In view of condition \eqref{e:max_eig=o(1/(2^J/2))},
$$
\frac{\max_{i=1,\hdots,\Upsilon(\nu)}|\lambda_{i}(\nu)|}{\sqrt{\Var(T_{\nu})}} \leq C' \nu^{1/2} \max_{i=1,\hdots,\Upsilon(\nu)}|\lambda_{i}(\nu)| \rightarrow 0, \quad \nu \rightarrow \infty.
$$
The claim \eqref{e:asymptotic_normality_wavecoef_fixed_scales} is now a consequence of Lemma \ref{l:Vn/sigma_n->N(0,1)}.

So, we need to show \eqref{e:max_eig=o(1/(2^J/2))}. The first step is to establish the bound
\begin{equation}\label{e:maxeig_Gamma1/2DGamma1/2=<maxblockD*maxeigGamma}
\sup_{\textbf{u} \in S^{\Upsilon(\nu) - 1}} |\textbf{u}^* \Gamma^{1/2} D \Gamma^{1/2}\textbf{u} | \leq C
\max_{j=j_1,\hdots,j_m}\frac{1}{K_{j}} \|\Omega_{j}\|_{l^1} \hspace{1mm} \sup_{\textbf{u} \in S^{\Upsilon(\nu) - 1}} \textbf{u}^* \Gamma \textbf{u}.
\end{equation}
Let $\textbf{u} \in S^{\Upsilon(\nu) - 1}$ and let $\textbf{v} =  \Gamma^{1/2}\textbf{u} $. We can break up the vector $\textbf{v}$ into two-dimensional subvectors $v_{\cdot,\cdot}$ to reexpress
$$
\textbf{v} = (v_{j_1,1}, \hdots, v_{j_1,K_{j_1}}; \hdots; v_{j_m,1}, \hdots, v_{j_m,K_{j_m}})^*.
$$
Based on the block-diagonal structure of $D$ expressed in \eqref{e:block-diagonal_matrix_D},
$$
|\textbf{u}^* \Gamma^{1/2}D\Gamma^{1/2}\textbf{u}| =| \textbf{v}^* D \textbf{v} |
=  \Big|\sum^{j_m}_{j=j_1}\sum^{K_j}_{l=1}v^*_{j,l} \hspace{1mm}\frac{\Omega_{j}}{K_j \sqrt{2^j}} \hspace{1mm} v_{j,l}  \Big|
\leq C \sum^{j_m}_{j=j_1}\sum^{K_j}_{l=1} \frac{1}{K_j \sqrt{2^j}}\|\Omega_{j}\|_{l^1} \hspace{1mm} \|v_{j,l}\|^2
$$
\begin{equation}\label{e:maxeig_Gamma1/2DGamma1/2=<maxblockD*maxeigGamma_proof}
\leq C \Big( \max_{j=j_1,\hdots,j_m}\frac{1}{K_j\sqrt{2^j}}\|\Omega_{j}\|_{l^1} \Big)\hspace{1mm}  \sum^{j_m}_{j=j_1}\sum^{K_j}_{l=1} \|v_{j,l}\|^2
= C \Big( \max_{j=j_1,\hdots,j_m}\frac{1}{K_j \sqrt{2^j}}\|\Omega_{j}\|_{l^1} \Big) \textbf{u}^* \Gamma \textbf{u},
\end{equation}
where the constant $C$ comes from a change of matrix norms (see \eqref{e:|A|p}) and only depends on the fixed dimension $n=2$. By taking $\sup_{\textbf{u} \in S^{\Upsilon(\nu) - 1}}$ on both sides of \eqref{e:maxeig_Gamma1/2DGamma1/2=<maxblockD*maxeigGamma_proof}, we arrive at \eqref{e:maxeig_Gamma1/2DGamma1/2=<maxblockD*maxeigGamma}.

The second step towards showing \eqref{e:max_eig=o(1/(2^J/2))} consists of analyzing the asymptotic behavior of the right-hand side of \eqref{e:maxeig_Gamma1/2DGamma1/2=<maxblockD*maxeigGamma}, as $\nu \rightarrow \infty$. Note that
\begin{equation}\label{e:max1/K_asympt_1/2J}
\max_{j=j_1,\hdots,j_m}\frac{1}{K_j} \|\Omega_{j}\|_{l^1}  \sim C \frac{1}{\nu}, \quad \nu \rightarrow \infty.
\end{equation}
In view of \eqref{e:max1/K_asympt_1/2J}, for establishing \eqref{e:max_eig=o(1/(2^J/2))} it suffices to show that $\sup_{\textbf{u} \in S^{\Upsilon(\nu) - 1}} \textbf{u}^* \Gamma \textbf{u}$ is bounded. So, rewrite $Y = ( \hspace{0.5mm}Y_i \hspace{0.5mm})_{i=1,\hdots,\Upsilon(\nu)}$. Since
\begin{equation}\label{e:maxeigGamma=<sup_sum|Cov|}
\sup_{\textbf{u} \in S^{\Upsilon(\nu) - 1}} \textbf{u}^* \Gamma \textbf{u} \leq \max_{i_1 = 1,\hdots, \Upsilon(\nu)}\sum^{\Upsilon(\nu)}_{i_2 = 1}|\Cov(Y_{i_1},Y_{i_2})|
\end{equation}
(see Lemma 1 in Bardet \cite{bardet:2000}, p.509), we only need to show that the right-hand side of \eqref{e:maxeigGamma=<sup_sum|Cov|} is bounded. Fix $0 < \delta < 1/2$ and without loss of generality assume that $\textnormal{length}(K) = 1$. We turn back to wavelet and dimensionality parameters (indices) to reexpress, and then bound, the right-hand side of \eqref{e:maxeigGamma=<sup_sum|Cov|} as
$$
\max_{j=j_1,\hdots,j_m} \max_{k=1,\hdots,K_j}\max_{i=1,2} \sum^{j_m}_{j'=j_1}\sum^{K_{j'}}_{k'=1}\sum^{2}_{i'=1} |\Cov(d_i(2^j,k),d_{i'}(2^{j'},k'))|
$$
$$
\leq 2m \max_{j,j'=j_1,\hdots,j_m} \max_{k=1,\hdots,K_j}\max_{i,i'=1,2} \sum^{K_{j'}}_{k'=1} |\Cov(d_i(2^j,k),d_{i'}(2^{j'},k'))|
$$
$$
= 2m \max_{j,j'=j_1,\hdots,j_m} \max_{k=1,\hdots,K_j}\max_{i,i'=1,2}
$$
\begin{equation}\label{e:cov_terms=distant_apart+close_together}
\sum^{K_{j'}}_{k'=1} 1_{ \Big\{\frac{\max\{2^j,2^{j'}\}}{|2^j k - 2^{j'}k'|} > \delta \Big\}}+ 1_{ \Big\{\frac{\max\{2^j,2^{j'}\}}{|2^j k - 2^{j'}k'|}\leq \delta \Big\}}|\Cov(d_i(2^j,k),d_{i'}(2^{j'},k'))|.
\end{equation}
One can interpret the bound in \eqref{e:cov_terms=distant_apart+close_together} as dividing up the covariance terms into those that, parameter-wise, are either far apart or close together (see \eqref{e:wave_param_range}).

We now develop a bound for the first summation term in \eqref{e:cov_terms=distant_apart+close_together}. By the property $(P2)$,
$$
|\Cov(d_{i}(2^j,k),d_{i'}(2^{j'},k'))| \leq \max_{i,i'=1,2}\max_{j,j'=j_1,\hdots,j_m}\sqrt{\Var \hspace{1mm}d_{i}(2^j,0)}\sqrt{\Var \hspace{1mm}d_{i'}(2^{j'},0)} \leq C,
$$
i.e., the covariance terms have a common bound. Moreover, in regard to the associated indicators in \eqref{e:cov_terms=distant_apart+close_together}, when $j' \geq j$, $\#\{k': |2^{j-j'}k-k' | < \delta^{-1} \} \leq 2 \delta^{-1} + 1$. Alternatively, when $j' < j$, $\#\{k': |2^{j-j'}k-k' | < 2^{j-j'} \delta^{-1} \} \leq 2 \hspace{1mm}2^{j-j'} \delta^{-1} + 1$. Therefore, the first summation term in \eqref{e:cov_terms=distant_apart+close_together} comprises finitely many terms and is bounded by a constant, irrespective of $\nu$.

To bound the second summation term in \eqref{e:cov_terms=distant_apart+close_together}, since $|\Cov(d_i(2^j,k),d_{i'}(2^{j'},k'))| \leq \|\Cov(D(2^j,k),D(2^{j'},k'))\|_{l^1}$, the bound \eqref{e:decay_Cov_wavelet_coefs_in_norm} implies that
$$
\sum^{K_{j'}}_{k'=1} 1_{ \Big\{\frac{\max\{2^j,2^{j'}\}}{|2^j k - 2^{j'}k'|}\leq \delta \Big\}} |\Cov(d_i(2^j,k),d_{i'}(2^{j'},k'))|
$$
\begin{equation}\label{e:cov_terms_close_together_converge}
\leq C \sum^{K_{j'}}_{k'=1} 1_{ \Big\{\frac{\max\{2^j,2^{j'}\}}{|2^j k - 2^{j'}k'|}\leq \delta \Big\}} \frac{|\log^{\kappa} |2^j k- 2^{j'}k'||}{|2^j k- 2^{j'}k'|^{2N_{\psi}-2}}\hspace{1mm}
\leq C \sum_{z \neq 0} \frac{1}{|z|^{2N_{\psi}-2}}\hspace{1mm}
\log^{\kappa} |z| < \infty,
\end{equation}
where $2N_{\psi} - 2 > 1$ by \eqref{e:N_psi} and $C$ does not depend on $k$, $k'$. Consequently, \eqref{e:cov_terms=distant_apart+close_together} is bounded, and so is \eqref{e:maxeigGamma=<sup_sum|Cov|}, as we wished to show. This establishes \eqref{e:max_eig=o(1/(2^J/2))}, and as a result, also \eqref{e:asymptotic_normality_wavecoef_fixed_scales}. $\Box$\\

The next lemma, stated without proof, is used in the proof of Theorem \ref{t:asymptotic_normality_wavecoef_fixed_scales}. It is a simple consequence of the Lindeberg central limit theorem.
\begin{lemma}\label{l:Vn/sigma_n->N(0,1)}
Let $\{W_{j,n}\}_{j=1,\hdots,n}$, $n \in \bbN$, be an array of i.i.d.\ random variables such that $EW_{j,k} = 0$ and $EW^2_{j,k} < \infty$. Let $\{\lambda_{j,n}\}_{j=1,\hdots,n}$, $n \in \bbN$, be an associated array of constants $\lambda_{j,n} \in \bbR$. Define the statistic
$V_{n} = \sum^{n}_{j=1}\lambda_{j,n} W_{j,n}$ and its variance $\sigma^2_n = \Var(V_n)$. If
$\max_{j=1,\hdots,n}|\lambda_{j,n}| = o(\sigma_n)$, then $\frac{V_{n}}{\sigma_n} \stackrel{d}\rightarrow N(0,1)$.
\end{lemma}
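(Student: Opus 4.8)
The plan is to recognize the normalized statistic $V_n/\sigma_n$ as a row sum of a triangular array of independent, mean-zero summands and to invoke the Lindeberg--Feller central limit theorem. Write $\tau^2 := E W^2_{1,1}$ and note that if $\tau^2 = 0$ then $V_n \equiv 0$ and there is nothing to prove, so assume $\tau^2 > 0$. Since the $W_{j,n}$ are i.i.d.\ with mean zero across both indices, the summands $X_{j,n} := \lambda_{j,n}W_{j,n}$, $j = 1,\hdots,n$, are independent within each row with $E X_{j,n} = 0$ and $\sigma^2_n = \Var(V_n) = \tau^2 \sum^{n}_{j=1}\lambda^2_{j,n}$; in particular the hypothesis $\max_{j}|\lambda_{j,n}| = o(\sigma_n)$ forces $\sigma_n > 0$ for all large $n$, and $c_n := \max_{j=1,\hdots,n}|\lambda_{j,n}|/\sigma_n \to 0$.

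First I would normalize, setting $Y_{j,n} := X_{j,n}/\sigma_n$, so that $\sum^{n}_{j=1}Y_{j,n} = V_n/\sigma_n$, $E Y_{j,n} = 0$, and $\sum^{n}_{j=1}\Var(Y_{j,n}) = 1$. The key deterministic bound is $|Y_{j,n}| = (|\lambda_{j,n}|/\sigma_n)|W_{j,n}| \le c_n|W_{j,n}|$, whence for every $\varepsilon > 0$ the event $\{|Y_{j,n}| > \varepsilon\}$ is contained in $\{|W_{j,n}| > \varepsilon/c_n\}$.

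Next I would check the Lindeberg condition. Using the inclusion above together with the identical distribution of the $W_{j,n}$ and the identity $\sum^{n}_{j=1}\lambda^2_{j,n}/\sigma^2_n = 1/\tau^2$,
\[
\sum^{n}_{j=1} E\Big[ Y^2_{j,n}\,\mathbf{1}_{\{|Y_{j,n}|>\varepsilon\}}\Big] \;\le\; \frac{\sum^{n}_{j=1}\lambda^2_{j,n}}{\sigma^2_n}\, E\Big[ W^2_{1,1}\,\mathbf{1}_{\{|W_{1,1}|>\varepsilon/c_n\}}\Big] \;=\; \frac{1}{\tau^2}\, E\Big[ W^2_{1,1}\,\mathbf{1}_{\{|W_{1,1}|>\varepsilon/c_n\}}\Big].
\]
Since $c_n \to 0$ gives $\varepsilon/c_n \to \infty$ and $E W^2_{1,1} < \infty$, dominated convergence shows the right-hand side tends to $0$. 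Hence the Lindeberg condition holds and the Lindeberg--Feller theorem yields $V_n/\sigma_n \stackrel{d}\rightarrow N(0,1)$.

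I do not anticipate a genuine obstacle here; the only point deserving attention is the passage to the single tail $E[W^2_{1,1}\mathbf{1}_{\{|W_{1,1}|>\varepsilon/c_n\}}]$, which is precisely where the uniform-smallness assumption $\max_j|\lambda_{j,n}| = o(\sigma_n)$ enters and without which the conclusion would be false (a single dominant coefficient would leave a non-Gaussian component). Beyond that one only needs to dispatch the degenerate cases $\tau^2 = 0$ and, implicitly, $\sigma_n = 0$.
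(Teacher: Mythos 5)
Your proof is correct and follows exactly the route the paper indicates: the paper states this lemma without proof, remarking only that it is ``a simple consequence of the Lindeberg central limit theorem,'' and your argument supplies precisely that verification, with the uniform-smallness hypothesis $\max_j|\lambda_{j,n}|=o(\sigma_n)$ entering where it should, namely in forcing the truncation level $\varepsilon/c_n\to\infty$ in the Lindeberg sum. Nothing further is needed.
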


\section{Estimation based on the discretized wavelet transform}\label{s:discretized_wavelet}

In this section, we address the issue raised in Remark \ref{r:discretized_wavelet_transform}. Instead of a continuous time OFBM path $\{B_H(t)\}_{t \in \bbR}$,  we assume that only a discrete OFBM sample
\begin{equation}\label{e:BH(k)}
\{B_H(k)\}_{k \in \bbZ}
\end{equation}
is available. The main goal is to develop the asymptotic distribution of the Hurst matrix estimators starting from the so-called discretized wavelet coefficients (as defined in \eqref{e:approxD_2j,k} below). The mathematical construction in this section is carried out along the same lines of that in Stoev et al.\ \cite{stoev:pipiras:taqqu:2002} (notably Proposition 2.4 and Theorem 3.2).\\

We suppose the wavelet approximation coefficients stem from Mallat's pyramidal algorithm, under a multiresolution analysis of $L^2(\bbR)$ (MRA; see Mallat \cite{mallat:1999}, chapter 7). Accordingly, we need to replace ($W2$) with the following more restrictive condition.

\medskip

\noindent {\sc Assumption ($W2'$)}:
\begin{align*}
& \textnormal{the functions $\varphi$ (a bounded scaling function) and $\psi$ correspond to a MRA of $L^2(\bbR)$,} \\
& \textnormal{and $\textnormal{supp}(\varphi)$ and $\textnormal{supp}(\psi)$ are compact intervals.}
\end{align*}


\noindent All through this section, we assume that ($W1$), ($W2'$) and ($W3$) hold. Given \eqref{e:BH(k)}, we initialize the algorithm with the vector-valued sequence
$$
\bbR^n \ni \widetilde{a}_{0,k} := a_{\varphi}B_H(k), \quad k \in \bbZ, \quad a_{\varphi} := \int_{\bbR}\varphi(t) dt,
$$
also called the approximation coefficients at scale $2^0 = 1$. At coarser scales $2^j$, Mallat's algorithm is characterized by the iterative procedure
$$
\widetilde{a}_{j+1,k} = \sum_{k'\in \bbZ}h_{k'- 2k}\widetilde{a}_{j,k'}, \quad \widetilde{d}_{j+1,k} = \sum_{k'\in \bbZ}g_{k'- 2k}\widetilde{a}_{j,k'}, \quad j \in \bbN, \quad k \in \bbZ,
$$
where the filter sequences $\{h_k\}_{k \in \bbZ}$, $\{g_k\}_{k \in \bbZ}$ are called low- and high-pass MRA filters, respectively. Due to ($W2'$), only a finite number of filter terms is non-zero, which is convenient for computational purposes (see Daubechies \cite{daubechies:1992}, chapter 6).

For the subsequent developments, we will need to strengthen the condition (OFBM1).

\medskip

\noindent {\sc Assumption (OFBM1$'$)}: the eigenvalues $h_k$ of the matrix exponent $H$ satisfy

\begin{equation}\label{e:eigen-assumption_stronger}
    \Re(h_k) \in (0,1)\backslash \{1/2\},\quad k=1,\ldots,n.
\end{equation}

\medskip

\noindent Let $D$ be the shifted exponent matrix \eqref{e:D}. Under \eqref{e:eigen-assumption_stronger}, Theorem 3.2 in Didier and Pipiras \cite{didier:pipiras:2011} yields the time domain representation
\begin{equation}\label{e:OFBM_timedomain}
\{B_H(t)\}_{t \in \bbR} \stackrel{{\mathcal L}}= \Big\{ \int_{\bbR} g_H(t,u)B(du) \Big\}_{t \in \bbR},
\end{equation}
where $B(du)$ is Brownian random measure such that $EB(du)B(du)^* = du$ and
\begin{equation}\label{e:gH(t,u)}
g_H(t,u) = \{(t - u)^{D}_{+}-(-u)^{D}_{+}\} M_+ + \{(t - u)^{D}_{-}-(-u)^{D}_{-}\} M_- \in M(n,\bbR)
\end{equation}
for $M_+,M_- \in M(n,\bbR)$ (in particular, the theorem describes the relation between the matrix parameters $M_+$, $M_-$ and $A$ in \eqref{e:OFBM_harmonizable}). The next result draws upon \eqref{e:OFBM_timedomain} to provide an integral representation for the normalized discretized wavelet coefficients
\begin{equation}\label{e:approxD_2j,k}
\bbR^{n} \ni \widetilde{D}(2^j,k) := 2^{-j/2} \widetilde{d}_{j,k}.
\end{equation}
For notational simplicity, we define $\varphi_i(u) = \varphi(u - i)$, $i \in \bbZ$.
\begin{lemma}
Fix $j \in \bbN$ and $k \in \bbZ$. Under the assumptions (OFBM1$'$) and (OFBM2), let $\widetilde{D}(2^j,k)$ be the discretized wavelet coefficient \eqref{e:approxD_2j,k}. Then,
\begin{equation}\label{e:approxD_2j,k_expression}
\widetilde{D}(2^j,k) \stackrel{d}=  \int_{\bbR} 2^{jD}\Big\{ \int_{\bbR}\psi(t) \sum^{\infty}_{i= - \infty} a_{\varphi} \varphi_{i}(2^{j}t)g_{H}\Big(\frac{i}{2^j},
\frac{s}{2^j}-k \Big) dt \Big\} B(ds).
\end{equation}
\end{lemma}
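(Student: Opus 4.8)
The plan is to reduce the iterated filter-bank output to an explicit finite linear combination of the samples $\{B_H(i)\}$, to substitute the time-domain representation \eqref{e:OFBM_timedomain}, and then to rearrange the resulting deterministic kernel using the operator-scaling and stationary-increment structure of $g_H$ in \eqref{e:gH(t,u)}. The first step is to express the discretized detail coefficient through the initialization. Since $\varphi$ and $\psi$ have compact support (Assumption $(W2')$), the MRA filters $\{h_k\}$, $\{g_k\}$ are finitely supported, and unrolling the recursion for $\{\widetilde a_{j,k}\}$, $\{\widetilde d_{j,k}\}$ gives, as a finite-filter identity among the two-scale relations defining the MRA (no $L^2$-convergence is used; cf.\ Stoev et al.\ \cite{stoev:pipiras:taqqu:2002}, Proposition 2.4, and Mallat \cite{mallat:1999}, chapter 7),
$$
\widetilde d_{j,k} = \sum_{i \in \bbZ} \widetilde a_{0,i}\, \langle \varphi_i, \psi_{j,k}\rangle, \qquad \psi_{j,k}(t) := 2^{-j/2}\psi(2^{-j}t - k),
$$
with only finitely many non-zero summands. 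Substituting $\widetilde a_{0,i} = a_\varphi B_H(i)$, evaluating $\langle\varphi_i,\psi_{j,k}\rangle = 2^{j/2}\int_\bbR \psi(\sigma)\varphi(2^j\sigma + 2^jk - i)\,d\sigma$ via the change of variables $t = 2^j(\sigma + k)$, and multiplying by $2^{-j/2}$ (see \eqref{e:approxD_2j,k}) yields
$$
\widetilde D(2^j,k) = a_\varphi \sum_{i\in\bbZ} c_i(j,k)\, B_H(i), \qquad c_i(j,k) := \int_\bbR \psi(\sigma)\, \varphi(2^j\sigma + 2^jk - i)\,d\sigma,
$$
where $c_i(j,k)\neq 0$ for only finitely many $i$.

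Since $\widetilde D(2^j,k)$ is a fixed finite linear combination of the $B_H(i)$, and by \eqref{e:OFBM_timedomain} the family $\{B_H(i)\}_{i\in\bbZ}$ has the same joint law as $\{\int_\bbR g_H(i,u)\,B(du)\}_{i\in\bbZ}$, linearity gives $\widetilde D(2^j,k) \stackrel{d}= \int_\bbR\big\{a_\varphi \sum_{i}c_i(j,k)\,g_H(i,u)\big\}\,B(du)$. Reindexing $i\mapsto i+2^jk$, legitimate because $2^jk\in\bbZ$, turns the kernel into $a_\varphi\sum_i\big(\int\psi(\sigma)\varphi(2^j\sigma - i)\,d\sigma\big)\,g_H(i+2^jk,u)$. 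A direct computation from \eqref{e:gH(t,u)} gives the increment identity $g_H(i+2^jk,u) = g_H(i,u-2^jk) + g_H(2^jk,u)$; the contribution of the $g_H(2^jk,u)$ piece is $a_\varphi\,g_H(2^jk,u)\int\psi(\sigma)\big(\sum_i\varphi(2^j\sigma - i)\big)\,d\sigma$, which vanishes because the partition-of-unity property of MRA scaling functions gives $\sum_{i}\varphi(x-i) = a_\varphi$ for all $x$ while $\int\psi = 0$ by \eqref{e:N_psi}. Finally the operator-scaling identity $g_H(ct,cu) = c^D g_H(t,u)$, $c>0$, immediate from \eqref{e:gH(t,u)} and \eqref{e:D}, gives $g_H(i,u-2^jk) = 2^{jD}g_H(i/2^j,\,u/2^j - k)$. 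Substituting this back, pulling the constant matrix $2^{jD}$ outside, interchanging the remaining finite sum with the $\sigma$-integral, relabeling $u$ as $s$, and writing $\varphi(2^j\sigma - i) = \varphi_i(2^j\sigma)$ reproduces exactly \eqref{e:approxD_2j,k_expression}.

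The only genuinely delicate point is the first step. Initializing Mallat's algorithm with the samples $a_\varphi B_H(k)$ rather than with the inner products $\langle B_H, \varphi_k\rangle$ is unavoidable here: $B_H$ grows polynomially, so $\sum_k B_H(k)\varphi_k \notin L^2(\bbR)$ and the latter objects are ill-defined. Accordingly, I would justify the closed-form expansion not through any $L^2$ projection argument but as an algebraic consequence of the two-scale relations for the MRA filters, composed finitely many times through the recursion; Assumption $(W2')$ guarantees finiteness of every sum in the argument. The remaining manipulations — interchanging finite sums and integrals, and the scaling and increment identities for $g_H$ — are routine, and the assumptions $(OFBM1')$, $(OFBM2)$ enter only through their role in securing the representation \eqref{e:OFBM_timedomain}.
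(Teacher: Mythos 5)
Your proposal is correct and follows essentially the route the paper intends: the paper's proof is a one-line deferral to Lemma 6.1 of Stoev et al.\ \cite{stoev:pipiras:taqqu:2002} (with its Lemma 6.2 supplying the partition-of-unity cancellation $\sum_i a_\varphi\varphi_i(x)\equiv \textnormal{const}$ that you also use), and what you have written out — unrolling the finite filter bank into $\sum_i \widetilde a_{0,i}\langle\varphi_i,\psi_{j,k}\rangle$, substituting \eqref{e:OFBM_timedomain}, and applying the increment and operator-scaling identities of $g_H$ — is precisely the adaptation of that argument to OFBM. The computations (the change of variables for $\langle\varphi_i,\psi_{j,k}\rangle$, the identity $g_H(i+2^jk,u)=g_H(i,u-2^jk)+g_H(2^jk,u)$, and $g_H(ct,cu)=c^Dg_H(t,u)$) all check out.
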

\begin{proof}
The assumed conditions on $\varphi$ and $\psi$ and the expression \eqref{e:gH(t,u)} imply that the integral on the right-hand side of \eqref{e:approxD_2j,k_expression} is well-defined in the mean-square sense. Therefore, \eqref{e:approxD_2j,k_expression} can be shown by a direct adaptation of the proof of Lemma 6.1 in Stoev et al.\ \cite{stoev:pipiras:taqqu:2002}, which also makes use of Lemma 6.2 in the latter reference. $\Box$\\
\end{proof}

For the proofs of Proposition \ref{p:D-tildeD_is_bounded} and Theorem \ref{t:asympt_log_a(nu)_discrete} below, recall the matrix norm notation \eqref{e:|A|p}. Then, for a random vector ${\mathbf X}$, we can define the norm $\|{\mathbf X}\|_{L^q(P)} = (E\|{\mathbf X}\|^q_{l^q})^{1/q}$, $q \in \bbN$.

The next proposition establishes that the mean-square distance between the discretized and regular wavelet coefficients (expressions \eqref{e:approxD_2j,k} and \eqref{wavelet_transform}, respectively) is bounded by a constant that does not depend on the octave $j$ or the shift $k$.
\begin{proposition}\label{p:D-tildeD_is_bounded}
Under the assumptions (OFBM1$'$) and (OFBM2), let $\widetilde{D}(2^j,k)$ be the approximation coefficient \eqref{e:approxD_2j,k}. Then, there is a constant $C(H,\psi,\varphi) \geq 0$, not depending on $j$ or $k$, such that
$$
\|D(2^j,k)- \widetilde{D}(2^j,k)\|_{L^2(P)} \leq C(H,\psi,\varphi).
$$
\end{proposition}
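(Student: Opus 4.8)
The plan is to compare the two integral representations of $D(2^j,k)$ and $\widetilde{D}(2^j,k)$ and bound their $L^2(P)$ distance by an Itô-isometry argument, following the template of Proposition 2.4 in Stoev et al.\ \cite{stoev:pipiras:taqqu:2002}. First I would write $D(2^j,k)$ in time-domain form. Using the representation \eqref{e:OFBM_timedomain}--\eqref{e:gH(t,u)} and the change of variables \eqref{e:wavelet_transform_after_change_var}, one gets
$$
D(2^j,k) \stackrel{d}= \int_{\bbR} 2^{jD}\Big\{ \int_{\bbR}\psi(t)\, g_H\Big(t, \frac{s}{2^j}-k\Big)\, dt\Big\}B(ds),
$$
after a $2^{-j}$-rescaling of the Brownian measure (which is where the $2^{jD}$ prefactor, rather than $2^{jH}$, appears, exactly as in the $D(2^j,k)$ computation in the cited reference). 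Subtracting \eqref{e:approxD_2j,k_expression}, by the Itô isometry we have
$$
\|D(2^j,k)-\widetilde{D}(2^j,k)\|^2_{L^2(P)} = \int_{\bbR}\Big\| 2^{jD}\Big\{\int_{\bbR}\psi(t)\Big[g_H\Big(t,\tfrac{s}{2^j}-k\Big) - \sum_{i}a_{\varphi}\varphi_i(2^j t)\, g_H\Big(\tfrac{i}{2^j},\tfrac{s}{2^j}-k\Big)\Big]dt\Big\}\Big\|^2_{l^2}\, ds,
$$
up to a fixed constant coming from the equivalence of matrix norms. The key point is that $\sum_i a_\varphi \varphi_i(2^j t)$ acts as a (near-)partition of unity at resolution $2^j$, so the bracketed term is the error of approximating $t \mapsto g_H(t,\cdot)$ by its ``sampled-and-averaged'' version; this is small in a way that is then integrated against $\psi$, which has $N_\psi \geq 2$ vanishing moments.

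The heart of the argument is then a kernel estimate. Using compact support of $\varphi$ and $\psi$, the sum over $i$ is finite on $\mathrm{supp}(\psi(2^{-j}\cdot))$, and I would split $\bbR$ (in the $s$-variable) into the region where $s/2^j - k$ is close to the support of $\psi$ (bounded length, where one uses Hölder regularity / boundedness of $g_H$ near the singularity $t=u$, controlled via $\Re(h_k)\in(0,1)\setminus\{1/2\}$, i.e.\ (OFBM1$'$), which keeps the exponents of $(t-u)_\pm^D$ away from the critical value that would destroy square-integrability) and the region where $|s/2^j-k|$ is large (where $g_H(t,u)$ and its derivatives decay in $u$ like powers dictated by $D$, again by (OFBM1$'$)). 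On each region, the difference inside the brackets is bounded, after using the vanishing-moment property $\int \psi(t)t^q\,dt=0$ for $q<N_\psi$ together with a Taylor expansion of $g_H(\cdot,u)$ in its first argument, by $C\,2^{-j N_\psi}$ times a polynomially-controlled factor in $i$ and $u$; meanwhile $\|2^{jD}\|\le C\, 2^{j\,\max_k(\Re h_k - 1/2)}\,|\log 2^j|^\kappa$ by the bound on matrix exponentials used in the proof of Proposition \ref{p:decay_Cov_wavelet_coefs}. Assembling the pieces, a change of variables $u = s/2^j - k$ produces a net power of $2^j$ with exponent $N_\psi + \tfrac12 - \max_k \Re(h_k) > 0$ (using $N_\psi\ge 2$ and $\Re(h_k)<1$) in the denominator, so the whole expression is bounded uniformly in $j$, and the $u$-integral converges uniformly in $k$ by the decay estimates; this gives the desired constant $C(H,\psi,\varphi)$.

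The main obstacle I expect is making the partition-of-unity / vanishing-moments cancellation rigorous simultaneously with the singularity of $g_H(t,u)$ at $t=u$: one cannot naively Taylor-expand $g_H(\cdot,u)$ in $t$ across the point $t=u$, so the region of integration has to be partitioned carefully (into a part bounded away from the singularity, where the expansion and the $N_\psi$ vanishing moments give the $2^{-jN_\psi}$ gain, and a small neighborhood of the singularity, where one instead uses directly the $L^2$-integrability of $(t-u)_\pm^D$ guaranteed by (OFBM1$'$) and the boundedness of $\varphi$, $\psi$). Handling the off-diagonal Jordan-block contributions (the $\log^\kappa$ factors) is routine once the exponent bookkeeping is set up, since the surplus power of $2^j$ in the denominator absorbs any logarithmic factor, exactly as in the proof of \eqref{e:decay_Cov_wavelet_coefs_in_norm}.
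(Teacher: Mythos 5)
Your setup is the same as the paper's: the time-domain representation of $D(2^j,k)$ via \eqref{e:OFBM_timedomain}, the It\^{o} isometry, and the observation that the partition-of-unity term $1-\sum_i a_\varphi\varphi_i(2^jt)$ kills one piece of the error (Lemma 6.2 of Stoev et al.). But from that point on your argument diverges and has a genuine gap. The paper does \emph{not} Taylor-expand $g_H$ or use the vanishing moments of $\psi$ at all. It applies the generalized Cauchy--Schwarz inequality (Lemma \ref{l:generalized_CS}) to pull the $i$-sum and the $t$-integral outside the $L^2(ds)$ norm, and then uses the exact identity
$$
\int_{\bbR}\textnormal{tr}\Big\{2^{jD}g_H\Big(t-\tfrac{i}{2^j},u\Big)g_H\Big(t-\tfrac{i}{2^j},u\Big)^*2^{jD^*}\Big\}\,du=\textnormal{tr}\Big\{2^{jD}\Big|t-\tfrac{i}{2^j}\Big|^{H}\Sigma\Big|t-\tfrac{i}{2^j}\Big|^{H^*}2^{jD^*}\Big\},
$$
i.e.\ the $L^2(du)$ norm of the kernel increment is the standard deviation of the process increment. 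Since $|t-i/2^j|\le K/2^j$ on the support of $\varphi_i(2^jt)$, writing $|t-i/2^j|^H=(z/2^j)^H$ makes the matrix factors cancel \emph{exactly}: $2^{jD}2^{-jH}=2^{-j/2}I$ because $D=H-\tfrac12 I$ commutes with $H$. This exact cancellation, together with the $O(2^j)$ count of nonzero $i$'s, is what produces a $j$-uniform constant.

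Your bookkeeping replaces this cancellation by the operator-norm bound $\|2^{jD}\|\le C\,2^{j(\max_k\Re h_k-1/2)}\log^{\kappa}(2^j)$ applied to a separately estimated kernel, and that is where the proof breaks. Near the singularity $u=t$ the only available estimate on the increment $g_H(t,\cdot)-g_H(i/2^j,\cdot)$ in $L^2(du)$ is of order $|t-i/2^j|^{h}\sim 2^{-jh_1}$ (no vanishing-moment gain is possible there, as you acknowledge), so your norm-based accounting leaves an uncancelled factor of order $2^{j(h_2-h_1-1/2)}$, which grows with $j$ whenever $h_2-h_1>1/2$ --- a case allowed by (OFBM4). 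Moreover, the $2^{-jN_\psi}$ gain you claim away from the singularity is asserted rather than derived: killing the Taylor polynomial of $g_H(\cdot,u)$ inside $\int\psi(t)[\,g_H(t,u)-\sum_ia_\varphi\varphi_i(2^jt)g_H(i/2^j,u)\,]dt$ requires not only the vanishing moments of $\psi$ but also a polynomial-reproduction (Strang--Fix) property of the sampling operator $\sum_ia_\varphi\varphi_i(2^jt)p(i/2^j)$, which you never invoke. Note finally that the proposition only asserts a $j$-uniform bound, not a decaying one; the paper's route (Cauchy--Schwarz plus the second-moment identity and the homogeneity $g_H(c\tau,cu)=c^Dg_H(\tau,u)$) gets exactly that with no singularity decomposition and no use of $N_\psi$.
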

\begin{proof}
Without loss of generality, we can assume that the interval $[-K,K]$, $K > 0$, contains the support of both functions $\psi$ and $\varphi$. The relations \eqref{e:OFBM_timedomain}, \eqref{e:N_psi} and \eqref{e:integ_EBH(s)BH(t)_psi(s)_psi(t)dsdt} yield the integral representation $D(2^j,k) \stackrel{{\mathcal L}}= \int_{\bbR} 2^{jD} \{\int^{K}_{-K} \psi(t) g_H(t,\frac{s}{2^j}-k)dt \} B(ds)$. So, let
$$
f_{H}(s) = 2^{jD} \int^{K}_{-K} \psi(t) \Big[g_{H}\Big(t, \frac{s}{2^j} -k \Big)
- \sum^{\infty}_{i= -\infty} a_{\varphi}\varphi_{i}(2^jt) g_{H}\Big(\frac{i}{2^j}, \frac{s}{2^j} -k \Big) \Big] dt \in M(n,\bbR).
$$
Then, by expression (\ref{e:approxD_2j,k_expression}), we can write
\begin{equation}\label{e:bound_D-tildeD}
\norm{D(2^j,k)- \widetilde{D}(2^j,k)}_{L^2(P)} = \sqrt{\textnormal{tr}\Big\{\int_{\bbR}f_{H}(s)f_{H}(s)^* ds\Big\}}
= \sqrt{\Big\{\int_{\bbR}\|f_{H}(s)\|^{2}_{l^2} ds\Big\}}.
\end{equation}
However, for almost every $s \in \bbR$,
$$
\|f_{H}(s)\|_{l^2} \leq \norm{2^{jD} \int^{K}_{-K} \psi(t) g_{H}\Big(t, \frac{s}{2^j}-k \Big) \Big(1 - \sum^{\infty}_{i= - \infty}a_{\varphi}\varphi_{i}(2^j t) \Big)  dt }_{l^2}
$$
\begin{equation}\label{e:bound_integrand_D-tildeD}
+ \norm{2^{jD} \int^{K}_{-K} \psi(t) \sum^{\infty}_{i= - \infty}\Big( g_{H}\Big(t, \frac{s}{2^j}-k \Big) -g_{H}\Big(\frac{i}{2^j}, \frac{s}{2^j}-k \Big) \Big)
a_{\varphi}\varphi_{i}(2^j t)dt }_{l^2}.
\end{equation}
By Lemma 6.2 in Stoev et al.\ \cite{stoev:pipiras:taqqu:2002}, the first term on the right-hand side of \eqref{e:bound_integrand_D-tildeD} is zero. Therefore, by a change of variables and Fubini's Theorem, \eqref{e:bound_D-tildeD} is bounded by
$$
C |a_{\varphi}|2^{j/2}\Big( \int_{\bbR}
\norm{2^{jD} \int^{K}_{-K} \psi(t) \sum^{\infty}_{i= - \infty}\Big( g_{H}(t, s ) -g_{H}\Big(\frac{i}{2^j}, s \Big) \Big)
\varphi_{i}(2^j t)dt }^2_{l^2} ds \Big)^{1/2}.
$$
$$
=: \Big(\int_{\bbR} \Big|\Big|\sum^{\infty}_{i = -\infty} F_{t,i}(s) \Big|\Big|^{2}_{l^2} ds \Big)^{1/2} = \norm{ \Big| \Big|
\sum^{\infty}_{i = -\infty} F_{t,i}(s)\Big|\Big|_{l^2}  }_{L^2(\bbR)}
\leq \sum^{\infty}_{i = -\infty} \norm{ \Big| \Big| F_{t,i}(s)\Big|\Big|_{l^2}  }_{L^2(\bbR)}
$$
$$
= C|a_{\varphi}|2^{j/2}\sum^{\infty}_{i = -\infty} \Big( \int_{\bbR} \norm{2^{jD}\int^{K}_{-K}\psi(t) \Big( g_{H}(t,s) - g_{H}\Big(\frac{i}{2^j},s\Big)\Big)
\varphi_{i}(2^j t)dt}^{2}_{l^2}  ds\Big)^{1/2}
$$
\begin{equation}\label{e:bound_D-tildeD_norm_difference_filters}
\leq C |a_{\varphi}|2^{j/2}\sum^{\infty}_{i = -\infty} \int^{K}_{-K}
|\psi(t)| |\varphi_{i}(2^j t)|\Big( \int_{\bbR} \norm{2^{jD}\Big( g_{H}(t,s) - g_{H}\Big(\frac{i}{2^j},s\Big)
\Big) }^{2}_{l^2}  ds\Big)^{1/2} dt,
\end{equation}
where the last inequality is a consequence of Lemma \ref{l:generalized_CS} below. On the other hand, from \eqref{e:gH(t,u)}, $g_{H}(t,s) - g_{H}(\frac{i}{2^j},s)=g_H(t - \frac{i}{2^j}, s-\frac{i}{2^j})$. Thus, by making the change of variables $u = s - i/2^j$,
$$
\int_{\bbR} \textnormal{tr} \Big\{2^{jD} g_{H}\Big(t-\frac{i}{2^j},s-\frac{i}{2^j}\Big)g_{H}\Big(t-\frac{i}{2^j},s-\frac{i}{2^j}\Big)^*2^{jD^*}\Big\}ds
$$
\begin{equation}\label{e:integ_tr(2jDgHgH*2jD*)}
= \int_{\bbR} \textnormal{tr}\Big\{ 2^{jD}g_{H}\Big(t - \frac{i}{2^j},u \Big)g_{H}\Big(t - \frac{i}{2^j},u \Big)^*2^{jD^*}  \Big\} du
= \textnormal{tr}\Big\{ 2^{jD} \Big| t - \frac{i}{2^j}\Big|^{H} \Sigma \Big| t - \frac{i}{2^j}\Big|^{H^*}2^{jD^*}  \Big\},
\end{equation}
where the last step follows from the fact that $EB_H(t)B_H(t)^* = |t|^{H}EB_H(\textnormal{sign}(t))B_H(\textnormal{sign}(t))^*|t|^{H^*} =: |t|^{H}\Sigma |t|^{H^*}$, $t \in \bbR$. By applying \eqref{e:integ_tr(2jDgHgH*2jD*)} and accounting for $\textnormal{supp} (\psi)$ and $\textnormal{supp} (\varphi_i(2^jt))$, the expression \eqref{e:bound_D-tildeD_norm_difference_filters} can be bounded from above by
$$
C |a_{\varphi}|2^{j/2}\norm{\psi}_{\infty}\norm{\varphi}_{\infty} \sum^{K(2^j+1)}_{i= - K(2^j+1)}
\int^{(i+K)/2^j}_{(i-K)/2^j} \Big( \textnormal{tr}\Big\{ 2^{jD} \Big|t - \frac{i}{2^j}\Big|^H \Sigma \Big| t - \frac{i}{2^j}\Big|^{H^*} 2^{jD^*}\Big\} \Big)^{1/2} dt
$$
$$
= C|a_{\varphi}|2^{j/2}\norm{\psi}_{\infty}\norm{\varphi}_{\infty} (2K(2^j+1)+1)2
\int^{K/2^j}_{0} (\textnormal{tr}\{2^{jD} w^H \Sigma w^{H^*}2^{jD^*}\})^{1/2} dw
$$
$$
= C |a_{\varphi}| 2^{j/2} \norm{\psi}_{\infty}\norm{\varphi}_{\infty}
\frac{(2K(2^j+1)+1)2}{2^{3j/2 }}\int^{K}_{0} (\textnormal{tr}\{z^H \Sigma z^{H^*}\})^{1/2} dz
\leq |a_{\varphi}|\norm{\psi}_{\infty}\norm{\varphi}_{\infty}C(H),
$$
where the equalities result from the consecutive changes of variables $w = t - \frac{i}{2^j}$, $z = 2^j w$. This proves the claim. $\Box$\\
\end{proof}

The next definition describes the discretized counterparts of the sample wavelet variance \eqref{e:W(a(v)2^j)} and the estimators \eqref{e:def_estimator} and \eqref{e:angle_estimator}.

\begin{definition}
Let $j, \log_2 a(\nu) \in \bbN$, and let $\widetilde{D}(a(\nu)2^j,k)$ be the discretized wavelet coefficient \eqref{e:approxD_2j,k} at scale $a(\nu)2^j$. We define the associated sample wavelet variance as
$$
\widetilde{W}(a(\nu)2^j) = \frac{1}{K_{a,j}}\sum^{K_{a,j}}_{k=1}\widetilde{D}(a(\nu)2^j,k)\widetilde{D}(a(\nu)2^j,k)^*.
$$
Under \eqref{e:bivariate_OFBM_h1_h2_real}, we also define the operator-renormalized discretized wavelet coefficients
\begin{equation}\label{e:Dnj,k(N)}
\widetilde{D}_\nu (2^j, k) := a(\nu)^{-H}\widetilde{D}(a(\nu)2^j, k), \quad D_\nu (2^j, k) := a(\nu)^{-H}D(a(\nu)2^j , k),
\end{equation}
and the random matrix
\begin{equation}\label{e:Btilde-nu}
\widetilde{B}_{\nu}(2^j)= P^{-1}\frac{1}{K_{a,j}}\sum^{K_{a,j}}_{k=1} \widetilde{D}_\nu (2^j, k)\widetilde{D}_\nu (2^j, k)^*  (P^*)^{-1},\quad P \in GL(2,\bbR)
\end{equation}
(c.f.\ relation \eqref{e:B^(j)}). Let $\widetilde{\lambda}_1(a(\nu)2^j) \leq \widetilde{\lambda}_2(a(\nu)2^j)$ be, respectively, the smallest and largest eigenvalues of $\widetilde{W}(a(\nu)2^j)$. By analogy to Definitions \ref{def:estimator} and \ref{def:theta_estimator}, we define the estimators
\begin{equation}\label{e:h1tilde_h2tilde}
\widetilde{h}_1(a(\nu)2^j) = \frac{\log \widetilde{\lambda}_1(a(\nu)2^j)}{2 \log a(\nu)}, \quad \widetilde{h}_2(a(\nu)2^j) = \frac{\log \widetilde{\lambda}_2(a(\nu)2^j)}{2 \log a(\nu)},
\end{equation}
\begin{equation}\label{e:theta-tilde}
\widetilde{\theta}(a(\nu)2^{j}) = \frac{\widetilde{\lambda}_{1}(a(\nu)2^j) - \widetilde{a}_{j,a(\nu)}}{\widetilde{b}_{j,a(\nu)}}
\end{equation}
for $h_1$, $h_2$ and $\theta$, respectively, where
\begin{equation}\label{e:W-tilde(a(nu)2j)}
\left(\begin{array}{cc}
\widetilde{a}_{j,a(\nu)} & \widetilde{b}_{j,a(\nu)}\\
\widetilde{b}_{j,a(\nu)} & \widetilde{c}_{j,a(\nu)}\\
\end{array}\right) := \widetilde{W}(a(\nu)2^j) = P \textnormal{diag}(a(\nu)^{h_1},a(\nu)^{h_2})\widetilde{B}_\nu(2^j) \textnormal{diag}(a(\nu)^{h_1},a(\nu)^{h_2})P^*.
\end{equation}
\end{definition}

Theorem \ref{t:asympt_log_a(nu)_discrete} below shows that, under mild conditions, estimating the Hurst eigenvalues based on either the discretized or the regular wavelet transform yields the same asymptotic behavior. Before stating and proving it, we will need the next lemma (c.f.\ Lemma \ref{l:vecB^a(2^j)_asympt}).

\begin{lemma}\label{l:vecBtilde(2^j)_asympt}
Under the assumptions (OFBM1$'$), (OFBM2) -- (4) and \eqref{e:a(nu)/J->infty}, let $\widetilde{B}_{\nu}(2^j)$, $\widehat{B}_a(2^j)$ be as in \eqref{e:Btilde-nu} and \eqref{e:B^(j)}. Then,
\begin{equation}\label{e:sqrt(Kaj)(Btilde-Bhat)}
\norm{ \sqrt{K_{a,j}}(\widetilde{B}_{\nu}(2^j) - \widehat{B}_a(2^j)) }_{L^1(P)} \rightarrow 0, \quad \nu \rightarrow \infty.
\end{equation}
\end{lemma}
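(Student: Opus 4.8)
The plan is to reduce the claimed $L^1(P)$ convergence to the mean-square bound on the difference between discretized and regular wavelet coefficients furnished by Proposition~\ref{p:D-tildeD_is_bounded}, combined with the operator self-similarity property $(P6)$ used to peel off the factor $a(\nu)^{-H}$. First I would write, using \eqref{e:B^(j)} and \eqref{e:Btilde-nu},
\[
\sqrt{K_{a,j}}(\widetilde{B}_{\nu}(2^j) - \widehat{B}_a(2^j))
= P^{-1}\sqrt{K_{a,j}}\,\frac{1}{K_{a,j}}\sum^{K_{a,j}}_{k=1}\Big( \widetilde{D}_\nu(2^j,k)\widetilde{D}_\nu(2^j,k)^* - D_\nu(2^j,k)D_\nu(2^j,k)^* \Big)(P^*)^{-1},
\]
and then expand the difference of outer products as $\widetilde{D}_\nu \widetilde{D}_\nu^* - D_\nu D_\nu^* = (\widetilde{D}_\nu - D_\nu)\widetilde{D}_\nu^* + D_\nu(\widetilde{D}_\nu - D_\nu)^*$. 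Since $P^{-1},(P^*)^{-1}$ are fixed matrices, it suffices to bound the $L^1(P)$ norm of $\sqrt{K_{a,j}}\,\frac{1}{K_{a,j}}\sum_k (\widetilde{D}_\nu(2^j,k) - D_\nu(2^j,k))\widetilde{D}_\nu(2^j,k)^*$ and the symmetric term.

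Next I would apply the triangle inequality for $\|\cdot\|_{L^1(P)}$ together with the Cauchy–Schwarz inequality in $L^2(P)$ to each summand, getting a bound of the form
\[
\Big\| \sqrt{K_{a,j}}\,\widetilde{B}_{\nu}(2^j) - \sqrt{K_{a,j}}\,\widehat{B}_a(2^j) \Big\|_{L^1(P)}
\le \frac{C}{\sqrt{K_{a,j}}}\sum^{K_{a,j}}_{k=1}\Big( \|\widetilde{D}_\nu(2^j,k) - D_\nu(2^j,k)\|_{L^2(P)}\big(\|\widetilde{D}_\nu(2^j,k)\|_{L^2(P)}+\|D_\nu(2^j,k)\|_{L^2(P)}\big)\Big).
\]
For the first factor, by stationarity in $k$ (property $(P2)$, which also holds for the discretized coefficients since the construction is translation-equivariant) and by $\widetilde{D}_\nu(2^j,k)-D_\nu(2^j,k) = a(\nu)^{-H}(\widetilde{D}(a(\nu)2^j,k)-D(a(\nu)2^j,k))$, Proposition~\ref{p:D-tildeD_is_bounded} gives $\|\widetilde{D}(a(\nu)2^j,k)-D(a(\nu)2^j,k)\|_{L^2(P)}\le C(H,\psi,\varphi)$ uniformly in $j,k$, and $\|a(\nu)^{-H}\|\le C a(\nu)^{-h_1}|\log^{\kappa}a(\nu)|$ for large $\nu$ (the Jordan-block bound used in Proposition~\ref{p:decay_Cov_wavelet_coefs}), so the first factor is $O(a(\nu)^{-h_1}|\log^{\kappa}a(\nu)|)$. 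For the second factor, property $(P6)$ and stationarity give $\|D_\nu(2^j,k)\|_{L^2(P)} = \|a(\nu)^{-H}D(a(\nu)2^j,k)\|_{L^2(P)}$, and since $D(a(\nu)2^j,k) \stackrel{d}= a(\nu)^{H}D(2^j,k)$ (operator self-similarity $(P3)$ with scale factor $a(\nu)$), this equals $\|D(2^j,0)\|_{L^2(P)}$, a finite constant independent of $\nu$; the analogous bound for $\|\widetilde{D}_\nu(2^j,k)\|_{L^2(P)}$ follows by adding and subtracting $D_\nu(2^j,k)$ and using the first-factor bound. Collecting these, the sum over $k=1,\dots,K_{a,j}$ of a quantity bounded by $C a(\nu)^{-h_1}|\log^{\kappa}a(\nu)|$ divided by $\sqrt{K_{a,j}}$ yields a bound $C\sqrt{K_{a,j}}\,a(\nu)^{-h_1}|\log^{\kappa}a(\nu)|$. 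Since $K_{a,j} = \nu/(a(\nu)2^j)$, this is $C\,2^{-j/2}\,\sqrt{\nu}\,a(\nu)^{-1/2-h_1}|\log^{\kappa}a(\nu)| = C\,2^{-j/2}(\nu/a(\nu)^{1+2h_1})^{1/2}|\log^{\kappa}a(\nu)|$, which tends to $0$ by \eqref{e:a(nu)/J->infty} (the term $\nu/a(\nu)^{1+2h_1}\to 0$ decays polynomially and so absorbs the logarithmic factor).

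The main obstacle I anticipate is the bookkeeping around the operator normalization $a(\nu)^{-H}$: the bound from Proposition~\ref{p:D-tildeD_is_bounded} is on the \emph{un-normalized} difference $D(a(\nu)2^j,k)-\widetilde{D}(a(\nu)2^j,k)$, so one must carefully track how the non-diagonalizable part of $H$ (the Jordan blocks) affects $\|a(\nu)^{-H}\|$, and verify that the resulting polynomial gain $a(\nu)^{-h_1}$ (where $h_1=\min\Re\,\mathrm{eig}(H)$) beats the rate inflation $\sqrt{K_{a,j}}\sim\sqrt{\nu/a(\nu)}$; this is precisely where the sharper balance condition $\nu/a(\nu)^{1+2h_1}\to 0$ in \eqref{e:a(nu)/J->infty} — as opposed to the weaker $\nu/a(\nu)\to 0$ one might naively expect — is needed, and I would make sure the exponents line up. A secondary point to be careful about is justifying that the discretized coefficients $\widetilde{D}(2^j,k)$ are themselves stationary in $k$ and have the operator-scaling property in the appropriate approximate sense; this follows from the representation \eqref{e:approxD_2j,k_expression} together with the MRA filter structure, but it should be invoked explicitly rather than taken for granted.
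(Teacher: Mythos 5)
Your proof is correct and follows essentially the same route as the paper's: triangle inequality over the $K_{a,j}$ summands, a decomposition of $\widetilde{D}_\nu\widetilde{D}_\nu^* - D_\nu D_\nu^*$ into cross terms, Cauchy--Schwarz in $L^2(P)$, the uniform bound of Proposition~\ref{p:D-tildeD_is_bounded} together with $(P3)$ to control the normalized coefficients, and the rate condition $\nu/a(\nu)^{1+2h_1}\to 0$ to conclude (the paper uses the symmetric three-term expansion with an extra $[\widetilde{D}-D][\widetilde{D}-D]^*$ term, but this is immaterial). One small remark: under (OFBM4) the exponent $H=P\,\mathrm{diag}(h_1,h_2)P^{-1}$ is diagonalizable, so $\|a(\nu)^{-H}\|\leq C\,a(\nu)^{-h_1}$ with no logarithmic correction, which sidesteps your (somewhat loosely justified) claim that the polynomial decay absorbs $|\log^{\kappa}a(\nu)|$.
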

\begin{proof}
By Minkowsky's inequality, the left-hand side of \eqref{e:sqrt(Kaj)(Btilde-Bhat)} is bounded by
\begin{equation}\label{e:bounding_normalized_distance_squared_wavecoef_vec}
C \frac{1}{\sqrt{K_{a,j}}} \sum^{K_{a,j}}_{k=1}\norm{
\widetilde{D}_\nu(2^j,k)\widetilde{D}_\nu(2^j,k)^* - D_\nu (2^j,k)D_\nu(2^j,k)^*}_{L^1(P)}.
\end{equation}
However, for $k = 1,\hdots,K_{a,j}$, the deviation between the associated non-normalized wavelet variance terms can be recast as
$$
\widetilde{D}(2^ja(\nu),k)\widetilde{D}(2^ja(\nu),k)^* - D(2^ja(\nu),k)D(2^ja(\nu),k)^*
$$
$$
= [\widetilde{D}(2^ja(\nu),k)-D(2^ja(\nu),k)][\widetilde{D}(2^ja(\nu),k)-D(2^ja(\nu),k)]^*
$$
$$
+ [\widetilde{D}(2^ja(\nu),k)-D(2^ja(\nu),k)]D(2^ja(\nu),k)^*
+ D(2^ja(\nu),k)[\widetilde{D}(2^ja(\nu),k)-D(2^ja(\nu),k)]^*.
$$
Therefore, by Proposition \ref{p:D-tildeD_is_bounded}, property ($P3$), and by using the fact that the $l^1$ matrix norm is sub-multiplicative, each term under the summation sign in \eqref{e:bounding_normalized_distance_squared_wavecoef_vec} can be bounded by
$$
\|a(\nu)^{-H}\{[\widetilde{D}(2^ja(\nu),k)-D(2^ja(\nu),k)][\widetilde{D}(2^ja(\nu),k)-D(2^ja(\nu),k)]^*
$$
$$
+ [\widetilde{D}(2^ja(\nu),k)-D(2^ja(\nu),k)]D(2^ja(\nu),k)^*
$$
$$
+ D(2^ja(\nu),k)[\widetilde{D}(2^ja(\nu),k)-D(2^ja(\nu),k)]^*\}a(\nu)^{-H^*}\|_{L^1(P)}
$$
$$
\leq \|a(\nu)^{-H}\|^{2}_{l^1}\|[\widetilde{D}(2^ja(\nu),k)-D(2^ja(\nu),k)][\widetilde{D}(2^ja(\nu),k)-D(2^ja(\nu),k)]^* \|_{L^1(P)}
$$
$$
+ \|a(\nu)^{-H}\|_{l^1} \|\widetilde{D}(2^ja(\nu),k)-D(2^ja(\nu),k)\|_{L^1(P)} \|D(2^j,k)\|_{L^{1}(P)}
$$
$$
+ \|D(2^j,k)\|_{L^1(P)}\|\widetilde{D}(2^ja(\nu),k)-D(2^ja(\nu),k)\|_{L^1(P)}\|a(\nu)^{-H}\|_{l^1}
$$
$$
\leq \|a(\nu)^{-H}\|^{2}_{l^1} \|\widetilde{D}(2^ja(\nu),k)-D(2^ja(\nu),k)\|^2_{L^2(P)}
$$
$$
+ \|a(\nu)^{-H}\|_{l^1}\|\widetilde{D}(2^ja(\nu),k)-D(2^ja(\nu),k)\|_{L^2(P)} \|D(2^j,0)\|_{L^{1}(P)}
$$
$$
+ \|D(2^j,0)\|_{L^1(P)}\|\widetilde{D}(2^ja(\nu),k)-D(2^ja(\nu),k)\|_{L^2(P)}\|a(\nu)^{-H^*}\|_{l^1}
$$
$$
\leq \|a(\nu)^{-H}\|^{2}_{l^1} C^2(H,\psi,\varphi) + \|a(\nu)^{-H}\|_{l^1} \|D(2^j,0)\|_{L^{1}(P)}\hspace{1mm}C(H,\psi,\varphi)
$$
$$
+ \|a(\nu)^{-H^*}\|_{l^1}  \|D(2^j,0)\|_{L^1(P)}\| \hspace{1mm}C(H,\psi,\varphi) \leq C \|a(\nu)^{-H}\|_{l^1}.
$$
Consequently, \eqref{e:bounding_normalized_distance_squared_wavecoef_vec} is bounded by $C \sqrt{K_{a,j}} \|a(\nu)^{-H}\|_{l^1}\rightarrow 0$, $\nu \rightarrow 0$, where the null limit results from \eqref{e:a(nu)/J->infty}. This establishes \eqref{e:sqrt(Kaj)(Btilde-Bhat)}. $\Box$\\
\end{proof}

\begin{theorem}\label{t:asympt_log_a(nu)_discrete}
For $m \in \bbN$, let $j_1 < \hdots < j_m$ be a set of fixed octaves $j$. Let $\widetilde{h}_{1}$, $\widetilde{h}_{2}$, $h^E_{1}$, $h^E_{2}$ be as in \eqref{e:h1tilde_h2tilde} and \eqref{e:lambda^E_1_lambda^E_2}. Under the assumptions (OFBM1$'$), (OFBM2) -- (4) and \eqref{e:a(nu)/J->infty}, as $\nu \rightarrow \infty$,
\begin{itemize}
\item [($i$)]
\begin{equation}\label{e:target_expression_H_diag_discrete}
\left(\begin{array}{c}
2 \log(a(\nu)2^j) \sqrt{K_{a,j}}[\widetilde{h}_1(a(\nu)2^j)-h^E_1(a(\nu)2^j)]\\
2 \log(a(\nu)2^j) \sqrt{K_{a,j}}[\widetilde{h}_2(a(\nu)2^j)-h^E_2(a(\nu)2^j)]\\
\end{array}\right)\stackrel{d}\rightarrow N(0,\Sigma_{h_1,h_2}(j_1,\hdots,j_m)),
\end{equation}
where the matrix $\Sigma_{h_1,h_2}(j_1,\hdots,j_m)$ is given in Theorem \ref{t:asympt_normality_lambda1};

\item [($ii$)] if, in addition, $p_{22} \neq 0$ and the condition \eqref{e:p12_neq_0_and_b12_neq_0} holds, then
\begin{equation}\label{e:thetatilde_weak_limit}
a(\nu)^{h_2-h_1}\sqrt{K_{a,j}} \{ \widetilde{\theta}(a(\nu)2^j) - \theta(a(\nu)2^j) \} \stackrel{d}\rightarrow \hspace{0.5mm} N(0,\sigma^2_{\theta}), \quad \nu \rightarrow \infty.
\end{equation}
where the variance $\sigma^2_{\theta}$ is given in Theorem \ref{t:weak_limit_eigenvector}.
\end{itemize}
\end{theorem}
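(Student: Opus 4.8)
The idea is to reduce the statement to the continuous-time results already established (Theorems \ref{t:asympt_normality_lambda1} and \ref{t:weak_limit_eigenvector}) by treating the discretized statistics as an asymptotically negligible perturbation of their continuous-time counterparts, the size of the perturbation being controlled by Proposition \ref{p:D-tildeD_is_bounded} and Lemma \ref{l:vecBtilde(2^j)_asympt}. The first step is to transfer the first-order asymptotics from $\widehat{B}_a(2^j)$ to $\widetilde{B}_\nu(2^j)$: Lemma \ref{l:vecBtilde(2^j)_asympt} gives $\sqrt{K_{a,j}}\big(\widetilde{B}_\nu(2^j)-\widehat{B}_a(2^j)\big)\to 0$ in $L^1(P)$, hence in probability, so together with Lemma \ref{l:vecB^a(2^j)_asympt} and Slutsky's lemma one obtains both $\widetilde{B}_\nu(2^j)\stackrel{P}\rightarrow B(2^j)$ and
\[
\Big(\sqrt{K_{a,j}}\big(\vecoper_{{\mathcal S}}\widetilde{B}_\nu(2^j)-\vecoper_{{\mathcal S}}B(2^j)\big)\Big)_{j=j_1,\hdots,j_m}\stackrel{d}\rightarrow N\big(0,\Sigma_B(j_1,\hdots,j_m)\big),
\]
with the very same covariance matrix as in \eqref{e:asymptotic_normality_pseudoestimator_Bhat}. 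Since the decomposition \eqref{e:W-tilde(a(nu)2j)} has exactly the same operator-scaling form as Lemma \ref{l:EW(a(n)2j)_W(a(n)2j)_scaling}, the tilde-analogue of Lemma \ref{l:elementary_results} then holds verbatim, its only inputs being that operator-scaling structure together with $\widetilde{B}_\nu(2^j)\stackrel{P}\rightarrow B(2^j)$.

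\textbf{Part (i).} The plan is to re-run the proof of Theorem \ref{t:asympt_normality_lambda1} with $\widetilde{W}(a(\nu)2^j)$, $\widetilde{B}_\nu(2^j)$, $\widetilde\lambda_i(a(\nu)2^j)$ in place of $W(a(\nu)2^j)$, $\widehat{B}_a(2^j)$, $\lambda_i(a(\nu)2^j)$. The entry-wise expansions and the determinant identity $\widetilde{a}\widetilde{c}-\widetilde{b}^2=a(\nu)^{2(h_1+h_2)}\det\widetilde{B}_\nu(2^j)$ transfer directly from \eqref{e:Shat=a(nu)H_EW(j)_a(nu)H*=(a_b_b_c)} and \eqref{e:ac-b^2=det}, and the mean-value-theorem manipulations that discard the negligible terms use only the tilde-version of Lemma \ref{l:elementary_results} and the tightness just noted. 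This delivers the analogue of \eqref{e:lambda1_asympt_equiv}--\eqref{e:lambda2_asympt_equiv}: $2\log(a(\nu)2^j)\sqrt{K_{a,j}}[\widetilde h_i(a(\nu)2^j)-h^E_i(a(\nu)2^j)]$ is asymptotically equivalent in probability to ${\mathcal Q}_{jj}\sqrt{K_{a,j}}\big(\vecoper_{{\mathcal S}}\widetilde{B}_\nu(2^j)-\vecoper_{{\mathcal S}}B(2^j)\big)$, which has the same weak limit as ${\mathcal Q}_{jj}\sqrt{K_{a,j}}\big(\vecoper_{{\mathcal S}}\widehat{B}_a(2^j)-\vecoper_{{\mathcal S}}B(2^j)\big)$ by Lemma \ref{l:vecBtilde(2^j)_asympt}; assembling over $j=j_1,\hdots,j_m$ gives \eqref{e:target_expression_H_diag_discrete} with $\Sigma_{h_1,h_2}(j_1,\hdots,j_m)$ as in Theorem \ref{t:asympt_normality_lambda1}. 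For part (ii), the same substitution is made in the proof of Theorem \ref{t:weak_limit_eigenvector}: since $\widetilde\theta(a(\nu)2^j)$ of \eqref{e:theta-tilde} is built from the entries of $\widetilde{W}(a(\nu)2^j)$ just as $\widehat\theta(a(\nu)2^j)$ is built from those of $W(a(\nu)2^j)$, the chain \eqref{e:1+2}, \eqref{e:1a+1b_limit}, \eqref{e:a/b-ahat/bhat}--\eqref{e:N(0,sigma2(b22))_limit} reduces $a(\nu)^{h_2-h_1}\sqrt{K_{a,j}}\{\widetilde\theta-\theta(a(\nu)2^j)\}$ to ${\mathcal R}^*\sqrt{K_{a,j}}\big(\vecoper_{{\mathcal S}}\widetilde{B}_\nu(2^j)-\vecoper_{{\mathcal S}}B(2^j)\big)$, and Lemma \ref{l:vecBtilde(2^j)_asympt} yields \eqref{e:thetatilde_weak_limit} with $\sigma^2_\theta={\mathcal R}^*\Sigma_B(2^j){\mathcal R}$; consistency of $\widetilde h_i$ and $\widetilde\theta$ follows a fortiori.

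\textbf{Main obstacle.} Essentially all of the genuinely new analytic work has already been discharged, in Proposition \ref{p:D-tildeD_is_bounded} (the uniform $L^2$ control of the discretization error) and Lemma \ref{l:vecBtilde(2^j)_asympt}. What remains is careful bookkeeping: one must verify that the proofs of Theorems \ref{t:asympt_normality_lambda1} and \ref{t:weak_limit_eigenvector} never use anything about $\widehat{B}_a(2^j)$ beyond (a) its operator-scaling relation to the corresponding sample wavelet variance, (b) convergence in probability to $B(2^j)$, and (c) $\sqrt{K_{a,j}}$-tightness of its fluctuation around $B(2^j)$. The one point that deserves explicit attention is the treatment of the $O_P(a(\nu)^{h_1-h_2})$ remainder terms suppressed ``for mathematical clarity'' in \eqref{e:a,ahat,b,bhat}: these must be bounded uniformly, which is where condition \eqref{e:a(nu)/J->infty} re-enters after Lemma \ref{l:vecBtilde(2^j)_asympt}. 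This is routine but should be written out, since it is the only place where the growth restriction on $a(\nu)$ plays a role in the reduction.
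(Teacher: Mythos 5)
Your proposal is correct and follows essentially the same route as the paper: the entire reduction rests on Lemma \ref{l:vecBtilde(2^j)_asympt} (itself powered by Proposition \ref{p:D-tildeD_is_bounded}) together with a re-run of the mean-value-theorem machinery from Theorems \ref{t:asympt_normality_lambda1} and \ref{t:weak_limit_eigenvector}. The only difference is organizational --- the paper splits $\widetilde\lambda_i - \lambda^E_i$ into $(\widetilde\lambda_i-\lambda_i)+(\lambda_i-\lambda^E_i)$ and shows the first piece is negligible at rate $\sqrt{K_{a,j}}$, whereas you transfer the CLT from $\widehat{B}_a(2^j)$ to $\widetilde{B}_\nu(2^j)$ via Slutsky and run the delta-method argument once --- and both versions require the same bookkeeping, including the uniform handling of the $O_P(a(\nu)^{h_1-h_2})$ remainders that you rightly single out.
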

\begin{proof}
We begin by showing ($i$). Rewrite the left-hand side of \eqref{e:target_expression_H_diag_discrete} as
\begin{equation}\label{e:target_expression_split}
\left(\begin{array}{c}
\sqrt{K_{a,j}}(\log \widetilde{\lambda}_1(a(\nu)2^j)- \log \lambda_1(a(\nu)2^j)) + \sqrt{K_{a,j}}(\log \lambda_1(a(\nu)2^j)- \log \lambda^E_1(a(\nu)2^j))\\
\sqrt{K_{a,j}}(\log \widetilde{\lambda}_2(a(\nu)2^j)- \log \lambda_2(a(\nu)2^j)) + \sqrt{K_{a,j}}(\log \lambda_2(a(\nu)2^j)- \log \lambda^E_2(a(\nu)2^j))
\end{array}\right)_j
\end{equation}
where $j = j_1,\hdots,j_m$. As in the proof of Theorem \ref{t:asympt_normality_lambda1}, without loss of generality we can fix $j$. Theorem \ref{t:asympt_normality_lambda1} implies the asymptotic normality of the second term in the sum \eqref{e:target_expression_split}. We will show that the first term in the sum (\ref{e:target_expression_split}) goes to zero in $L^1(P)$. Note that
\begin{equation}\label{e:loglambdatilde-loglambda}
2 \log(a(\nu)2^j)[\widetilde{h}_{i}(a(\nu)2^j) - \widehat{h}_{i}(a(\nu)2^j)] = \log \widetilde{\lambda}_{i}(a(\nu)2^j)-\log \lambda_{i}(a(\nu)2^j), \quad i = 1,2.
\end{equation}
So, we can repeat the proof of Theorem \ref{t:asympt_normality_lambda1} with $W(a(\nu)2^j)$ in place of $EW(a(\nu)2^j)$ and $\widetilde{W}(a(\nu)2^j)$ (see \eqref{e:W-tilde(a(nu)2j)}) in place of $W(a(\nu)2^j)$. In regard to $i=1$ in \eqref{e:loglambdatilde-loglambda}, we arrive at a relation analogous to \eqref{e:lambda1-lambdaE1=sum}, where
$$
\widetilde{r} := 4\frac{\widetilde{a}\widetilde{c}-\widetilde{b}^2}{(\widetilde{a}+\widetilde{c})^2} \stackrel{P}\sim \frac{1}{a(\nu)^{2(h_2-h_1)}}\frac{4(\det(P))^2\det \widetilde{B}_{\nu}(2^j) }{((p^{2}_{12}+p^{2}_{22})\widetilde{b}_{22})^2}, \quad \nu \rightarrow \infty.
$$
The analogous expression to \eqref{e:lambda1-lambdaE1=sum_second_term_limit} is
$$
\sqrt{K_{a,j}}\Big\{\frac{(\widehat{a}\widehat{c}-\widehat{b}^2)}{(\widehat{a}+\widehat{c})^2} - \frac{(\widetilde{a}\widetilde{c}-\widetilde{b}^2)}{(\widetilde{a}+\widetilde{c})^2} \Big\} = \frac{(\det(P))^2}{a(\nu)^{2(h_2-h_1)}} \Big\{ \frac{O_P(1)}{[O(a(\nu)^{h_1-h_2})+ (p^{2}_{12}+p^{2}_{22})\widehat{b}_{22}]^2}
$$
\begin{equation}\label{e:lambda1tilde-lambda1=sum_second_term_limit}
+ \det \widetilde{B}_{\nu}(2^j) \Big[ \frac{ O_P(1) }{[O_P(a(\nu)^{h_1-h_2})+ (p^{2}_{12}+p^{2}_{22})\widehat{b}_{22}]^2 [O_P(a(\nu)^{h_1-h_2})+ (p^{2}_{12}+p^{2}_{22})\widetilde{b}_{22}]^2}\Big] \Big\} \stackrel{P}\rightarrow 0.
\end{equation}
The numerators of the first and second terms appearing in the sum in \eqref{e:lambda1tilde-lambda1=sum_second_term_limit} are bounded in probability as a consequence of Lemma \ref{l:vecBtilde(2^j)_asympt} and of applying the mean value theorem to the function $f_3(x) = x^2$. The relation analogous to \eqref{e:theo_asympt_normality_lambda1_explicit} is
$$
\sqrt{K_{a,j}} \hspace{0.5mm} \frac{\det \widetilde{B}_{\nu}(2^j) - \det \widehat{B}_{a}(2^j)}{\det \widehat{B}_{a}(2^j)} - \sqrt{K_{a,j}}\hspace{0.5mm} \frac{\widetilde{b}_{22}- \widehat{b}_{22}}{\widehat{b}_{22}} \stackrel{P}\rightarrow 0,
$$
where the limit follows from Lemma \ref{l:vecBtilde(2^j)_asympt}. In regard to $i=2$ in \eqref{e:loglambdatilde-loglambda}, it suffices to repeat the argument in Theorem \ref{t:asympt_normality_lambda1} to arrive at the relation
$$
\sqrt{K_{a,j}}\{ \log(\widetilde{\lambda}_2(a(\nu)2^{j})) - \log(\lambda_2(a(\nu)2^{j})) \} \stackrel{P}\sim \sqrt{K_{a,j}} \Big(\frac{\widetilde{b}_{22}(2^j)- \widehat{b}_{22}(2^j)}{\widehat{b}_{22}(2^j)} \Big) \stackrel{P}\rightarrow 0,
$$
where the limit is again a consequence of Lemma \ref{l:vecBtilde(2^j)_asympt}. Therefore, \eqref{e:target_expression_H_diag_discrete} holds.\\

We now turn to ($ii$). Recast the left-hand side of \eqref{e:thetatilde_weak_limit} as
\begin{equation}\label{e:thetatilde-thetahat+thetahat-thetaE}
a(\nu)^{h_2 - h_1} \sqrt{K_{a,j}}\{ \widetilde{\theta}(a(\nu)2^j) - \widehat{\theta}(a(\nu)2^j) \} +  a(\nu)^{h_2 - h_1} \sqrt{K_{a,j}}\{ \widehat{\theta}(a(\nu)2^j) - \theta(a(\nu)2^j) \}.
\end{equation}
The weak limit of the second term in the sum \eqref{e:thetatilde-thetahat+thetahat-thetaE} is given by Theorem \ref{t:weak_limit_eigenvector}. By analogy with ($i$), we will show that the first term in \eqref{e:thetatilde-thetahat+thetahat-thetaE} converges to zero in probability. Rewrite this term as
\begin{equation}\label{e:1+2-tilde}
a(\nu)^{h_2-h_1}\sqrt{K_{a,j}} \Big\{\frac{\widetilde{\lambda}_1}{\widetilde{b}} - \frac{\lambda_1}{\widehat{b}}\Big\} + a(\nu)^{h_2-h_1}\sqrt{K_{a,j}} \Big\{ \frac{\widehat{a}}{\widehat{b}}- \frac{\widetilde{a}}{\widetilde{b}}\Big\}.
\end{equation}
As a consequence of the proof of \eqref{e:target_expression_H_diag_discrete}, $\log(\widetilde{\lambda}_1(a(\nu)2^j)/\lambda_1(a(\nu)2^j)) \stackrel{P}\rightarrow 0$, namely,
\begin{equation}\label{e:lambda-tilde1/lambda1->1}
\widetilde{\lambda}_1(a(\nu)2^j)/\lambda_1(a(\nu)2^j) \stackrel{P}\rightarrow 1.
\end{equation}
Based on \eqref{e:lambda-tilde1/lambda1->1}, Lemma \ref{l:vecBtilde(2^j)_asympt} and the expression \eqref{e:W-tilde(a(nu)2j)}, we obtain the relations
$$
\frac{\widetilde{\lambda}_1}{\widetilde{b}} \stackrel{P}\sim \frac{1}{[O_P(a(\nu)^{h_1-h_2}) + p_{12}p_{22}\widetilde{b}_{22}]a(\nu)^{2h_2}}\frac{a(\nu)^{2h_1}2 (\det(P))^2 \det \widetilde{B}_{\nu}(2^j)}{[O_{P}(a(\nu)^{h_1-h_2})+(p^{2}_{12}+p^{2}_{22})\widetilde{b}_{22}]},
$$
\begin{equation}\label{e:theta-tilde_auxiliary_limits}
\frac{\widehat{b}}{\widetilde{b}} = O_P(1), \quad \sqrt{K_{a,j}}\Big\{\frac{\widetilde{b} - \widehat{b}}{\widehat{b}}\Big\} \stackrel{P}\rightarrow 0.
\end{equation}
Therefore,
\begin{equation}\label{e:1a-tilde_second_term}
a(\nu)^{h_2-h_1}(-1) \frac{\lambda_1}{\widehat{b}} \sqrt{K_{a,j}}\Big\{ \frac{\widetilde{b} - \widehat{b}}{\widetilde{b}}\Big\} \stackrel{P}\rightarrow 0.
\end{equation}
Moreover, for $\sqrt{K_{a,j}}\{\widetilde{\lambda}_1 - \widehat{\lambda}_1\}$, we can obtain relations analogous to \eqref{e:sqrtKaj(lambda1-lambdaE1)}, \eqref{e:sqrt(K)_(ac-b2/a+c-allhat)-ac-b2/a+c}, \eqref{e:sqrt(K)(sqrt(1-rhat)-1/-rhat)-(sqrt(1-r)-1/-r)} and \eqref{e:1a_first_term} by making use of Lemma \ref{l:vecBtilde(2^j)_asympt}. We conclude that
$$
a(\nu)^{h_2-h_1}\sqrt{K_{a,j}} \Big\{\frac{\widetilde{\lambda}_1}{\widetilde{b}} - \frac{\lambda_1}{\widehat{b}}\Big\} \stackrel{P}\rightarrow 0.
$$
To show that
$$
a(\nu)^{h_2-h_1}\sqrt{K_{a,j}} \Big\{ \frac{\widehat{a}}{\widehat{b}}- \frac{\widetilde{a}}{\widetilde{b}}\Big\} \stackrel{P}\rightarrow 0,
$$
we can follow the steps of the proof of the analogous statement in Theorem \ref{t:weak_limit_eigenvector} to arrive at expressions which, in turn, are analogous to \eqref{e:N(0,sigma2(b12))_limit} and \eqref{e:N(0,sigma2(b22))_limit}. Then, we can make use of Lemma \ref{l:vecBtilde(2^j)_asympt} to find a zero limit in probability, instead of a distribution. Thus, \eqref{e:thetatilde_weak_limit} holds. $\Box$\\
\end{proof}

The next lemma, a multivariate generalized Cauchy-Schwarz inequality, is used in the proof of Proposition \ref{p:D-tildeD_is_bounded}. It can be shown by an adaptation of the univariate argument, which we provide for the reader's convenience.

\begin{lemma}\label{l:generalized_CS}
Let $g: \bbR^2 \rightarrow M(n,\bbR)$,  $(t,s) \mapsto g(t,s)$, be a function such that
\begin{equation}\label{e:generalized_CS_assumptions}
\norm{g(t,\cdot)}_{l^2} , \norm{\int_{\bbR}g(t,\cdot)dt}_{l^2} \in L^2(\bbR).
\end{equation}
Then,
\begin{equation}\label{e:generalized_CS}
\Big( \int_{\bbR} \norm{\int_{\bbR}g(t,s)dt}^2_{l^2} ds \Big)^{1/2} \leq \int_{\bbR} \Big( \int_{\bbR} \norm{g(t,s)}^2_{l^2} ds   \Big)^{1/2} dt.
\end{equation}
\end{lemma}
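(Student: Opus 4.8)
The plan is to recognize \eqref{e:generalized_CS} as the $L^2$ Minkowski integral inequality and to prove it by the standard duality argument, adapted to matrix-valued integrands. The key observation is that the entry-wise $l^2$ norm \eqref{e:|A|p} is the norm induced by the Hilbert--Schmidt (Frobenius) inner product $\langle A,B\rangle := \textnormal{tr}(AB^*) = \sum_{i_1,i_2}A_{i_1,i_2}B_{i_1,i_2}$ on $M(n,\bbR)$, so that $\norm{A}^2_{l^2} = \langle A,A\rangle$ and the scalar Cauchy--Schwarz inequality $|\langle A,B\rangle| \le \norm{A}_{l^2}\norm{B}_{l^2}$ holds. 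I would set $F(s) := \int_{\bbR} g(t,s)\,dt \in M(n,\bbR)$, which is well defined for a.e.\ $s$ and satisfies $\norm{F}_{l^2}\in L^2(\bbR)$ by \eqref{e:generalized_CS_assumptions}; moreover, if the right-hand side of \eqref{e:generalized_CS} is infinite there is nothing to prove, so I may assume $t\mapsto\big(\int_{\bbR}\norm{g(t,s)}^2_{l^2}\,ds\big)^{1/2}$ is in $L^1(\bbR)$.

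First I would expand the square of the left-hand side using bilinearity of $\langle\cdot,\cdot\rangle$:
\[
\int_{\bbR}\norm{F(s)}^2_{l^2}\,ds = \int_{\bbR}\Big\langle \int_{\bbR} g(t,s)\,dt,\,F(s)\Big\rangle ds = \int_{\bbR}\int_{\bbR}\langle g(t,s),F(s)\rangle\,dt\,ds.
\]
Next I would justify the Fubini--Tonelli interchange of the $t$- and $s$-integrals: for each fixed $t$, Cauchy--Schwarz in $L^2(\bbR)$ gives $\int_{\bbR}|\langle g(t,s),F(s)\rangle|\,ds \le \big(\int_{\bbR}\norm{g(t,s)}^2_{l^2}\,ds\big)^{1/2}\big(\int_{\bbR}\norm{F(s)}^2_{l^2}\,ds\big)^{1/2}$, and integrating this bound in $t$ is finite by the two standing assumptions, so the double integral converges absolutely. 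After swapping the order of integration and applying the same Cauchy--Schwarz bound inside the $t$-integral,
\[
\int_{\bbR}\norm{F(s)}^2_{l^2}\,ds \le \Big(\int_{\bbR}\norm{F(s)}^2_{l^2}\,ds\Big)^{1/2}\int_{\bbR}\Big(\int_{\bbR}\norm{g(t,s)}^2_{l^2}\,ds\Big)^{1/2}dt.
\]

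Finally, if $\int_{\bbR}\norm{F(s)}^2_{l^2}\,ds = 0$ then \eqref{e:generalized_CS} holds trivially; otherwise I would divide both sides by the finite, strictly positive quantity $\big(\int_{\bbR}\norm{F(s)}^2_{l^2}\,ds\big)^{1/2}$, which yields exactly \eqref{e:generalized_CS}. The only point requiring care is the measurability and integrability bookkeeping needed to invoke Fubini and to make sense of $F(s)$ for a.e.\ $s$, but this is entirely routine given \eqref{e:generalized_CS_assumptions}; there is no substantive obstacle beyond reproducing the classical proof of Minkowski's integral inequality with the Hilbert--Schmidt inner product in place of ordinary multiplication.
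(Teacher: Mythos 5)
Your proof is correct and is essentially the paper's argument: both are the classical duality proof of Minkowski's integral inequality, reducing to the same bound $\int_{\bbR}\int_{\bbR}\norm{F(s)}_{l^2}\norm{g(t,s)}_{l^2}\,ds\,dt$, then applying Fubini and Cauchy--Schwarz in $L^2(ds)$ and dividing by $\bigl(\int_{\bbR}\norm{F(s)}^2_{l^2}\,ds\bigr)^{1/2}$. The only cosmetic difference is that you pull the $t$-integral out via bilinearity of the Hilbert--Schmidt pairing, whereas the paper writes the square as a product of two norms and uses the triangle inequality $\norm{\int_{\bbR}g(t,s)\,dt}_{l^2}\le\int_{\bbR}\norm{g(t,s)}_{l^2}\,dt$ on one factor.
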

\begin{proof}
Starting from the square of the left-hand side of \eqref{e:generalized_CS},
$$
\int_{\bbR} \norm{\int_{\bbR}g(t_1,s)dt_1}_{l^2} \norm{\int_{\bbR}g(t_2,s)dt_2}_{l^2} ds
\leq \int_{\bbR} \Big(\norm{\int_{\bbR}g(t_1,s)d t_1}_{l^2} \Big) \Big(\int_{\bbR} \norm{g(t_2,s)}_{l^2} dt_2 \Big) ds
$$
$$
= \int_{\bbR} \Big( \int_{\bbR}  \norm{\int_{\bbR}g(t_1,s)dt_1}_{l^2}  \norm{g(t_2,s)}_{l^2} ds \Big) dt_2
$$
$$
\leq  \int_{\bbR} \Big( \int_{\bbR}  \norm{\int_{\bbR}g(t_1,s_1)dt_1}^2_{l^2}  ds_1\Big)^{1/2} \Big( \int_{\bbR}\norm{g(t_2,s_2)}^2_{l^2} ds_2 \Big)^{1/2} dt_2.
$$
The equality is a consequence of Fubini when applied to the integrators $dt_2 ds$, whereas the second inequality is a consequence of \eqref{e:generalized_CS_assumptions} and the Cauchy-Schwarz inequality. This establishes the claim \eqref{e:generalized_CS}. $\Box$\\
\end{proof}


\bibliography{mlrd}

\small

\bigskip

\noindent \begin{tabular}{lr}
Patrice Abry & \hspace{6cm} Gustavo Didier\\
Physics Lab & Mathematics Department\\
CNRS and \'{E}cole Normale Sup\'{e}rieure de Lyon & Tulane University  \\
46 all\'{e}e d'Italie & 6823 St.\ Charles Avenue\\
F-69364, Lyon cedex 7, France & New Orleans, LA 70118, USA\\
{\it patrice.abry@ens-lyon.fr} & {\it gdidier@tulane.edu}\\
\end{tabular}\\

\smallskip

\end{document}